\DeclareMathOperator{\fc}{fc}
\numberwithin{equation}{section}
\newcommand{\bla}{\bm{\lambda}}
\newcommand{\boeta}{\bm{\eta}}
\newcommand{\boxi}{\bm{\xi}}
\newcommand{\boldu}{\mbox{\boldmath $u$}}
\newcommand{\pt}{\partial}
\newcommand{\rd}{{\rm d}}
\newcommand{\ba}{{\bf{a}}}
\newcommand{\beq}{\begin{equation}}
\newcommand{\bEq}{\end{equation}}
\newcommand{\del}{\partial}
\newcommand{\bu}{{\bf{u}}}
\newcommand{\bv}{{\bf{v}}}
\newcommand{\bw}{{\bf{w}}}
\newcommand{\al}{\alpha}
\newcommand{\be}{\begin{equation}}
\newcommand{\ee}{\end{equation}}
\newcommand{\e}{{\varepsilon}}
\newcommand{\la}{\lambda}
\newcommand{\om}{{\omega}}
\newcommand{\LL}{{\rm L}}
\newcommand{\cN}{{\cal N}}
\newcommand{\rU}{{\rm U}}
\def\RR{{\mathbb R}}
\renewcommand{\b}[1]{\bm{\mathrm{#1}}}
\renewcommand{\cal}{\mathcal}
\newcommand{\wt}{\widetilde}
\newcommand{\ii}{\mathrm{i}}
\newcommand{\s}{\mspace{-0.9mu}} 	
\newcommand{\col}{\mathrel{\mathop:}}
\newcommand{\deq}{\mathrel{\mathop:}=}
\renewcommand{\epsilon}{\varepsilon}
\renewcommand{\leq}{\leqslant}
\renewcommand{\geq}{\geqslant}
\renewcommand{\le}{\leq}
\renewcommand{\ge}{\geq}
\renewcommand{\P}{\mathbb{P}}
\newcommand{\E}{\mathbb{E}}
\newcommand{\R}{\mathbb{R}}
\newcommand{\C}{\mathbb{C}}
\newcommand{\N}{\mathbb{N}}
\newcommand{\Z}{\mathbb{Z}}
\newcommand{\pb}[1]{\bigl({#1}\bigr)}
\newcommand{\pB}[1]{\Bigl({#1}\Bigr)}
\DeclareMathOperator{\diag}{diag}
\DeclareMathOperator{\tr}{Tr}
\DeclareMathOperator{\var}{Var}
\DeclareMathOperator{\supp}{supp}
\DeclareMathOperator{\re}{Re}
\DeclareMathOperator{\im}{Im}
\DeclareMathOperator{\OO}{O}
\DeclareMathOperator{\oo}{o}
\DeclareMathOperator{\UU}{U}
\theoremstyle{plain} 
\newtheorem{theorem}{Theorem}[section]
\newtheorem*{theorem*}{Theorem}
\newtheorem{lemma}[theorem]{Lemma}
\newtheorem{assumption}[theorem]{Assumption}
\newtheorem*{lemma*}{Lemma}
\newtheorem{corollary}[theorem]{Corollary}
\newtheorem*{corollary*}{Corollary}
\newtheorem{proposition}[theorem]{Proposition}
\newtheorem*{proposition*}{Proposition}
\newtheorem{definition}[theorem]{Definition}
\newtheorem*{definition*}{Definition}
\newtheorem*{example*}{Example}
\newtheorem{remark}[theorem]{Remark}
\newtheorem*{remark*}{Remark}
\newtheorem*{remarks*}{Remarks}
\renewcommand{\subsection}{\@startsection
{subsection}
{2}
{0mm}
{-\baselineskip}
{0 \baselineskip}
{\normalfont\bf\itshape}} 
\newcommand{\nc}{\normalcolor}
\def\bN{{\mathbb N}}
\newcommand{\bq}{{\bf q}}
\renewcommand{\b}[1]{\boldsymbol{\mathrm{#1}}}
\def\@empty{}
\def\author#1{\par
    {\centering{\authorfont#1}\par\vspace*{0.05in}}
}
\def\titlefont{\fontsize{13}{15}\bfseries\boldmath\selectfont\centering{}}
\def\authorfont{\fontsize{13}{15}}
\def\abstractfont{\fontsize{8}{10}}
\let\affiliationfont\rhfont
\def\address#1{\par
    {\centering{\affiliationfont#1\par}}\par\vspace*{11pt}
}
\def\keywords#1{\par
    \vspace*{8pt}
    {\authorfont{\leftskip18pt\rightskip\leftskip
    \noindent{\it\small{Keywords}}\/:\ #1\par}}\vskip-12pt}
\def\body{
\setcounter{footnote}{0}
\def\thefootnote{\alph{footnote}}
\def\@makefnmark{{$^{\rm \@thefnmark}$}}
}
\def\title#1{
    \thispagestyle{plain}
    \vspace*{-14pt}
    \vskip 79pt
    {\centering{\titlefont #1\par}}
    \vskip 1em
}
\renewenvironment{abstract}{\par
    \vspace*{6pt}\noindent
    \abstractfont
    \noindent\leftskip18pt\rightskip18pt
}{
  \par}
\renewcommand{\b}[1]{\boldsymbol{\mathrm{#1}}}
\renewcommand{\section}{\@startsection
{section}
{1}
{0mm}
{-2\baselineskip}
{2\baselineskip}
{\normalfont\large\scshape\centering}}
\newcommand{\tnorm}[1]{{\left\vert\kern-0.25ex\left\vert\kern-0.25ex\left\vert #1 
    \right\vert\kern-0.25ex\right\vert\kern-0.25ex\right\vert}}
\begin{document}

\title{Random band matrices in the delocalized phase, I:\\  Quantum unique ergodicity and universality}

\vspace{1.2cm}
\noindent\begin{minipage}[b]{0.33\textwidth}

 \author{P. Bourgade}

\address{Courant Institute\\
 bourgade@cims.nyu.edu}
 \end{minipage}
\begin{minipage}[b]{0.33\textwidth}

 \author{H.-T. Yau}

\address{Harvard University\\
htyau@math.harvard.edu}

 \end{minipage}
\begin{minipage}[b]{0.3\textwidth}

 \author{J. Yin}

\address{University of California, Los Angeles\\
jyin@math.ucla.edu}

 \end{minipage}

\begin{abstract}
Consider  $N\times N$ symmetric one-dimensional random band matrices with 
general distribution of the entries and band width    $W \ge N^{3/4+\e}$ for any $\e>0$. 
 In the bulk of the spectrum and in the large $N$ limit, we obtain the following results.

\vspace{0.1cm}

{\addtolength{\leftskip}{1.5em} \setlength{\parindent}{0em}

\vspace{0.1cm}

\makebox[1.5em][l]{(i)} The semicircle law holds up to the scale $N^{-1+\e}$ for any $\e>0$.

\vspace{0.1cm}

\makebox[1.5em][l]{(ii)} The  eigenvalues locally converge to the point process given by the Gaussian orthogonal ensemble at any fixed energy.

\vspace{0.1cm}

\makebox[1.5em][l]{(iii)}   All eigenvectors are delocalized, meaning their ${\rm L}^\infty$ norms are all simultaneously bounded by $N^{-\frac{1}{2}+\e}$ (after normalization in ${\rm L}^2$)
with overwhelming probability, for any $\e>0$.

\vspace{0.1cm}

\makebox[1.5em][l]{(iv)} Quantum unique ergodicity holds, in the sense that  the  local    ${\rm L}^2$ mass of eigenvectors becomes equidistributed  with overwhelming probability.
\par 
}
\vspace{0.1cm}

\noindent 

We extend the mean-field reduction method  \cite{BouErdYauYin2017}, which required  $W=\Omega(N)$,  
to the current setting  $W \ge N^{3/4+\e}$.  Two new  ideas  are:   (1) A new estimate on the ``generalized  resolvent" of band matrices when  $W \ge N^{3/4+\e}$.  Its proof, along with an improved fluctuation average estimate,  will be presented  in parts 2 and 3 of this series  \cite {BouYanYauYin2018,YanYin2018}.  (2) A strong  (high probability) version of the  quantum unique ergodicity property of random matrices.   
For its proof,  we   construct  perfect matching observables of eigenvector 
overlaps  and show  they   satisfying   the eigenvector moment flow equation \cite{BouYau2017} under the matrix  Brownian motions.

\end{abstract}

\keywords{Random band matrices, delocalization, quantum unique ergodicity, universality.}

\vspace{1cm}

\tableofcontents

{\let\thefootnote\relax\footnotetext{\noindent The work of P.B. is partially supported by the NSF grant DMS\#1513587. 
The work of H.-T. Y. is partially supported by NSF Grant  DMS-1606305 and a Simons Investigator award.
The work of  J.Y. is partially supported by the NSF grant DMS\#1552192.}}

\newpage

\section{Introduction}

\subsection{Random matrices beyond mean field.}\ 
In Wigner's vision, random matrices play the role of a mean-field model for  large  quantum systems of high complexity.   His paradigm has been confirmed with significant progress 
in understanding the  universal  behavior of many random graph and random matrix models. However,  regarding his core thesis that  
random matrix can be used to model non mean-field  systems, our understanding is much more limited.  Even for one of the simplest non mean-field models, the random Schr\"odinger
operator,  there is no result concerning  the existence of the delocalized regime, in which random matrix statistics are expected to hold.

A slightly more tractable model is  the  \emph{ random band matrix}  characterized
by the property that $H_{ij}$   becomes negligible if $\mbox{dist}(i,j)$ 
exceeds
a  parameter, $W$, called the \emph{band width}.  In general, $i,j$ are lattice points in $\Z^d$ but in this article we consider only the case $d=1$. 
Based on numerics, it was conjectured  \cite{ConJ-Ref1, ConJ-Ref2}  that the eigenvectors  of band matrices satisfy a localization-delocalization transition,  in the bulk of the spectrum, with a corresponding sharp transition for the eigenvalues distribution\cite{ConJ-Ref4}:
\begin{enumerate}[(i)]
\item for  $W \gg \sqrt N$, delocalization and  Gaussian orthogonal ensemble (GOE) spectral statistics hold;
\item  for  $W \ll \sqrt N$, eigenstates are localized and the eigenvalues converge to a Poisson point process. 
\end{enumerate}
This transition was also supported by heuristic arguments \cite{ConJ-Ref6} and a nonrigorous supersymmetry method \cite{fy}.

There have been many partial results  concerning localization-delocalization for band matrices. 
For general distribution of the matrix entries, 
localization of eigenvectors  was first shown for $W\ll N^{1/8}$ \cite{Sch2009}, and improved to $W\ll N^{1/7}$ for Gaussian entries  \cite{PelSchShaSod}.
 Delocalization was proved in some averaged sense, for
 $W\gg N^{6/7}$ in  \cite{ErdKno2013},  $W\gg N^{4/5}$  in \cite{ErdKnoYauYin2013},  $W\gg N^{7/9}$  in \cite{HeMarc2018}. 
The Green's function  was controlled
 down to the scale $\im z\gg W^{-1}$ in \cite{ErdYauYin2012Univ}, implying
a lower bound of order $W$ for the localization length of all eigenvectors.
We mention also that at the edge of the spectrum, the transition for $1d$ band matrices (with critical exponent $N^{5/6}$) was understood in \cite{Sod2010}, thanks to the method of moments.

When the entries of band matrices are Gaussian with some specific covariance profile,
one can apply  supersymmetry techniques (see \cite{Efe1997} and  \cite{Spe} for  overviews). With this method, for $d=3$, precise estimates on the
density of states  \cite{DisPinSpe2002} were first obtained.
Then,  
random matrix local spectral statistics were proved for 
 $W=\Omega(N)$ \cite{Sch2014},  and  delocalization was obtained for all
eigenvectors  when 
  $W\gg N^{6/7}$ and the first four moments of the matrix entries match the Gaussian ones \cite{BaoErd2015}
(these  results assume complex entries and hold in part of the bulk).
Still with the supersymmetry technique, a transition  around $N^{1/2}$ was proved in \cite{SchMT,Sch1}, concerning moments of characteristics polynomials.

\subsection{Mean Field reduction and  quantum unique ergodicity. }\ 
The main difficulties to analyze spectral properties  of  band matrices with general entries are two-folds.
\begin{enumerate}[(i)]
\item There is currently no effective diagrammatical method 
to estimate the Green's function when  $\im z \ll  W^{-1}$, while delocalization of eigenvectors requires estimates up to $\im z \gg  N^{-1}$.

 \item
   For the universality of local spectral statistics, 
the comparison method used for mean-field models does not apply to band matrices since the majority of matrix elements (effectively) vanish.  
\end{enumerate}

In an earlier paper \cite{BouErdYauYin2017}, we proposed a {\it mean-field reduction} method  to prove universality of local spectral statistics for  
band matrices with $W=\Omega(N)$.  
This method  relies on a notion much stronger than delocalization, the 
probabilistic  quantum unique ergodicity (QUE). Historically,  QUE was introduced by  Rudnick and Sarnak \cite{RudSar1994} 
asserting  that for negatively curved compact Riemannian manifolds, {\it all} high energy Laplacian eigenfunctions become completely flat. 
Quantum ergodicity, essentially an averaged version of QUE, had previously been proved for more general manifolds
\cite{Shn1974,Col1985,Zel1987}. 
For  $d$-regular graphs, 
the eigenvectors of the discrete Laplacian also satisfy quantum ergodicity,  under certain assumptions  on the injectivity radius and  spectral gap of the adjacency matrices \cite{AnaLeM2013}.

A probabilistic version of QUE was proposed and proved for Wigner matrices
in \cite{BouYau2017}. To state it, let  $H$ be a size $N$ random matrix with eigenvectors $\b\psi_j$ associated to eigenvalues $\la_j$.  Then, there exists $\e>0$ such that
for any  deterministic $1\leq j\leq N$ and $I\subset\llbracket 1,N\rrbracket$, for any $\delta>0$ we have
\begin{equation}\label{eqn:QUEintro}
 \P\left( \Big | \sum_{i\in I} |\psi_j(i)| ^2  - \frac{|I|}{N} \Big | \ge \delta\right)\le 
    N^{- \e}/\delta^2.  
\end{equation}

To explain the mean field reduction, 
we 
block-decompose a band matrix $H$ and its eigenvectors:
\be\label{H0}
   H=  \begin{pmatrix} A  & B^* \cr B & D  \end{pmatrix}, \quad
   \b\psi_j=   \begin{pmatrix}\b w_j \cr \b p_j \end{pmatrix},
\ee
where $A$ is a $W\times   W$ Wigner matrix. From the eigenvector equation $H \b\psi_j = \lambda_j \b\psi_j$, 
\be\label{1100}
  Q_{\lambda_j}  \bw_j  = \lambda_j  \bw_j, \quad  {\rm where}\    Q_e =  A
 - B^* \frac{1}{D-e}B.
\ee
Thus $\bw_j$  is an eigenvector to  $Q_e$  with eigenvalue $\lambda_j$ when   $e= \lambda_j$. 
The basic observation from the earlier paper \cite{BouErdYauYin2017} can be summarized as follows. 
Suppose that the probabilistic  QUE for the eigenvectors of $H$ holds. Then the eigenvalues to $H$ 
near a fixed energy $E$ can be reconstructed  from the eigenvalues of $Q_e$ near the origin with $e$ near $E$.  Thus if we can prove the spectral universality for $Q_e$,  the same statement holds  for $H$. 
On the other hand, to  establish 
QUE for the band matrix $H$,
assume first that it holds for the $W\times W$ operator $Q_e$. If we can substitute 
$e$ by $\lambda_j$, then the eigenvector ${\b\psi}_j$ is flat in the first $W$ coordinates. Clearly, we can stitch together 
the flatnesses  of ${\b\psi}_j$ in
sufficiently many  windows of size $W$ to establish the global flatness of  ${\b \psi}_j$ provided that the error in each window is sufficiently small.

To summarize, the mean field reduction method  reduces the universality and QUE  for  the band matrix $H$ to those  of 
$Q_e$. Thanks to the recent progress on these topics \cite{BouHuaYau2017,LanYau2015,LanSosYau2016}, the inputs to prove  these properties require precise 
estimates on the  Green's function $ (Q_e-z)^{-1}$  only for 
$\im z \sim  N^{-\e}$.    For probabilistic QUE, we also need to establish 
the error probability  in the sense of ``very  high probability".  In the following, we start with a discussion on the  Green's function $ (Q_e-z)^{-1}$.

\subsection{Generalized Green's Functions.}\   
It is clear that,  if we  estimate the Green's function $ (Q_e-z)^{-1}$ directly,  some  bound on 
the matrix $(D-e)^{-1}$ appearing  in $Q_e$ will be needed.   Since  $e$ is real, estimating $(D-e)^{-1}$
is clearly a much harder problem than estimating  the original Green's  function $ (H-z')^{-1}$. Fortunately, 
we only need this estimate  with $\im z \sim  N^{-\e}$. Clearly, one can  interpret 
 $(Q_e-z)^{-1}$ as the $W \times W$  corner of the {\it generalized  Green's function}  
\begin{equation}\label{eqn:general}
G(z, w )=  
\left(H- \begin{pmatrix}  z\,{\rm I}_{W}  & 0 \cr 0 & w\, {\rm I}_{N- W}  \end{pmatrix} \right)^{-1}
\end{equation}
when $w=e$. 
In \cite{BouErdYauYin2017}, we use   a somehow involved  induction argument and  
 an  uncertainty principle to estimate $G(z, e )$  for $W=\Omega(N)$.  In  this work, 
 we provide accurate estimates, Theorem \ref{LLniu},  on $G(z, e)$ for $\im z \sim  N^{-\e}$ when $W\gg N^{3/4}$. 
Our method is to derive   a self consistent equation  for  the (off-diagonal) entires of the generalized  Green's function 
(a similar equation for the standard Green's function  was called the $T$ equation  \cite{ErdKnoYauYin2013-band}).  
Notice that Ward's identity, which is instrumental in many random matrix estimations, 
is not valid for  generalized Green's functions.  More precisely, Ward's identity asserts  that for any Green's function of a Hermitian operator $H$, 
\be
\sum_j | G_{ij} (z)|^2 \le (\im z)^{-1} \im G_{ii}. 
\ee
For the generalized Green's function $G(z, w )$, the last property fails. Our strategy is to  establish an estimate on $\sum_j | G_{ij} (z)|^2$  by  successively decreasing  the imaginary part of $w$ and using repeatedly   the self-consistent  $T$ equation in each step. Besides overcoming this difficulty, we also devise   a new diagrammatic expansion in deriving  the   $T$ equation. Finally, we remark that the main   condition  $W\gg N^{3/4}$ is mainly used in   estimating $G(z, e )$. 
Besides extending  the region  of validity from $W=\Omega(N)$ to  $W\gg N^{3/4}$,  our current approach  allows the  estimate on 
$G(z, e )$ to be  completely independent  from all other arguments in this work (e.g., the mean field reduction). The proof of Theorem \ref{LLniu} will be delayed 
to  parts 2 and 3 of this series.

\subsection{Probabilistic QUE with high probability.}\  The  proof of  the quantum unique ergodicity  (\ref{eqn:QUEintro}) for  $Q_e$ in \cite{BouErdYauYin2017}  relies on two different tools.
\begin{enumerate}
\item A priori estimates on the Green's function $ (Q_e-z)^{-1}$ (for large $\im z$)  provide flatness of eigenvectors on average (quantum ergodicity). This a priori information is necessary to obtain the following.

\item The eigenvector moment flow from \cite{BouYau2017} is a random walk in a dynamic random environment whose relaxation means flatness of individual eigenvectors  (quantum unique ergodicity).

\end{enumerate}

\noindent We have just outlined our new  estimates on  the Green's function $ (Q_e-z)^{-1}$   for  $W\gg N^{3/4}$. 
The main new technique developed in this work concerns (ii): Theorem \ref{thmQUE} states that

\begin{center}
 {\it  Quantum ergodicity implies a strong  quantum unique ergodicity after adding a small GOE component.}
\end{center}
   
\noindent Compared to (\ref{eqn:QUEintro}), this new result is a strong  probabilistic QUE, as it first  allows
much more general observables of eigenvectors and is valid with probability $1-N^{-D}$ for any $D > 0$. Therefore all bulk eigenvectors are now simultaneously flat.
The proof of Theorem  \ref{thmQUE}  relies on a remarkable  combinatorial identity: the perfect matching observables defined in (\ref{feq}) satisfy the eigenvector moment flow parabolic equation, see Theorem \ref{thm:EMF}.

Thanks to this new strong, version of QUE, the eigenvectors of $Q_e$ are flat for all $e$ in a discrete subset
of size $N^C$ for any $C$ fixed.  Thus to establish flatness of $\b\psi_j$ on the first $W$ coordinates,   we only need to compare  eigenvectors of $ Q_e$ and  $ Q_{\lambda_j}$ for $|e-\lambda_j| \le N^{-C}$ with $C$ a large constant.
An eigenvector perturbation formula is enough to compute the difference between these eigenvectors, with sufficient a priori estimates given by a weak 
uncertainty principle as developed in \cite{BouErdYauYin2017}.

Therefore, our work presents an improvement from $W=\Omega(N)$ \cite{BouErdYauYin2017} to $W \gg N^{3/4}$ thanks to new results both on (i) and (ii).
As discussed in Remark \ref{64}, our hypothesis $W\gg N^{3/4}$ for delocalization comes from the generalized Green's function estimates (ii).  Heuristics for the transition at band width 
$N^{1/2}$ are given in the same remark.

\subsection{The model and  results.}\ 
All results in this paper apply to both real and complex band matrices. For the definiteness of notation,   we consider only  
the real symmetric case and we use the convention that  all eigenvectors are  real. In the following definition, 
$\Z_N$ denotes the set of residues mod $N$ so that our matrices are assumed to have periodic boundary condition.

\begin{definition}[Band matrix $H_N$ with bandwidth $W_N$]\label{jyyuan}
Let $H_N$ be a $N\times N$  matrix with real centered  entries ($H_{ij}$, $i,j\in \Z_N)$ which  are  
independent up to the  condition $H_{ij}= H_{ji}$.  We say that  $H_N$ is band matrix with bandwidth $W=W_N$ if 
\be\label{bandcw0}
s_{ij}:=\E |H _{ij}|^2{  =f(i-j)}
\ee
for some $f: \Z_N\to \R$ satisfying $\sum_{x\in\mathbb{Z}_N} f(x)=1$, and there exist
a small positive  constant $c_s $ and a large constant $C_s$ such that 
\begin{equation}\label{bandcw1}
c_s\, W^{-1} \cdot \mathds{1} _{|x| \le   W}\le f(x) \le C_s \,W^{-1} \cdot \mathds{1} _{|x| \le C_s W},\  x\in \Z_N.
\end{equation}
\end{definition}

The method in this paper also allows to treat cases with progressive decay of the variance away from the diagonal (e.g.  $f(x)\le C_s \,W^{-1} \cdot {\bf 1} _{|x| \le C_s \,W}$ instead of $f(x)\le C_s \,W^{-1} \cdot {\bf 1} _{|x| \le \,W}$), or variants with exponentially small mass
away from the band width. We work under the hypothesis (\ref{bandcw1}) for simplicity.\\

For technical  reasons we assume the following condition on the  fourth moment of the matrix entries: there is  $\e_m>0$ (here the subscript $m$ indicates  the moment condition) such that for $|i-j|\le W$,  
\be\label{cond: moment}
\min_{  |i-j|\le W}\left(\E \,\xi^4_{ij}-(\E \,\xi^3_{ij})^2-1 \right)\ge N^{-\e_m},
\ee
where $ \xi _{ij}:= H_{ij} (s_{ij})^{-1/2}$  is the normalized random variable with mean zero and variance one. 
 It is well-known that  
for any real random variable $\xi$ with mean zero and variance 1,  
$
\E \,\xi^4 -(\E \,\xi^3 )^2-1\ge 0
$
and the equality holds if and only if $\xi$ is a Bernoulli random variable (Lemma 28 of \cite{TaoVu2011}). Therefore, one simply has $\e_m=0$ when the $\xi_{ij}$'s ($|i-j|\leq W$) all have the same law, different from the Bernoulli distribution.
In the more general setting (\ref{cond: moment}), all our results are restricted to $0\leq \e_m<1/2$ because of the following condition (\ref{gwzN}).

We also assume that for some $\delta_d>0$ (subscript $d$ stands for ``decay") we have
\begin{equation}\label{eqn:subgaus}
\sup_{N,i,j}\E\left(e^{\delta_d W H_{ij}^2}\right)<\infty.
\end{equation}
This tail condition can be weakened to a finite high  moment condition.  We  assume (\ref{eqn:subgaus}) mainly for the convenience of presentation.  
The constants in the following theorems depend on the fixed parameters $c_s$, $C_s$, $\e_m$ and $\delta_d$,  in \eqref{bandcw1}, \eqref{cond: moment} and \eqref{eqn:subgaus},
but we will only  track of the dependence on $\e_m$.

\medskip

Denote the eigenvalues of $H$ by
$
\lambda_1\leq\dots\leq \lambda_N,
$
and let $(\b\psi_k)_{k=1}^N$ be the corresponding ${\rm L}^2$-normalized eigenvector, i.e., $H\b\psi_k=\lambda_k\b\psi_k$.
Thanks to the condition $\sum f(x)=1$,
it is known that the empirical spectral measure $\frac{1}{N}\sum_{k=1}^N\delta_{\lambda_k}$ converges
almost surely to the Wigner semicircle law with density
$$
\rho_{\rm sc}(x)=\frac{1}{2\pi}\sqrt{(4-x^2)_+}.
$$
The concept of localization/delocalization can be defined in many ways. For definiteness, we use the ${\rm L}^\infty$ norm. For any small constant $c>0$ and $\tau>0$, one expects that
\be\label{39m}
\P\Big(N^{-\tau}\le  \min (N, W^{2}) \|\b\psi_k\|^2_\infty \le  N^\tau\ {\rm for\,all}\ k\in \llbracket cN, (1-c)N\rrbracket \Big)=1-\oo(1),
\ee
meaning that a localization-delocalization phase transition occurs at $\log_N W=1/2$, where  $\log_N W=\log W/\log N$.
Our first result proves (\ref{39m}) in the delocalization regime $\log_N W  >3/4$.

  \begin{theorem}[Delocalization for  $\log_N W  >3/4$]\label{Main}
Let $(H_N)_{N\geq 1}$ be band matrices with band width $W_N$ satisfying  the conditions \eqref{cond: moment} and \eqref{eqn:subgaus}.  
 Recall that  $\e_m> 0$ is defined in \eqref{cond: moment}. 
Suppose that   for some constant $a>0$,  
\be\label{gwzN}
\log _N W\ge  \max\left(\frac{3}{4},\frac{1}{2}+\e_m\right)+a\nc.
\ee
For any (small) constants $\kappa , \tau>0$ and (large)  $D>0$, there exists $N_0$ such that  for all  $N\geq N_0$ we have 
\be\label{maindel}
\mathbb{P}\left(\|\b\psi_k\|^2_\infty\le N^{-1+\tau}\ \mbox{ for all } \; k\in \llbracket \kappa N, (1-\kappa)N\rrbracket \right)\geq 1-N^{-D}.
\ee
\end{theorem}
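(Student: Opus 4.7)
The plan is to follow the mean-field reduction strategy outlined in the introduction, combining the generalized Green's function estimate (Theorem \ref{LLniu}) with the strong quantum unique ergodicity statement (Theorem \ref{thmQUE}), and then stitching together local flatness across overlapping windows of size $W$. Fix an arbitrary index $j\in\llbracket\kappa N,(1-\kappa)N\rrbracket$. By the translation invariance (mod $N$) of the band structure, I may place the $W\times W$ corner $A$ in the decomposition \eqref{H0} at any lattice site $I\subset\Z_N$ of $W$ consecutive indices; it suffices to prove that with overwhelming probability the restriction $\b w_j=\b\psi_j|_I$ satisfies $\|\b w_j\|_\infty^2\le N^{-1+\tau}$, and then take a union bound over $\OO(N/W)$ shifts covering $\Z_N$.

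For a single window, the identity $Q_{\lambda_j}\b w_j=\lambda_j\b w_j$ in \eqref{1100} reduces the flatness of $\b w_j$ to a statement about the eigenvectors of the $W\times W$ random matrix $Q_e=A-B^*(D-e)^{-1}B$. Since $(Q_e-z)^{-1}$ is the $W\times W$ corner of the generalized Green's function $G(z,e)$ from \eqref{eqn:general}, Theorem \ref{LLniu}, valid precisely in the regime $\log_N W>3/4$, supplies a local law for $Q_e$ at the optimal scale $\im z\sim N^{-\epsilon}$. This local law gives, in standard fashion, quantum ergodicity on average for the bulk eigenvectors of $Q_e$ for any deterministic bulk $e$.

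To upgrade this average flatness to a strong, simultaneous statement, I perturb $A$ by an independent GOE component of small variance $t=N^{-\delta}$ and run the eigenvector moment flow of \cite{BouYau2017}. The new ingredient is the perfect matching observables of \eqref{feq}: Theorem \ref{thm:EMF} shows that they satisfy the eigenvector moment flow parabolic equation, whose relaxation on the scale $t$ with the quantum ergodicity a priori input produces, via Theorem \ref{thmQUE}, the strong QUE with error $N^{-D}$ valid simultaneously for all bulk eigenvectors of the perturbed $Q_e$. To move from a fixed $e$ to the random parameter $\lambda_j$, I discretize: choose a grid $\cE\subset[-2+\kappa,2-\kappa]$ of $N^{C}$ equally spaced points for $C$ large, apply the strong QUE at each $e\in\cE$, and take a union bound. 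For the nearest grid point $e_*$ to $\lambda_j$, the eigenvector perturbation formula applied to
\[
Q_{e_*}-Q_{\lambda_j}=B^*\bigl[(D-\lambda_j)^{-1}-(D-e_*)^{-1}\bigr]B,
\]
together with the weak uncertainty principle of \cite{BouErdYauYin2017} controlling $(D-e)^{-1}$ away from its spectrum, yields $\|\b w_j(e_*)-\b w_j(\lambda_j)\|_\infty$ negligibly small provided $C$ is chosen sufficiently large relative to $\tau$ and the a priori delocalization exponent.

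Combining these three ingredients gives flatness of $\b\psi_j$ on the chosen window $I$ with probability $1-N^{-D}$; a union bound over $\OO(N/W)$ shifts of the window covers every coordinate and yields \eqref{maindel}. The main obstacle I expect is step three: constructing the perfect matching observables and verifying that they solve the eigenvector moment flow (Theorem \ref{thm:EMF}) — this combinatorial miracle is what converts the weak probabilistic QUE of \eqref{eqn:QUEintro} into the overwhelming-probability simultaneous flatness needed to run the union bound over $\cE$ and over all window positions. The generalized Green's function estimate Theorem \ref{LLniu} in the regime $W\gg N^{3/4}$ is the other decisive technical input, but is deferred to parts 2 and 3 of the series and may be treated as a black box here.
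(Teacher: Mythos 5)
There is a genuine structural gap: your argument, as written, proves delocalization only for the Gaussian-divisible ensemble \eqref{sjui} (i.e., Theorem \ref{lemma main3}), not for the general matrices of Theorem \ref{Main}. The decisive step you insert — ``I perturb $A$ by an independent GOE component of small variance $t=N^{-\delta}$ and run the eigenvector moment flow'' — is only legitimate if that Gaussian component is already present inside $H$, so that the matrix you analyze at the end of the flow is (in distribution) the matrix you started with. For a general $H$ satisfying only \eqref{cond: moment} and \eqref{eqn:subgaus} there is no Gaussian piece to extract, and after adding one you have proved flatness for $H+\text{(GOE)}$, not for $H$; you never explain how to remove the perturbation. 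This is exactly why the paper splits the proof in two: the entire mean-field reduction (generalized resolvent estimate, perfect matching observables, net in $e$, weak uncertainty principle, window stitching — everything you describe) yields Theorem \ref{lemma main3} for matrices of type \eqref{sjui}, and then Section \ref{sec: comp} supplies the missing half. There one constructs, via Lemma \ref{4M}, a Gaussian-divisible $\wt H$ whose first four moments match those of $H$ (this is where the fourth-cumulant condition \eqref{cond: moment} and the exponent $\e_m$ in \eqref{gwzN} enter — your sketch never uses $\e_m$, which is a symptom of the omission), and transfers the bound $\max_i \im \wt G_{ii}=\OO(N^\tau)$ to $G$ by a self-consistent Green's function comparison. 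That comparison is itself delicate: unlike the classical four-moment theorem, a priori resolvent bounds are available only for $\wt H$, so one must bootstrap in $\eta$ along the scales $\eta_m=N^{-m\e_0}$ and interpolate with the Bernoulli scheme \eqref{Ht}, proving the Gronwall-type inequality \eqref{LGZ} at each step. Without this second half, \eqref{maindel} is not established for the matrices in the hypothesis of Theorem \ref{Main}.

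Within the mean-field reduction itself your outline is broadly faithful to the paper, though two points are stated more loosely than the proof requires: the passage from $e_*$ to $\lambda_j$ is done in the paper by integrating the eigenvector derivative \eqref{eqn:perbevect} along the curves $\cal C_k$, which requires both the level repulsion estimate \eqref{sza3} for $Q_{e}$ and the invertibility of $g\mapsto\lambda_k^g+g$ (guaranteed by the tiny $N^{-A}H^{\rm G}$ regularization and Lemma \ref{Hide}), not merely a norm bound on $Q_{e_*}-Q_{\lambda_j}$; and the grid points $e_n$ must be chosen so that $\inf_j|\lambda_j^D-e_n|\ge M^{-1}$ with high probability, since otherwise $Q_{e_n}$ is not polynomially bounded and the flow argument cannot be initialized. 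These are repairable details; the missing comparison argument is not.
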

The above delocalization holds together with a local semicircle law down to the optimal scale.

  \begin{theorem}[Local semicircle law for $\log_N W  >3/4$]\label{lsc}
Under the same assumptions as Theorem \ref{Main}, there exists $\e>0$ such that 
for any (small) $\kappa$, $\tau>0$ and (large) $D>0$ there exists $N_0$  such that for any  $E_1, E_2\in [-2+\kappa, 2-\kappa]$ and any $N\geq N_0$ we have
 \be\label{SemiC}
\mathbb{P}\left( \left|\#\left\{\lambda_k\in [E_1,E_2] \right\}-N\int_{E_1}^{E_2}\rd \rho_{\rm sc}\right|<N^{\tau}+|E_1-E_2|N^{1-\e}\right)\geq 1-N^{-D}.
 \ee 
\end{theorem}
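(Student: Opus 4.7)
The plan is to deduce the eigenvalue counting estimate \eqref{SemiC} from an averaged local law for the Stieltjes transform $m_N(z) \deq N^{-1} \tr (H-z)^{-1}$. Writing $z = E + \ii \eta$, the target is to show the existence of a universal $\varepsilon > 0$ such that
$$
|m_N(z) - m_{\rm sc}(z)| \leq \frac{N^{\tau/4}}{N\eta} + N^{-\varepsilon}
$$
with probability at least $1 - N^{-D-1}$, uniformly for $E \in [-2+\kappa, 2-\kappa]$ and $\eta \in [N^{-1+\tau/2}, 1]$.

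The starting point is the observation that the standard Green's function $(H-z)^{-1}$ is precisely the generalized Green's function $G(z,w)$ of \eqref{eqn:general} at $w = z$. Theorem \ref{LLniu}, together with the $T$-equation for band matrices alluded to after \eqref{eqn:general}, provides sharp entrywise control of $G(z,w)$ at an intermediate scale $\im z \sim N^{-\varepsilon'}$. Specializing to $w=z$ then yields $G_{ii}(z) - m_{\rm sc}(z) = \oo(1)$ at that scale and, since $H$ is Hermitian, the full Ward identity $\sum_j |G_{ij}(z)|^2 = \eta^{-1}\,\im G_{ii}(z)$ is available (this is the crucial advantage over the generalized $G(z,w)$, for which Ward fails).

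The second step is to propagate these estimates down to the optimal scale $\eta \sim N^{-1+\tau/2}$. This proceeds by the standard bootstrap along the imaginary axis: one closes a self-consistent $T$-equation for the off-diagonal entries of $G$, controls the remainder via the improved fluctuation averaging estimate (deferred to parts 2 and 3), and iterates. Ward's identity is used in each iteration to bound off-diagonal sums by diagonal imaginary parts, breaking what would otherwise be a non-closable system. Once the averaged law is established, the conversion into \eqref{SemiC} is a now-standard Helffer--Sj\"ostrand computation: write $\mathds{1}_{[E_1,E_2]}$ as an integral against $\partial_{\bar{z}}$ of an almost analytic extension $f$ with $\eta$-cutoff at $N^{-1+\tau/2}$, integrate by parts against $N(m_N - m_{\rm sc})$, and bound the resulting integral; the error $N^\tau$ arises from the optimal-scale $(N\eta)^{-1}$ bound near the real axis, while the prefactor $|E_1-E_2| N^{1-\varepsilon}$ comes from the large-$\eta$ bound $N^{-\varepsilon}$ integrated over $E \in [E_1, E_2]$.

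The main obstacle is the propagation step from the moderate scale $N^{-\varepsilon'}$ down to $N^{-1+\tau/2}$ in the non-mean-field band setting. One cannot close a scalar equation on $m_N$ alone because the variance profile $s_{ij}$ is inhomogeneous; instead one must control a coupled system for the full diagonal of $G$ and achieve fluctuation averaging exploiting the band structure to produce the additional decay factor $N^{-\varepsilon}$. This technical core is precisely the content of parts 2 and 3 of the series, and the hypothesis $\log_N W > 3/4$ enters here through the derivation of Theorem \ref{LLniu} and the closure of the $T$-equation.
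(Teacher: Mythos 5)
Your proposal rests on a step that is precisely the open obstruction this paper is built to avoid. You propose to prove an averaged local law $|m_N(z)-m_{\rm sc}(z)|\lesssim (N\eta)^{-1}+N^{-\e}$ for the ordinary resolvent down to $\eta\sim N^{-1+\tau/2}$ by ``the standard bootstrap along the imaginary axis'' using the $T$-equation, Ward's identity and fluctuation averaging. For band matrices with $W\ll N$ no such bootstrap is known: the existing self-consistent methods (\cite{ErdYauYin2012Univ}, \cite{ErdKnoYauYin2013-band}) control the resolvent only down to $\im z\gg W^{-1}$, and the paper states explicitly (Subsection 1.2, difficulty (i)) that there is currently no effective method below that scale. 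If your propagation step worked, delocalization would follow immediately from $\eta^{-1}|\psi_k(i)|^2\le \im G_{ii}(\la_k+\ii\eta)$ at $\eta=N^{-1+\tau}$ and the entire mean-field reduction would be unnecessary. You also misattribute the content of parts 2 and 3 of the series: they prove the \emph{generalized} resolvent estimate of Theorem \ref{LLniu}, i.e.\ control of $G(z,e)$ with $e$ real and $\im z\sim N^{-\e}$ (almost order one), not an optimal-scale local law for the ordinary resolvent. Specializing Theorem \ref{LLniu} to $w=z$ gives nothing beyond the known moderate-scale law.

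The paper's actual route is entirely different. For the Gaussian-divisible ensemble, the counting estimate is obtained by mean-field reduction: Lemma \ref{zybb}, whose proof uses the strong QUE statement \eqref{wzajj2} to show that the slopes $\partial_e\cal C_k(e)$ are all $1-N/W$ to leading order, converts the eigenvalue count of $H$ in $[E_1,E_2]$ into the eigenvalue count of $Q_{e_0}$ in an interval of length $\Delta N/W\pm W^{-1}$, for which the local law \eqref{zyaa} holds (this in turn comes from rigidity for the Dyson Brownian motion started from the $(\eta_*,\eta^*,r)$-regular initial data provided by Theorem \ref{LLniu}). The extension to general entries is then done not by re-deriving a local law but by four-moment matching: the self-consistent Green function comparison \eqref{LGZ} is applied to $F(X)=|\im N^{-1}\Tr(X-z)^{-1}-\im m_{\rm sc}(z)|^{2p}$ at $\eta=N^{-1+\tau}$, transferring the estimate from $\wt H$ to $H$. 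Your Helffer--Sj\"ostrand conversion at the end would be fine if the averaged law were available, but the core input is missing, so the proof does not go through.
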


In the following fixed energy universality statement, we denote $\rho_H^{(k)}$ the $k$-point correlation function (understood in the sense of distributions) for the spectral measure of a $N\times N$ random matrix $H$.

 \begin{theorem}[Universality  for  $\log_N W  >3/4$] \label{Univ}
Under the same assumptions as Theorem \ref{Main},
for any  $\kappa >0$,  any integer  $k$ and any smooth test function $O \in \mathscr{C}^\infty ( \R^k )$ with compact support, 
there are   constants $c, C >0$ such that for any  $ |E| \le 2 - \kappa$  we have
\begin{equation}\label{e:Univ}
\left|\int_{\mathbb{R}^k} O (\ba) \rho_{H}^{(k)} \left( E + \frac{\ba}{ N \rho_{\rm sc} (E)}\right) \rd\ba
- \int_{\mathbb{R}^k} O (\ba) \rho_{\rm GOE}^{(k)} \left( E + \frac{\ba}{ N \rho_{\rm sc} (E) }   \right)\rd \ba\right| \leq  C N^{- c}.
\end{equation}
  \end{theorem}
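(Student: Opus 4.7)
The plan is to implement and extend the mean-field reduction of \cite{BouErdYauYin2017}, converting local spectral universality for the band matrix $H$ into universality for the $W\times W$ matrix $Q_e = A - B^*(D-e)^{-1}B$ from \eqref{1100}. The reduction rests on the observation that the eigenvalues $\la_j$ of $H$ near a bulk energy $E$ are exactly the fixed points of the nonlinear map $e\mapsto \nu(e)$, where $\nu(e)$ is an eigenvalue of $Q_e$ close to $E$. Differentiating the eigenvalue relation for $Q_e$ and using $(D-\la_j)\b p_j = -B\bw_j$ gives $\nu'(\la_j) = -\|\b p_j\|^2/\|\bw_j\|^2$, hence $1-\nu'(\la_j)=\|\b\psi_j\|^2/\|\bw_j\|^2$. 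By the strong QUE of Theorem \ref{thmQUE} applied to the first $W$ coordinates (and its cyclic translates), this slope equals $N/W$ up to $N^{-c}$, simultaneously in $j$, with overwhelming probability.

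The next step is to prove bulk universality for $Q_e$ at the origin. Since $(Q_e-z)^{-1}$ is the $W\times W$ principal block of the generalized Green's function $G(z,e)$ in \eqref{eqn:general}, Theorem \ref{LLniu} supplies a local law for $(Q_e-z)^{-1}$ down to the optimal scale $\im z\sim N^{-\e}$. Conditional on the $D$-block, $Q_e$ is a mean-field model (deterministic shift plus a Wigner part with variance profile), and the standard three-step program applies. First, smooth by a short matrix Brownian motion $Q_e+\sqrt{t}\,G$ and invoke the eigenvector moment flow of \cite{BouYau2017} together with the Dyson Brownian motion relaxation of \cite{LanYau2015,LanSosYau2016,BouHuaYau2017} to obtain GOE statistics at a mesoscopic time $t=N^{-1+\de}$. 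Then remove the small Gaussian component via a Green's function comparison, the fourth-moment matching in \eqref{cond: moment} providing the required cancellation.

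Having GOE statistics for $Q_e$ at energy $E$ for every $e$ on a fine $N^{-C}$-grid, the monotone map $e\mapsto \nu(e)-e$, of slope $-(N/W)(1+o(1))$ by QUE, sets up a bijection between the eigenvalues of $H$ in an $O(1/N)$-window of $E$ and the eigenvalues of $Q_e$ in an $O(W/N^2)$-window of $E$, for any fixed $e$ close to $E$. Combined with the local scale $1/(W\rho_{Q_e}(E))=W/(N\rho_{\rm sc}(E))\cdot W^{-1}$ on the $Q_e$-side, the rescaled $k$-point functions coincide up to $O(N^{-c})$, yielding \eqref{e:Univ}. The required regularity of $\nu$ between adjacent grid points is controlled by a first-order eigenvector perturbation argument, using the weak uncertainty principle of \cite{BouErdYauYin2017} for a priori bounds on $(D-e)^{-1}B\bw$.

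The principal difficulty of the whole scheme is not in these transfer arguments, which are by now fairly mechanical, but in the two inputs that make them run at $W\gg N^{3/4}$: the generalized Green's function estimate for $(Q_e-z)^{-1}$ (Theorem \ref{LLniu}, with Ward identity failing for $G(z,e)$ and replaced by an iterative reduction of $\im w$ via the $T$-equation), and the high-probability, simultaneous form of QUE (Theorem \ref{thmQUE}, proved through the perfect matching observables and the eigenvector moment flow). Once both are granted, the proof of universality amounts to assembling the pieces above in the regime $\log_N W > \max(3/4, 1/2+\e_m)+a$ dictated by the availability of these inputs.
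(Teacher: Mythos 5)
Your architecture for the Gaussian-divisible case --- the local law for the generalized resolvent, Dyson Brownian motion relaxation for $Q_e$ via \cite{LanSosYau2016}, the slope identity $1-\partial_e\mathcal{C}_k=\big(\sum_{i\le W}|\psi_k^g(i)|^2\big)^{-1}$ controlled by strong QUE, and the parallel projection back to the spectrum of $H$ --- is exactly the paper's proof of Theorem \ref{lemma main3} (iii). The gap is in how you pass from there to a general $H$ satisfying only \eqref{cond: moment} and \eqref{eqn:subgaus}. You propose to ``remove the small Gaussian component via a Green's function comparison'' at the level of $Q_e$, with \eqref{cond: moment} ``providing the required cancellation''. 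This step does not work as stated, for two reasons. First, the relaxation time cannot be $t=N^{-1+\delta}$: Theorem \ref{LLniu} only gives regularity of the initial data $V=A_T^{\b g}-B^{\b g,*}(D^{\b g}-e)^{-1}B^{\b g}$ down to the nearly macroscopic scale $\eta_*=N^{-\e_*}$ (see \eqref{mouyzz}), so the flow must run for a time $T\gg\eta_*$, almost of order one. A Gaussian component of that size cannot be removed afterwards by a four-moment comparison; it must genuinely be present in the matrix, which is precisely why the paper first restricts to the Gaussian-divisible ensemble \eqref{sjui} (and \eqref{cond: moment} is not a moment-matching identity --- it is the condition guaranteeing that a Gaussian-divisible partner with four matching moments exists, Lemma \ref{4M}). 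Second, $Q_e$ does not have independent entries, so an entry-swapping comparison on $Q_e$ is not directly available; conditioning on $(B,D)$ reduces it to a comparison of the $A$-blocks, i.e.\ to a comparison of band matrices anyway.

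The paper therefore performs the comparison at the level of the full $N\times N$ matrices: it constructs $\wt H$ of type \eqref{sjui} with four matching moments (Lemma \ref{4M}) and transfers universality from $\wt H$ to $H$ by the Green's function comparison theorem of \cite{ErdYauYin2012Univ}. Even this requires an ingredient you omit entirely: the comparison theorem needs resolvent bounds for $H$ at scales $\eta\sim N^{-1}$, which are not available a priori for the non-Gaussian-divisible matrix. Lemma \ref{shay} supplies them by the self-consistent interpolation method of \cite{aniso}, inducting jointly on the scales $\eta_m=N^{-m\e_0}$ and on the interpolation parameter $\theta$ to bootstrap \eqref{zhizlttx} from the bound known for $\wt H$. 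Without this self-consistent step (or an equivalent), the final comparison is circular. (A minor arithmetic slip aside: the projection maps an $O(1/N)$ window for $H$ to an $O(1/W)$ window for $Q_e$, not $O(W/N^2)$.)
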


For  the proof of Theorems \ref{Main}, \ref{lsc} and  \ref{Univ}, the first step is to show that  delocalization,
the local semicircle law, eigenvalues universality and quantum unique ergodicity  hold  under  the following additional assumption:
$H$ is a   Gaussian divisible  band matrix, i.e., there exists independent band matrices $ H_1$ and $H_2$ 
 with the same width $W$ and $c>0$ such that $H_1$ satisfies (\ref{cond: moment})  and (\ref{eqn:subgaus}), and  
 \be\label{sjui}
H= H_1 + H_2\ {\rm where}\ (H_{2})_{ij} =\mathds{1}_{|i-j|\le W}\cdot (1+\mathds{1}_{ij})^{1/2}\cdot \cal N(0, \; c \; W^{-1}N^{- \e_m }).
 \ee
Remember that $\e_m$ is defined in $(\ref{cond: moment})$. Here, $c$ is a small enough constant depending only on $\delta_d$ from (\ref{eqn:subgaus}).

  \begin{theorem}\label{lemma main3}
 Assume that $H$ is a band matrix of type \eqref{sjui}, with band width $W_N$ satisfying (\ref{gwzN}).
 \begin{enumerate}[(i)]
 \item
The eigenvectors are delocalized as in (\ref{maindel}).
 \item  
The eigenvalues
  satisfy the local semicircle law as in (\ref{SemiC}).
\item 
Fixed energy universality holds as in (\ref{e:Univ}).
\item
For any (small)  $\tau, \kappa > 0$,  and (large) $D>0$, there exists $N_0>0$  such that for any $N\geq N_0$ we have 
$$
\P\left(  \left|\frac {N} W\sum_{\alpha=\ell}^{\ell+W}
|\psi_j(\alpha)|^2 \nc-1\right|<N^{- \frac{3}{2}\nc a+\tau}\,
\mbox{for all}\ 1\le j,\ell \le N\;  \text{ such that }   \; |\lambda_j|\le 2-\kappa 
\right)\ge 1-N^{-D}, 
$$
where  $a>0$ was given in \eqref{gwzN} and all indices are defined  modulo $N$.
\end{enumerate} 
\end{theorem}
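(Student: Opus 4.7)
I will extend the mean field reduction of \cite{BouErdYauYin2017} to the range $W \gg N^{3/4}$, combining the generalized Green's function estimate (Theorem~\ref{LLniu}) with the strong QUE built from the perfect matching observables (Theorems~\ref{thmQUE} and \ref{thm:EMF}). Block-decompose $H$ as in \eqref{H0} and set $Q_e = A - B^*(D-e)^{-1}B$. Applying Theorem~\ref{LLniu} with $\im z \asymp N^{-\epsilon}$ and real $w = e \in [-2+\kappa, 2-\kappa]$ yields sharp entrywise bounds on $G(z,w)$, hence on $(Q_e - z)^{-1}$, giving a local semicircle law at scale $N^{-\epsilon}$ for the $W \times W$ matrix $Q_e$ and, in particular, average flatness (quantum ergodicity) of its eigenvectors $\bw_j(e)$ associated with the eigenvalues $\lambda_j(Q_e)$ near $0$.

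\textbf{Strong QUE for $Q_e$.} After the block decomposition, the Gaussian component $H_2$ in \eqref{sjui} induces an effective matrix Brownian motion on $Q_e$ of duration $t \asymp N^{-\epsilon_m}$. I will invoke Theorem~\ref{thmQUE}, whose core content is that the perfect matching observables \eqref{feq} satisfy the eigenvector moment flow parabolic equation, to upgrade the preceding quantum ergodicity to strong probabilistic QUE: for every bulk index $j$ and every deterministic unit vector $\bq$ supported in $\llbracket 1, W \rrbracket$,
\[
 W\,|\scalar{\bq}{\bw_j(e)}|^2 \;=\; 1 + \OO(N^{-\epsilon})
\]
with probability $1 - N^{-D}$ for any $D > 0$. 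The super-polynomial control makes a union bound permissible over an $N^C$-grid of $e \in [-2+\kappa, 2-\kappa]$, over all bulk $j$, and over all Dirac test vectors $\bq = \b\delta_\alpha$ with $\alpha \in \llbracket 1, W \rrbracket$, keeping the estimate valid simultaneously.

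\textbf{Transfer to $H$ and completion of (i)--(iv).} For each $j$, pick the grid point $e$ closest to $\lambda_j$. An eigenvector perturbation formula together with the weak uncertainty principle developed in \cite{BouErdYauYin2017} controls $\norm{\bw_j - \bw_j(e)}$ for $|e - \lambda_j| \le N^{-C}$, so flatness passes from $\bw_j(e)$ to the first-$W$-coordinate truncation $\bw_j$ of $\b\psi_j$. Cyclic translation invariance of the band matrix law (periodic boundary) applies the same argument to every window of $W$ consecutive coordinates, giving (iv) with quantitative error $N^{-3a/2+\tau}$ tracked through the reduction. Specializing $\bq = \b\delta_\alpha$ in the strong QUE and dividing by the window mass $\asymp W/N$ from (iv) yields $|\psi_j(\alpha)|^2 \lesssim N^{-1+\tau}$, proving (i). The local semicircle law (ii) then follows by pushing Theorem~\ref{LLniu} from scale $N^{-\epsilon}$ to the optimal scale via (i) and standard resolvent perturbations. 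For fixed-energy universality (iii), the mean field reduction puts the eigenvalues of $H$ near $E$ in bijection with the indexed crossings $\lambda_k(Q_e) = e$; bulk GOE universality for $Q_e$ — a Gaussian-divisible Wigner-type matrix of size $W$ — then follows from the Dyson Brownian motion techniques of \cite{LanYau2015, LanSosYau2016, BouHuaYau2017} and transfers through the bijection.

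\textbf{Main obstacle.} The decisive step is the strong QUE. It requires the combinatorial identification of the perfect matching observables with a solution of the eigenvector moment flow, together with a quantitative relaxation analysis sharp enough that after the union bounds above the QUE persists with probability $1 - N^{-D}$ for arbitrary $D$. This super-polynomial improvement over the polynomial-probability QUE \eqref{eqn:QUEintro} of \cite{BouYau2017} is precisely what enables simultaneous flatness of all bulk eigenvectors in (iv), and is the chief new ingredient needed beyond the generalized Green's function estimate of Theorem~\ref{LLniu}.
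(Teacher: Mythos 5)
Your architecture is the paper's: mean-field reduction via $Q_e=A-B^*(D-e)^{-1}B$, a priori quantum ergodicity from the generalized resolvent estimate of Theorem \ref{LLniu}, strong QUE from the perfect matching observables of Theorem \ref{thmQUE}, an $N^{-C}$-net in $e$, eigenvector perturbation controlled by an uncertainty principle, and patching of windows of length $W$ to get (iv), from which (i)--(iii) follow. However, there are two concrete gaps. First, you invoke ``the weak uncertainty principle developed in \cite{BouErdYauYin2017}'', but that result requires $W=\Omega(N)$ and is unavailable in the regime $W\gg N^{3/4}$. The paper must prove a new, weaker version (Lemma \ref{Hide}, with errors $N^{\pm C_r}$ rather than order one), and its proof relies on first adding a tiny GOE regularization $N^{-A}H^{\rm G}$ to $H$ (the ensemble \eqref{11da}), proving the whole theorem for the regularized matrices (Theorem \ref{thm:afterreg}), and only then removing the regularization by a separate perturbative argument (Lemma \ref{pert}), which itself needs a level-repulsion estimate obtained from a Schur-complement argument using QUE for the minors $H^{(k)}$. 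Your proposal omits the regularization entirely, so both the input $\bigl|\bigl(\b u_{\ell'},B^*(D-e)^{-2}B\,\b u_{k'}\bigr)\bigr|\le N^{C_1/2}$ needed to integrate the perturbation formula \eqref{eqn:perbevect} along the net, and the final de-regularization, are unsupported.

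Second, the claim that the optimal-scale local law (ii) ``follows by pushing Theorem \ref{LLniu} from scale $N^{-\epsilon}$ to the optimal scale via (i) and standard resolvent perturbations'' does not work: resolvent perturbation cannot improve the spectral resolution from $\eta\sim N^{-\epsilon}$ to $\eta\sim N^{-1+\epsilon}$. The paper instead obtains eigenvalue rigidity for $Q_e$ at scale $W^{-1+\epsilon}$ from the Dyson Brownian motion analysis (Lemma \ref{l:regpath}(i) and \eqref{zyaa}, together with the free-convolution identification \eqref{eqn:expect}), and transfers it to $H$ through the parallel projection $\cal C_j(e_0)-e_0\approx \frac NW(\lambda_j-e_0)$ of Lemma \ref{zybb}, whose slope identification itself uses QUE. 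A minor further point: Theorem \ref{thmQUE} assumes $|I|\geq cn$, so it cannot be specialized to Dirac test vectors $\b q=\b\delta_\alpha$ to prove (i); delocalization of the $Q_e$ eigenvectors comes instead from the resolvent bound of Lemma \ref{lemdeloc} (regularized by the flow), and is then combined with the QUE window mass exactly as you describe.
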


\subsection{Organization of the paper.}\
This work is essentially divided in two parts.

The first part (Sections \ref{sec:QUE1} and \ref{sec:QUE}) concerns quantum unique ergodicity for mean field blocks, and improves on the estimate (\ref{eqn:QUEintro}):
Theorem \ref{thmQUE} gives flatness of the eigenvectors with
overwhelming probability, and with optimal fluctuations scale for the ${\rm L}^2$ mass of eigenvectors on subsets of $\llbracket 1,N\rrbracket$. This result is the main technical novelty of our work.

The first aspect of the proof is algebraic (Section \ref{sec:QUE1}). A new function of the eigenvectors overlaps is defined in equation  (\ref{feq}), and it follows the eigenvector moment flow dynamics, see Theorem \ref{thm:EMF}. These dynamics of  {\it perfect matching observables}\ generalize an earlier observation from \cite{BouYau2017}. In this previous work, the eigenvectors evolution was related to a random walk in a dynamic random environment, after dimension reduction through projection on a given fixed direction. Projections can now occur on an arbitrary number of directions, see Remark \ref{rem:canon}. The proof of Theorem \ref{thm:EMF} is combinatorial and given at the end of Section \ref{sec:QUE1}.

The second aspect of the proof of Theorem \ref{thmQUE} is analytic (Section \ref{sec:QUE}). As proved by a sequence of maximum principles and approximations with short range dynamics, 
the eigenvector moment flow reaches equilibrium after some time depending on the initial condition.  This allows to identify the scale of the  perfect matching observables.
Our proof is more involved than the H\"{o}lder regularity of the eigenvector moment flow in \cite{BouYau2017}, because our observables are more general: in \cite{BouYau2017}, the  scale of observables was a priori known and the dynamics were used to identify the distribution of fluctuations.\\

The second part of the paper (Sections \ref{sec:meanfield} and \ref{sec: comp}) applies the strong form of quantum unique ergodicity to delocalization for random band matrices. First, Theorem \ref{lemma main3} is proved by the mean field reduction technique from \cite{BouErdYauYin2017}, then it is extended to more general band matrices by a moment matching argument.

The proof of Theorem \ref{lemma main3} (Section \ref{sec:meanfield}) is sketched in Subsection \ref{MR}. Subsection \ref{sec: Str} contains the first important input for the proof: the resolvent estimates for $(Q_e-z)^{-1}$. As explained after (\ref{eqn:general}), these estimates from Theorem \ref{LLniu} amount to a form of quantum ergodicity for the eigenvectors of $Q_e$. 
From this a priori estimate,  quantum unique ergodicity is deduced for the Gaussian divisible version of $Q_e$ (Subsection \ref{sec:QUEmean}). 
To access flatness of eigenvectors of our original eigenvectors $\b\psi_k$, we need to patch QUE estimates for eigenvectors of $Q_e$ when $e=\lambda_k$. By a net argument in $e$,
with mesh size $N^{-C}$ ($C$ is fixed and arbitrarily large because Theorem \ref{thmQUE} holds with overwhelming probability), we only need to control  eigenvector shifts under tiny perturbations in $e$. 
This is the role of another input for the proof of Theorem \ref{lemma main3},
the weak uncertainty principle. It is inspired by a more difficult result from \cite{BouErdYauYin2017}, and proved in Subsection \ref{sec: reg}. We refer to (\ref{ver}) for eigenvectors bounds thanks to the weak uncertainty principle.
Subsection \ref{Mfr} concludes the proof of Theorem \ref{lemma main3}.

In Section \ref{sec: comp}, delocalization, local semicircle law and universality (Theorems \ref{Main}, \ref{lsc} and \ref{Univ}) are obtained beyond the Gaussian divisible ensemble. The proof relies on moment matching, exhibiting a matrix $\wt H$ of type (\ref{sjui}) whose first 
four moments of the entries match those of $H$. This idea appeared in \cite{TaoVu2011} for the purpose of universality for Wigner matrices,
and required some a priori information on delocalization and local semicircle law. In our work, such information is only available for $\wt H$, by Theorem \ref{lemma main3}. 
It is extended to $H$ thanks to an implementation of the moment matching strategy at the level of the Green's functions \cite{ErdYauYin2012Univ}, and a self-consistent method  to obtain these estimates by continuously interpolating from $\wt H$ to $H$ \cite{aniso}.\\

Finally, although this work focuses on symmetric matrices, the method applies to the Hermitian class. The only substantial difference is the algebraic part of QUE for mean-field models: the perfect matching observables are defined in a different way for real and complex matrices, as explained in the Appendix.

\section{Quantum unique ergodicity for deformed matrices
}\label{sec:QUE1}

This and the next sections  are self-sufficient. In these sections, the size of the matrices is denoted by $n$. The main result (Theorem \ref{thmQUE}) 
will then be applied to mean-field blocks of type $Q_e$ from (\ref{1100}) (or more precisely its generalization $Q_e^g$, see (\ref{gaibian})), i.e. for $n=W$.

\subsection{Eigenvectors dynamics.}\label{subsec:defBM}\ 
In this subsection, we  first recall  the stochastic differential equation for the eigenvectors 
under the Dyson Brownian motion, as stated in \cite[Section 2]{BouYau2017}. 

The matrix Brownian motion dynamics are defined as follows, either at the matrix, eigenvalues or eigenvectors level (remember we only consider the symmetric case, the Hermitian one being detailed in the Appendix).
Let $B$ be a $n\times n$ matrix such that $B_{ij} (i<j)$ and $B_{ii}/\sqrt{2}$ are independent standard Brownian motions, and $B_{ij}=B_{ji}$. We abbreviate $Z(t)=B(t)/\sqrt{n}$.
The $n\times n$ symmetric Dyson Brownian motion $K$ with initial value $K(0)=V$ is defined as
\begin{equation}\label{eqn:Kt}
K(t)=V+Z(t).
\end{equation}

Let  $\bla_0\in\Sigma_n=\{\lambda_1<\dots<\lambda_n\}$, $\boldu_0\in \OO(n)$. The symmetric Dyson Brownian motion/vector flow  with initial condition  $(\lambda_1,\dots,\lambda_n)=\bla_0$, $(u_1,\dots,u_n)=\boldu_0$,  is defined through the dynamics
\begin{align}
\rd\la_k&=\frac{\rd B_{kk}}{\sqrt{n}}+\left(\frac{1}{n}\sum_{\ell\neq k}\frac{1}{\la_k-\la_\ell}\right)\rd t\label{eqn:eigenvaluesSymmetric},\\
\rd u_k&=\frac{1}{\sqrt{n}}\sum_{\ell\neq k}\frac{\rd B_{k\ell}}{\lambda_k-\lambda_\ell}u_\ell
-\frac{1}{2n}\sum_{\ell\neq k}\frac{\rd t}{(\la_k-\la_\ell)^2}u_k\label{eqn:eigenvectorsSymmetric}.
\end{align}
With a slight abuse of notation, we will write $\bla_t$
either for $(\lambda_1(t),\dots,\lambda_n(t))$ or for the $n\times n$ diagonal matrix with entries 
$\lambda_1(t),\dots,\lambda_n(t)$.

The link between the previously defined matrix and spectral dynamics is given as follows. See \cite{BouYau2017} for a proof, with the main ideas being due to McKean \cite{McK1969} for the existence and uniqueness of solutions, and Bru \cite{Bru1989}
for  the eigenvector dynamics, in the Wishart case.

\begin{theorem}\label{thm:PCE}
The following statements about the  Dyson Brownian motion and eigenvalue/vector flow hold.
\begin{enumerate}[(a)]
\item Existence and strong uniqueness hold for the  system of stochastic differential equations (\ref{eqn:eigenvaluesSymmetric}), (\ref{eqn:eigenvectorsSymmetric}).
Let $(\bla_t,\boldu_t)_{t\geq 0}$  be the solution. Almost surely, for any $t\geq 0$ we have $\bla_t\in\Sigma_n$ and $\boldu_t\in\OO(n)$.
\item Let $(K(t))_{t\geq 0}$ be
a symmetric Dyson Brownian motion with initial condition $K(0)=\boldu_0\bla_0\boldu_0^*$, $\bla_0\in\Sigma_n$.
Then the processes $(K(t))_{t\geq 0}$ and $(\boldu_t\bla_t\boldu_t^*)_{t\geq 0}$ have the same distribution.
\item
Existence and strong uniqueness hold for (\ref{eqn:eigenvaluesSymmetric}). For any $T>0$, let $\nu_T^{K(0)}$ be the distribution of $(\bla_t)_{0\leq t\leq T}$ with initial value the spectrum of a matrix $K(0)$.
For $0\leq T\leq T_0$ and any given continuous trajectory $\bla=(\bla_t)_{0\leq t\leq T_0}\subset\Sigma_n$, existence and strong uniqueness holds for
(\ref{eqn:eigenvectorsSymmetric}) on $[0,T]$. Let $\mu^{K(0), \bla}_T$ be the distribution of
$(\boldu_t)_{0\leq t\leq T}$ with the initial  matrix $K(0)$ and the path $\bla$  given. 

Let $F$ be continuous bounded, from the set of continuous paths (on $[0,T]$) on $n\times n$ symmetric matrices to $\RR$. 
Then for any initial matrix $K(0)$ we have
$$
\E^{K(0)}(F((K(t))_{0\leq t\leq T}))=\int \rd \nu_{T}^{K(0)}(\bla)  \int\rd\mu_T^{K(0), \bla}(\boldu)F((\boldu_t\bla_t\boldu_t^*)_{0\leq t\leq T}).
$$
\end{enumerate}
\end{theorem}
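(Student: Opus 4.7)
The plan is to treat the three parts in sequence: (a) is the analytic foundation (strong well-posedness together with almost-sure non-collision of the eigenvalues), (b) is the classical spectral Itô computation identifying the eigenvalue/eigenvector dynamics of a matrix Brownian motion, and (c) is a decoupling that follows directly from the block-independence structure of the driving Brownians.

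For (a), away from the collision set $\{\lambda_i=\lambda_j\text{ for some }i\neq j\}$ the drifts in \eqref{eqn:eigenvaluesSymmetric}--\eqref{eqn:eigenvectorsSymmetric} are smooth, so standard Picard iteration gives strong existence and uniqueness up to the first collision time $\tau:=\inf\{t:\lambda_i(t)=\lambda_j(t)\text{ for some }i\neq j\}$. To show $\tau=+\infty$ almost surely, I would apply Itô's formula to the Lyapunov functional $\Phi(\bla):=-\log\prod_{i<j}(\lambda_j-\lambda_i)$: a direct calculation using the partial-fraction identity
\[
\sum_{k}\sum_{\substack{\ell_1,\ell_2\neq k\\\ell_1\neq \ell_2}}\frac{1}{(\lambda_k-\lambda_{\ell_1})(\lambda_k-\lambda_{\ell_2})}=0
\]
shows that the drift of $\Phi(\bla_t)$ reduces to the non-positive singular term $-\frac{1}{n}\sum_{i<j}(\lambda_j-\lambda_i)^{-2}\rd t$, so $\Phi(\bla_t)$ is a supermartingale and cannot blow up in finite time. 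Once non-collision is in hand, the coefficients in \eqref{eqn:eigenvectorsSymmetric} are locally bounded and the equation is linear in $\boldu$; applying Itô to $(\boldu_t^\top\boldu_t)_{k\ell}$ and using the antisymmetry of the infinitesimal rotation $(\lambda_k-\lambda_\ell)^{-1}\rd B_{k\ell}$ shows that $\boldu_t^\top\boldu_t$ stays identically equal to the identity, so $\boldu_t\in\OO(n)$ for all $t$.

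For (b), starting from $K(t)=V+Z(t)$ I would diagonalize $K(t)=U(t)\La(t)U(t)^*$ (smooth on any compact interval because (a) applied to $K$ shows the spectrum is a.s.\ simple), and then derive SDEs for $(\La,U)$ by combining first- and second-order perturbation theory with Itô's formula. For the eigenvalues, the first-order perturbation gives $\rd\lambda_k=(U^*\rd K\,U)_{kk}$ and the Itô correction from the quadratic variation of the off-diagonal entries of $U^*\rd Z\,U$ supplies exactly $\frac{1}{n}\sum_{\ell\neq k}(\lambda_k-\lambda_\ell)^{-1}\rd t$; for the eigenvectors the second-order perturbation yields both the rotation term $\sum_{\ell\neq k}(\lambda_k-\lambda_\ell)^{-1}(U^*\rd K\,U)_{\ell k}U_\ell$ and the correction $-\frac{1}{2n}\sum_{\ell\neq k}(\lambda_k-\lambda_\ell)^{-2}U_k\,\rd t$. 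Setting $\rd\widetilde B:=\sqrt{n}\,U^*\rd Z\,U$, orthogonal invariance of the Gaussian law on symmetric matrices makes $\widetilde B$ itself a symmetric matrix Brownian motion with the same covariance as $B$, so the derived equations coincide with \eqref{eqn:eigenvaluesSymmetric}--\eqref{eqn:eigenvectorsSymmetric} in law. The strong uniqueness in (a) then yields equality in distribution of $(K(t))_{t\geq 0}$ and $(\boldu_t\bla_t\boldu_t^*)_{t\geq 0}$. For (c), only the diagonal Brownians $\{B_{kk}\}$ enter \eqref{eqn:eigenvaluesSymmetric} and only the off-diagonal $\{B_{k\ell}\}_{k\neq \ell}$ drive \eqref{eqn:eigenvectorsSymmetric}, and these families are independent; therefore, given a continuous path $\bla\in\Sigma_n$ on $[0,T]$, the coefficients $1/(\lambda_k(t)-\lambda_\ell(t))$ are bounded and continuous in $t$, the eigenvector SDE becomes a linear SDE with continuous coefficients for which strong well-posedness is standard, and the claimed Fubini identity follows from (b) by the tower property (condition on the eigenvalue trajectory, integrate the eigenvector path against its conditional law $\mu^{K(0),\bla}_T$, then average against $\nu_T^{K(0)}$).

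The main obstacle is the non-collision estimate in (a): the Coulomb-type interaction $1/(\lambda_i-\lambda_j)$ sits exactly at the borderline of integrability for symmetric ($\beta=1$) Dyson dynamics, so establishing $\tau=+\infty$ requires the algebraic cancellation above together with a careful localization and optional-stopping argument promoting the supermartingale property to almost-sure finiteness of $\Phi(\bla_t)$. Every downstream step—the well-posedness of the singular eigenvector SDE, the spectral perturbation theory in (b), and the decoupling in (c)—relies on this.
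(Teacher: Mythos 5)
Your proposal is correct and follows essentially the same route as the proof the paper cites for Theorem \ref{thm:PCE} (the paper does not reprove it, referring instead to \cite{BouYau2017}, with the existence/uniqueness and non-collision going back to McKean and the eigenvector SDE to Bru): strong well-posedness up to the collision time, non-collision via the log-Vandermonde Lyapunov functional, the spectral It\^o/perturbation computation with the rotated Brownian motion $\sqrt{n}\,U^*\rd Z\,U$ being again a symmetric matrix Brownian motion by orthogonal invariance, and the decoupling in (c) from the independence of the diagonal and off-diagonal driving Brownians. One small correction: with the paper's normalization ($B_{kk}/\sqrt{2}$ standard, so $\langle\lambda_k\rangle_t=2t/n$) the singular terms in the drift of $\Phi(\bla_t)$ cancel \emph{exactly} at $\beta=1$, so $\Phi$ is a local martingale rather than a strict supermartingale — this is precisely the borderline feature you flag, and the optional-stopping/localization argument for non-collision goes through unchanged.
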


Following \cite{BouYau2017}, we introduce the notations (the dependence in $t$ will often be omitted for $c_{k\ell}$, $1\leq k<\ell\leq n$)
\begin{align}
c_{k\ell}(t)&=\frac{1}{n(\la_k(t)-\la_\ell(t))^2},\label{eqn:cij}\\
u_k\partial_{u_\ell}&=\sum_{\al=1}^nu_k(\al)\partial_{u_\ell(\al)},\label{eqn:ukd} \\
X_{k\ell}^{(s)}&=u_k\partial_{u_\ell}-u_\ell\partial_{u_k},\notag
\end{align}
We then have the following generator for the eigenvector dynamics. For a proof, see \cite{BouYau2017}.

\begin{lemma}\label{lem:generator}
For the diffusion (\ref{eqn:eigenvectorsSymmetric}) 
the generator acting on smooth functions $f:\RR^{n^2}\to\RR$ is
\begin{align}
&\LL_t^{(s)}=\sum_{1\leq k<\ell\leq n}c_{k\ell}(t)(X_{k\ell}^{(s)})^2. \notag
\end{align}
\end{lemma}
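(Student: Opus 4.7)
The plan is to apply It\^o's formula to the system \eqref{eqn:eigenvectorsSymmetric} and then identify the resulting second-order operator with the claimed combination of vector fields by a direct algebraic expansion.

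For the It\^o computation, I would first read off from \eqref{eqn:eigenvectorsSymmetric} the drift $-\tfrac12\sum_{\ell\ne k}c_{k\ell}u_k(\alpha)$ of each coordinate $u_k(\alpha)$, together with its martingale part $\tfrac1{\sqrt n}\sum_{\ell\ne k}(\lambda_k-\lambda_\ell)^{-1}u_\ell(\alpha)\,\rd B_{k\ell}$. Since the independent Brownian motions are indexed by unordered pairs (because $B_{k\ell}=B_{\ell k}$), the bracket $\rd\langle B_{k\ell},B_{k'\ell'}\rangle$ equals $\rd t$ precisely when $\{k,\ell\}=\{k',\ell'\}$, which produces two kinds of covariations,
\begin{align*}
\rd\langle u_k(\alpha),u_k(\beta)\rangle_t &= \sum_{\ell\ne k}c_{k\ell}\,u_\ell(\alpha)u_\ell(\beta)\,\rd t,\\
\rd\langle u_k(\alpha),u_{k'}(\beta)\rangle_t &= -\,c_{kk'}\,u_{k'}(\alpha)u_k(\beta)\,\rd t\qquad(k\ne k'),
\end{align*}
the minus sign coming from the only admissible matching $(k,\ell)=(\ell',k')$, which contributes $(\lambda_k-\lambda_{k'})(\lambda_{k'}-\lambda_k)=-(\lambda_k-\lambda_{k'})^2$ in the denominator. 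Substituting drift and covariations into It\^o's formula expresses the generator as an explicit linear combination of the first-order operator $u_k\partial_{u_k}$ and two families of second-order operators, $\sum_{\alpha,\beta}u_\ell(\alpha)u_\ell(\beta)\partial_{u_k(\alpha)}\partial_{u_k(\beta)}$ (diagonal in $k$) and $\sum_{\alpha,\beta}u_{k'}(\alpha)u_k(\beta)\partial_{u_k(\alpha)}\partial_{u_{k'}(\beta)}$ (off-diagonal).

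For the algebraic side, I would expand $(X_{k\ell}^{(s)})^2=(u_k\partial_{u_\ell}-u_\ell\partial_{u_k})^2$ by the Leibniz rule. The pure squares $(u_k\partial_{u_\ell})^2$ and $(u_\ell\partial_{u_k})^2$ remain purely second-order since $\partial_{u_\ell(\alpha)}u_k(\beta)=0$ for $k\ne\ell$. The crucial input is the commutator identity
\[
[u_k\partial_{u_\ell},\,u_\ell\partial_{u_k}]\;=\;u_k\partial_{u_k}-u_\ell\partial_{u_\ell},
\]
which follows from the contraction $\partial_{u_\ell(\alpha)}u_\ell(\beta)=\delta_{\alpha\beta}$: each of the mixed products $(u_k\partial_{u_\ell})(u_\ell\partial_{u_k})$ and $(u_\ell\partial_{u_k})(u_k\partial_{u_\ell})$ therefore contributes a first-order piece alongside the expected second-order term $\sum_{\alpha,\beta}u_k(\alpha)u_\ell(\beta)\partial_{u_\ell(\alpha)}\partial_{u_k(\beta)}$.

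It then remains to multiply by $c_{k\ell}$, sum over $k<\ell$, and match term by term against the generator from Step~1. Using $c_{k\ell}=c_{\ell k}$, the pure squares aggregate into the diagonal covariation $\sum_k\sum_{\ell\ne k}c_{k\ell}\sum_{\alpha,\beta}u_\ell(\alpha)u_\ell(\beta)\partial_{u_k(\alpha)}\partial_{u_k(\beta)}$; the cross term reconstitutes the off-diagonal covariation (after converting $\sum_{k<\ell}$ into $\tfrac12\sum_{k\ne k'}$); and the first-order pieces combine into the drift. No step presents a genuine obstacle; the only delicate point is the careful bookkeeping of ordered-versus-unordered pair conventions for the Brownian motions, which dictates the correct coefficient and sign at each stage.
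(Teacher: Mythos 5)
The paper gives no proof of this lemma---it defers to \cite{BouYau2017}---and your It\^o-plus-algebra strategy is exactly the standard argument, so the route is right. Your drift, both covariation formulas (including the sign of the off-diagonal one, coming from the matching $\ell=k'$, $\ell'=k$), and the commutator identity $[u_k\partial_{u_\ell},u_\ell\partial_{u_k}]=u_k\partial_{u_k}-u_\ell\partial_{u_\ell}$ are all correct.

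The step that does not close as you describe it is the final ``term by term'' matching: with your own intermediate formulas it produces an overall factor $\tfrac12$. Indeed, since each unordered pair $\{k,\ell\}$ contributes \emph{both} $(u_\ell\partial_{u_k})^2$ and $(u_k\partial_{u_\ell})^2$, one gets
\[
\sum_{k<\ell}c_{k\ell}\bigl(X^{(s)}_{k\ell}\bigr)^2
=\sum_{k\ne\ell}c_{k\ell}\sum_{\alpha,\beta}u_\ell(\alpha)u_\ell(\beta)\partial_{u_k(\alpha)}\partial_{u_k(\beta)}
-\sum_{k\ne\ell}c_{k\ell}\sum_{\alpha,\beta}u_\ell(\alpha)u_k(\beta)\partial_{u_k(\alpha)}\partial_{u_\ell(\beta)}
-\sum_{k\ne\ell}c_{k\ell}\,u_k\partial_{u_k},
\]
with full ordered sums and no $\tfrac12$ anywhere, whereas the It\^o generator built from your drift and covariations is
\[
-\tfrac12\sum_{k\ne\ell}c_{k\ell}\,u_k\partial_{u_k}
+\tfrac12\sum_{k\ne\ell}c_{k\ell}\sum_{\alpha,\beta}u_\ell(\alpha)u_\ell(\beta)\partial_{u_k(\alpha)}\partial_{u_k(\beta)}
-\tfrac12\sum_{k\ne k'}c_{kk'}\sum_{\alpha,\beta}u_{k'}(\alpha)u_k(\beta)\partial_{u_k(\alpha)}\partial_{u_{k'}(\beta)},
\]
i.e.\ exactly $\tfrac12\sum_{k<\ell}c_{k\ell}(X^{(s)}_{k\ell})^2$: all three families of terms are short by the same factor $2$. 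A quick sanity check is $n=2$: writing $u_1=(\cos\theta,\sin\theta)$, the flow (\ref{eqn:eigenvectorsSymmetric}) is a Brownian motion in $\theta$ with $(\rd\theta)^2=c_{12}\,\rd t$, while $X^{(s)}_{12}=-\partial_\theta$, so the generator is $\tfrac12 c_{12}\partial_\theta^2=\tfrac12 c_{12}(X^{(s)}_{12})^2$. Because the mismatch is a uniform constant it is only a deterministic time rescaling and affects nothing downstream (the same constant propagates into the moment flow rates), and it may well reflect a normalization convention carried over from the cited reference rather than an error in your reasoning; but as written your claim of an exact match with coefficient $1$ is contradicted by your own formulas, so you must either carry the $\tfrac12$ explicitly or identify and state the compensating convention.
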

\noindent The above lemma means
$\rd\E( g (\boldu_t))/\rd t=\E(\LL_t^{(s)}g (\boldu_t))$
for the stochastic differential equation (\ref{eqn:eigenvectorsSymmetric}).

\subsection{Main result.}\ 
Let $I$ be a deterministic subset of $\llbracket 1,n\rrbracket$. We denote the eigenvectors overlaps as
\begin{align}
p_{ij}&=\sum_{\alpha\in I}u_i(\alpha)u_j(\alpha),\ \ i\neq j\in\llbracket 1,n\rrbracket\notag\\
p_{ii}&=\sum_{\alpha\in I}u_i(\alpha)^2-C_0,\ \ i\in\llbracket 1,n\rrbracket\label{eqn:pkk}
\end{align}
where $C_0$ is an arbitrary but fixed constant independent of $i$. 
We will eventually choose $C_0=|I|/n$ so that the diagonal overlaps are properly normalized, but many results in this section do not depend the actual value of $C_0$.
Moreover, these overlaps are functions of $t$ ($\bu$ satisfies the dynamics (\ref{eqn:eigenvectorsSymmetric})) but this dependence is omitted in the notation. 

Remember the notation (\ref{eqn:Kt}) and denote
$$
G(t,z)=\frac{1}{K(t)-z}.
$$
For a matrix $H$, we abbreviate the Stieltjes transform as
$$
m_H(z)= \frac{1}{n} \tr \frac{1}{H-z}.
$$

\begin{assumption}[Notations and conditions for relaxation flow]\label{XYH}
Fix a  small number $\mathfrak  a > 0$. 
A matrix $V$ is said to be bounded if the norm of $V$ is  bounded, i.e., there is a constant $C_1>0$ such that  
\be\label{clMang}
 \|V\|  := \|V\|_{\rm op}  \leq n^{C_1}.
\ee
A  deterministic matrix $V  $ is called {\bf $(\eta_*,\, \eta^*,  r)$}-regular  at $E_0$   if   $\eta_*$, $\eta^*$ and $r$ satisfy 
 \be\label{jjlzi} 
  n^{-1+\mathfrak a}\le  \eta_*, \quad    \eta_*n^ { \mathfrak a}\le r \le n^ { - \mathfrak a}  \eta^ *, \quad  \eta^ * n^ { \mathfrak a} \le 1
\ee
and there exists $C_2$ such that the imaginary part of the Stieltjes transform of $V$ is  bounded from above and below by
\begin{align}\label{e:imasup}
  C_2^{-1}\leq \Im(m_V(z))\leq C_2, \qquad  m_V(z): = \frac{1}{n} \tr (V-z)^{-1}, 
\end{align}
uniformly for any 
$$
z\in \{E+ \ii \eta: E\in[E_0-r, E_0+r], \;\eta_*\leq \eta\leq \eta^* \}.
$$
\end{assumption}

Our main result not only requires the above hypothesis about the Stieltjes transform, but also the folllowing estimates on individual diagonal resolvent entries.

\begin{assumption}\label{as:basic}
The following holds uniformly in $z\in\{E+\ii\eta: E\in[E_0-r,E_0+r],\eta_*<\eta<\eta^*\}$.
\begin{enumerate}
\item 
Diagonal entries all have the same order:
\begin{equation}\label{eqn:diaginitial}
{\rm Im} G(0,z)_{ii}\leq \frac{2}{n}{\rm Im}{\rm Tr}G(0,z).
\end{equation}
\item There exists a constant $0<\mathfrak{c}<1$ such that the averages over $I$ and $\llbracket 1,n\rrbracket$ coincide up to $n^{-\mathfrak{c}}$:
\be\label{Hop1prime}
\left|\frac{1}{|I|}\sum_{i\in I}G(0,z)_{ii}-\frac{1}{n}{\rm Tr}G(0,z)\right|\leq n^{-\mathfrak{c}}.
\ee
\end{enumerate}
\end{assumption}

In the remainder of this article, to simplify the exposition we also assume that the deterministic set $I$ from (\ref{eqn:pkk}) satisfies
\begin{equation}\label{eqn:Ibound}
|I|\geq c n
\end{equation}
for some small fixed constant $c$. This is enough for our purpose, as $|I|\sim n/2$ in the next sections.
We define, for any $r> 0$ and  $0<\kappa<1$,
\begin{equation}\label{Ir}
  I^r_\kappa (E):=  \cal I_{E, (1-\kappa) r},  \quad  
   \cal I_{E, r}= (E-r, \; E+r). 
\end{equation}
The main result of this section is the following, where we choose $C_0=|I|/N$ in (\ref{eqn:pkk}).

\begin{theorem}[Quantum unique ergodicity for deformed matrices]\label{thmQUE}
Remember the notation (\ref{eqn:pkk}) for the centered partial overlaps, take $C_0=|I|/n$ and assume (\ref{eqn:Ibound}).
Under 
Assumption \ref{XYH}  and Assumption \ref{as:basic}, 
  the following statement holds.
For any (small) $\kappa, \e>0$, (large) $D>0$ and $i,j\in \llbracket 1,n\rrbracket$, for any $t_0,t_1$ such that  $n^{\mathfrak{a}} \eta_*\leq t_0\leq t_1\leq n^{-\mathfrak{a}} r$,  we have
\be\label{Ice1}
\P\left(
\exists\, t_0<t<t_1: \mathds{1}_{\lambda_i(t),\lambda_j(t)\in  I_r^\kappa(E_0)} \left(|p_{ii}| + |p_{ij}|\right)  \geq n^{\e}\left(\frac{1}{n^\mathfrak{c}}+\frac{1}{\sqrt{n t_{{0}}}}\right)
 \right)\le n^{-D}
\ee
for large enough $N$. Here,  the constant $\mathfrak c$ is from \eqref{Hop1prime}. In other words, the errors consist of the initial error $n^{-\mathfrak c}$ and the dynamical error  $(n t_{0})^{-1/2} $.
\end{theorem}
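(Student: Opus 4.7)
My plan rests on two ingredients: the parabolic eigenvector moment flow (EMF) satisfied by the perfect matching observables from (\ref{feq}) along the Dyson Brownian vector flow (\ref{eqn:eigenvectorsSymmetric}), and the relaxation of this flow to its equilibrium at rate $1/(nt)$ per particle, starting from the averaged initial input of Assumption \ref{as:basic}. Write $f_t(\boldsymbol\xi)$ for the perfect matching observable of total mass $k$ indexed by a configuration $\boldsymbol\xi$ of particles on $\llbracket 1,n\rrbracket$. The objective is the moment bound
\[
\E[f_t(\boldsymbol\xi)] \;\leq\; C_k(\boldsymbol\xi)\,\bigl(n^{-\mathfrak c}+(nt_0)^{-1/2}\bigr)^{2k}
\]
for all $t\in[t_0,t_1]$ and all $\boldsymbol\xi$ supported in the bulk window $I^\kappa_r(E_0)$. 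Applied with $\boldsymbol\xi$ a single particle of multiplicity $k$ and with $\boldsymbol\xi$ two particles each of multiplicity $k$, this controls $\E[p_{ii}^{2k}]$ and $\E[p_{ij}^{2k}]$ respectively. Markov's inequality with $k$ chosen large depending on $(D,\e)$, a union bound over a mesh in $t$ of spacing $n^{-C}$, and a deterministic Lipschitz bound for (\ref{eqn:eigenvectorsSymmetric}) on a rigidity event implied by Assumption \ref{XYH}, then give (\ref{Ice1}).

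The execution has three steps. Step 1 is algebraic: verify Theorem \ref{thm:EMF}, i.e.\ that $f_t$ satisfies $\partial_t f_t = \mathcal{B}_t f_t$ for a positivity-preserving generator $\mathcal{B}_t$ of multi-particle hopping with rates $c_{k\ell}(t)$ from (\ref{eqn:cij}). By Lemma \ref{lem:generator} this reduces to computing the action of the differential operators $(X_{k\ell}^{(s)})^2$ on products $\prod_a p_{i_a j_a}$ and identifying the result with the EMF generator; the identity is combinatorial and amounts to enumerating perfect matchings related by transpositions of two indices. Step 2 is the initial estimate: show $f_0(\boldsymbol\xi) \leq (Cn^{-\mathfrak c})^{2k}$ on all admissible $\boldsymbol\xi$, using (\ref{Hop1prime}) to bound the one-particle overlaps $p_{ii}(0)$ via a contour integral of $\frac{1}{|I|}\sum_{i\in I}G(0,z)_{ii}-\frac{1}{n}\tr G(0,z)$, and using (\ref{eqn:diaginitial}) to lift this to $k$-particle configurations. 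Step 3 is the analytic heart: introduce a short-range approximation $\mathcal{B}_t^\ell$ of $\mathcal{B}_t$ keeping only hops between indices at distance $\leq\ell$, prove a maximum principle showing that $f_t$ evolved under $\mathcal{B}_t^\ell$ is dominated by a barrier of size $((nt)^{-1/2})^{2k}$, and control the error of the truncation via finite speed of propagation combined with rigidity of $\bla_t$ from Assumption \ref{XYH}.

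The principal obstacle, compared with \cite{BouYau2017}, is the absence of an a priori pointwise flatness for the initial eigenvectors: only the averaged input (\ref{Hop1prime}) is available, so the EMF must be used to \emph{produce} flatness rather than merely to identify its fluctuations. This forces the maximum principle to be run directly on the multi-particle observables $f_t$, with a barrier adapted to the averaged initial data. Designing this barrier is the most delicate point: it must simultaneously dominate $f_0$ (which one controls only through averaged Green's function estimates, not pointwise on configurations) and evolve no slower than the genuine $f_t$ under $\mathcal{B}_t^\ell$, so that the comparison runs in the right direction. Handling the combinatorial bookkeeping for arbitrary numbers of particles, and the interplay between particle multiplicities and the hopping rates $c_{k\ell}(t)$, is what makes Step 3 substantially harder than its analogue in \cite{BouYau2017}, where the a priori size of observables was known and only their fluctuations had to be identified.
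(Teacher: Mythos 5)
Your overall skeleton (the perfect matching observables satisfying the eigenvector moment flow, a short-range truncation with finite speed of propagation, a maximum principle, and Markov's inequality with a large particle number $d$) is the right one and matches the paper. But Step 2 contains a genuine gap that breaks the argument as you have set it up. The claim that $f_0(\boldsymbol\xi)\leq (Cn^{-\mathfrak c})^{2k}$ for all admissible configurations is not obtainable from Assumptions \ref{XYH} and \ref{as:basic}, and is in general false. Unwinding (\ref{Hop1prime}) via the spectral decomposition gives only $\bigl|\sum_k p_{kk}(0)/(\lambda_k-z)\bigr|\leq |I|\,n^{-\mathfrak c}$ with $\Im z\geq \eta_*$, i.e.\ control of a \emph{signed average} of the $p_{kk}(0)$ over spectral windows containing $\gtrsim n\eta_*$ eigenvalues; cancellations mean this says nothing about an individual $p_{ii}(0)$. (Take $V$ diagonal: then $p_{kk}(0)=\mathds{1}_{k\in I}-|I|/n$ is of order one for every $k$, while (\ref{Hop1prime}) can still hold if $I$ is well spread in the spectrum.) Likewise (\ref{eqn:diaginitial}) only yields delocalization of single coordinates, not flatness of $\sum_{\alpha\in I}u_k(\alpha)^2$. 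So the honest initial bound is the trivial $f_0=\OO(1)$, and a single maximum-principle pass against a barrier of size $\bigl(n^{-\mathfrak c}+(nt)^{-1/2}\bigr)^{2k}$ cannot start, since the barrier must dominate the initial data. Your last paragraph senses this tension but the proposed resolution (a cleverer barrier) does not address it: no comparison function at the target scale can dominate order-one initial data.

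The mechanism the paper uses instead is a bootstrap. The long-range error (Lemma \ref{l:Lbound}) and the maximum-principle output (Proposition \ref{thm:maxPrincipleQUE}) are bounded not by the target scale but by expressions of the form $\delta\, {\rm S} + n^{-\mathfrak c}\,{\rm S}^{(d-1)/d} + (nt)^{-1}{\rm S}^{(d-2)/d}$, where ${\rm S}$ is the running supremum of $f_s(\boeta)$ over a space-time window and $\delta=\oo(1)$. Because the exponents are strictly below one, each application of the maximum principle on a nested space-time domain contracts ${\rm S}$ by a factor $n^{-\e/3}$ as long as ${\rm S}^{1/d}$ exceeds $n^{\e/2}\bigl(n^{-\mathfrak c}+(nt_0)^{-1/2}\bigr)$; iterating $\OO(1/\e)$ times brings ${\rm S}$ from its trivial $\OO(1)$ size down to the target, and only then is Markov's inequality applied. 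The averaged input (\ref{Hop1prime}) enters not through the initial data but through the resolvent estimate (\ref{evcontrol2}) used to bound the sum $\sum_k p_{kk}/(n(z-\lambda_k))$ arising from perfect matchings containing an edge $\{(k,1),(k,2)\}$; this is where the $n^{-\mathfrak c}$ in (\ref{Ice1}) originates. Your Steps 1 (the combinatorial identity of Theorem \ref{thm:EMF}) and the infrastructure of Step 3 are sound, but without the self-improving iteration the proof does not close.
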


\subsection{Perfect matching observables.}\ 
We will need the following notations.

First, as in \cite{BouYau2017}, we define
$\boeta:  \llbracket 1,n\rrbracket \to \bN$ where
$\eta_j:=\boeta(j)$ is interpreted as the number of particles at the site $j$. Thus $\boeta$ denotes the configuration space 
of particles. We denote $\cN(\boeta) = \sum_j  \eta_j=d$ the total number of particles. 
Define $\boeta^{i, j}$ to be the configuration obtained by moving one particle from $i$ to $j$. 
If there is no particle at $i$ then $\boeta^{i, j} = \boeta$. Notice that there is a direction and the particle is moved  from $i$ to $j$.

Second, for any given configuration $\boeta$, consider the set of vertices
$$
\mathcal{V}_{\boeta}=\{(i,a): 1\leq i\leq n, 1\leq a\leq 2\eta_i\}.
$$
Let $\mathcal{G}_{\boeta}$ be the set of perfect matchings of the complete graph on $\mathcal{V}_{\boeta}$, i.e. this is the set of graphs $G$ with vertices $V_{\boeta}$ and edges $\mathcal{E}(G)\subset\{\{v_1,v_2\}: v_1\in \mathcal{V}_{\boeta},v_2\in\mathcal{V}_{\boeta},v_1\neq v_2\}$ being a partition of $\mathcal{V}_{\boeta}$.

\begin{figure}[h]
\centering
\begin{subfigure}{.4\textwidth}
\centering
\vspace{1.6cm} \hspace{0cm}
\begin{tikzpicture}[scale=0.5]

\draw[fill,black] (1,1) circle [radius=0.2];
\draw[fill,black] (1,2) circle [radius=0.2];
\draw[fill,black] (4,1) circle [radius=0.2];
\draw[fill,black] (4,2) circle [radius=0.2];
\draw[fill,black] (4,3) circle [radius=0.2];
\draw[fill,black] (6,1) circle [radius=0.2];
\draw [thick,->,black] (-2,1) -- (9,1);
\draw [thick,black] (-1,0.8) -- (-1,1.2);
\draw [thick,black] (8,0.8) -- (8,1.2);

\node at (-1,0.4) {\nc 1};
\node at (1,0.4) {\nc$i_1$};
\node at (4,0.4) {\nc $i_2$};
\node at (6,0.4) {\nc $i_3$};
\node at (8,0.4) {\nc $n$};
  
\end{tikzpicture}
\vspace{0.1cm}
  \caption{A configuration $\boeta$ with $\mathcal{N}(\boeta)=6$, $\eta_{i_1}=2$, $\eta_{i_2}=3$, $\eta_{i_3}=1$.
 }
   \label{fig:sub1}
\end{subfigure}
\hspace{1cm}
\begin{subfigure}{.4\textwidth}
  \centering
\begin{tikzpicture}[scale=0.5]
  
\draw[fill,black] (1,1) circle [radius=0.2];
\draw[fill,black] (1,2) circle [radius=0.2];
\draw[fill,black] (1,3) circle [radius=0.2];
\draw[fill,black] (1,4) circle [radius=0.2];
\draw[fill,black] (4,1) circle [radius=0.2];
\draw[fill,black] (4,2) circle [radius=0.2];
\draw[fill,black] (4,3) circle [radius=0.2];
\draw[fill,black] (4,4) circle [radius=0.2];
\draw[fill,black] (4,5) circle [radius=0.2];
\draw[fill,black] (4,6) circle [radius=0.2];
\draw[fill,black] (6,1) circle [radius=0.2];
\draw[fill,black] (6,2) circle [radius=0.2];
\draw [thick,->,black] (-2,1) -- (9,1);
\draw [thick,black] (-1,0.8) -- (-1,1.2);
\draw [thick,black] (8,0.8) -- (8,1.2);

\node at (-1,0.4) {\nc 1};
\node at (1,0.4) {\nc $i_1$};
\node at (4,0.15) {\nc $i_2$};
\node at (6,0.4) {\nc $i_3$};
\node at (8,0.4) {\nc $n$};

\draw [ultra thick,black] (1,1) to[out=180,in=180] (1,3);
\draw [ultra thick,black] (1,2) to[out=-70,in=-90] (6,2);
\draw [ultra thick,black] (1,4) to[out=-70,in=120] (4,1);
\draw [ultra thick,black] (4,6) to[out=-10,in=0] (6,1);
\draw [ultra thick,black] (4,3) to[out=-10,in=0] (4,4);
\draw [ultra thick,black] (4,2) to[out=-10,in=0] (4,5);

\end{tikzpicture}
  \caption{A perfect matching $G\in\mathcal{G}_{\boeta}$. Here, $P(G)=p_{i_1i_1}p_{i_1i_2}p_{i_2i_2}^2p_{i_2i_3}p_{i_3i_1}$.}
  \label{fig:sub1}
\end{subfigure}
\end{figure}

Third, for any given edge $e=\{(i_1,a_1),(i_2,a_2)\}$, we define  
$p(e)=p_{i_1,i_2}$, 
$
P(G)=\prod_{e\in \mathcal{E}(G)}p(e)
$ and
\begin{equation}\label{feq}
f^{(s)}_{\bla, t}(\boeta)=\frac{1}{\mathcal{M}(\boeta)}\E\left(\sum_{G\in\mathcal{G}_{\boeta}} P(G)\mid \bla\right),\ \ \mathcal{M}(\boeta)=\prod_{i=1}^n (2\eta_i)!!,
\end{equation}
where $(2m)!!=\prod_{k\leq 2m,k \, {\rm odd}}k$ is the number of perfect matchings of the complete graph on $2m$ vertices.
Remarkably, the above function $f$ satisfies a parabolic partial differential equation. 

\begin{theorem}[Perfect matching observables for the eigenvector moment flow: symmetric case]\label{thm:EMF}
Suppose that $\boldu$ is the solution to the symmetric eigenvector dynamics (\ref{eqn:eigenvectorsSymmetric})
and $ f^{(s)}_{\bla, t} (\boeta)$ is given by \eqref{feq}.  Then 
$f^{(s)}_{\bla, t}$
satisfies the equation  
\begin{align}
\label{ve}
&\partial_t f^{(s)}_{\bla, t} =  \mathscr{B}^{(s)}(t)  f^{(s)}_{\bla, t},\\
\label{momentFlotSym}&\mathscr{B}^{(s)}(t)  f(\boeta) = \sum_{k \neq \ell} c_{k\ell}(t) 2 \eta_k (1+ 2 \eta_\ell) \left(f(\boeta^{k\ell})-f(\boeta)\right).
\end{align}
\end{theorem}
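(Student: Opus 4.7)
The plan is to prove a pointwise (in $\bu$) identity for the unnormalized observable $F_\boeta(\bu) := \mathcal{M}(\boeta)^{-1} \sum_{G \in \mathcal{G}_\boeta} P(G)$. Since $f^{(s)}_{\bla,t}(\boeta) = \E[F_\boeta(\bu) \mid \bla]$ and $\mathcal{M}(\boeta)$ is deterministic, once
$$
\LL^{(s)}_t F_\boeta(\bu) \;=\; \sum_{k \neq \ell} c_{k\ell}(t)\, 2\eta_k(1 + 2\eta_\ell)\, \bigl(F_{\boeta^{k\ell}}(\bu) - F_\boeta(\bu)\bigr)
$$
is verified for every $\bu$, equation \eqref{ve} follows from Lemma \ref{lem:generator} and the tower property. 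This reduces Theorem \ref{thm:EMF} to a purely algebraic and combinatorial identity.

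The first step is to compute $X_{k\ell}^{(s)} = u_k\partial_{u_\ell} - u_\ell \partial_{u_k}$ applied to a single overlap. A direct calculation yields
$$
X_{k\ell}^{(s)} p_{ab} \;=\; \delta_{a\ell}\tilde p_{kb} + \delta_{b\ell}\tilde p_{ka} - \delta_{ak}\tilde p_{\ell b} - \delta_{bk}\tilde p_{\ell a},
$$
where $\tilde p_{ij} := \sum_{\alpha \in I} u_i(\alpha) u_j(\alpha)$ (so $\tilde p_{ij} = p_{ij}$ off-diagonal and $\tilde p_{ii} = p_{ii} + C_0$). Graphically, $X_{k\ell}^{(s)}$ acts on each edge of a matching by swapping a site label between $k$ and $\ell$: each $\ell$-endpoint contributes a $+$ term (swap $\ell \to k$) and each $k$-endpoint contributes a $-$ term (swap $k \to \ell$). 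The constant $C_0$ is annihilated by the derivative and plays no role, which is why the final identity is independent of the specific choice of $C_0$.

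Applying the Leibniz rule twice to $P(G) = \prod_{e \in \mathcal{E}(G)} p(e)$,
$$
(X_{k\ell}^{(s)})^2 P(G) \;=\; \sum_e (X_{k\ell}^{(s)})^2 p(e)\prod_{e'\neq e}p(e') + \sum_{e_1 \neq e_2} X_{k\ell}^{(s)} p(e_1)\,X_{k\ell}^{(s)} p(e_2) \prod_{e \notin \{e_1,e_2\}} p(e);
$$
after substitution of step 1, each term becomes $\pm P(\widetilde G)$ for a perfect matching $\widetilde G$ on $\mathcal{V}_{\boeta'}$ with $\boeta' \in \{\boeta, \boeta^{k\ell}, \boeta^{\ell k}\}$ depending on the net transfer of endpoint labels between $k$ and $\ell$. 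The crucial combinatorial claim is that, summing over $G \in \mathcal{G}_\boeta$, the multiplicity of $P(\widetilde G)$ in $\sum_G (X_{k\ell}^{(s)})^2 P(G)$ equals $2\eta_k(2\eta_k - 1)$ for every $\widetilde G \in \mathcal{G}_{\boeta^{k\ell}}$, independently of $\widetilde G$. I would prove this via an explicit bijection between pairs $(G, \text{ordered pair of site-}k\text{ endpoints in }G)$ and pairs $(\widetilde G, \text{ordered pair of site-}\ell\text{ endpoints in }\widetilde G)$, the two endpoints on each side being precisely those affected by the two swaps. Combined with the ratio $\mathcal{M}(\boeta)/\mathcal{M}(\boeta^{k\ell}) = (2\eta_k-1)/(2\eta_\ell+1)$, this multiplicity produces the coefficient $2\eta_k(1+2\eta_\ell)$ of $F_{\boeta^{k\ell}}$; the symmetric count gives $2\eta_\ell(1+2\eta_k)$ for $F_{\boeta^{\ell k}}$, and the zero-net-transfer terms regroup into $-[2\eta_k(1+2\eta_\ell) + 2\eta_\ell(1+2\eta_k)] F_\boeta$, as required by the conservation identity $\sum_{\boeta'} [\text{coefficient of } F_{\boeta'}] = 0$ that any Markov generator must satisfy.

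The main obstacle is the bookkeeping in this final combinatorial regrouping step. One must treat separately: (a) edges joining sites $k$ and $\ell$, whose two endpoints are both swappable and therefore contribute extra cross-terms; (b) loops $p_{ii}$ at $i \in \{k,\ell\}$, which acquire an extra symmetry factor of $2$ in the derivative and are not subsumed into the generic edge formula; and (c) the constant $C_0$ inside $\tilde p_{ii}$ that appears in intermediate formulas, whose aggregate contribution must cancel in the sum so that the identity is valid for any value of $C_0$ (the normalized choice $C_0 = |I|/n$ being only one possibility). None of these cases involves new ideas, but they require systematic enumeration to verify that all combinatorial factors collapse to the claimed coefficient $2\eta_k(1+2\eta_\ell)$.
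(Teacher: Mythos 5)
Your proposal is correct and follows essentially the same route as the paper's proof: reduce via the generator to the pointwise identity $X^2 g(\boeta)=2\eta_k(1+2\eta_\ell)(g(\boeta^{k\ell})-g(\boeta))+2\eta_\ell(1+2\eta_k)(g(\boeta^{\ell k})-g(\boeta))$, expand $X^2P(G)$ by Leibniz, recognize each resulting monomial as $\pm P(S_{vw}G)$ for a transposition or jump of two vertices in $\mathcal{V}_k\cup\mathcal{V}_\ell$, and sum over $G$ using that each $S_{vw}$ is a bijection of matchings together with the ratio $\mathcal{M}(\boeta)/\mathcal{M}(\boeta^{k\ell})=(2\eta_k-1)/(2\eta_\ell+1)$ — the bookkeeping you defer is exactly the paper's nine-term case analysis over single, double and transverse edges. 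One small caution: the multiplicity $2\eta_k(2\eta_k-1)$ arises because for each of the $2\eta_k(2\eta_k-1)$ ordered pairs of site-$k$ vertices the jump map is a bijection $\mathcal{G}_{\boeta}\to\mathcal{G}_{\boeta^{k\ell}}$, not from a bijection onto all pairs $(\widetilde G,\,\text{ordered pair of site-}\ell\text{ vertices})$, whose cardinality $(2\eta_\ell+2)(2\eta_\ell+1)\,|\mathcal{G}_{\boeta^{k\ell}}|$ differs in general; likewise the vanishing row sums of the resulting generator are an output of the computation rather than something that may be assumed.
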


\begin{remark}\label{rem:reversibility}
An important property of the eigenvector moment flow is the reversibility with respect to  a simple explicit equilibrium measure:
\beq\label{eqn:weight}
\pi(\boeta) = \prod_{p=1}^n \phi(\eta_p), \ \phi(k) =\prod_{i=1}^k\left(1-\frac{1}{2i}\right).
\bEq
For any function $f$ on the configuration space, the Dirichlet form is given by 
$$
\sum_{\bm\eta}\pi(\bm\eta)f(\bm\eta)\mathscr{B}(t)f(\bm \eta)=\sum_{\boeta}  \pi(\boeta) \sum_{i \neq j}  c_{ij}  \eta_i (1+ 2 \eta_j) \left(f(\boeta^{i j}) - f(\boeta)\right)^2.
$$
\end{remark}

\begin{remark}\label{rem:canon}
The above theorem is independent of our choice of $C_0$ and of the canonical basis and, more remarkably, the projection vectors don't have to be orthogonal. More precisely, let $(\bq_\al)_{\al\in I}$ be any family of fixed vectors. Define
\begin{align*}
p_{ij}&=\sum_{\alpha\in I}\langle u_i,\bq_\alpha\rangle\langle u_j,\bq_\alpha\rangle\ \ i\neq j\in\llbracket 1,n\rrbracket,\\
p_{ii}&=\sum_{\alpha\in I}\langle u_i,\bq_\alpha\rangle^2-C_0,\ \ i\in\llbracket 1,n\rrbracket,
\end{align*}
and $f_{t,\bla}$ accordingly. Then (\ref{ve}) holds. In particular, Theorem \ref{thm:EMF} generalizes \cite[Theorem 3.1 (i)]{BouYau2017} by just choosing $|I|=1$.
\end{remark}

\subsection{Proof of Theorem \ref{thm:EMF}.}\  \label{proofofEMF}  
To start the proof of Theorem \ref{thm:EMF}, let 
\begin{equation}\label{eqn:gR}
g(\boeta)=\frac{1}{\mathcal{M}(\boeta)}\sum_{G\in\mathcal{G}_{\boeta}} P(G)
\end{equation}
and let $1\leq k<\ell\leq n$ be fixed for the rest of this subsection. We abbreviate $X=X_{k\ell}^{(s)}$. Using Lemma \ref{lem:generator},  we only need to prove 
\begin{equation}\label{eqn:application}
X^2 g(\boeta)=2\eta_k(1+2\eta_\ell) (g(\boeta^{k\ell})-g(\boeta))+2\eta_\ell(1+2\eta_k) (g(\boeta^{\ell k})-g(\boeta)).
\end{equation}
We therefore want to calculate  $X^2 P(G)$ for any $G\in\mathcal{G}_{\boeta}$. For that purpose, we first need the following definition.

\begin{definition} Let $\boeta$ and $k<\ell$ be fixed. The following notations will be useful for calculating $X^2 P(G)$.
\begin{enumerate}[(i)]
\item $\mathcal{V}_i\subset\mathcal{V}_{\boeta}$ is the set of vertices of type $(i,a)$, $1\leq a\leq 2\eta_i$. 
\item For any two vertices $v,w\in \mathcal{V}_k\cup\mathcal{V}_\ell$, we denote 
$$
\epsilon(v,w)=\left\{
\begin{array}{ll}
1&{\rm if}\ v,w\ {\rm are\ in\ the\ same}\ \mathcal{V}_i,\ i=k\ or\ \ell\\
-1&{\rm if}\ v,w \ {\rm are\ in\ different}\ \mathcal{V}_i\,'s.
\end{array}
\right.
$$
\item Let $G\in\mathcal{G}_{\boeta}$ and $v,w\in \mathcal{V}_k\cup\mathcal{V}_\ell$.

Assume $v\in \mathcal{V}_k$ and $w\in \mathcal{V}_\ell$. Then we define $S_{wv}G=S_{vw}G\in\mathcal{G}_{\boeta}$ as the perfect matching obtained by transposition of $v$ and $w$. More precisely, let $\tau_{vw}$ be the permutation of $\mathcal{V}_{\boeta}$ transposing $v$ and $w$. Then 
$$
\mathcal{E}(S_{vw}G)=\{\{\tau_{v,w}(v_1),\tau_{v,w}(v_2)\}: \{v_1,v_2\}\in\mathcal{E}(G)\}.
$$

Assume $v=(k,a)$ and $w=(k,b)$ ($a<b$) are both in $\mathcal{V}_k$. Then we define $S_{wv}G=S_{vw}G\in\mathcal{G}_{\boeta^{k\ell}}$ as the perfect matching obtained by a jump of $v$ and $w$ to $\ell$. 
More precisely, let $j_{vw}=j_{wv}$ be the following bijection from $\mathcal{V}_{\boeta}$ to $\mathcal{V}_{\boeta^{k\ell}}$: 
$j_{vw}(v)=(\ell,2\eta_\ell+1)$, $j_{vw}(w)=(\ell,2\eta_\ell+2)$, $j_{vw}((k,c))=(k,c-2)$ if $b<c$, $j_{vw}((k,c))=(k,c-1)$ if $a<c<b$ and $j_{vw}(v_1)=v_1$ in all other cases. 
Then
$$
\mathcal{E}(S_{vw}G)=\{\{j_{v,w}(v_1),j_{v,w}(v_2)\}: \{v_1,v_2\}\in\mathcal{E}(G)\}.
$$
A similar definition applies if both  $v$ and $w$ are in $\mathcal{V}_\ell$, the jump now being towards $k$.
\end{enumerate}
\end{definition}

\begin{figure}[h]
\centering
\begin{subfigure}{.45\textwidth}
\centering
\begin{tikzpicture}[scale=0.4]
  \normalcolor
\draw[fill] (1,1) circle [radius=0.2];
\draw[fill] (1,2) circle [radius=0.2];
\draw[fill] (1,3) circle [radius=0.2];
\draw[fill] (1,4) circle [radius=0.2];
\draw[fill] (4,1) circle [radius=0.2];
\draw[fill] (4,2) circle [radius=0.2];
\draw[fill] (4,3) circle [radius=0.2];
\draw(4,3) circle [radius=0.35];
\node at (3.2,3) {\normalcolor$v$};
\draw[fill] (4,4) circle [radius=0.2];
\draw[fill] (4,5) circle [radius=0.2];
\draw[fill] (4,6) circle [radius=0.2];
\draw[fill] (6,1) circle [radius=0.2];
\draw[fill] (6,2) circle [radius=0.2];
\draw(6,2) circle [radius=0.35];
\node at (6,2.8) {\normalcolor$w$};
\draw [thick,->] (-0.5,1) -- (7.5,1);

\node at (1,0.3) {\normalcolor$i$};
\node at (4,0.05) {\normalcolor$k$};
\node at (6,0.3) {\normalcolor$\ell$};

\draw [ultra thick] (1,1) to[out=180,in=180] (1,3);
\draw [ultra thick] (1,2) to[out=-70,in=-90] (6,2);
\draw [ultra thick] (1,4) to[out=-70,in=120] (4,1);
\draw [ultra thick] (4,6) to[out=-10,in=0] (6,1);
\draw [ultra thick] (4,3) to[out=-10,in=0] (4,4);
\draw [ultra thick] (4,2) to[out=-10,in=0] (4,5);

\draw[fill] (11,1) circle [radius=0.2];
\draw[fill] (11,2) circle [radius=0.2];
\draw[fill] (11,3) circle [radius=0.2];
\draw[fill] (11,4) circle [radius=0.2];
\draw[fill] (14,1) circle [radius=0.2];
\draw[fill] (14,2) circle [radius=0.2];
\draw[fill] (14,3) circle [radius=0.2];
\draw[fill] (14,4) circle [radius=0.2];
\draw[fill] (14,5) circle [radius=0.2];
\draw[fill] (14,6) circle [radius=0.2];
\draw[fill] (16,1) circle [radius=0.2];
\draw[fill] (16,2) circle [radius=0.2];

\draw [thick,->] (9.5,1) -- (17.5,1);

\draw [ultra thick] (11,1) to[out=180,in=180] (11,3);
\draw [ultra thick] (11,2) to[out=20,in=-130] (14,3);
\draw [ultra thick] (11,4) to[out=-70,in=120] (14,1);
\draw [ultra thick] (14,6) to[out=-10,in=0] (16,1);
\draw [ultra thick] (16,2) to[out=130,in=-20] (14,4);
\draw [ultra thick] (14,2) to[out=-10,in=0] (14,5);

\draw [ultra thick,->] (7,4) to[out=30,in=150] (10,4);
\node at (8.5,5.3) {\normalcolor$S_{vw}$};

\end{tikzpicture}
  \caption{The map $S_{vw}$ in case of a transposition.
 }
   \label{fig:sub3}
\end{subfigure}
\hspace{1cm}
\begin{subfigure}{.45\textwidth}
\centering
\begin{tikzpicture}[scale=0.4]
  \normalcolor
\draw[fill] (1,1) circle [radius=0.2];
\draw[fill] (1,2) circle [radius=0.2];
\draw[fill] (1,3) circle [radius=0.2];
\draw[fill] (1,4) circle [radius=0.2];
\draw[fill] (4,1) circle [radius=0.2];
\draw[fill] (4,2) circle [radius=0.2];
\draw[fill] (4,3) circle [radius=0.2];
\draw(4,2) circle [radius=0.35];
\node at (3.3,2.4) {\normalcolor$v$};
\draw(4,4) circle [radius=0.35];
\node at (3.2,4) {\normalcolor$w$};
\draw[fill] (4,4) circle [radius=0.2];
\draw[fill] (4,5) circle [radius=0.2];
\draw[fill] (4,6) circle [radius=0.2];
\draw[fill] (6,1) circle [radius=0.2];
\draw[fill] (6,2) circle [radius=0.2];
\draw [thick,->] (-0.5,1) -- (7.5,1);

\node at (1,0.3) {\normalcolor$i$};
\node at (4,0.05) {\normalcolor$k$};
\node at (6,0.3) {\normalcolor$\ell$};

\draw [ultra thick] (1,1) to[out=180,in=180] (1,3);
\draw [ultra thick] (1,2) to[out=-70,in=-90] (6,2);
\draw [ultra thick] (1,4) to[out=-70,in=120] (4,1);
\draw [ultra thick] (4,6) to[out=-10,in=0] (6,1);
\draw [ultra thick] (4,3) to[out=-10,in=0] (4,4);
\draw [ultra thick] (4,2) to[out=-10,in=0] (4,5);

\draw[fill] (11,1) circle [radius=0.2];
\draw[fill] (11,2) circle [radius=0.2];
\draw[fill] (11,3) circle [radius=0.2];
\draw[fill] (11,4) circle [radius=0.2];
\draw[fill] (14,1) circle [radius=0.2];
\draw[fill] (14,2) circle [radius=0.2];
\draw[fill] (14,3) circle [radius=0.2];
\draw[fill] (14,4) circle [radius=0.2];
\draw[fill] (16,1) circle [radius=0.2];
\draw[fill] (16,2) circle [radius=0.2];
\draw[fill] (16,3) circle [radius=0.2];
\draw[fill] (16,4) circle [radius=0.2];
\draw [thick,->] (9.5,1) -- (17.5,1);

\draw [ultra thick] (11,1) to[out=180,in=180] (11,3);
\draw [ultra thick] (11,2) to[out=-70,in=-90] (16,2);
\draw [ultra thick] (11,4) to[out=-70,in=120] (14,1);
\draw [ultra thick] (14,4) to[out=-45,in=135] (16,1);
\draw [ultra thick] (14,2) to[out=-10,in=220] (16,4);
\draw [ultra thick] (16,3) to[out=180,in=0] (14,3);

\draw [ultra thick,->] (7,4) to[out=30,in=150] (10,4);
\node at (8.5,5.3) {\normalcolor$S_{vw}$};

\end{tikzpicture}
\vspace{0.1cm}
  \caption{The map $S_{vw}$ in case of a jump.
 }
   \label{fig:sub4}
\end{subfigure}
\end{figure}

In this proof, for any set $A$ we denote $A^2_*=\{(a,b)\in A^2:a\neq b\}$.
The following result is the key step in our proof of Theorem \ref{thm:EMF}.

\begin{lemma}\label{lem:key}
For any $G\in\mathcal{G}_{\boeta}$, we have
\begin{equation}\label{sum11}
X^2P(G)=\sum_{(v,w)\in(\mathcal{V}_k\cup\mathcal{V}_\ell)^2_*}\epsilon(v,w)P(S_{vw}G)-(2\eta_k+2\eta_\ell)P(G).
\end{equation}
\end{lemma}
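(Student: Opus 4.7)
The plan is to recast the differential operator $X = X_{k\ell}^{(s)}$ as a vertex-coloring operation on perfect matchings, so that applying it twice produces the structure on the right-hand side of (\ref{sum11}) directly.

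The first step is to compute $Xp_{ij}$ for every pair of indices. Using $Xu_k(\alpha)=-u_\ell(\alpha)$, $Xu_\ell(\alpha)=u_k(\alpha)$, and $Xu_m(\alpha)=0$ for $m\notin\{k,\ell\}$, together with the definition of $p_{ij}$ (which subtracts the constant $C_0$ only in the diagonal case but this constant is annihilated by $X$), a short case analysis gives $Xp_{ij}=0$ unless $\{i,j\}\cap\{k,\ell\}\neq\emptyset$, and otherwise
$Xp_{kj}=-p_{\ell j}$, $Xp_{\ell j}=p_{kj}$ for $j\notin\{k,\ell\}$, $Xp_{k\ell}=p_{kk}-p_{\ell\ell}$, $Xp_{kk}=-2p_{k\ell}$, $Xp_{\ell\ell}=2p_{k\ell}$. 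In every case the rule is the same: each occurrence of the index $k$ in the subscript of $p$ is replaced by $\ell$ with a minus sign, and each occurrence of $\ell$ is replaced by $k$ with a plus sign.

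This rule has a natural vertex interpretation. For each $v\in \mathcal{V}_k\cup \mathcal{V}_\ell$, let $G^v$ be the matching obtained from $G$ by flipping the color of $v$ between $k$ and $\ell$ (with the indices inside each $\mathcal{V}_i$ reshuffled by the obvious single-vertex analogue of the bijection $j_{vw}$ from the definition of $S_{vw}$); and set $\sigma(v)=-1$ if $v\in \mathcal{V}_k$ and $\sigma(v)=+1$ if $v\in \mathcal{V}_\ell$. Applying the product rule to $P(G)=\prod_{e\in \mathcal{E}(G)}p(e)$ and using the single-$p$ formulas above — noting that for an edge with both endpoints in $\mathcal{V}_k\cup\mathcal{V}_\ell$ both endpoints contribute, which is exactly what produces the factors of $2$ in $Xp_{kk}$ and the two terms in $Xp_{k\ell}$ — one obtains the key identity
$$
XP(G)=\sum_{v\in \mathcal{V}_k\cup \mathcal{V}_\ell}\sigma(v)\,P(G^v).
$$

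Iterating this identity, and denoting by $\widetilde\sigma^v$ the sign function on the vertex set of $G^v$ (so $\widetilde\sigma^v(w)=\sigma(w)$ for $w\neq v$ and $\widetilde\sigma^v(v)=-\sigma(v)$), one gets
$$
X^2P(G)=\sum_{v,w\in \mathcal{V}_k\cup \mathcal{V}_\ell}\sigma(v)\,\widetilde\sigma^v(w)\,P(G^{v,w}).
$$
The diagonal $v=w$ contributes $G^{v,v}=G$ and $\sigma(v)\widetilde\sigma^v(v)=-1$, yielding exactly $-|\mathcal{V}_k\cup \mathcal{V}_\ell|\,P(G)=-(2\eta_k+2\eta_\ell)P(G)$. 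The off-diagonal contribution matches the first term of (\ref{sum11}) once we check two things: first, that $G^{v,w}=S_{vw}G$ in the sense that their edge products coincide, which reduces to the two cases of the definition of $S_{vw}$ (transposition when $v,w$ lie in different $\mathcal{V}_i$'s, joint jump when they lie in the same $\mathcal{V}_i$); second, that $\sigma(v)\sigma(w)=\epsilon(v,w)$, which is a one-line check in each of the three sub-cases (both in $\mathcal{V}_k$, both in $\mathcal{V}_\ell$, one in each).

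The main obstacle in this approach is the vertex-based rewriting of $XP(G)$: once it is in place, the evaluation of $X^2P(G)$ is essentially bookkeeping. Establishing the rewriting itself requires only the case analysis for $Xp_{ij}$ together with careful accounting of per-endpoint contributions — in particular, tracking that each index appearance in a subscript corresponds to exactly one vertex of the edge, which is what makes the sum over edges in the product rule reorganize as a sum over vertices. This reorganization is the clean algebraic mechanism behind the combinatorial identity (\ref{sum11}), and it is also what makes the generalization to arbitrary projection families $(\bq_\alpha)_{\alpha\in I}$ in Remark \ref{rem:canon} transparent: the formulas for $Xp_{ij}$ are formally unchanged.
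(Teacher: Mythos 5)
Your proof is correct, and it takes a genuinely different route from the paper's. The paper applies the second-order Leibniz rule to $P(G)=\prod_{e\in\mathcal{E}(G)}p(e)$ directly and then sorts the resulting cross terms by edge type (single, double, transverse), producing nine contributions (I)--(IX) that are evaluated one by one from the rules (\ref{trans3})--(\ref{trans1}). Your intermediate identity
\begin{equation*}
XP(G)=\sum_{v\in\mathcal{V}_k\cup\mathcal{V}_\ell}\sigma(v)\,P(G^v),\qquad \sigma=-1 \ \text{on}\ \mathcal{V}_k,\quad \sigma=+1 \ \text{on}\ \mathcal{V}_\ell,
\end{equation*}
does not appear in the paper; it packages the observation that $Xp_{ki}=-p_{\ell i}$, $Xp_{k\ell}=p_{kk}-p_{\ell\ell}$, $Xp_{kk}=-2p_{k\ell}$, $Xp_{\ell\ell}=2p_{k\ell}$ are all instances of one per-endpoint rule (one checks, edge by edge, that the sum of the vertex terms over the endpoints of $e$ reproduces $Xp(e)$ exactly, the two $C_0$'s cancelling in the transverse case), and it collapses the nine-case analysis into a single iteration followed by a diagonal/off-diagonal split, with $\sigma(v)\sigma(w)=\epsilon(v,w)$ and $P(G^{v,w})=P(S_{vw}G)$ as the only remaining verifications. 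What the paper's version buys is that every cross term is written explicitly via $\tau_{vw}$ and $j_{vw}$, so each case can be checked against the definition of $S_{vw}$; what yours buys is brevity and a structural explanation of why the answer has the form (\ref{sum11}) at all, which also makes Remark \ref{rem:canon} immediate. Two points worth making explicit in a write-up: $G^v$ is not a perfect matching of any admissible configuration (the vertex counts over sites $k$ and $\ell$ become odd), so $P(G^v)$ must be read simply as the monomial obtained from $P(G)$ by flipping the site label of $v$; and the second application of $X$ uses that the flipped vertex still lies over a site in $\{k,\ell\}$ with its sign reversed, which is exactly what produces the coefficient $-1$ on the diagonal $v=w$ and hence the term $-(2\eta_k+2\eta_\ell)P(G)$.
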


We postpone the proof of the above lemma and first finish the proof of Theorem \ref{thm:EMF}. Let 
$$
h(\boeta)=\sum_{G\in\mathcal{G}_{\boeta}}P(G).
$$
Note that if $v\in \mathcal{V}_k$ and $w\in \mathcal{V}_\ell$, $S_{vw}$ is a permutation of $\mathcal{G}_{\boeta}$. Moreover, if $v$ and $w$ are both in $\mathcal{V}_k$, 
 $S_{vw}$ is a bijection from $\mathcal{G}_{\boeta}$ to  $\mathcal{G}_{\boeta^{k\ell}}$. The summation of (\ref{sum11}) over all $G\in\mathcal{G}_{\boeta}$ therefore gives
\begin{align*}
X^2h(\boeta)=&\sum_{(v,w)\in(\mathcal{V}_k)^2_*}\sum_{G\in\mathcal{G}_{\boeta}}P(S_{vw}G)+\sum_{(v,w)\in(\mathcal{V}_\ell)^2_{*}}\sum_{G\in\mathcal{G}_{\boeta}}P(S_{vw}G)
-2\sum_{(v,w)\in\mathcal{V}_k\times\mathcal{V}_\ell}\sum_{G\in\mathcal{G}_{\boeta}}P(S_{vw}G)
-2(\eta_k+\eta_\ell)h(\boeta)\\
=&\sum_{(v,w)\in(\mathcal{V}_k)^2_*}h(\boeta^{k\ell})+\sum_{(v,w)\in(\mathcal{V}_\ell)^2_*}h(\boeta^{\ell k})
-2\sum_{(v,w)\in\mathcal{V}_k\times\mathcal{V}_\ell}h(\boeta)
-2(\eta_k+\eta_\ell)h(\boeta)\\
X^2h(\boeta)=&\ 2\eta_k(2\eta_k-1)h(\boeta^{k\ell})+2\eta_\ell(2\eta_\ell-1)h(\boeta^{\ell k})-(2\eta_k(2\eta_\ell+1)+2\eta_\ell(2\eta_k+1))h(\boeta).
\end{align*}
The above equation implies (\ref{eqn:application}) after renormalization by $\mathcal{M}(\boeta)$. This concludes the proof of Theorem \ref{thm:EMF}.

\begin{proof}[Proof of Lemma \ref{lem:key}]
Let  $G\in\mathcal{G}_{\boeta}$ and $1\leq k<\ell\leq n$ be fixed. The Leibniz rule applies: for any smooth functions $f,g((u_i(\al))_{1\leq i,\al\leq n}):\RR^{n^2}\to\RR$ we have
$X(fg)=fX(g)+g X(f)$, so that
\begin{equation}\label{eqn:Leibniz}
X^2 P(G)=\sum_{(e_1,e_2)\in \mathcal{E}(G)^2_*}Xp(e_1)Xp(e_2)\prod_{e\in\mathcal{E}(G)\backslash\{e_1,e_2\}}p(e)
+
\sum_{e_1\in \mathcal{E}(G)}X^2p(e_1)\prod_{e\in\mathcal{E}(G)\backslash\{e_1\}}p(e).
\end{equation}
The above sums will be decomposed depending of the following edge group (single, double or transverse):
\begin{align}
\mathcal{E}_s&=\mathcal{E}(G)\cap\{\{v,w\}:v\in \mathcal{V}_k\cup\mathcal{V}_\ell,w\not\in\mathcal{V}_k\cup\mathcal{V}_\ell\},\label{edg1}\\
\mathcal{E}_d&=\mathcal{E}(G)\cap\{\{v,w\}:(v,w)\in \mathcal{V}_k^2\cup\mathcal{V}_\ell^2\},\label{edg2}\\
\mathcal{E}_t&=\mathcal{E}(G)\cap\{\{v,w\}:v\in \mathcal{V}_k,w\in\mathcal{V}_\ell\}\label{edg3}.
\end{align}
For any $v\in\mathcal{V}_{\boeta}$, let $e_v$ be the edge containing $v$ and $v'$ be the vertex such that $e_v=\{v,v'\}$. We denote
\begin{align*}
\mathcal{V}_s&=\{v\in \mathcal{V}_k\cup\mathcal{V}_\ell:\{v,v'\}\in\mathcal{E}_s\},\\
\mathcal{V}_d&=\{v\in \mathcal{V}_k\cup\mathcal{V}_\ell:\{v,v'\}\in\mathcal{E}_d\},\\
\mathcal{V}_t&=\{v\in \mathcal{V}_k\cup\mathcal{V}_\ell:\{v,v'\}\in\mathcal{E}_t\}.
\end{align*}
Our calculations will be based on the following basic facts: if $e\not\in\mathcal{E}_s\cup\mathcal{E}_d\cup\mathcal{E}_t$ then $X_{k\ell}p(e)=0$, and
\begin{align}
&X p_{ki}=-p_{\ell i}\label{trans3},\\
&X p_{k\ell}=p_{kk}-p_{\ell\ell},\label{trans2}\\
&X p_{\ell\ell}=2 p_{k\ell}\label{trans1}.
\end{align}
From (\ref{eqn:Leibniz}) we have
$
X^2 P(G)={\rm(I)+(II)+(III)+(IV)+(V)+(VI)+(VII)+(VIII)+(IX)}
$
where all terms are defined and calculated below. First,
$$
{\rm(I)}:=\sum_{(e_1,e_2)\in (\mathcal{E}_s)^2_*}Xp(e_1)Xp(e_2)\prod_{e\in\mathcal{E}(G)\backslash\{e_1,e_2\}}p(e)
=
\sum_{(v,w)\in (\mathcal{V}_s)^2_*}Xp_{\{w,w'\}}Xp_{\{v,v'\}}\prod_{e\in\mathcal{E}(G)\backslash\{e_v,e_w\}}p(e).
$$
From (\ref{trans3}),  $Xp_{\{v,v'\}}Xp_{\{w,w'\}}=-p_{\{w,v'\}}p_{\{v,w'\}}$ if $v$ and $w$ are in distinct $\mathcal{V}_i$'s, and 
$Xp_{\{v,v'\}}Xp_{\{w,w'\}}=p_{\{j_{v,w}(v),v'\}}p_{\{j_{v,w}(w),w'\}}$ if they are both in the same $\mathcal{V}_i$. In all cases, we proved
\begin{equation}\label{eqn:I}
{\rm(I)}=\sum_{(v,w)\in (\mathcal{V}_s)^2_*} \e(v,w)P(S_{vw}G).
\end{equation}
We now consider
$$
{\rm(II)}:=\sum_{(e_1,e_2)\in \mathcal{E}_s\times\mathcal{E}_d\cup \mathcal{E}_d\times\mathcal{E}_s}Xp(e_1)Xp(e_2)\prod_{e\in\mathcal{E}(G)\backslash\{e_1,e_2\}}p(e)
=
\sum_{(v,w)\in \mathcal{V}_s\times \mathcal{V}_d}Xp_{\{v,v'\}}Xp_{\{w,w'\}}\prod_{e\in\mathcal{E}(G)\backslash\{e_v,e_w\}}p(e).
$$
For the second equality, note that vertices on a double edge need to be weighted by a factor $1/2$.
From (\ref{trans1}) and (\ref{trans3}),  $Xp_{\{v,v'\}}Xp_{\{w,w'\}}=-2p_{\{w,v'\}}Xp_{\{v,w'\}}$  if $v$ and $w$ are in distinct $\mathcal{V}_{i}$'s, and 
$2p_{\{j_{vw}(v),v'\}}p_{\{j_{vw}(w),w'\}}$ if they are in the same $\mathcal{V}_{i}$.
We therefore have
\begin{equation}\label{eqn:II}
{\rm(II)}=\sum_{(v,w)\in \mathcal{V}_s\times\mathcal{V}_d\cup\mathcal{V}_d\times\mathcal{V}_s} \e(v,w)P(S_{vw}G).
\end{equation}
For the contribution of 
$$
{\rm(III)}:=\sum_{(e_1,e_2)\in (\mathcal{E}_d)^2_*}Xp(e_1)Xp(e_2)\prod_{e\in\mathcal{E}(G)\backslash\{e_1,e_2\}}p(e)
=
\frac{1}{4}\sum_{(v,w)\in (\mathcal{V}_d)^2_*:w\neq v'}Xp_{\{v,v'\}}Xp_{\{w,w'\}}\prod_{e\in\mathcal{E}(G)\backslash\{e_v,e_w\}}p(e),
$$
from (\ref{trans1}) we have $Xp_{\{v,v'\}}Xp_{\{w,w'\}}=-4p_{\{w,v'\}}Xp_{\{v,w'\}}$ if $v$ and $w$ are in distinct $\mathcal{V}_{i}$'s, 
$2p_{\{j_{vw}(v),v'\}}p_{\{j_{vw}(w),w'\}}$ if they are in the same $\mathcal{V}_{i}$. We therefore proved
\begin{equation}\label{eqn:III}
{\rm(III)}=\sum_{(v,w)\in (\mathcal{V}_d)^2_*} \e(v,w)P(S_{vw}G)-\sum_{v\in\mathcal{V}_d} P(S_{vv'}G).
\end{equation}
We now calculate
\begin{equation}\label{eqn:IV}
{\rm(IV)}:=
\sum_{e_1\in \mathcal{E}_s}X^2p(e_1)\prod_{e\in\mathcal{E}(G)\backslash\{e_1\}}p(e)
=
\sum_{v\in \mathcal{V}_s}X^2p_{\{v,v'\}}\prod_{e\in\mathcal{E}(G)\backslash\{e_v\}}p(e)
=
-\sum_{v\in \mathcal{V}_s}P(G)
\end{equation}
where we used  (\ref{trans3}) twice to obtain $X^2p_{\{v,v'\}}=-p_{\{v,v'\}}$.
For the term
$$
{\rm(V)}:=
\sum_{e_1\in \mathcal{E}_d}X^2p(e_1)\prod_{e\in\mathcal{E}(G)\backslash\{e_1\}}p(e)
=
\frac{1}{2}\sum_{v\in \mathcal{V}_d}X^2p_{\{v,v'\}}\prod_{e\in\mathcal{E}(G)\backslash\{e_1\}}p(e),
$$
note that we have
$X^2p_{\{v,v'\}}=2p_{kk}-2p_{\ell\ell}$ if $v\in\mathcal{V}_\ell$, $2p_{\ell\ell}-2p_{kk}$ otherwise. This yields
\begin{equation}\label{eqn:V}
{\rm(V)}=
\sum_{v\in \mathcal{V}_d}(P(S_{v,v'}(G))-P(G)).
\end{equation}
We now consider cases where transverse edges appear:
$$
{\rm(VI)}:=\sum_{(e_1,e_2)\in \mathcal{E}_s\times\mathcal{E}_t\cup \mathcal{E}_t\times\mathcal{E}_s}Xp(e_1)Xp(e_2)\prod_{e\in\mathcal{E}(G)\backslash\{e_1,e_2\}}p(e)
=
2\sum_{v\in \mathcal{V}_s,\{w,w'\}\in\mathcal{E}_t}Xp_{\{v,v'\}}Xp_{\{w,w'\}}\prod_{e\in\mathcal{E}(G)\backslash\{e_v,e_w\}}p(e).
$$
Up to transposing $w$ and $w'$, we can assume that $v$ and $w$ are in the same $\mathcal{V}_i$. With  (\ref{trans3}) and (\ref{trans2}), a calculation gives 
$Xp_{\{v,v'\}}Xp_{\{w,w'\}}=p_{j_{vw}(v)v'}p_{j_{vw}(w)w'}-p_{\tau_{vw'}(v)v'}p_{\tau_{vw'}(w')w}$. This yields
\begin{equation}\label{eqn:VI}
{\rm(VI)}=
\sum_{(v,w)\in\mathcal{V}_s\times\mathcal{V}_t\cup\mathcal{V}_t\times\mathcal{V}_s}\e(v,w)P(S_{vw}(G)).
\end{equation}
We also have
$$
{\rm(VII)}:=\sum_{(e_1,e_2)\in \mathcal{E}_d\times\mathcal{E}_t\cup \mathcal{E}_t\times\mathcal{E}_d}Xp(e_1)Xp(e_2)\prod_{e\in\mathcal{E}(G)\backslash\{e_1,e_2\}}p(e)
=
\sum_{v\in \mathcal{V}_d,\{w,w'\}\in\mathcal{E}_t}Xp_{\{v,v'\}}Xp_{\{w,w'\}}\prod_{e\in\mathcal{E}(G)\backslash\{e_v,e_w\}}p(e).
$$
We can assume $v$ and $w$ are in the same $\mathcal{V}_i$. Then (\ref{trans2})  and (\ref{trans1})  give 
$Xp_{\{v,v'\}}Xp_{\{w,w'\}}=2(p_{j_{vw}(v)v'}p_{j_{vw}(w)w'}-p_{\tau_{vw'}(v)v'}p_{\tau_{vw'}(w')w})$, so that
\begin{equation}\label{eqn:VII}
{\rm(VII)}=
\sum_{(v,w)\in\mathcal{V}_d\times\mathcal{V}_t\cup\mathcal{V}_t\times\mathcal{V}_d}\e(v,w)P(S_{vw}(G)).
\end{equation}
For two transverse edges, we have
$$
{\rm(VIII)}:=\sum_{(e_1,e_2)\in (\mathcal{E}_t)^2_*}Xp(e_1)Xp(e_2)\prod_{e\in\mathcal{E}(G)\backslash\{e_1,e_2\}}p(e)
=
\frac{1}{4}\sum_{(v,w)\in (\mathcal{V}_t)^2_*,w\neq v'}Xp_{\{v,v'\}}Xp_{\{w,w'\}}\prod_{e\in\mathcal{E}(G)\backslash\{e_v,e_w\}}p(e).
$$
Without loss of generality, assume $v$ and $w$ are in the same $\mathcal{V}_i$. 
Equation (\ref{trans2}) yields $Xp_{\{v,v'\}}Xp_{\{w,w'\}}=
p_{\{j_{v,w}(v),v'\}}p_{\{j_{v,w}(w),w'\}}+p_{\{j_{v',w'}(v'),v\}}p_{\{j_{v',w'}(w'),w\}}-
p_{\{\tau_{v,w'}(v),v'\}}p_{\{\tau_{v,w'}(w'),w\}}-
p_{\{\tau_{v',w}(v),v\}}p_{\{\tau_{v',w}(w),w'\}}$. We therefore have
\begin{equation}\label{eqn:VIII}
{\rm(VIII)}=\sum_{(v,w)\in(\mathcal{V}_t)^2_*}\e(v,w)P(S_{vw}(G))+\sum_{v\in\mathcal{V}_t}P(G).
\end{equation}
Finally, from (\ref{trans2}) we have $X^2p_{k\ell}=-4p_{k\ell}$, so that
\begin{equation}\label{eqn:IX}
{\rm(IX)}:=
\sum_{e_1\in \mathcal{E}_t}X^2p(e_1)\prod_{e\in\mathcal{E}(G)\backslash\{e_1\}}p(e)
=
-2\sum_{v\in \mathcal{V}_t}P(G)
\end{equation}
By summation of all equations (\ref{eqn:I}), (\ref{eqn:II}), (\ref{eqn:III}), (\ref{eqn:IV}), (\ref{eqn:V}), (\ref{eqn:VI}), (\ref{eqn:VII}), (\ref{eqn:VIII}), (\ref{eqn:IX}), the right hand sides of (\ref{sum11}) and (\ref{eqn:Leibniz})
exactly coincide, concluding the proof of Lemma \ref{lem:key}.
\end{proof}

\section{Analysis of the eigenvector moment flow}
\label{sec:QUE}

Before getting into the details of the proof of Theorem \ref{thmQUE}, i.e. relaxation for the eigenvector moment flow (\ref{momentFlotSym}), we note substantial differences with the setting and proof from \cite{BouYau2017}. The dynamics equation (\ref{momentFlotSym}) already appeared in \cite{BouYau2017}, but
the observables associated with the equation (\ref{momentFlotSym})  
are now much more general (see Remark \ref{rem:canon}), and their natural scale (i.e., the order of the sizes of these observables) is not known a priori. 

Indeed,  in \cite{BouYau2017}, the order of magnitude of $f_t(\boeta)$ was a priori known: 
$f_t(\boeta)=\E(|\sqrt{n}\langle \bq,u_k\rangle|^d\mid\bla)\leq n^\e$ thanks to the local law.
The eigenvector moment flow  was used in \cite{BouYau2017}  to find fluctuations around this scale.

On the contrary, in the current paper, the eigenvector moment flow  (\ref{momentFlotSym}) allows to find the natural scale for a wider class of observables. For $|I|\sim c n$,  local laws only give the trivial estimate $|p_{ii}|\leq 1$ for example, although the dynamics yield Theorem  \ref{thmQUE}, i.e. $|p_{ii}|\leq n^{-1/2+\e}$ for $t$ approaching 1.

This differences  about observables and scales require the following notable novelties in the proof of Theorem \ref{thmQUE}:
\begin{enumerate}
\item The decomposition between long-range and short-range dynamics is now more intricate. In particular,  our bound on the long-range contribution improves in inductive steps (see Lemma \ref{l:Lbound} to be compared with \cite[Lemma 6.1]{BouYau2017}).
\item The maximum principle, Proposition \ref{thm:maxPrincipleQUE}, also gives stronger  results once it is used inductively,
on space-time embedded domains, while the analogue \cite[Theorem 7.4]{BouYau2017} only required one time step. 
\end{enumerate}

In summary, the error terms in  the finite speed of propagation and the maximum principle estimates depend on  the size of $f_t(\boeta)$. In this paper, the a priori bound on  
$f_t(\boeta)$ is far from its real size. Hence we need to bootstrap our estimates in a suitable way  in order to get a sharp estimate at the end of the proof.   \nc

\medskip 
We now introduce a few notations which will be useful in the statement and proof of the following Lemma \ref{lemdeloc}, and the following of this section.
For a fixed fixed and arbitrarily small $\omega>0$, we define  the control parameter
$$
\psi=n^\omega
$$
with $\omega\leq \mathfrak a/100$,
and  the following time  and spectral domains: 
\begin{equation}\label{Tdef}
\cal T_\omega(\eta_*,\eta^*, r)=\left\{t: \eta_*\psi\le t\le \psi^{-1}\, r\right\},
\end{equation}

\subsection{A priori estimates.}\ For $K(t)$ in \eqref{eqn:Kt}, we denote the initial matrix $V=U_0 \Lambda_0 U_0^*$, where $\Lambda_0=\diag\{\lambda_1(0), \cdots, \lambda_n(0)\}$, and $U_0$ is the orthogonal matrix of its eigenvectors. 
Let $m_{\fc,t}$ be the Stieltjes transform of the free convolution between the empirical spectral measure of $V$ and the Gaussian orthogonal ensemble $Z_t$.
Then  $m_{\fc,t}$ solves the equation
 \begin{equation}\label{eqn:freefunc}
m^{(n)}_{\fc,t}(z)= m_{ V}\left(z+ t\; m^{(n)}_{\fc,t}(z)\right)= \frac{1}{n}\sum_{i=1}^n g_i(t,z),\quad g_i(t,z)\deq \frac{1}{ \lambda_i (0) - z - t m_{\fc,t}(z) }.
\end{equation}  
Here  $m^{(n)}_{\fc,t}(z)$ is the Stieltjes transform of a measure with density denoted $ \rho^{(n)}_{\fc,t} $. For notational convenience we will suppress the superscript and use the notations
$m_{\fc,t}(z)$, $\rho_{\fc,t}$. 
\nc 

The typical location $\gamma_i(s)$ of the $i$th eigenvalue $\lambda_i(s)$  is defined through 
$\int _{-\infty}^{\gamma_i(s)}\rd \rho_{\fc,s}=\frac{i}{n}$.
We also recall the following stability property of the typical locations, see \cite[Lemma 3.4]{LanYau2015}: for any $0<q_1<q_2<1$ and $\omega>0$, for large enough $n$ we have,
for all $s,t\in \cal T_\omega(\eta_*,\eta^*, r)$,
\begin{equation}\label{eqn:stability}
\{i:\gamma_i(s)\in \cal I_{E_0,q_1 r}\}
\subset
\{i:\gamma_i(t)\in \cal I_{E_0,q_2 r}\}.
\end{equation}

\begin{lemma}[Delocalization for deformed matrices]\label{lemdeloc}
Let  $\tau>0$ and  let $\bu_{i,t}$ denote the normalized eigenvector of  $K(t)$ in  \eqref{eqn:Kt}, whose eigenvalues are $\lambda_{i,t}$, $1\leq i\leq n$.
We assume that $t\in \cal T_\omega(\eta_*, \eta^*, r)$,  $V$ is $(\eta_*,\eta^*, r)$-regular   at $E_0$ and bounded as in (\ref{clMang}),
and that there exists  $C>0$  such that for any $D>0$, for large enough $n$ we have 
\be\label{Hop2}
\P\left(\exists E\in \cal I_{E_0,r},  \eta_*<\eta<rn^{-\omega}:\, \im G(0,E+\ii\eta)_{ii}\ge C  \right)\le n^{-D}
\ee
for any $1\leq i\leq n$. Here $G(0, z)$ is the Green function of the initial matrix $V$. Then for any $\kappa,\tau, D>0$, provided that $n$ is sufficiently  large we have 
$$
\P\left( \mathds{1}_{|\lambda_{k,t}-E_0|\le (1-\kappa)r} \  \| u_{k,t}\|^2_\infty\ge
n^{-1+\tau}\right)\le n^{-D} 
$$
uniformly in $1\leq k\leq n$ .  
\end{lemma}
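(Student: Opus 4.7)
The plan is to reduce the $\LL^\infty$ delocalization statement to a high-probability bound on the imaginary part of the diagonal resolvent of $K(t)$ at the optimal scale, and to derive the latter by transporting the initial bound \eqref{Hop2} to time $t$ through the characteristic flow of the free convolution equation \eqref{eqn:freefunc}.

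First I would invoke the elementary spectral identity
$$
|u_{k,t}(\alpha)|^2 \ \leq\ 2\eta\,\im G(t,\lambda_{k,t}+\ii\eta)_{\alpha\alpha},
$$
valid for every $\eta>0$. Hence it suffices to prove that, with overwhelming probability,
$$
\im G(t,E+\ii\eta)_{\alpha\alpha}\ \leq\ n^{\tau/2} \qquad \text{uniformly in } \alpha,\ E\in \cal I_{E_0,(1-\kappa)r},\ \eta = n^{-1+\tau/2},
$$
since then one may take $E=\lambda_{k,t}$ on the event $\{|\lambda_{k,t}-E_0|\leq(1-\kappa)r\}$ to obtain the claimed bound $\|u_{k,t}\|_\infty^2 \leq n^{-1+\tau}$.

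The Green's function bound is obtained by the characteristic flow method for Dyson Brownian motion with deterministic initial condition $V$. Given $z=E+\ii\eta$ at time $t$, I would define a trajectory $s\mapsto z_s$ by $z_t=z$ and $\dot z_s = -m_{\fc,s}(z_s)$; the quantity $m_{\fc,s}(z_s)$ is then conserved along the characteristic. Because the regularity assumption \eqref{e:imasup} enforces $\im m_{\fc,s}\asymp 1$ throughout the spectral window of interest, $\im z_0 \asymp \eta + t$, so that $z_0$ lies in the spectral domain where \eqref{Hop2} applies whenever $t\in\cal T_\omega(\eta_*,\eta^*,r)$. An Ito expansion of $G(s,z_s)_{ii}$ along the characteristic, in the spirit of entrywise local laws for deformed Wigner matrices, should then yield
$$
G(t,z)_{ii}\ =\ g_i(t,z)+\oo(1)
$$
with probability at least $1-n^{-D}$. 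Since $|g_i(t,z)|=\OO(1)$ by the definition \eqref{eqn:freefunc} and the regularity of $V$, this delivers the required bound on $\im G(t,z)_{ii}$.

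The main obstacle will be establishing the entrywise local law along characteristics starting from only the qualitative input \eqref{Hop2} on individual diagonal entries of $G(0,\cdot)$, and extending it down to the optimal scale $\eta = n^{-1+\tau/2}$. Two points require care. First, Ito's formula applied to $G(s,z_s)_{ii}$ produces off-diagonal entries $G_{ij}$, which I would control via Ward's identity $\sum_j |G_{ij}(s,z)|^2 = (\im z)^{-1}\im G_{ii}(s,z)$ together with the uniform lower bound on $\im m_{\fc,s}$. Second, the trajectory $z_s$ must remain inside the regular spectral window, which follows from the constraint $t\leq \psi^{-1}r$ built into \eqref{Tdef} and from the location-stability estimate \eqref{eqn:stability}. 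A continuity (bootstrap) argument in $\eta$, starting from the initial scale $\eta_*$, then promotes the input \eqref{Hop2} to the quantitative bound at the target scale, completing the proof.
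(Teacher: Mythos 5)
Your proposal is correct and follows essentially the same route as the paper: both reduce the claim to the bound $|u_{k,t}(i)|^2\le \eta\,\im G(t,\lambda_{k,t}+\ii\eta)_{ii}$ at $\eta\sim n^{-1+\tau}$, and then control $\im G(t,z)_{ii}$ by comparing it with $\im G(0,z+t\,m_{\fc,t}(z))_{ii}$ --- exactly the time-$0$ endpoint of your characteristic --- which is bounded by the hypothesis \eqref{Hop2} because $\im m_{\fc,t}\asymp 1$ places that point in the admissible spectral window. The only difference is that the paper imports the comparison $G(t,z)_{ii}\approx G(0,z+t\,m_{\fc,t}(z))_{ii}$ directly as the entrywise local law of \cite{BouHuaYau2017} (with a domain adjustment noted in Remark \ref{rem:changescales}), whereas you sketch re-deriving that local law from scratch via the characteristic/It\^o/Ward-identity argument, which is essentially how the cited result is itself proved.
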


\begin{remark}\label{rem:changescales}
This lemma is essentially a restatement of \cite[Theorem 2.1]{BouHuaYau2017}, which
holds in the domain
$
\{z=E+\ii \eta: E\in I^r_\kappa (E_0), \psi^4/n\leq \eta \leq1-\kappa r \}$
under the $(\eta_*,  1, r)$-regularity  for $V$, see \cite[Assumption 1.3]{BouHuaYau2017}.

In the above lemma the assumption is weaker: we only have $(\eta_*,\eta^*, r)$-regularity for $V$. 
A simple inspection of the proof of 
\cite[Theorem 2.1]{BouHuaYau2017} shows that its conclusion remains, in the restricted domain $\psi^4/n\leq \eta \leq r n^{-\omega}$ (which will be sufficient for our purpose) under this $(\eta_*,\eta^*, r)$-regularity assumption.
\end{remark}

\begin{proof}
We bound the eigenvectors coordinates by the diagonal entries of the resolvent through
\begin{equation}\label{eqn:deloc1}
|u_{k,t}(i)|^2\leq n^{-1+\tau} \im G(t,\lambda_{k,t}+\ii n^{-1+\tau})_{ii}.
\end{equation}
If 
$|\lambda_{k,t}-E_0|\leq (1-\kappa)r$, \nc  denoting $z=\lambda_{k,t}+\ii n^{-1+\tau}$ we  have 
\begin{equation}\label{eqn:domain}
z\in\{E+\ii\eta:|E-E_0|<(1-\kappa)r, \frac{\psi^4}{n}\leq \eta\leq r\eta^* \psi^{-1}\}.
\end{equation} 
The  local law from \cite[Theorem 2.1]{BouHuaYau2017} with the domain adjustment from Remark \ref{rem:changescales}
states  that $G(t,z)$ is well approximated by $U_0\diag\{g_1(t,z), g_2(t,z),\dots, g_n(t,z)\}U_0^*$, i.e., 
for any $\eta_*\ll t\ll r$ and  any unit vector $\b q$, uniformly for any $z$ as in \eqref{eqn:domain},  the following holds with overwhelming probability:
\begin{align} 
 \left|\langle \b q, G(t,z) \b q\rangle - \sum_{i=1}^{n}\langle \b u_i(0), \b q\rangle^2g_i(t,z)\right|\leq \frac{\psi^2}
 {\sqrt{n\eta}}\im\left(\sum_{i=1}^{n}\langle \b u_i(0), \b q\rangle^2g_i(t,z)\right).
 \end{align}
Clearly, we can restate the last result as 
\begin{align}\label{e:isolaw}
 \left|\langle \b q, G(t,z) \b q\rangle -\langle \b q,  G(0,  z+ t m_{\fc,t}(z)) \b q\rangle \right|\leq \frac{\psi^2}{\sqrt{n\eta}}\im  \langle \b q, G(0,  z+ t m_{\fc,t}(z)) \b q\rangle ,
 \end{align}
where $G (0, z)$ is the Green's function of $V$. 
Since $\psi^2/\sqrt {n \eta} \le 1$, we have \nc 
\begin{equation}\label{eqn:deloc2}
\left|\im G(t,z)_{ii}-\im G(0,z+t m_{\fc, t}(z))_{ii}\right|\leq
\im G(0,z+t m_{\fc, t}(z))_{ii}. 
\end{equation}
From \cite[Proposition 2.2]{BouHuaYau2017}, for some fixed constant $C>0$ we have $C^{-1}<\im m_{\fc, t}(z)<C$
and $|\re m_{\fc, t}(z)|<C \log n$, so that $\re(z+t m_{\fc, t}(z))\in \cal I_{E_0,r}$ and $\eta_*<\im(z+t m_{\fc, t}(z))<n^{-\omega}r$.
With (\ref{Hop2}), we deduce that
\begin{equation}\label{eqn:deloc3}
\im G(0,z+t m_{\fc, t}(z))_{ii}\leq C
\end{equation}
with overwhelming probability. Equations 
(\ref{eqn:deloc1}), (\ref{eqn:deloc2}), (\ref{eqn:deloc3}) conclude the proof.
\end{proof}

Similarly to \cite{BouYau2017, BouHuaYau2017}, we split split the operator $\mathscr{B}(t)$ from (\ref{momentFlotSym}) into a short-range part and a long range part through a short range parameter $\ell$: $\mathscr{B}(t)=\mathscr{S}(t)+\mathscr{L}(t)$, with
\begin{align*}
&(\mathscr{S} f_t)(\boeta)  =   \sum_{0<|j-k|\leq \ell }  c_{jk}(t) 2 \eta_j (1+ 2 \eta_k) \left(f_t(\boeta^{j k}) - f_t(\boeta)\right),\\
&(\mathscr{L}f_t) (\boeta) =    \sum_{|j-k|>\ell}  c_{jk}(t) 2 \eta_j (1+ 2 \eta_k ) \left(f_t(\boeta^{jk}) - f_t(\boeta)\right).\notag
\end{align*}
Notice that $\mathscr{S}$ and $\mathscr{L}$ are also reversible with respect to the measure $\pi$ from \eqref{eqn:weight}. We denote by $\rU_\mathscr{B}(s,t)$ ($\rU_\mathscr{S}(s,t)$ and $\rU_\mathscr{L}(s,t)$) the semigroup associated with $\mathscr{B}$ ($\mathscr{S}$ and $\mathscr{L}$) from time $s$ to $t$, i.e.
\begin{align*}
\del_t \rU_\mathscr{B}(s,t)=\mathscr{B}(t)\rU_\mathscr{B}(s,t).
\end{align*}

For a fixed $\kappa>0$, consider the following ``distance"
on $n$ particle configurations:
\begin{equation}\label{effdis}
 d(\boeta,\boxi)=\max_{1\leq \alpha\leq d}\#\{i\in\llbracket1,n\rrbracket: \gamma_{i}(t_0)\in I_{\kappa}^{r}(E_0), i\in \llbracket x_\alpha, y_{\alpha}\rrbracket\cup \llbracket y_\alpha, x_{\alpha}\rrbracket\},
 \end{equation}
where $\boeta$: $1\leq x_1\leq x_2\leq\cdots \leq x_d\leq n$ and $\boxi$: $1\leq y_1\leq y_2\leq\cdots \leq y_d \leq n$, and the initial
an initial time  $t_0$ defined in the next lemma.  Note that we use the notation $d$ for $\tilde d$ defined in \cite[equation (3.10)]{BouHuaYau2017}.

\begin{lemma}\label{l:regpath}
Assume the initial estimates (\ref{clMang}), (\ref{jjlzi}), (\ref{e:imasup}) and (\ref{Hop1prime}) hold. We fix times $t_0$, $t_1$, and the range parameter $\ell$, such that 
\begin{equation}\label{eqn:parameters}
\psi\eta_*\leq t_0\leq t_1\leq \frac{\ell}{n\psi}\leq \frac{r}{\psi^{10}}.
\end{equation}
The matrix  Brownian motion $(K(s))_{0\leq s\leq t_1}$ defined in Subsection (\ref{subsec:defBM})) induces a measure on the space of eigenvalues and eigenvectors $(\bm \lambda(s), \bm u(s))$ for $0\leq s\leq t_1$, such that, for any $\kappa>0$, the following event $A$ holds with overwhelming probability:
\begin{enumerate}
\label{rigid}\rm\item The eigenvalue rigidity estimate holds:  $\sup_{t_0\leq s\leq t_1}|m_s(z)-m_{{\rm{fc}},s}(z)|\leq \psi (n\eta)^{-1}$  
uniformly in $z\in \cal D_\kappa$,  and $ \sup_{t_0\leq s\leq t_1}|\lambda_{i}(s)-\gamma_{i}(s)|\leq\psi n^{-1}$ uniformly for indices $i$ such that $\gamma_i(s)\in I_r^{\kappa}(E_0)$.
\rm\item When we condition on the trajectory $\bm \lambda\in A$, with overwhelming probability, the following holds:
\begin{align}\label{evcontrol}
\sup_{t_0\leq s\leq t_1}|G(s,z)_{ii}-G(0,z+s m_{\fc, s}(z))_{ii}|&\leq  C  \frac{\psi^2}{\sqrt{n\eta}},\\
\sup_{t_0\leq s\leq t_1}\left|\frac{1}{|I|}\sum_{i\in I}G(s,z)_{ii}-\frac{1}{n}{\rm Tr}G(s,z)\right|&\leq   \frac{C \psi^2}{n^{\mathfrak{c}}}+\frac{C \psi^4}{n\eta}\label{evcontrol2},
\end{align}
uniformly in $z\in \cal D_\kappa$, where $\mathfrak{b},\mathfrak{c}$ are defined in is defined in Assumption \ref{as:basic}.
\rm\item Finite speed of propagation holds: for any $d$ there exists $C_d,c_d>0$ such that uniformly, for any function $h$ on the space of $d$ particle configurations, and particle configuration $\boxi$ which is away from the support of $h$ in the sense that $  d(\boeta,\boxi)\geq \psi \ell$, we have  for any $\boeta$ in the support of $h$ that 
\begin{align}\label{finitespeed}
\sup_{t_0\leq s'\leq s\leq t_1}\rU_{\mathscr{S}}(s',s)h(\boxi)\leq C_d \|h\|_\infty n^d e^{-c_d\psi}.
\end{align}
\end{enumerate}
\end{lemma}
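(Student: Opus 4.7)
The three parts of the lemma correspond to different facets of the Dyson Brownian motion with deterministic initial condition $V$, and each is obtained by adapting known tools to the setting of Assumption \ref{XYH}.

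For part (1), rigidity follows from the local law for the free convolution $\rho_{\fc,s}$ combined with the regularity assumption on $V$: the results of \cite{LanYau2015,LanSosYau2016} provide $|m_s(z)-m_{\fc,s}(z)| \leq \psi/(n\eta)$ for each fixed $s$ and $z$, and uniformity in $s \in [t_0,t_1]$ and $z \in \cal D_\kappa$ is upgraded by a standard net argument (grid spacing $n^{-C}$) together with Lipschitz continuity of both sides in $s$ and $z$. Eigenvalue rigidity $|\lambda_i(s)-\gamma_i(s)|\leq \psi/n$ for indices $i$ with $\gamma_i(s)\in I_r^\kappa(E_0)$ is then deduced by a Helffer--Sj\"ostrand contour argument at the scale $\eta = \psi/n$, exactly as in the deformed Wigner setting.

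For part (2), estimate \eqref{evcontrol} is the diagonal isotropic local law: applying \cite[Theorem 2.1]{BouHuaYau2017} (with the domain extension of Remark \ref{rem:changescales}) with test vector $\b e_i$ gives $G(s,z)_{ii} = \sum_j (U_0)_{ij}^2 g_j(s,z) + \OO(\psi^2/\sqrt{n\eta}) = G(0, z + s m_{\fc,s}(z))_{ii} + \OO(\psi^2/\sqrt{n\eta})$, uniformly in $i$, $z \in \cal D_\kappa$ and $s \in [t_0,t_1]$, using \eqref{eqn:diaginitial} to keep $\im G(0,\zeta)_{ii}$ bounded at the shifted parameter $\zeta = z + s m_{\fc,s}(z)$. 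For \eqref{evcontrol2}, we average \eqref{evcontrol} separately over $i \in I$ and over $i \in \llbracket 1,n\rrbracket$, then use \eqref{Hop1prime} to compare the two corresponding averages of $G(0,\zeta)_{ii}$; this produces the $\psi^2/n^{\mathfrak{c}}$ contribution (the extra $\psi^2$ comes from controlling the shift $z \mapsto \zeta$). The sharper $\psi^4/(n\eta)$ term comes from the averaged version of \cite[Theorem 2.1]{BouHuaYau2017}: after averaging over a linear-in-$n$ set of indices, the $\psi^2/\sqrt{n\eta}$ pointwise error improves to $\psi^4/(n\eta)$ by the usual factor-of-$\sqrt{n\eta}$ gain for averaged local laws.

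Part (3) is a finite-speed-of-propagation bound for the short-range generator $\mathscr{S}$. Since $\mathscr{S}$ allows only jumps of range $\leq \ell$ and the coefficients $c_{jk}(s) = 1/(n(\lambda_j(s)-\lambda_k(s))^2)$ are uniformly bounded for $0 < |j-k| \leq \ell$ thanks to the rigidity of part (1), a standard $L^\infty$ cutoff argument on the Dirichlet form (of the type used in \cite[Lemma 3.4]{BouHuaYau2017} and \cite[Lemma 6.1]{BouYau2017}) shows that $\rU_\mathscr{S}(s',s)h(\boxi)$ decays exponentially as $d(\boeta,\boxi)$ grows beyond $\psi \ell$, the exponent $e^{-c_d \psi}$ reflecting that the available time $t_1 \leq \ell/(n\psi)$ is much smaller than what a random walk needs to travel the buffer distance $\psi\ell$. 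The pre-factor $n^d$ is a crude $L^\infty$ bound on $\rU_\mathscr{S}h$ after rescaling by the reversible weight $\pi$. The distance \eqref{effdis} is tailored so that only bulk indices contribute, which is essential because $c_{jk}$ is not well controlled for edge indices.

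The main obstacle is the second bound \eqref{evcontrol2}: the deterministic initial-time information \eqref{Hop1prime} must be transported through the dynamical evolution to time $s \in [t_0,t_1]$, and the resulting error must combine the initial defect $n^{-\mathfrak{c}}$ and the stochastic fluctuation $\psi^4/(n\eta)$ in a way that is both sharp and uniform in $z \in \cal D_\kappa$. The key technical point is that the spectral parameter shift $\zeta = z + s m_{\fc,s}(z)$ stays within the regularity domain of $V$, which follows from the construction of $m_{\fc,s}$ in \eqref{eqn:freefunc} and the choice of parameters \eqref{eqn:parameters}.
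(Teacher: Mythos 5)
Your proposal is correct and follows essentially the same route as the paper: parts (1), \eqref{evcontrol} and (3) are imported from \cite{LanYau2015} and \cite{BouHuaYau2017} respectively, and \eqref{evcontrol2} is obtained from the same three-term decomposition — dynamic difference averaged over $I$, the initial-data comparison \eqref{Hop1prime} at the shifted parameter $z+sm_{\fc,s}(z)$, and the dynamic difference averaged over all indices — with the $1/(n\eta)$ error for the averaged terms coming from the fluctuation-averaging improvement, exactly as in the paper. You also correctly flag that the partial average over $I$ requires replicating the fluctuation-averaging argument rather than just averaging the pointwise bound, and that the shifted parameter must remain in the regularity domain, which are the two points the paper's proof emphasizes.
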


\begin{proof}
The statement (i) was proved in \cite[Theorem 3.3 and 3.5]{LanYau2015},   and (\ref{evcontrol}) and (iii) were given in  Theorem 2.1 and  Lemma 3.4 of  \cite{BouHuaYau2017} respectively.
For the proof of (\ref{evcontrol2}), we decompose
\begin{multline}
\frac{1}{|I|}\sum_{i\in I}G(s,z)_{ii}-\frac{1}{n}{\rm Tr}G(s,z)
=
\frac{1}{|I|}\sum_{i\in I}\left(G(s,z)_{ii}-G(0,z+s m_{\fc, s}(z))_{ii}
\right)\\
+
\left(\frac{1}{|I|}\sum_{i\in I}G(0,z+s m_{\fc, s}(z))_{ii}-
\frac{1}{n}\sum_{1\leq i\leq n}G(0,z+s m_{\fc, s}(z))_{ii}\right)
-\frac{1}{n}\sum_{1\leq i\leq n}\left(G(s,z)_{ii}-G(0,z+s m_{\fc, s}(z))_{ii}
\right) 
\end{multline}
The second sum is exactly the left hand side of (\ref{Hop1prime}), so it is bounded by $n^{-\mathfrak{c}}$. 
The third sum   is just the difference between Stieltjes transforms and it was proved in \cite[Theorem 3.3]{LanYau2015}
is of order at most $n^\e/(n\eta)$ thanks to \eqref{e:isolaw}.  Notice that we have used  $m_{\fc,s}(z) = G(0,z+s m_{\fc, s}(z))$ by definition.

The first sum of the last displayed equation  is of same type as the third one, except that the average is over not all entries but  a macroscopic fraction of them. The proof  in   \cite[Theorem 3.3]{LanYau2015},  based on a fluctuation averaging lemma,   can  be replicated to yield  that
$$
\left|\frac{1}{|I|}\sum_{i\in I}\left(G(s,z)_{ii}-G(0,z+s m_{\fc, s}(z))_{ii}
\right)\right|\leq \frac{n^\e}{n\eta}
$$
with overwhelming probability. This completes  the proof of Lemma \ref{l:regpath}.
\end{proof}

\begin{remark}
The following is an elementary consequence of the above rigidity estimate (i) together with (\ref{eqn:stability}).
For any $t_0\leq s\leq t_1$ and  and interval $I\subset I_\kappa^r(E_0)$ with $|I|\geq \psi^4/n$, we have
\begin{align}\label{e:avdeleig}
C^{-1}|I|n\leq   \#\{i: \gamma_i(s)\in I\}+ \#\{i: \lambda_i(s)\in I\} \leq C|I|n.
\end{align}
\end{remark}

\subsection{Approximation with short range dynamics.}\ 
We introduce the notation
$$
{\rm S}^{(u,v)}_I=\sup_{\boeta\subset I,u\leq s\leq v}f_s(\boeta).
$$
for the following lemma. Moreover, for $i\in \mathbb{Z}$ and $J\subset\mathbb{Z}$, let $d(i,J)=\inf_{j\in J} |i-j|$. Finally, from here, we assume that the number of particles of the eigenvector moment flow is even, i.e.
$$
d=2m.
$$

\begin{lemma}\label{l:Lbound}
Under the assumptions of Lemma  \ref{l:regpath}, consider $\bm \lambda\in A$, with $A$ defined in the same lemma. Consider the perfect matching observables $f_u$
from (\ref{feqC}). Then, for large enough $n$, for any intervals  $J_{\rm in}\subset\{i:\gamma_i(t_0)\in I_{2\kappa}^r(E)\}$ and $J_{\rm out}=\{i:d(i,J_{\rm in})\leq \psi \ell\}$, any
$d$-particle configuration $\boxi$ supported on $J_{\rm in}$, and any $t_0<u<v<t_1$ we have
$$
\left|\left((\rU_{\mathscr{B}}(u,v)-\rU_{\mathscr{S}}(u,v))f_{u}\right)(\boxi)\right|
\leq \psi^{4}\frac{n|u-v|}{\ell}
\left({\rm S}^{(u,v)}_{J_{\rm out}}
+\frac{1}{n^{\mathfrak{c}}}({\rm S}^{(u,v)}_{J_{\rm out}})^{\frac{d-1}{d}}
+\frac{1}{\ell}({\rm S}^{(u,v)}_{J_{\rm out}})^{\frac{d-2}{d}}\right).
$$
\end{lemma}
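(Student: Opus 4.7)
The plan combines Duhamel's identity, the finite‑speed‑of‑propagation estimate of Lemma~\ref{l:regpath}(iii), and orthogonality sum rules on the overlap variables $p_{ij}$ to bound the difference of propagators.

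\emph{Duhamel and localization.} Since $\mathscr{B}=\mathscr{S}+\mathscr{L}$ and $f_s=\rU_\mathscr{B}(u,s)f_u$ by Theorem~\ref{thm:EMF}, Duhamel's formula gives
\[
\bigl((\rU_\mathscr{B}(u,v)-\rU_\mathscr{S}(u,v))f_u\bigr)(\boxi)=\int_u^v\bigl(\rU_\mathscr{S}(s,v)\,\mathscr{L}(s)f_s\bigr)(\boxi)\,\rd s.
\]
Because $\boxi\subset J_{\rm in}$, the finite‑speed bound (\ref{finitespeed}) restricts the integrand, up to an $O(n^{-D})$ error, to configurations $\boeta$ with $d(\boeta,\boxi)\leq\psi\ell$, and hence $\boeta\subset J_{\rm out}$. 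It therefore suffices to estimate $(\mathscr{L}(s)f_s)(\boeta)$ for $\boeta\subset J_{\rm out}$.

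\emph{Interior vs.\ exterior jumps.} For such $\boeta$, the generator $\mathscr{L}(s)$ sums over transitions $\boeta\to\boeta^{jk}$ with $j\in\supp(\boeta)\subset J_{\rm out}$ and $|j-k|>\ell$. I split according to whether the target $k$ lies in $J_{\rm out}$ (interior) or not (exterior). In the interior case both $\boeta$ and $\boeta^{jk}$ lie in $J_{\rm out}$, so the trivial bound $|f_s(\boeta^{jk})-f_s(\boeta)|\leq 2\mathrm{S}^{(u,v)}_{J_{\rm out}}$ combined with $\sum_{|j-k|>\ell}c_{jk}(s)\lesssim n/\ell$ (on the rigidity event of Lemma~\ref{l:regpath}(i)) and integration over $s$ produces the first error term $(n/\ell)\,\mathrm{S}^{(u,v)}_{J_{\rm out}}$. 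The exterior case is where the remaining two terms appear.

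\emph{Matching expansion and sum rules.} For $k\notin J_{\rm out}$, I expand $f_s(\boeta^{jk})$ as a sum over perfect matchings of $\mathcal{V}_{\boeta^{jk}}$ and classify each matching by how the two new vertices at $k$ are paired: either with each other, producing a factor $p_{kk}$ multiplying a matching of the remaining $(d-1)$‑particle configuration in $J_{\rm out}$; or with two other vertices at positions $i,i'\in J_{\rm out}$, producing a factor $p_{ki}p_{ki'}$ multiplying a matching of the remaining $(d-2)$‑particle configuration. The identity $\sum_k u_k(\alpha)u_k(\beta)=\delta_{\alpha\beta}$, combined with $C_0=|I|/n$, yields the exact sum rules
\[
\sum_{k=1}^n p_{kk}=0,\qquad \sum_{k=1}^n p_{ki}\,p_{ki'}=(1-2C_0)\,p_{ii'}\quad(i\neq i').
\]
Writing $\sum_{|j-k|>\ell}=\sum_k-\sum_{|j-k|\leq\ell}$, in the diagonal $p_{kk}$‑channel the weighted full‑$k$ sum $\sum_k c_{jk}(s)p_{kk}$ is controlled by (\ref{evcontrol2})---the difference between the averages of $G_{ii}$ over $I$ and over $\llbracket 1,n\rrbracket$---delivering the $n^{-\mathfrak{c}}$ factor. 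In the off‑diagonal $p_{ki}p_{ki'}$‑channel the full‑$k$ sum collapses to an overlap $p_{ii'}$ internal to $J_{\rm out}$. The short‑range residues $\sum_{|j-k|\leq\ell}$ involve only $k\in J_{\rm out}$, and a finer analysis of $c_{jk}$ across that cutoff range supplies the extra $\ell^{-1}$ factor in the third term. H\"older's inequality applied to the remaining product of $d-1$ or $d-2$ overlaps then bounds it by the corresponding fractional power $(\mathrm{S}^{(u,v)}_{J_{\rm out}})^{(d-1)/d}$ or $(\mathrm{S}^{(u,v)}_{J_{\rm out}})^{(d-2)/d}$. Integrating over $s\in[u,v]$ and absorbing the polylogarithmic rigidity and local‑law errors into the prefactor $\psi^4$ gives the stated bound.

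\emph{Main obstacle.} The principal difficulty is the combinatorial bookkeeping in the exterior case: after summing over $k$ via the two orthogonality identities, one must verify that the remaining matching factors really do reorganize into a lower‑order perfect‑matching observable supported on $J_{\rm out}$, uniformly across matching types, so that H\"older's inequality legitimately produces the fractional exponents $(d-1)/d$ and $(d-2)/d$, together with precise tracking of the $c_{jk}$ weights responsible for the sharp $\ell^{-1}$ factor in the last term.
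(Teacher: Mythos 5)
Your skeleton is right and matches the paper's: Duhamel plus the finite-speed bound \eqref{finitespeed} reduces the problem to bounding $(\mathscr{L}(s)f_s)(\wt\boeta)$ for $\wt\boeta$ supported in $J_{\rm out}$, the ``easy'' jumps give the $(n/\ell)\,{\rm S}$ term, and for the remaining jumps one expands the perfect matchings of $\boeta^{jk}$ according to whether the two new vertices at $k$ are paired with each other (a $p_{kk}$ channel of degree $d-1$) or not (a $p_{ki}p_{ki'}$ channel of degree $d-2$), closing with H\"older and Lemma \ref{lem:YoungBound}. However, both of your key analytic steps in the exterior case have genuine gaps.

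First, in the diagonal channel you assert that the weighted sum $\sum_k c_{jk}(s)p_{kk}$ ``is controlled by \eqref{evcontrol2}.'' But \eqref{evcontrol2} controls $\frac1n\sum_k \frac{p_{kk}}{z-\lambda_k}$, a first-order pole evaluated at $\im z=\eta>0$, whereas you need $\sum_{|j-k|>\ell}\frac{p_{kk}}{n(\lambda_j-\lambda_k)^2}$, a truncated sum of second-order real poles. Moreover the decomposition $\sum_{|j-k|>\ell}=\sum_k-\sum_{|j-k|\le\ell}$ is not available here: the full sum $\sum_k c_{jk}p_{kk}$ is dominated by $k$ adjacent to $j$, where $c_{jk}\sim n$, so both pieces are far larger than the target. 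The paper bridges this gap with a contour argument (equations \eqref{eqn:bd1}--\eqref{eqn:bd2}): it represents the truncated sum via Cauchy's formula on a rectangle of height $\ell/n$, controls the contour integrand by \eqref{evcontrol2}, and differentiates in $z$ to produce the second-order poles; this is where the factor $\frac{n}{\ell}\bigl(\frac1\ell+\frac{1}{n^{\mathfrak c}}\bigr)$ actually comes from. Your proposal contains no substitute for this step.

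Second, in the off-diagonal channel the sum rule $\sum_k p_{ki}p_{ki'}=(1-2C_0)p_{ii'}$ (which is correct as an unweighted identity) does not make the weighted sum $\sum_{|j-k|>\ell}\frac{p_{ki}p_{ki'}}{n(\lambda_j-\lambda_k)^2}$ ``collapse'': the weights are non-constant and singular, so no cancellation is extracted without an additional summation-by-parts or contour device, and none is available here since there is no analogue of \eqref{evcontrol2} for off-diagonal overlaps. The paper does not attempt cancellation in this channel at all; it bounds absolute values via Cauchy--Schwarz, $|p_{ki}p_{ki'}|\le\frac12(p_{ki}^2+p_{ki'}^2)$, the delocalization identity $\sum_k p_{ik}^2=\sum_{\alpha\in I}u_i(\alpha)^2=\OO(n^{\e}|I|/n)$, and the crude bound $c_{jk}\le n/\ell^2$ valid for $|j-k|>\ell$ by rigidity. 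This is what produces the $\ell^{-1}({\rm S})^{(d-2)/d}$ term; your ``finer analysis of $c_{jk}$ across the cutoff'' is exactly the argument that is missing. You correctly identify these issues as the main obstacles, but as written the proposal does not overcome them.
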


\begin{proof}
 We first define, similarly to \cite{BouYau2017, BouHuaYau2017}, the following flattening  operators on the space of functions of configurations with $d$ points: 
\begin{align*}
&({\rm Flat}_a (f))(\boeta)=\left\{
\begin{array}{cc}
f(\boeta), & \ {\rm if }\ \boeta\subset \{i:d(i,J_{\rm in})\leq a\},\\
0, &\ {\rm otherwise},
\end{array}
\right.
\end{align*}
By Duhamel's formula,
$$
\left(\left(\rU_{\mathscr{S}}(u,v)-\rU_{\mathscr{B}}(u,v)\right)f_{u}\right)(\boxi)
=\int_{u}^v \rU_{\mathscr{S}}(s,v)\mathscr{L}(s)f_{s} (\boxi)\rd s.
$$
Notice that $d (\supp(\mathscr{L}(s)f_{s}-{\rm Flat}_{\psi\ell}(\mathscr{L}(s)f_{s})), \boxi\}\geq \psi\ell$. Therefore by the finite speed of propagation \eqref{finitespeed} in Lemma \ref{l:regpath} of $\rU_{\mathscr{S}}$,  
we have
\begin{multline}
|(\rU_{\mathscr{S}}(s,v)\mathscr{L}(s)f_{s}) (\boxi)|
=|\rU_{\mathscr{S}}(s,v){\rm Flat}_{\psi\ell}(\mathscr{L}(s)f_{s}) (\boxi)|+\OO(e^{-c\psi/2})\\
\leq  \max_{\wt\boeta}|{\rm Flat}_{\psi\ell}(\mathscr{L}(s)f_{s}) (\wt\boeta)|+\OO(e^{-c\psi/2}).\label{eqn:reduction}
\end{multline}
where in the last inequality, we used that $\rU_{\mathscr{S}}$ is a contraction in ${\rm L}^\infty$.

Let $\wt\boeta$ be a configuration $ \{(i_1,  j_1),  \dots, (i_d , j_d) \}$, with support in $J_{\rm out}$. 
In view of (\ref{eqn:reduction}), we only need to prove that
\begin{equation}\label{eqn:Lbound}
|(\mathscr{L}(s)f_{s}) (\wt\boeta)|\leq \psi^{4}\frac{n}{\ell}
\left({\rm S}^{(u,v)}_{J_{\rm out}}
+\frac{1}{n^{\mathfrak{c}}}({\rm S}^{(u,v)}_{J_{\rm out}})^{\frac{d-1}{d}}
+\frac{1}{\ell}({\rm S}^{(u,v)}_{J_{\rm out}})^{\frac{d-2}{d}}\right).
\end{equation}
We have
$$
\mathscr{L}(s)f_{s}(\wt\boeta)\leq\left|\sum_{|j-k|\geq \ell}\frac{f_{s}(\wt\boeta^{jk})}{n(\lambda_j-\lambda_k)^2}\right|+ |f_{s}(\wt\boeta)|\sum_{1\leq p\leq d,|i_p-k|\geq \ell}\frac{1}{n(\lambda_{i_p}-\lambda_k)^2}.
$$
Notice that $i_p \in J_{\rm out}$, and thus $\lambda_{i_p}(s)\in I_\kappa^r(E_0)$. We denote $\eta_q=2^q\ell/n$. From the local law and a dyadic decomposition we have
$$
\sum_{k:|i_p-k|\geq \ell}\frac{1}{n(\lambda_{i_p}-\lambda_k)^2}
\leq \sum_{q=1}^{\lceil \log_2 n/\ell\rceil}\frac{1}{\eta_q}\sum_{k}\frac{\eta_q}{n((\lambda_{i_p}-\lambda_k)^2+\eta_q^2)}
\leq \frac{n}{\ell},
$$
so that the second term on the right hand side of (\ref{eqn:Lbound}) is bounded by the right hand side of (\ref{eqn:reduction}), as desired.

More subtle bounds are required for
$$
\sum_{|j-k|\geq \ell}\frac{f_{s}(\wt\boeta^{jk})}{n(\lambda_j-\lambda_k)^2}=
\sum_{|j-k|\geq \ell, \wt\eta_k=0}\frac{f_{s}(\wt\boeta^{jk})}{n(\lambda_j-\lambda_k)^2}+\OO\left(\frac{n}{\ell^2}\right)\sup_{u\leq s\leq v,\boeta\subset  J_{\rm out}}
|f_s(\boeta)|
$$
where we used that $\wt\boeta^{jk}\subset J_{\rm out}$ if $\wt\eta_k\neq 0$, and $1/(n(\lambda_j-\lambda_k)^2)\leq 1/(n(\ell/n)^2)$ for $|j-k|\geq \ell$, by rigidity, see Lemma \ref{l:regpath} (i).
For fixed $p$, we therefore want to bound
$$
\sum_{|i_p-k|\geq \ell,\wt\eta_k=0}\sum_{G\in\mathcal{G}_{\wt\boeta^{i_p k}}}\frac{\mathbb{E}(P(G)\mid\bla)}{n(\lambda_{i_p}-\lambda_k)^2}=({\rm I})+({\rm II})
$$
where $(\rm I)$ corresponds to perfect matchings such that  $\{(k,1),(k,2)\}$ is not an edge, and $(\rm II)$ corresponds to perfect matchings with an edge of type $\{(k,1),(k,2)\}$. More precisely,
\begin{align*}
({\rm I})&=\sum_{1\leq q_1,q_2\leq d} \mathbb{E}\left(P^{(q_1,q_2)}(p(e)_{e\in \mathcal{E}_{\wt\boeta}})\sum_{|k-i_{p}|>\ell,\wt\eta_k=0}\frac{p_{i_{q_1}k}p_{i_{q_2}k}}{n(\lambda_{i_p}-\lambda_k)^2}\mid\bla
\right),\\
({\rm II})&=
\mathbb{E}\left(P^{(p)}((p(e)_{e\in \mathcal{E}_{\wt\boeta}}))\sum_{|k-i_p|>\ell,\wt\eta_k=0}\frac{p_{kk}}{n(\lambda_{i_p}-\lambda_k)^2}\mid\bla
\right),
\end{align*}
with $\mathcal{E}_{\wt\boeta}$ the set of all possible edges between between vetices from $\mathcal{V}_{\wt\boeta}$,  $P^{(p,q)}$ is a finite sum of monic monomials of degree $d-2$, and , $P^{(p)}$ is a finite sum of monic monomials of degree $d-1$.

To bound $(\rm I)$, we simply write
$$
\sum_{|k-i_p|>\ell,\wt\eta_k=0}\frac{p_{i_{q_1}k}p_{i_{q_2}k}}{n(\lambda_{i_p}-\lambda_k)^2}=\OO\left(\frac{1}{n(\ell/n)^2}\sum_k(p_{i_{q_1}k}^2+p_{i_{q_2}k}^2)\right)=
\OO\left(n^{\e}\frac{|I|}{\ell^2}\right),
$$
where we slightly changed the meaning of $p_{kk}$ (only in the equation above and the equation below, $p_{kk}=\sum_{\alpha\in I}u_k(\alpha)^2$, i.e. $C_0=0$ in (\ref{eqn:pkk})) and used the elementary identity
\begin{equation}\label{eqn:elementary}
\sum_{k}p_{ik}^2=\sum_{\alpha\in I} u_i(\alpha)^2=\OO\left(n^{\e}\frac{|I|}{n}\right).
\end{equation}
The above second equality follows from Lemma \ref{lemdeloc}.
Moreover, with Lemma \ref{lem:YoungBound}, we have
$$
\mathbb{E}\left(|P^{(q_1,q_2)}(p(e)_{e\in \mathcal{E}_{\wt\boeta}})|\mid\bla\right)=\OO\left(\sup_{u\leq s\leq v,\boeta\subset  J_{\rm out}}
|f_s(\boeta)|\mathds{1}_{\mathcal{N}(\boeta)=d-2}\right)
=\OO\left(({\rm S}^{(u,v)}_{J_{\rm out}})^{\frac{d-2}{d}}\right),
$$
where we used H{\"o}lder's inequality and Lemma \ref{lem:YoungBound}.
This concludes our estimate for ({\rm I}).

The term ({\rm II}) is more complicated to bound. For fixed $p$ and $s$, let $E_1=\gamma_{i_p-\ell}$, $E_1^-=\gamma_{i_p-\ell-n^\e}$, $E_1^+=\gamma_{i_p-\ell+n^{\e}}$, $E_2=\gamma_{i_p+\ell}$, $E_2^-=\gamma_{i_p+\ell-n^\e}$, $E_2^+=\gamma_{i_p+\ell+n^\e}$. We also define the contour $\Gamma$ as the rectangle with vertices $E_1\pm\ii\frac{\ell}{n}$, $E_2\pm\ii\frac{\ell}{n}$.
Let 
\begin{align}
f(z)&=\sum_{k:\gamma_k\not\in[E_1^-,E_2^+]}\frac{p_{kk}}{n(z-\lambda_k)}\label{eqn:bd1}\\
g(z)&=\sum_{k:\gamma_k\not\in[E_1^-,E_1^+]\cup[E_2^-,E_2^+]}\frac{p_{kk}}{n(z-\lambda_k)}\notag
\end{align}
We now assume $|z-\lambda_{i_p}|\leq n^{-\e}\frac{\ell}{n}$.
By Cauchy's formula, we have
$$
f(z)=\frac{1}{2\pi \ii}\int_{\Gamma}\frac{f(\xi)}{\xi-z}\rd \xi=\frac{1}{2\pi \ii}\int_{\Gamma}\frac{g(\xi)}{\xi-z}\rd \xi,
$$
where for the second equality we used that, for any $\lambda_k$ (and $z$) inside $\Gamma$ we have
$$
\int_{\Gamma}\frac{\rd \xi}{(\xi-\lambda_k)(\xi-z)}=0,
$$
from a residue calculus. Define $\Gamma_{\rm int}=\{z=E+\ii\eta:E=E_1\ \mbox{or}\ E_2, |\eta|<n^\e/n\}$ and $\Gamma_{\rm ext}=\Gamma\slash\Gamma_{\rm int}$.
We first bound the contribution due to small $\eta$: we have
$$
\left|\int_{\Gamma_{\rm int}}\frac{g(\xi)}{\xi-z}\rd \xi\right|\leq
\frac{n}{\ell}\int_{\Gamma_{\rm int}}
\sum_{\substack{k<i_p-\ell-n^\e\\i_p+\ell+n^\e<k\\i_p-\ell+n^\e<k<i_p+\ell-n^\e}}\frac{|p_{kk}|}{n|\lambda_k-\xi|}
$$
We simply bound $|p_{kk}|$ by 1 and obtain that the corresponding integral is at most 
$$\left|\int_{\Gamma_{\rm int}}\frac{g(\xi)}{\xi-z}\rd \xi\right|\leq\frac{n}{\ell}\frac{n^\e}{n}\sum_{k\geq \ell}\frac{1}{n(k/n)}=\OO\left(\frac{n^\e}{\ell}\right).$$
We now bound the contribution from $\Gamma_{\rm ext}$. On this domain, we can afford extending the definition of $g$ to the full sum $1\leq k\leq n$, up to an error of order
$$
n^\e \frac{n}{\ell}\int_{\Gamma_{\rm Ext}}\frac{|p_{kk}|}{n|\xi-E_1|}\leq \frac{n^\e}{\ell}. 
$$
We therefore proved
\begin{equation}\label{eqn:bd2}
f(z)\leq
\frac{n}{\ell}\int_{\Gamma_{\rm Ext}}\left|\frac{1}{n}\sum_{1\leq k\leq n}\frac{p_{kk}}{\xi-\lambda_k}\right||\rd \xi|+\frac{n^\e}{\ell}
\leq
\frac{n}{\ell}\int_{\Gamma_{\rm Ext}}\left(\frac{\psi^4}{n\im \xi}+\frac{\psi}{n^\mathfrak{c}}\right)|\rd \xi|+\frac{n^\e}{\ell}=\OO(n^\e)\left(\frac{1}{\ell}+\frac{1}{n^{\mathfrak{c}}}\right),
\end{equation}
where we used (\ref{evcontrol2}) in the second inequality.
We conclude that 
$\partial_z f(\lambda_{i_p})=\OO(n^\e)\frac{n}{\ell}\left(\frac{1}{\ell}+\frac{1}{n^{\mathfrak{c}}}\right)$, so that
$$
({\rm II})=\OO\left(\frac{n}{\ell^2}+\frac{n}{\ell n^{\mathfrak{c}}}\right)({\rm S}^{(u,v)}_{J_{\rm out}})^{\frac{d-1}{d}}
$$
where we used H{\"o}lder's inequality and Lemma \ref{lem:YoungBound}.
This concludes the proof of (\ref{eqn:Lbound}) and the lemma (note that $\frac{n}{\ell^2}({\rm S}^{(u,v)}_{J_{\rm out}})^{\frac{d-1}{d}}=\OO(\frac{n}{\ell^2}({\rm S}^{(u,v)}_{J_{\rm out}})^{\frac{d-2}{d}})$).
\end{proof}

\begin{lemma}\label{lem:YoungBound}
Denote by $\boeta$  the configuration with $m$ particles at site $i$, $m$ particles at site $j$  and no particles elsewhere. Moreover, 
denote by $ \boeta^{(1)}$ ($ \boeta^{(2)}$ resp.)  the configurations with $d=2m$ particles on the site $i$ (site $j$ resp.) and no particles elsewhere. 
Then there exists $C_1,C_2,C>0$ depending only on $d$ such that 
for any $i<j$ and any time $s$ we have
$$
\mathbb{E}\left(p_{ij}(s)^d\mid \bla\right)
\leq
C_1f_{\bla, s}(\boeta^{(1)})+C_2 f_{\bla, s}(\boeta^{(2)})+C f_{\bla, s} (\boeta).
$$
\end{lemma}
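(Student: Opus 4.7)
The plan is to expand $f_{\bla,s}(\boeta)$ combinatorially, isolate the $\mathbb{E}(p_{ij}^d\mid\bla)$ contribution, and bound the remaining terms via Cauchy--Schwarz and Young's inequality.

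Since $\boeta$ has $\eta_i=\eta_j=m$, each $G\in\mathcal{G}_\boeta$ is classified by the common number $a\in\{0,1,\dots,m\}$ of intra-site $i$-$i$ and $j$-$j$ edges, leaving $2(m-a)$ transverse $i$-$j$ edges. A direct count of matchings gives
\[
\mathcal{M}(\boeta)\, f_{\bla,s}(\boeta)=\sum_{a=0}^m N(a)\,\mathbb{E}\!\bigl(p_{ii}^a p_{jj}^a p_{ij}^{2(m-a)}\,\bigm|\,\bla\bigr),
\]
with $N(a)=\binom{2m}{2a}^2[(2a-1)!!]^2(2(m-a))!>0$ and $N(0)=(2m)!$. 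Solving for the $a=0$ term then expresses $N(0)\mathbb{E}(p_{ij}^{2m}\mid\bla)$ in terms of $f_{\bla,s}(\boeta)$ and the $a\ge 1$ residual, reducing the lemma to an upper bound for the latter in terms of $f_{\bla,s}(\boeta^{(1)})=\mathbb{E}(p_{ii}^{2m}\mid\bla)$ and $f_{\bla,s}(\boeta^{(2)})=\mathbb{E}(p_{jj}^{2m}\mid\bla)$, without reintroducing $\mathbb{E}(p_{ij}^{2m}\mid\bla)$ on the right.

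For the residual, I would first apply the pointwise Cauchy--Schwarz inequality
\[
p_{ij}^2=\Bigl(\sum_{\alpha\in I}u_i(\alpha)u_j(\alpha)\Bigr)^2\le\Bigl(\sum_{\alpha\in I}u_i(\alpha)^2\Bigr)\Bigl(\sum_{\alpha\in I}u_j(\alpha)^2\Bigr)=(p_{ii}+C_0)(p_{jj}+C_0)\le 1,
\]
raised to the $(m-a)$-th power, which bounds $p_{ij}^{2(m-a)}$ by $1$ for every $a\ge 1$ \emph{without} producing any new factor of $p_{ij}^{2m}$. A weighted AM--GM step $|p_{ii}|^a|p_{jj}|^a\le\tfrac12(p_{ii}^{2a}+p_{jj}^{2a})$, followed by the Young-type inequality $p_{ii}^{2a}\le\tfrac{a}{m}p_{ii}^{2m}+\tfrac{m-a}{m}$ (applied as weighted AM--GM on $(p_{ii}^{2m})^{a/m}\cdot 1^{(m-a)/m}$), then produces
\[
\bigl|p_{ii}^a p_{jj}^a p_{ij}^{2(m-a)}\bigr|\le \tfrac{a}{2m}\bigl(p_{ii}^{2m}+p_{jj}^{2m}\bigr)+\tfrac{m-a}{m}.
\]
Taking conditional expectation and summing against the weights $N(a)/\mathcal{M}(\boeta)$ recovers the claimed inequality, up to a universal $d$-dependent additive constant.

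The main obstacle will be removing this additive constant, which does not appear in the statement. In the $C_0=0$ setting---relevant to term (I) in the proof of Lemma \ref{l:Lbound}---the obstacle is absent: $p_{ii}=\sum_{\alpha\in I}u_i(\alpha)^2\ge 0$ makes every summand in the combinatorial expansion of $f_{\bla,s}(\boeta)$ nonnegative, and simply dropping the $a\ge 1$ terms yields $\mathbb{E}(p_{ij}^{2m}\mid\bla)\le (\mathcal{M}(\boeta)/N(0))\,f_{\bla,s}(\boeta)$, i.e.\ $C_1=C_2=0$ and $C=\mathcal{M}(\boeta)/N(0)$. For general $C_0\in[0,1]$, I would pass first through the shifted Gram-matrix observable $g(\boeta)$ obtained by replacing $p_{ii},p_{jj}$ with $p_{ii}+C_0,p_{jj}+C_0$ throughout; by the same nonnegativity argument one gets $\mathbb{E}(p_{ij}^{2m}\mid\bla)\le (\mathcal{M}(\boeta)/N(0))\,g(\boeta)$, and the difference $g(\boeta)-f_{\bla,s}(\boeta)$ is a polynomial in $C_0$ whose coefficients are expectations of mixed monomials in $p_{ii},p_{jj},p_{ij}$ of total degree strictly less than $2m$, which one controls by $f_{\bla,s}(\boeta^{(1)}), f_{\bla,s}(\boeta^{(2)})$ and $f_{\bla,s}(\boeta)$ through Hölder and Jensen, yielding the clean three-term bound.
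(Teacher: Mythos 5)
Your combinatorial expansion of $f_{\bla,s}(\boeta)$ into the monomials $p_{ii}^a p_{jj}^a p_{ij}^{2(m-a)}$ (with nonnegative weights and a strictly positive weight on the pure $p_{ij}^{d}$ term) is exactly the paper's first step, and your count of the matchings is correct. The gap is in the analytic step. You impose on yourself the requirement that $\E(p_{ij}^{d}\mid\bla)$ must not reappear on the right-hand side, and this forces you to use the inhomogeneous bound $p_{ij}^{2(m-a)}\le 1$; that is precisely what produces the additive constant, which is not merely cosmetic: the lemma is applied in situations where $f_{\bla,s}(\boeta^{(1)})$, $f_{\bla,s}(\boeta^{(2)})$, $f_{\bla,s}(\boeta)$ are all $\oo(1)$, so an $\OO(1)$ additive term would make the conclusion useless. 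Your proposed repair via the shifted observable $g$ does not close this gap: the difference $g(\boeta)-f_{\bla,s}(\boeta)$ consists of expectations of monomials of total degree \emph{strictly less} than $d$, and no application of H\"older or Jensen can dominate a degree-$d'$ quantity with $d'<d$ by the degree-$d$ quantities $f_{\bla,s}(\boeta^{(1)}),f_{\bla,s}(\boeta^{(2)}),f_{\bla,s}(\boeta)$ without an additive constant --- if all overlaps are of size $\delta$, the left side of such a bound is of order $\delta^{d'}$ while the right side is of order $\delta^{d}\ll\delta^{d'}$.

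The missing idea is that one should \emph{allow} $\E(p_{ij}^{d}\mid\bla)$ on the right with a small coefficient and then absorb it. Every monomial in the expansion has total degree exactly $d$, so the homogeneous weighted Young inequality applies with a free scaling parameter: for $\alpha+\beta+\gamma=d$ with $\alpha<d$,
\begin{equation*}
\absb{\E(p_{ij}^\alpha p_{ii}^\beta p_{jj}^\gamma\mid\bla)}\leq \frac{\alpha\varepsilon^2}{d}\,\E(p_{ij}^{d}\mid\bla)+\frac{\beta}{d\varepsilon}\,\E(p_{ii}^{d}\mid\bla)+\frac{\gamma}{d\varepsilon}\,\E(p_{jj}^{d}\mid\bla),
\end{equation*}
with no constant term. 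Summing against the (nonnegative) weights and moving the resulting $\OO(\varepsilon^2)\,\E(p_{ij}^{d}\mid\bla)$ to the left, one concludes by choosing $\varepsilon=\varepsilon(d)$ small enough that the coefficient of $\E(p_{ij}^{d}\mid\bla)$ on the left stays bounded below. This is the paper's proof; your argument becomes correct once you replace the Cauchy--Schwarz/shifted-observable detour by this absorption step.
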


\begin{proof}
From (\ref{feqC}), we have
\begin{equation}\label{eqn:mom1}
f_{\bla, s}(\boeta)=a_d \E(p_{ij}^d\mid\bla)+
\sum_{\alpha+\beta+\gamma=d,\alpha<d} b_{\alpha,\beta,\gamma}\E(p_{ij}^\alpha p_{ii}^\beta p_{jj}^\gamma\mid\bla)
\end{equation}
for some coefficients $a_d>0$, $b_{\alpha,\beta,\gamma}\geq 0$. From Young's inequality, for any $\varepsilon>0$ we have
\begin{equation}\label{eqn:mom2}
\left|\E(p_{ij}^\alpha p_{ii}^\beta p_{jj}^\gamma\mid\bla)\right|\leq
\frac{\alpha\varepsilon^2}{d}\E(p_{ij}^{d}\mid\bla)+
\frac{\beta}{d\varepsilon}\E(p_{ii}^{d}\mid\bla)+
\frac{\gamma}{d\varepsilon}\E(p_{jj}^{d}\mid\bla).
\end{equation}
Equations (\ref{eqn:mom1}) and (\ref{eqn:mom2})  imply
$$
\E(p_{ij}^{d}\mid\bla)\leq \frac{f_{\bla, s}(\boeta)}{a_d}+\sum_{\alpha+\beta+\gamma=d,\alpha<d} \frac{b_{\alpha,\beta,\gamma}}{a_d}\left(
\frac{\alpha \varepsilon^2}{d}\E(p_{ij}^{d}\mid\bla)+\frac{\beta\E(p_{ii}^{d}\mid\bla)+\gamma\E(p_{jj}^{d}\mid\bla)}{d\varepsilon}
\right).
$$
The result follows by choosing $\varepsilon=\varepsilon(d)$ small enough.
\end{proof}

\subsection{Maximum principle.}\ Iterations of the following proposition will give the main result, Theorem \ref{thmQUE}.

\begin{proposition}\label{thm:maxPrincipleQUE}
For any eigenvalue trajectory $(\bm\la(s))_{0\leq s\leq t_1}\in A$  defined in Lemma \ref{l:regpath}, let $f$ be a solution of the $d$-particle eigenvector moment flow (\ref{ve}) with initial matrix $K(0)$.
For any $C>0$, there exists $n_0$ such that for any $n\geq n_0$ the following holds. For any intervals  $J_{\rm in}\subset\{i:\gamma_i(t_0)\in I_{3\kappa}^r(E_0)\}$,  $J_{\rm out}=\{i:d(i,J_{\rm in})\leq n r/\psi\}$, and $[t,t+u]\subset[t_0,t_1]$ with $u>t/\psi$, we have
\begin{equation}\label{fest2QUE}
{\rm S}^{(t+\frac{u}{2},t+u)}_{J_{\rm in}}
\leq 
\psi^3 \left(\left(\frac{u}{r}\right)^{1/2}+\frac{1}{n t}\right){\rm S}^{(t,t+u)}_{J_{\rm out}}
+\frac{\psi^3 }{n^{\mathfrak{c}}}({\rm S}^{(t,t+u)}_{J_{\rm out}})^{\frac{d-1}{d}} 
+\frac{\psi^3 }{n t}({\rm S}^{(t,t+u)}_{J_{\rm out}})^{\frac{d-2}{d}} +n^{-C}.
\end{equation}
\end{proposition}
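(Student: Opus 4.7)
The plan is to decompose $f_{t+s} = \rU_\mathscr{B}(t,t+s) f_t$ into a short-range piece and a long-range Duhamel remainder, and to analyze each separately. For the long-range part, I would choose the short-range cutoff roughly as $\ell \asymp n\psi\sqrt{ur}$, so that Lemma \ref{l:Lbound} produces, with coefficient $\psi^4(nu/\ell) \asymp \psi^3(u/r)^{1/2}$, precisely the first linear term $\psi^3(u/r)^{1/2}\mathrm{S}^{(t,t+u)}_{J_{\rm out}}$ in (\ref{fest2QUE}) together with the two sublinear error terms $\psi^3 n^{-\mathfrak{c}}(\mathrm{S}^{(t,t+u)}_{J_{\rm out}})^{(d-1)/d}$ and $\psi^3(nt)^{-1}(\mathrm{S}^{(t,t+u)}_{J_{\rm out}})^{(d-2)/d}$ (the last using $t \leq r$ and $u > t/\psi$ to absorb the arising $\psi^2/(nr)$ into $\psi^3/(nt)$). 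Verifying that this $\ell$ satisfies the constraint $n\psi u \leq \ell \leq r/\psi^{10}$ of Lemma \ref{l:Lbound}, and that the resulting $\psi\ell$-neighborhood of $J_{\rm in}$ fits inside the $J_{\rm out}$ of the present proposition (which has the much larger buffer $nr/\psi$), is a routine check in the regime $u \leq r/\psi^{\mathrm{large}}$.

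The remaining task is to estimate $(\rU_\mathscr{S}(t,t+s) f_t)(\boxi)$ for $\boxi \subset J_{\rm in}$ and $s \in [u/2,u]$. By finite speed of propagation (\ref{finitespeed}), only the restriction of $f_t$ to $J_{\rm out}$ matters, up to an $n^{-C}$ error. Here I would run a heat-kernel argument for the reversible Markov semigroup $\rU_\mathscr{S}$, whose jump rates satisfy $c_{k\ell}(t) \asymp n/(k-\ell)^2$ by rigidity (Lemma \ref{l:regpath}(i)); this defines a long-jump random walk on particle configurations with equilibration on the natural time scale $1/n$. Averaging in $s \in [u/2,u]$ and using $\partial_s (\rU_\mathscr{S}(t,t+s) f_t) = \mathscr{S}(t+s)\rU_\mathscr{S}(t,t+s) f_t$ together with spectral-gap/Dirichlet-form estimates (Remark \ref{rem:reversibility}) for $\mathscr{S}$ should then yield the complementary contribution $\psi^3(nt)^{-1}\mathrm{S}^{(t,t+u)}_{J_{\rm out}}$, completing the first summand of (\ref{fest2QUE}).

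The principal obstacle is that, unlike in \cite{BouYau2017}, the observable $f_t$ is not a priori known to be small: only the trivial bound $|f_t| \lesssim 1$ is available. Consequently (\ref{fest2QUE}) must be treated as a self-improving inequality, with $\mathrm{S}^{(t,t+u)}_{J_{\rm out}}$ appearing on both sides, and the sublinear exponents $(d-1)/d$ and $(d-2)/d$ (produced through Young's inequality in the form of Lemma \ref{lem:YoungBound}) are essential to make the subsequent iteration, on progressively shrinking spacetime domains $J_{\rm in} \subset J_{\rm out}$ and $[t+u/2,t+u] \subset [t,t+u]$, converge to the sharp scale $n^{-1/2+\varepsilon}$ required by Theorem \ref{thmQUE}. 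A second subtlety is that the generator $\mathscr{S}(s)$ is non-stationary through its dependence on the (random) eigenvalue trajectory, so the heat-kernel and spectral-gap estimates must be carried out conditionally on the rigidity event $A$ of Lemma \ref{l:regpath}, with the $\psi$-polynomial losses carefully tracked at each step.
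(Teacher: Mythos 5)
Your treatment of the long-range part is essentially the paper's: Duhamel plus Lemma \ref{l:Lbound} with $\ell\asymp n\psi^{O(1)}\sqrt{ur}$ does produce the $\psi^{O(1)}\sqrt{u/r}$ coefficient, and the parameter constraints are indeed routine. The gap is in the short-range part, which is the actual content of the proposition. A spectral-gap/Dirichlet-form argument for $\rU_{\mathscr S}$ cannot work here, for two reasons. First, the relaxation time of the short-range dynamics across a window of $\ell$ indices is of order $\ell/n=\psi^{O(1)}\sqrt{ur}\gg u$, so the walk does not equilibrate over $J_{\rm out}$ within the available time; only local smoothing on the scale $n\eta$ with $\eta\sim t/\psi^2$ is available, which is why the paper runs a parabolic maximum principle with a complex regularization $z_{i_p}=\lambda_{i_p}+\ii\eta$ rather than a global gap estimate. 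Second, and more fundamentally, equilibration alone tells you nothing: one must identify the value the observable relaxes \emph{to} and show it is small, and the a priori bound $|f_t|\lesssim 1$ gives no such information. In the maximum principle the damping term $-c\,h(s)/\eta$ is exactly cancelled by the naive bound $\frac{1}{n\eta}\sum_k|g_s(\tilde\boeta^{i_pk})|/|z_{i_p}-\lambda_k|\le C h(s)/\eta$ on the gain term, so the whole proof hinges on beating this trivial bound for $\frac1n\sum_k\im\frac{f_t(\tilde\boeta^{i_pk})}{z_{i_p}-\lambda_k}$. That is done by exploiting the perfect-matching structure: splitting matchings according to whether $\{(k,1),(k,2)\}$ is an edge, using $\sum_k p_{ik}^2=\sum_{\alpha\in I}u_i(\alpha)^2=\OO(n^{\e}|I|/n)$ (delocalization) for the off-diagonal contribution, and a Cauchy-formula argument together with the quantum-ergodicity input \eqref{evcontrol2} for the $\sum_k p_{kk}/(n(z-\lambda_k))$ contribution. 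This is precisely where the $n^{-\mathfrak c}({\rm S})^{(d-1)/d}$ and $(n\eta)^{-1}({\rm S})^{(d-2)/d}$ errors in \eqref{fest2QUE} originate; your proposal attributes them solely to Lemma \ref{l:Lbound}, which is not where the dominant contributions come from, and offers no mechanism for producing them in the short-range analysis.

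A secondary omission: to close the maximum principle on a finite window you need the initial condition of the auxiliary short-range flow to be a \emph{smoothly averaged} truncation ${\rm Av}f_t=\frac{3\psi}{nr}\sum_a{\rm Flat}_a(f_t)$, so that the commutator $[{\rm Av},\rU_{\mathscr S}]$ is controlled by finite speed of propagation and the Lipschitz property \eqref{eqn:propa} of the coefficients $a_{\boeta}$; a sharp cutoff would introduce boundary terms of order ${\rm S}^{(t,t+u)}_{J_{\rm out}}$ with no small prefactor. This averaging also generates the $\psi\ell/(nr)$ error that dictates the optimal choice $\ell=n\psi^2(ur)^{1/2}$ by balancing against $nu/\ell$. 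Your remarks on the self-improving nature of the inequality, the role of Lemma \ref{lem:YoungBound}, and conditioning on the rigidity event are correct, but they concern the iteration in the proof of Theorem \ref{thmQUE} rather than the proof of this proposition.
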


\begin{proof} 
For a general number of particles $d$, consider now the following modification of the eigenvector moment flow (\ref{ve}).
We only keep the short-range dynamics (depending on the short range parameter $\ell$, chosen later) and modify the initial condition to be zero when there is a particle far from $J_{\rm in}$:
\begin{align}\begin{split}\label{eqn:modMom}
&\partial_s g_{s} =  \mathscr{S}(s) g_{s},\ t\leq s\leq t+u,\\
&g_{t}(\boeta)=({\rm Av}f_{t})(\boeta),\end{split}
\end{align}
where
$$
{\rm Av}(f)=\frac{3\psi}{n r}\sum_{\frac{1}{3}\frac{nr}{\psi}<a<\frac{2}{3}\frac{nr}{\psi}}{\rm Flat}_a(f).
$$
We can write 
$$
{\rm Av}(f)(\boeta)=a_{\boeta} f(\boeta)
$$
for some coefficient $a_{\boeta}\in[0,1]$ ($a_{\boeta}=0$ if $\boeta\not\subset J_{\rm out}$, $\; a_{\boeta} =1$ if $
\boeta\subset J_{\rm in} $). We will only use the elementary property
\begin{equation}\label{eqn:propa}
|a_{\boeta}-a_{{\boxi}}|\leq\frac{\psi}{nr} d(\boeta,\boxi),
\end{equation}
where the distance is defined in \eqref{effdis}.

For any $\boeta\subset J_{\rm in}$, we have
\begin{align}\notag
|f_s(\boeta)-g_s(\boeta)|&\leq \left|\left(\rU_{\mathscr{B}}(t,s)f_{t}-\rU_{\mathscr{S}}(t,s)f_{t}\right)(\boeta)\right|
+\left|\rU_{\mathscr{S}}(t,s)(f_{t}-{\rm Av}f_{t})(\boeta)\right|\\
\label{eqn:ftog}&\leq
\psi^{4}\frac{nu}{\ell}
\left({\rm S}^{(t,t+u)}_{J_{\rm out}}
+\frac{1}{n^{\mathfrak{c}}}({\rm S}^{(t,t+u)}_{J_{\rm out}})^{\frac{d-1}{d}} 
+\frac{1}{\ell}({\rm S}^{(t,t+u)}_{J_{\rm out}})^{\frac{d-2}{d}}\right)
+e^{-c\psi/2},
\end{align}
where we bounded the first term by Lemma \ref{l:Lbound}, and the second term by finite speed of propagation \eqref{finitespeed}, since $f_{t_0}-{\rm{Av}}f_{t_0}$ vanishes for any $\boxi$ such that $\boxi\subset 
\{i:d(i,J_{\rm in})\leq nr /3\psi\}$ (note that $\psi\ell\leq nr/3\psi$). 

In the following we will prove that for large enough $n$ we have
\begin{multline}\label{eqn:g}
\sup_{\boeta\subset J_{\rm in}, t+\frac{u}{2}\leq s\leq t+u} g_s(\boeta)\leq
\psi\left(\frac{n u}{\ell}+\frac{\ell}{nr}+\frac{\psi^2}{n t}\right){\rm S}^{(t,t+u)}_{J_{\rm out}}\\
+\frac{\psi }{n^{\mathfrak{c}}}({\rm S}^{(t,t+u)}_{J_{\rm out}})^{\frac{d-1}{d}} 
+\psi\left(\frac{n u}{\ell^2}+\frac{\psi^2}{n t}\right)({\rm S}^{(t,t+u)}_{J_{\rm out}})^{\frac{d-2}{d}}+n^{-C}
\end{multline}
by a maximum principle argument.
Equations (\ref{eqn:ftog}) and (\ref{eqn:g}) together give the expected result (\ref{fest2QUE}) by choosing
$$
\ell=n\psi^2(u r)^{1/2},
$$
which satisfies (\ref{eqn:parameters})
If the left hand side of (\ref{eqn:g}) is smaller than $n^{-C}$, there is nothing to prove. It it is greater thn $n^{-C}$, by the finite speed of propagation property (\ref{finitespeed}), for any $t<s<t+u$, the configuration(s) 
$\tilde{ \bm\eta}$ such that
\begin{align*}
g_s(\tilde{ \bm\eta})=\sup_{\boeta} g_s(\bm \eta)
\end{align*} 
need to be supported in $\{i:d(i,J_{\rm in})\leq \frac{3}{4}\frac{n r}{\psi}\}$.

From the dynamics \eqref{eqn:modMom}, for any parameter $\psi^4/n\leq \eta\leq\ell/n$ to be chosen, we have
\begin{multline}
\del_t\s g_s(\tilde{\bm \eta})=\sum_{0<|j-k|\leq \ell} c_{jk} 2\tilde \eta_j (1+2\tilde\eta_k) \left(g_s(\tilde\boeta^{jk})-g_s(\tilde\boeta)\right)
\leq \frac{C}{n}\sum_{1\leq p\leq d,\atop k:0<|i_p-k|\leq \ell} \frac{g_s(\tilde \boeta^{i_pk})-g_s(\tilde{\boeta})}{(\lambda_{i_p}-\lambda_k)^2+\eta^2}\\
=\frac{1}{n\eta}\sum_{1\leq p\leq d,\atop k:0<|{i_p}-k|\leq \ell}\im \frac{g_s(\tilde{\bm \eta}^{{i_p}k})}{z_{i_p}-\lambda_k}-\frac{1}{n\eta}g_s(\tilde{\bm \eta})\sum_{1\leq p\leq d,\atop k:0<|{i_p}-k|\leq \ell}\Im \frac{1}{z_{i_p}-\lambda_k}\label{eqn:tobound}
\end{multline}
where we define  $z_{i_p}=\lambda_{i_p}+\ii \eta$. For the second term in \eqref{eqn:tobound}, note that
\begin{align*}
\sum_{1\leq p\leq d,\atop k:0<|{i_p}-k|\leq \ell}\Im \frac{1}{z_{i_p}-\lambda_k}
\geq \sum_{p=1}^d\sum_{k:0<|{i_p}-k|\leq \ell}\frac{\eta}{(\lambda_{i_p}-\lambda_k)^2+\eta^2}
\geq \sum_{p=1}^{d}\sum_{k: |\lambda_k-\lambda_{i_p}|\leq\eta} \frac{\eta}{2\eta^2}
\gtrsim n,
\end{align*}
where we used \eqref{e:avdeleig}.
For the first term in \eqref{eqn:tobound}, we claim that for any fixed $p$ we have 
\begin{multline}
\label{eqn:keybound}
\frac{1}{n}\sum_{k:0<|{i_p}-k|\leq \ell}\im \frac{g_s(\tilde\boeta^{{i_p}k})}{z_{i_p}-\lambda_k}
=
\OO\left(\psi\right)
\left(\frac{1}{n\eta}+\frac{\ell}{nr}+\frac{n u}{\ell}\right){\rm S}^{(t,t+u)}_{J_{\rm out}}\\
+\OO\left(\psi\right)
\frac{1}{n^{\mathfrak{c}}}
({\rm S}^{(t,t+u)}_{J_{\rm out}})^{\frac{d-1}{d}}
+
\OO\left(\psi\right)
\left(\frac{1}{n\eta}
+
\frac{n u}{\ell^2}
\right)
({\rm S}^{(t,t+u)}_{J_{\rm out}})^{\frac{d-2}{d}}.
\end{multline}
For this, we can bound the left hand side of \eqref{eqn:keybound} 
by $\eqref{eqn:term1}+\eqref{eqn:term2}+\eqref{eqn:term3}$ where
\begin{align}
\label{eqn:term1}&\im\sum_{k:0<|k-{i_p}|\leq \ell}\frac{1}{n}\frac{(\rU_{\mathscr{S}}(t,s){\rm Av}f_{t})(\tilde\boeta^{{i_p}k})-({\rm Av}\rU_{\mathscr{S}}(t,s)f_{t})(\tilde\boeta^{{i_p}k})}{z_{i_p}-\la_k},\\
\label{eqn:term2}&\im\sum_{k:0<|{i_p}-k|\leq \ell}\frac{1}{n}\frac{({\rm Av}\rU_{\mathscr{S}}(t,s)f_{t})(\tilde\boeta^{{i_p}k})-({\rm Av}\rU_{\mathscr{B}}(t,s)f_{t})(\tilde\boeta^{{i_p}k})}{z_{i_p}-\la_k},\\
\label{eqn:term3}&\im\sum_{k:0<|{i_p}-k|\leq \ell}\frac{1}{n}\frac{({\rm Av}\rU_{\mathscr{B}}(t,s)f_{t})(\tilde\boeta^{{i_p}k})}{z_{i_p}-\la_k}.
\end{align}
The term \eqref{eqn:term1} will be controlled by finite speed of propagation;  \eqref{eqn:term2} will be controlled by  Lemma \ref{l:Lbound}, and \eqref{eqn:term3} by  the local  law. 

To bound \eqref{eqn:term1}, we write   
$$
(\rU_{\mathscr{S}}(t,s){\rm Av}f_{t})(\tilde\boeta^{{i_p}k})-({\rm Av}\rU_{\mathscr{S}}(t,s)f_{t})(\tilde\boeta^{{i_p}k})
=\frac{2\psi}{nr}\sum_{\frac{nr}{2\psi}<a<\frac{nr}{\psi}}\left(\rU_{\mathscr{S}}(t,s){\rm Flat}_a f_{t}-{\rm Flat}_a\rU_{\mathscr{S}}(t,s) f_{t}\right)(\tilde \boeta^{{i_p}k}).
$$
For fixed $a$, let  $L_{1}\subset L_2$ be defined as $L_1=\{i:d(i,J_{\rm in})\leq a-\psi \ell\}$, $L_{2}=\{i:d(i,J_{\rm in})\leq a+\psi \ell\}$.
We consider three cases: $\tilde\boeta^{{i_p}k}\not \subset L_2$, $\tilde\boeta^{{i_p}k}\subset L_1$, or neither of them.

For $\tilde\boeta^{{i_p}k}\not \subset L_2$, by our definition, ${\rm Flat}_a\rU_{\mathscr{S}}(t,s) f_{t}(\tilde \boeta^{{i_p}k})=0$. 
By the finite speed of propagation \eqref{finitespeed}, the total mass of 
$\rU_{\mathscr{S}}(t,s){\rm Flat}_a f_{t}$ outside $L_2$ is exponentially small. In particular, $|\rU_{\mathscr{S}}(t,s){\rm Flat}_a f_{t}(\tilde\boeta^{{i_p}k})|\leq \exp(-c\psi/2)$.

For $\tilde\boeta^{{i_p}k}\subset L_1$, we have
\begin{align*}
\left|\left(\rU_{\mathscr{S}}(t,s){\rm Flat}_a f_{t}-{\rm Flat}_a\rU_{\mathscr{S}}(t,s) f_{t}\right)(\tilde \boeta^{{i_p}k})\right|
=&\left|\left(\rU_{\mathscr{S}}(t,s){\rm Flat}_a f_{t}-\rU_{\mathscr{S}}(t,s) f_{t}\right)(\tilde \boeta^{{i_p}k})\right|\\
=&|\left(\rU_{\mathscr{S}}(t,s)\left(f_{t}-{\rm Flat}_a f_{t}\right)\right)(\tilde \boeta^{{i_p}k})|\leq \exp(-c\psi/2),
\end{align*}
we used the finite speed of propagation \eqref{finitespeed} in the last inequality, since $f_{t}-{\rm Flat}_a f_{t}$ vanishes for any $\boxi$ supported in $\{i:d(i,J_{\rm in})\leq a\}$.

For the last case, we have $\tilde\boeta^{{i_p}k}\subset L_2$, and some particle(s) of $\tilde\boeta^{{i_p}k}$ is in $L_2\slash L_1$. There are at most $2n\psi \ell$ such $a$. Moreover, since $\rU_{\mathscr{S}}$ is a contraction in ${\rm L}^{\infty}$, we have
\begin{align*}
&\left|\left(\rU_{\mathscr{S}}(t,s){\rm Flat}_a f_{t}-{\rm Flat}_a\rU_{\mathscr{S}}(t,s) f_{t}\right)(\tilde \boeta^{{i_p}k})\right|\\
\leq &\left|\rU_{\mathscr{S}}(t,s){\rm Flat}_a f_{t}\right|+\left|{\rm Flat}_a\rU_{\mathscr{S}}(t,s){\rm Flat}_{a+2\psi\ell} f_{t}(\tilde \boeta^{{i_p},k})\right|+\left|{\rm Flat}_a\rU_{\mathscr{S}}(t,s)\left(f_{t}-{\rm Flat}_{a+2\psi\ell} f_{t}\right)(\tilde \boeta^{{i_p}k})\right|\\
\leq &\|{\rm Flat}_a f_{t}\|_{\infty}+\|{\rm Flat}_{a+\psi\ell} f_{t}\|_{\infty}+e^{-c\psi/2}.
\end{align*}
We bound $\|{\rm Flat}_a f_{t}\|_{\infty}, \|{\rm Flat}_{a+2\psi\ell} f_{t}\|_{\infty}\leq {\rm S}^{(t,t+u)}_{J_{\rm out}}$.
From these estimates, we have $\eqref{eqn:term1}\leq \psi^2\frac{\ell}{n r} {\rm S}^{(t_0,t_0+u)}_{J_{\rm out}}$.

We now bound \eqref{eqn:term2}. For $|k-i_p|\leq \ell$, $\wt \boeta^{i_p k}$ is supported in $\{i:\gamma_i(t_0)\in I_{2\kappa}^r(E)\}$, so that we can apply 
Lemma \ref{l:Lbound}:
\begin{multline*}
\left|({\rm Av}\rU_{\mathscr{S}}(t,s)f_{t})(\tilde\boeta^{{i_p}k})-({\rm Av}\rU_{\mathscr{B}}(t,s)f_{t})(\tilde\boeta^{{i_p}k})\right|
\leq \left|(\rU_{\mathscr{S}}(t,s)f_{t}-\rU_{\mathscr{B}}(t,s)f_{t})(\tilde\boeta^{{i_p}k})\right|\\
\leq
\psi^{4}\frac{n u}{\ell}
\left({\rm S}^{(t,t+u)}_{J_{\rm out}}
+\frac{1}{n^{\mathfrak{c}}}
({\rm S}^{(t,t+u)}_{J_{\rm out}})^{\frac{d-1}{d}}
+\frac{1}{\ell}({\rm S}^{(t,t+u)}_{J_{\rm out}})^{\frac{d-2}{d}}\right).
\end{multline*}
As a consequence, we have 
$$
\eqref{eqn:term2}\leq
\psi^{4}\frac{n u}{\ell}
\left({\rm S}^{(t,t+u)}_{J_{\rm out}}
+\frac{1}{n^{\mathfrak{c}}}
({\rm S}^{(t,t+u)}_{J_{\rm out}})^{\frac{d-1}{d}}
+\frac{1}{\ell}({\rm S}^{(t,t+u)}_{J_{\rm out}})^{\frac{d-2}{d}}\right).
$$

Finally for \eqref{eqn:term3}, note that $\tilde\boeta^{i_pk}$ is supported on $J_{\rm out}$,
so that
\begin{multline*}
\frac{1}{n}\im\sum_{k:0<|{i_p}-k|\leq \ell}\frac{({\rm Av}f_{t})(\tilde\boeta^{{i_p}k})}{z_{i_p}-\la_k}
=\frac{1}{n}\im\sum_{k:0<|{i_p}-k|\leq \ell}\frac{a_{\tilde\boeta}f_{t}(\tilde\boeta^{{i_p}k})+(a_{\tilde\boeta^{{i_p}k}}-a_{\tilde{\boeta}})f_{t}(\tilde\boeta^{{i_p}k})}{z_{i_p}-\la_k}\\
=\frac{a_{\tilde\boeta}}{n}\im\sum_{k:0<|{i_p}-k|\leq \ell}\frac{f_{t}(\tilde\boeta^{{i_p}k})}{z_{i_p}-\la_k}+\OO\left(\psi \frac{\ell}{nr}{\rm S}^{(t,t+u)}_{J_{\rm out}}\right),
\end{multline*}
where we used that $|a_{\tilde\boeta^{{i_p}k}}-a_{\tilde{\boeta}}|\leq \psi d(\tilde\boeta, \tilde\boeta^{{i_p}k})/(n r)\leq \psi \ell/(n r)$ from (\ref{eqn:propa}). 

In the above imaginary part, the contribution of all $k\in\{i_1,\dots,i_d\}$ is of order $\frac{1}{n\eta}{\rm S}^{(t,t+u)}_{J_{\rm out}}$, so that (here $k_0$ is any index not  in $\{i_1,\dots,i_d\}$)
\begin{multline*}
\frac{1}{n}\im\sum_{k:0<|{i_p}-k|\leq \ell}\frac{f_{t}(\tilde\boeta^{{i_p}k})}{z_{i_p}-\la_k}=
\frac{1}{n}\frac{1}{\mathcal{M}(\wt\boeta{i_pk_0})}\im\sum_{k:0<|{i_p}-k|\leq \ell}\sum_{G\in\mathcal{G}_{\wt\boeta^{i_p k}}}\frac{\mathbb{E}(P(G)\mid\bla)}{z_{i_p}-\lambda_k}+\OO\left(\frac{1}{n\eta}{\rm S}^{(t,t+u)}_{J_{\rm out}}\right)\\
=({\rm I})+({\rm II})+\OO\left(\frac{1}{n\eta}{\rm S}^{(t,t+u)}_{J_{\rm out}}\right)
\end{multline*}
where $(\rm I)$ corresponds to perfect matchings for which $\{(k,1),(k,2)\}$ is not an edge, and $(\rm II)$ corresponds to perfect matchings for which $\{(k,1),(k,2)\}$ is an edge. More precisely,
\begin{align*}
({\rm I})&=\im\sum_{1\leq q_1,q_2\leq d} \mathbb{E}\left(P^{(q_1,q_2)}(p(e)_{e\in \mathcal{E}_{\wt\boeta}})\sum_{k:0<|{i_p}-k|\leq \ell}\frac{p_{i_{q_1}k}p_{i_{q_2}k}}{n(z_{i_p}-\lambda_k)}\mid\bla
\right),\\
({\rm II})&=\im
\mathbb{E}\left(P^{(p)}((p(e)_{e\in \mathcal{E}_{\wt\boeta}}))\sum_{k:0<|{i_p}-k|\leq \ell}\frac{p_{kk}}{n(z_{i_p}-\lambda_k)}\mid\bla
\right),
\end{align*}
with $\mathcal{E}_{\wt\boeta}$ the set of all possible edges between between vertices from $\mathcal{V}_{\wt\boeta}$,  $P^{(p,q)}$ is a finite sum of monic monomials of degree $n-2$, and , $P^{(p)}$ is a finite sum of monic monomials of degree $n-1$.

To bound $(\rm I)$, we simply write
$$
\im\sum_{k:0<|{i_p}-k|\leq \ell}\frac{p_{i_{q_1}k}p_{i_{q_2}k}}{n(z_{i_p}-\lambda_k)}=\OO\left(\frac{1}{n\eta}\sum_k(p_{i_{q_1}k}^2+p_{i_{q_2}k}^2)\right)=
\OO\left(\frac{1}{n\eta}n^{\e}\frac{|I|}{n}\right).
$$ 
Here we slightly changed the meaning of $p_{kk}$ (in both equations above and below, $p_{kk}=\sum_{\alpha\in I}u_k(\alpha)^2$, i.e. $C_0=0$ in (\ref{eqn:pkk})) and used the elementary identity (\ref{eqn:elementary}).
The above second equality follows from Lemma \ref{lemdeloc}. 

Moreover, with Lemma \ref{lem:YoungBound}, we have
$$
\mathbb{E}\left(|P^{(q_1,q_2)}(p(e)_{e\in \mathcal{E}_{\wt\boeta}})|\mid\bla\right)=\OO\left(\sup_{t\leq s\leq t+u,\boeta\subset  J_{\rm out}}
|f_s(\boeta)|\mathds{1}_{\mathcal{N}(\boeta)=n-2}\right)
=\OO\left(({\rm S}^{(t,t+u)}_{J_{\rm out}})^{\frac{d-2}{d}}\right),
$$
where we used H{\"o}lder's inequality and Lemma \ref{lem:YoungBound}.
This concludes our bound for ({\rm I}), $\frac{1}{n\eta}({\rm S}^{(t,t+u)}_{J_{\rm out}})^{\frac{d-2}{d}}$.

More subtle bounds are required for the term ({\rm II}).
$$
\im
\sum_{k:0<|{i_p}-k|\leq \ell}\frac{p_{kk}}{n(z_{i_p}-\lambda_k)}
=
\OO\left(\frac{1}{n\eta}+\frac{1}{n^{\mathfrak{c}}}\right)-
\im
\sum_{k:|{i_p}-k|>\ell}\frac{p_{kk}}{n(z_{i_p}-\lambda_k)}
$$
where we used (\ref{evcontrol2}). This last term can be bounded exactly as between (\ref{eqn:bd1}) and (\ref{eqn:bd2}), and we obtain
$$
\im
\sum_{k:0<|{i_p}-k|\leq \ell}\frac{p_{kk}}{n(z_{i_p}-\lambda_k)}
=
\OO\left(\frac{1}{n\eta}+\frac{1}{n^{\mathfrak{c}}}\right),
$$
where we used that $\eta\leq \ell/n$. This concludes the proof of (\ref{eqn:keybound}).

We define $h(s)=\sup_{\boeta} g_s(\bm \eta)$. Equations (\ref{eqn:tobound}) and (\ref{eqn:keybound}) yield
$$
h'(s)\leq 
\frac{C\psi}{\eta}\left(
\left(\frac{1}{n\eta}+\frac{\ell}{nr}+\frac{n u}{\ell}\right){\rm S}^{(t,t+u)}_{J_{\rm out}}
+\frac{1}{n^{\mathfrak{c}}}({\rm S}^{(t,t+u)}_{J_{\rm out}})^{\frac{d-1}{d}} 
+
\left(\frac{1}{n\eta}
+
\frac{n u}{\ell^2}
\right)
({\rm S}^{(t,t+u)}_{J_{\rm out}})^{\frac{d-2}{d}}\right)
-c\frac{h(s)}{\eta}
$$
for any $t<s<t+u$.
We now chose $\eta=t/\psi^2$, so that $u/\eta>\psi$, and obtain
$$
h(s)<C\psi
\left(\frac{1}{n\eta}+\frac{\ell}{nr}+\frac{n u}{\ell}\right){\rm S}^{(t,t+u)}_{J_{\rm out}}
+C\frac{\psi }{n^{\mathfrak{c}}}({\rm S}^{(t,t+u)}_{J_{\rm out}})^{\frac{d-1}{d}} 
+
C\psi
\left(\frac{1}{n\eta}
+
\frac{n u}{\ell^2}
\right)
({\rm S}^{(t,t+u)}_{J_{\rm out}})^{\frac{d-2}{d}}
+n^{-C}
$$
for any $t+u/2<s<t+u$, which is (\ref{eqn:g}) and concludes the proof.\end{proof}

\begin{proof}[Proof of Theorem \ref{thmQUE}]
We proceed by iterating the bound from Proposition \ref{thm:maxPrincipleQUE}. We are given a small $\e$ such that $\e<\mathfrak{a}/5$ and a large $D>0$, as in the statement of Theorem \ref{thmQUE}.

We first choose $d= \lfloor 5D/\e\rfloor$, and define (implicitly, for $J_{i+1}$)
$$
\left\{
\begin{array}{l}
s_0=t_0\\
s_{i+1}=\frac{s_i+t_1}{2}
\end{array}
\right.,\ 
\left\{
\begin{array}{l}
J_0=\{i:\gamma_i(t_0)\in I_{3\kappa}^r(E_0)\}\\
J_{i}=\{i:d(i,J_{i+1})\leq \frac{nr}{\psi}\}
\end{array}
\right..
$$
A direct application of Proposition \ref{thm:maxPrincipleQUE} together with the bounds $n^{-1+\mathfrak{a}}\leq t_0\leq t_1\leq n^{-\mathfrak{a}}r$ yields
$$
{\rm S}^{(s_{i+1},t_1)}_{J_{i+1}}
\leq 
\psi^3 \left(
n^{-\mathfrak{a}/2}+2^in^{-\mathfrak{a}}
\right){\rm S}^{(s_i,t_1)}_{J_{i}}
+\frac{\psi^3 }{n^{\mathfrak{c}}}({\rm S}^{(s_i,t_1)}_{J_{\rm out}})^{\frac{d-1}{d}} 
+\frac{\psi^3 2^i}{n t_0}({\rm S}^{(s_i,t_1)}_{J_i})^{\frac{d-2}{d}} +n^{-C}.
$$
In particular, we have
$$
{\rm S}^{(s_{i+1},t_1)}_{J_{i+1}}
\leq n^{-\e/3}{\rm S}^{(s_{i},t_1)}_{J_{i}}
$$
provided that
$$
({\rm S}^{(s_{i},t_0)}_{J_{i}})^{1/d}
\geq
\frac{n^{\e/2}2^{i}}{\sqrt{n t_1}}+\frac{n^{\e/2}}{n^{\mathfrak{c}}}.
$$
This implies that for $k=\lfloor 4\e^{-1}\rfloor$
we have
$$
\left|({\rm S}^{(s_{k},t_1)}_{J_{k}})^{1/d}\right| \leq \frac{n^{3\e/4}}{\sqrt{nt_0}}+\frac{n^{3\e/4}}{n^{\mathfrak{c}}}.
$$
For each fixed $i$,  by choosing  $ \boeta$ as  the configuration with $d=2m$ particles on the site $i$  and no particles elsewhere, 
we have $|p_{ii}(s) |^d \le C_d f_s (\boeta)$. Hence 
by Markov's inequality, the last displayed equation  implies that
$$
\P\left(
\exists\, s_k<t<t_1: \mathds{1}_{\lambda_i(t)\in I_{4\kappa}^r(E_0)} |p_{ii}|\geq n^{\e}\left(\frac{1}{n^{\mathfrak{c}}}+\frac{1}{\sqrt{n t_0}}\right)\right)\le
 \left(n^{\e}\right)^{-d}\left(n^{3\e/4}\right)^d\leq n^{-D}.
$$
Here we used that $\{i:\gamma_i(t_0)\in I_{4\kappa}^r(E_0)\}\subset J_{k}$ because $k\frac{nr}{\psi}<\kappa r$ for $k=\lfloor 4\e^{-1}\rfloor$ and $n$ large enough.

Finally, by Lemma \ref{lem:YoungBound}, $p_{ij}^d$ can be estimated in terms $f_t$, $p_{ii}$ and $p_{jj}$. Hence the previous estimate also holds if we replace $\mathds{1}_{\lambda_i(t)\in I_{4\kappa}^r(E_0)} |p_{ii}|$ by $\mathds{1}_{\lambda_i(t),\la_j(t)\in I_{4\kappa}^r(E_0)} |p_{ij}|$.  \nc
This concludes the proof of the theorem,  up to redefining $t_0$ and  $\kappa$ by a constant factor.
\end{proof}

\section{Mean-field reduction} 
\label{sec:meanfield}

This section proves Theorem \ref{lemma main3}. We  actually just need to prove it
when a tiny GOE  regularization is added, as explained in the next paragraph.

\subsection{Small regularization.}\ Consider  matrices of type
\be\label{11da}
H=H_1+H_2+N^{-A}H^{\rm G}\ {\rm where}\ H^{\rm G}_{ij} \overset{\rm (d)}{=}   (1+\mathds{1}_{ij})^{1/2}\cdot \cal N(0,  N^{-1}),
\ee
where $H_1$ and $H_2$ are defined in \eqref{sjui}. Our main result in this section is the following result.

  \begin{theorem}\label{thm:afterreg}
 Let $A>10$ be any fixed constant. Assume that $H$ is a band matrix of type \eqref{11da}, with band width $W_N$ satisfying (\ref{gwzN}).
 \begin{enumerate}[(i)]
 \item
The eigenvectors are delocalized as in (\ref{maindel}).
 \item  
The eigenvalues
  satisfy the local semicircle law as in (\ref{SemiC}).
\item 
Fixed energy universality holds as in (\ref{e:Univ}).
\item
For any (small)  $\tau, \kappa > 0$,  and (large) $D>0$, there exists $N_0>0$  such that for any $N\geq N_0$ we have 
\be\label{M2}
\P\left(  \left|\frac {N} W\sum_{\alpha=\ell}^{\ell+W}
|\psi_j(\alpha)|^2 \nc-1\right|<N^{-\frac{3}{2} a+\tau}\,
\mbox{for all}\ 1\le j,\ell \le N\;  \text{ such that }   \; |\lambda_j|\le 2-\kappa 
\right)\ge 1-N^{-D}, 
\ee
where  $a>0$ was given in \eqref{gwzN} and all indices are defined  modulo $N$.
\end{enumerate} 
The same results hold for all submatrices  of $H$ of type $H^{(k)}=(H_{ij})_{i,j\in\llbracket 1,N\rrbracket\backslash\{k\}}$.
\end{theorem}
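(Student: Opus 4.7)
The plan is to implement the mean-field reduction of \cite{BouErdYauYin2017}. Block-decompose $H$ as in \eqref{H0} so that $A$ is a $W\times W$ top-left corner, and interpret the eigenvector equation as $Q_{\lambda_j}\b w_j = \lambda_j \b w_j$ with $Q_e = A - B^*(D-e)^{-1}B$ and $\b w_j$ the first $W$ coordinates of $\b\psi_j$. The strategy is to reduce (iv) to flatness of the eigenvectors of $Q_e$; parts (i)--(iii) then follow from (iv) together with previously invoked tools. Indeed, (i) is immediate by covering $\mathbb{Z}_N$ with $W$-windows and noting $\|\b\psi_j\|_\infty^2$ is controlled by $\max_\ell (W^{-1})\cdot(N/W)\sum_{\alpha\in\llbracket\ell,\ell+W\rrbracket}|\psi_j(\alpha)|^2$; (ii) is a complex-analytic integration of the local law for the generalized resolvent from Theorem \ref{LLniu} down to the optimal scale, combined with the delocalization from (i); and (iii) follows from the reconstruction of the spectrum of $H$ near a bulk energy $E$ from the spectrum of $Q_e$ near $0$, combined with the known universality of deformed Wigner statistics from \cite{LanSosYau2016,BouHuaYau2017}.

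The heart of the argument is (iv). First I would verify Assumption~\ref{XYH} and Assumption~\ref{as:basic} for $Q_e$, more precisely for the deformed version $Q_e^g$ obtained by peeling off the Gaussian divisible component in \eqref{sjui} to serve as the matrix Brownian motion of Subsection~\ref{subsec:defBM}. Theorem \ref{LLniu} supplies the required control of the generalized resolvent $G(z,e)$, whose $W\times W$ corner equals $(Q_e-z)^{-1}$: the regularity condition \eqref{e:imasup} and the diagonal--trace comparison \eqref{Hop1prime} (with exponent $\mathfrak{c}$ reflecting the precision of Theorem~\ref{LLniu}) are read off from that estimate. Theorem~\ref{thmQUE} then yields strong QUE for the eigenvectors of $Q_e$: uniformly for subsets $I\subset\llbracket 1,W\rrbracket$ and bulk indices $k$,
\[
\sum_{\alpha\in I}|u_k(\alpha)|^2 = \frac{|I|}{W} + \OO\!\pa{W^{\e}\pa{W^{-\mathfrak{c}} + \frac{1}{\sqrt{W t_0}}}}
\]
with overwhelming probability, where $t_0 \sim N^{-\e_m}$ is the Gaussian divisible time entering \eqref{sjui}. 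Under \eqref{gwzN} this error matches the target precision in \eqref{M2}.

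Second, since Theorem~\ref{thmQUE} holds with probability $1 - N^{-D}$ for arbitrarily large $D$, a union bound gives QUE simultaneously over a net $\mathcal N\subset\mathbb R$ of $N^C$ values of $e$ in the bulk. To pass from QUE for the eigenvectors of $Q_e$ at $e \in \mathcal N$ to QUE for $\b w_j$ (an eigenvector of $Q_{\lambda_j}$), pick the closest net point $e_* $ to $\lambda_j$ and compare by first-order perturbation: the discrepancy $\|\b w_j - \b w_j^{(e_*)}\|$ is dominated by $|\lambda_j - e_*|\,\|B^*(D-e_*)^{-2} B \b w_j^{(e_*)}\|$ divided by the local eigenvalue gap of $Q_{e_*}$, and the weak uncertainty principle developed in Subsection~\ref{sec: reg} along the lines of \cite{BouErdYauYin2017} bounds this quantity. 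Repeating the block decomposition with the $W\times W$ corner centered at coordinate $\ell$ (equivalently, working with the submatrices $H^{(k)}$) delivers \eqref{M2} uniformly in $\ell$. Parts (i), (ii), (iii) then follow as indicated above.

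The main obstacle will be calibrating three parameters in tandem: the mesh of the net $\mathcal N$, the perturbation accuracy from the weak uncertainty principle, and the dynamical error of strong QUE from Theorem~\ref{thmQUE}. The eigenvalue gaps of $Q_e$ are of order $W^{-1}$ and the net spacing is $N^{-C}$, so a naive perturbation bound would be borderline; the decisive input is the strong decay $\|B^*(D-e)^{-1} B \b w\| \ll \|\b w\|$ supplied by the weak uncertainty principle. It is this combination, together with the requirement $t_0 \gg 1/W$ needed to drive the eigenvector moment flow to equilibrium, that forces the threshold $W \gg N^{3/4}$ (equivalently $a > 0$ in \eqref{gwzN}).
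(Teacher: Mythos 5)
Your treatment of part (iv) — verifying Assumptions \ref{XYH} and \ref{as:basic} for $Q_e^{\b g}$ via Theorem \ref{LLniu}, applying Theorem \ref{thmQUE} to the flow obtained by peeling off the Gaussian divisible component, taking a union bound over a polynomially fine net in $e$, controlling the shift $e\mapsto\lambda_j$ by eigenvector perturbation together with level repulsion and the weak uncertainty principle, and patching shifted $W$-windows — is exactly the paper's argument, and the error calibration $\frac{N^{1/2+\tau}}{W}+\frac{N^{\e_m/2+\tau}}{W^{1/2}}$ matches. Two points there are slightly off but not fatal: the net points $e_n$ must additionally be chosen away from the spectrum of $D$ (the paper does this by a Markov/averaging argument, see \eqref{wanjyz}) so that $Q_{e_n}$ satisfies the boundedness hypothesis \eqref{clMang}; and the threshold $W\gg N^{3/4}$ is forced by the quantum-ergodicity error $N^{1/2}/W$ in \eqref{Hop3} (i.e.\ by $n^{-\mathfrak c}$), not by the dynamical error $(Wt_0)^{-1/2}$, which alone would only require $W\gg N^{2/3}$ (Remark \ref{64}).

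The genuine gaps are in your derivations of (i) and (ii) from (iv). For (i): QUE controls the ${\rm L}^2$ mass of $\b\psi_j$ on each window of length $W$, so covering $\Z_N$ by windows only yields $\|\b\psi_j\|_\infty^2\lesssim W/N$; the extra factor $W^{-1}$ you insert is precisely delocalization \emph{within} a window, which does not follow from Theorem \ref{thmQUE} (whose hypothesis \eqref{eqn:Ibound} requires $|I|\geq cn$, so it cannot be applied to a single coordinate). One needs the separate ${\rm L}^\infty$ bound $\|\bu_{k'}(e)\|_\infty^2\leq W^{-1+\tau}$ for eigenvectors of $Q_e$ (Lemma \ref{lemdeloc}, via the local law for the regularized flow), transported to $e=\lambda_j$ by the same perturbation argument, and then combined with the QUE lower bound on $\|\bw_j\|_2^2$. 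For (ii): Theorem \ref{LLniu} holds only for $\im z\geq\eta_*=N^{-\e_*}$, i.e.\ nearly of order one, so there is no "integration of the generalized resolvent local law down to the optimal scale." The paper instead proves optimal-scale rigidity for the spectrum of $Q_e$ (available after the Dyson Brownian motion regularization, Lemma \ref{l:regpath} and \eqref{zyaa}) and transfers it to $H$ through the parallel-projection identity $\cal C_j(e_0)-e_0\approx\frac NW(\lambda_j-e_0)$ of Lemma \ref{zybb}, whose proof itself uses QUE for $H^g$ over a range of $g$ (not just $g=0$) to show all slopes equal $1-N/W$ to leading order. This slope identity is also the missing ingredient in your sketch of (iii), since without it the projection from the curves $\xi_{k'}(e)$ to the diagonal would distort the local statistics.
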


The following simple lemma shows that all properties of delocalization only need to be established for the slightly regularized matrices. It is proved by perturbative arguments.

\begin{lemma}\label{pert}
Theorem \ref{thm:afterreg} implies Theorem \ref{lemma main3}.
\end{lemma}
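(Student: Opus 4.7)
The plan is to couple $H$ of type \eqref{sjui} with a tiny GOE regularization $\wt H := H + N^{-A} H^{\rm G}$ (where $H^{\rm G}$ is the independent GOE from \eqref{11da} and $A > 10$ is chosen arbitrarily large, say $A=100$), so that $\wt H$ is of type \eqref{11da} and Theorem \ref{thm:afterreg} applies to it. A standard GOE operator norm bound yields $\|H - \wt H\|_{\rm op} \leq 2 N^{-A+1/2}$ with overwhelming probability, so this perturbation is negligible at every scale appearing in Theorem \ref{lemma main3}.

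Parts (ii) and (iii) will follow almost immediately from this smallness. By Weyl's inequality, $|\lambda_j(H) - \lambda_j(\wt H)| \leq 2 N^{-A+1/2}$ for all $j$; hence the eigenvalue count in $[E_1,E_2]$ differs between $H$ and $\wt H$ by at most the number of $\wt H$-eigenvalues within $2 N^{-A+1/2}$ of the endpoints, which by the semicircle law for $\wt H$ is $O(N^{-A+3/2}) \ll N^\tau$. For (iii), the rescaled eigenvalue shift at the microscopic scale $1/N$ is $O(N^{-A+3/2}) \ll N^{-c}$, and smoothness of the test function in \eqref{e:Univ} absorbs this error.

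For the eigenvector statements (i) and (iv), I would use a spectral-projector argument. Fix $j$ with $|\lambda_j| \leq 2 - \kappa$, choose a small $\tau' > 0$, and let $P_J$ be the spectral projector of $\wt H$ onto $J := [\lambda_j - \delta, \lambda_j + \delta]$ with $\delta := N^{-1+\tau'}$. The Davis--Kahan theorem gives
\[
\|(I - P_J)\psi_j\|_2 \leq \|H - \wt H\|_{\rm op}/\delta \leq 2 N^{-A + 3/2 - \tau'},
\]
while the local semicircle law for $\wt H$ bounds $N_J := \#\{i : \wt\lambda_i \in J\} \leq N^{2\tau'}$ with overwhelming probability. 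Writing $P_J \psi_j = \sum_{i : \wt\lambda_i \in J} c_i \wt\psi_i$ with $\sum_i c_i^2 \leq 1$, Cauchy--Schwarz together with delocalization of $\wt H$-eigenvectors gives $|(P_J \psi_j)(\alpha)|^2 \leq N_J \max_i |\wt\psi_i(\alpha)|^2 \leq N^{-1+\tau+2\tau'}$, and combined with $\|\cdot\|_\infty \leq \|\cdot\|_2$ on the Davis--Kahan remainder this proves (i). For (iv), one expands
\[
\sum_{\alpha \in I}(P_J \psi_j)(\alpha)^2 = \sum_i c_i^2\, \wt p_{ii} + \sum_{i \neq i'} c_i c_{i'}\, \wt p_{ii'},
\]
where the diagonal part equals $(\sum_i c_i^2)|I|/N + O(N^{-3a/2+\tau}) = |I|/N + O(N^{-3a/2+\tau})$ by Theorem \ref{thm:afterreg}(iv) applied to $\wt H$, and the off-diagonal part is bounded by $N_J \cdot \max_{i \neq i'} |\wt p_{ii'}|$.

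The main obstacle is therefore the off-diagonal overlap bound $|\wt p_{ii'}| = O(N^{-3a/2+\tau})$ for distinct $i, i'$ with $\wt\lambda_i, \wt\lambda_{i'}$ near $\lambda_j$, which is needed to close the QUE estimate. This is not stated explicitly in Theorem \ref{thm:afterreg}(iv), but it is a direct consequence of Theorem \ref{thmQUE} (which controls \emph{both} the diagonal overlaps $p_{ii}$ and the off-diagonal overlaps $p_{ij}$ simultaneously, for pairs of eigenvalues in the same spectral window) when transferred to the band matrix $\wt H$ through the mean-field reduction developed in the remainder of this section.
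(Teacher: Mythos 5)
Your handling of parts (ii) and (iii) is sound, and your Davis--Kahan route to delocalization (i) is a legitimate alternative to the paper's resolvent-expansion argument. The problem is part (iv), where your own ``main obstacle'' is a genuine gap, not a deferrable detail. The off-diagonal overlap bound $|\wt p_{ii'}|=\sum_{\alpha\in I}\wt\psi_i(\alpha)\wt\psi_{i'}(\alpha)=\OO(N^{-3a/2+\tau})$ for \emph{distinct band-matrix eigenvectors} is not contained in Theorem \ref{thm:afterreg}, and it is not ``a direct consequence'' of Theorem \ref{thmQUE} via the mean-field reduction. Theorem \ref{thmQUE} controls overlaps $p_{ii'}$ of eigenvectors of a single mean-field block $Q_e$ at a \emph{fixed} energy $e$; but the restriction of $\b\psi_i$ to a window is proportional to $\bu_{i'}(\lambda_i)$ while that of $\b\psi_{i'}$ is proportional to $\bu_{(i')'}(\lambda_{i'})$, i.e.\ they are eigenvectors of $Q_e$ at two \emph{different} energies $e=\lambda_i$ and $e=\lambda_{i'}$. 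Relating these is nowhere done in the paper, and the stitching argument of Section \ref{sec:meanfield} only propagates single-eigenvector (diagonal) information. Moreover, even if such a bound were available, importing it would defeat the point of the lemma, which is a self-contained deduction of Theorem \ref{lemma main3} from Theorem \ref{thm:afterreg}.

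The paper closes (iv) by an entirely different mechanism: it integrates the eigenvector perturbation ODE along the interpolation $H'+sH^{\rm G}$, which transfers QUE coordinate-by-coordinate in ${\rm L}^\infty$ with no off-diagonal overlaps at all, at the price of a lower bound on the eigenvalue gaps $|\lambda_{k\pm1}(s)-\lambda_k(s)|$ along the path. That level-repulsion bound is obtained by a Schur-complement anti-concentration argument which uses QUE for the minors $H^{(k)}$ --- this is precisely why Theorem \ref{thm:afterreg} explicitly asserts its conclusions for the submatrices $H^{(k)}$, a hypothesis your proposal never touches (a telltale sign that an ingredient is missing). Note that the same level repulsion would also rescue your projector approach: with a gap lower bound $N^{-C}$ one could shrink $\delta$ so that $P_J$ has rank one, eliminating the off-diagonal terms entirely. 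Without it, or without a genuinely new off-diagonal QUE estimate for band matrices, the argument for (iv) does not close.
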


\begin{proof}
Let $H'=H_1+H_2$ have distribution \eqref{sjui} and $H=H'+N^{-A}H^{\rm G}$, with respective ordered eigenvalues and eigenvectors $\la_k',\b\psi_k'$, $\la_k,\b\psi_k$. 
Let $\mathcal{A}=\{\|H^{\rm G}\|_\infty\leq N^{-A-1/2+\e}\}$. By Gaussian decay of the entries of $H^{\rm G}$, for any $\e,C>0$, for large enough $N$ we have 
\begin{equation}\label{eqn:proba1}
\mathbb{P}(\mathcal{A})\geq 1-N^{-C}.
\end{equation}

The conclusions $(ii)$ and $(iii)$ of Theorem \ref{lemma main3} for $H'$ therefore follow from the Hoffman-Wieland inequality:
\begin{equation}\label{eqn:HW}
\sup_k|\la_k-\la_k'|\mathds{1}_{\mathcal{A}}\leq N^{1/2} \big(\sum_k|\la_k-\la_k'|^2\big)^{1/2}\mathds{1}_{\mathcal{A}}\leq N^{1/2}({\rm Tr}(H'-H)^2)^{1/2}\mathds{1}_{\mathcal{A}}\leq N^{-A+3}.
\end{equation}

Moreover, the conclusions  $(i)$ of Theorem \ref{lemma main3}  also holds for $H'$. Indeed, we have $\eta^{-1}|\psi'_k(i)|^2 \le \im G'_{ii}(\lambda'_k+ \ii \eta)$ and  the simple inequality
$$
\|(H'-z)^{-1}\|_\infty=\|(H-z)^{-1}\|_\infty+\OO\left(\frac{N^2}{\eta^2}\|H'-H\|_\infty\right)
$$
obtained by resolvent expansion.
From the local law and eigenvector delocalization for $H$, for any $z=E+\ii\eta$, $\eta>N^{-1+\e}$, $E\in[-2+\kappa,2-\kappa]$, for any $D>0$ we have $\mathbb{P}(\|(H-z)^{-1}\|_\infty\leq N^{\e})\geq 1-N^{-D}$ for some $C>0$, for large enough $N$. Moreover, on $\mathcal{A}$ we have $\frac{N^2}{\eta^2}\|H'-H\|_\infty\leq N^{-2}$, which concludes the proof of $(i)$ for $H'$.

The proof of $(iv)$ is more involved. We want to obtain (\ref{M2}) for $H'$, for a given large $D>0$.  Take $A=4D$ in (\ref{11da}) and denote $t=N^{-A}$.
The perturbation formula for the $\b\psi_k(s)$'s, eigenvectors of $H'+s H^{\rm G}$ associated to eigenvalues $\la_k(s)$'s, is
$$
\frac{\rd}{\rd s}{\b\psi_k}(s)=\sum_{\ell\neq k}\frac{\langle \b\psi_\ell(s),H^{\rm G}\b \psi_k(s)\rangle}{\la_k(s)-\la_\ell(s)}{\b \psi}_\ell(s).
$$
On $\mathcal{A}$, we therefore have
\begin{equation}\label{eqn:per}
\|\b \psi_k-\b \psi_k'\|_\infty\leq N^{2}\int_0^{t}\rd s\left(\frac{1}{|\la_k(s)-\la_{k+1}(s)|}+\frac{1}{|\la_k(s)-\la_{k-1}(s)|}\right).
\end{equation}
Consider eigenvalues $\la_k<\la_{k+1}$ for $H$, and $\la^{(i)}_{k}\in(\la_k,\la_{k+1})$ be an eigenvalue for the minor $H^{(i)}$, with associated normalized eigenvector $\b\psi_k^{(i)}$.
Denote
$$
\mathcal{A}^{(i)}_k=\left\{\sum_{|\alpha-i|<W}|\psi_k^{(i)}(\alpha)^2|>\frac{W}{10N}\right\}.
$$
By QUE for $H^{(i)}$, for any $C>0$, for large enough $N$ we have 
\begin{equation}\label{eqn:proba2}
\mathbb{P}(\cap_{i,k:\la_k\in[-2+\kappa,2-\kappa]}\mathcal{A}^{(i)}_k)\geq 1-N^{-C}.
\end{equation}
By a Schur complement as in 
\cite[Section 4]{NguTaoVu2017}, for any $\delta>0$ we have
$$
\mathbb{P}\left(\{|\la_{k+1}-\la_k|<\delta\}\cap \mathcal{A}^{(1)}_k\cap\dots\cap\mathcal{A}^{(N)}_k\right)\leq N\mathbb{P}(\{|\langle \wt H^{(1)},\b\psi^{(1)}_k\rangle|<\delta\sqrt{N}\}\cap\mathcal{A}^{(1)}_k)
$$
where $\wt H^{(i)}=(H_{ij})_{j\neq i}$. Take $\delta=N^{-2D}$. On $\mathcal{A}^{(1)}_k$, $\langle \wt H^{(1)},\b\psi^{(1)}_k\rangle$ is a random variable with density bounded  by $N^2$, so that
$$
\mathbb{P}\left(\{|\la_{k+1}-\la_k|<\delta\}\cap \mathcal{A}^{(1)}_k\cap\dots\cap\mathcal{A}^{(N)}_k\right)\leq N^{-2D+4}.
$$
Moreover, similarly to (\ref{eqn:HW}), we have $\sup_{0\leq s\leq t}|\la_k(t)-\la_k(s)|\mathds{1}_{\mathcal{A}}\leq N^{-A+3}$, which together with the previous equation gives
\begin{equation}\label{eqn:proba3}
\mathbb{P}\left(\{|\la_{k+1}(s)-\la_k(s)|<\delta\ {\rm for\ some}\ 0<s<t\}\cap \mathcal{A}^{(1)}_k\cap\dots\cap\mathcal{A}^{(N)}_k\cap\mathcal{A}\right)\leq N^{-2D+4}+N^{-A+3}.
\end{equation}
From equations (\ref{eqn:proba1}), (\ref{eqn:per}), (\ref{eqn:proba2}) and (\ref{eqn:proba3}), for any $C>0$ we have, for large enough $N$,
$$\mathbb{P}(\|\b\psi_k-\b\psi_k'\|_\infty<N^{-A+2+2D})\geq  1-N^{-2D+4}-N^{-A+3}-N^{-C}.
$$
This concludes the proof of QUE for $\b\psi_k'$, knowing QUE for $\b\psi_k$.
\end{proof}

\subsection{Notations.}\label{MR}\  
We now explain the ideas for the proof of 
Theorem \ref{thm:afterreg}.
We start with the following  definition which generalizes band matrices by allowing  diagonal perturbations.

 \begin{definition}[Definition of $H^{\b g}_\zeta$]\label{defHg}
 For any positive constant $\zeta$ and any  $\b g\in \R^N$, $H_{\zeta}$ and $H_{\zeta}^{\b g}$ will denote real  symmetric  $N \times N$ matrices satisfying the following properties.

The matrix $H_{\zeta}$ is centered, it has  independent entries up to the symmetry condition which satisfy (\ref{cond: moment}), (\ref{eqn:subgaus}) and is of the form 
\be\label{Hgzeta}
H_\zeta  = \begin{pmatrix} A_\zeta  & B^* \cr B & D
    \end{pmatrix}, 
\ee
where  $A_\zeta$ is a $W\times W$ matrix and 
$$\var ((H_{\zeta})_{ij})=(s_{\zeta})_{ ij}=s_{ij}-\frac{\zeta (1+\delta_{ij})}{W}\mathds{1}_{ i, j\in\llbracket 1, W\rrbracket },
$$
where $s_{ij}=f(i-j)$ and $\sum_{x\in\mathbb{Z}_N} f(x)=1$.

The matrix $H^{\b g}_\zeta$ is defined by
\be\label{Hg}
\left(H^{\b g}_\zeta \right)_{ij}:= (H_\zeta)_{ij}-  g_i \delta_{ij},\quad H^{\b g}_\zeta 
   =:  \begin{pmatrix} A^{\b g}_\zeta  & B^* \cr B & D^{\b g}
    \end{pmatrix},    \quad \b g=(  g_1,   g_2, \dots,  g_N).
\ee
We denote the eigenvalues and eigenvectors of $H^{\b g}_\zeta$ by $\lambda_k^{\b g}$ and $\b\psi_k^{\b g} = \begin{pmatrix} \bw^{\b g}_k \cr \bv^{\b g}_k  \end{pmatrix}$
where $\bw^{\b g}_k\in \R^W$. In the special case $g_j = g \mathds{1}_{j > W} $,   we will  denote 
$H^{\b g}_\zeta$ by $H^ g_\zeta$, and for $\zeta=0$ we abbreviate $H^{\b g}_\zeta$ (resp. $H^g_\zeta$) by $H^{\b g}$ (resp. $H^g$).
 \end{definition}

\noindent In fact, the matrices $H^{\b g}$ we consider will always be of type  $H^ g$, up to a translation of the basis indices mod $N$.\\

We now define some curves, illustrated in Figure \ref{Fig1} from Subsection \ref{sub:sketch}. The eigenvector equation $H^{\b g} \b\psi_k^{\b g} = \lambda_k^{\b g}\b\psi_k^{\b g}$ immediately implies that 
 $$
 (A^{\b g} -B^{\b g, *} (D^{\b g} -\lambda^{\b g} _k)^{-1}B^{\b g} ) \bw_k^{\b g} =\lambda_k^{\b g}  \bw_k^{\b g} .
 $$
Hence we will consider the eigenvector equation 
\be\label{gaibian}
Q_e^{\b g} \bu_k^{\b g} (e) = \xi_k^{\b g} (e)\bu_k^{\b g} (e); \quad Q_e^{\b g} := (A^{\b g} -B^{\b g, *} (D^{\b g} -e)^{-1}B^{\b g} ),
 \ee
where  $\xi_k^{\b g} (e)$, $\bu_k^{\b g} (e)$ are  eigenvalues and normalized eigenvectors. 
From now on, we assume that $k$ is an index in the bulk of the spectrum for $H^{\b g}$,   i.e. for some $\kappa>0 , \kappa N<k<(1-\kappa)N  $.

Since  the matrix elements 
have Gaussian components \eqref{11da}, it is easy to check that the eigenvalue flows  $g\to \lambda_k^g$ are smooth and non-intersecting with probability one.
Assuming that 
the function $g\to e= \lambda_k^g+g$ has a regular inverse (for the existence of such an inverse, see Subsection \ref{Mfr}),
for any $e$ close enough to $\la_k$, there exists a $g$ such that $e= \lambda_k^g+g$, so that we can define
$$\cal C_k(e) = \lambda^g_k.$$
The curves 
$(\cal C_k(e))_{1\leq k\leq N}$are labeled  in increasing order by  their intersections with the diagonal  $\cal C (e) = e$. 
We refer to \cite[Equation (4.16)]{BouErdYauYin2017} for a detailed discussion of the domain of $\mathcal{C}_k$.

We defined  $\xi_i(e)$ ($1 \le i \le W$), the  eigenvalue of $Q_e= Q^{\b g = 0}_e$.  A simulation of the  curves $e \to \xi (e)$ in given in Figure \ref{Fig1}.
Since $\xi_i(e)$ is also an eigenvalue of $H^{e-\xi_i(e)}$,   it is equal to $\cal C_j(e)$ for some $j$.  We follow the convention in \cite{BouErdYauYin2017} 
to denote  $k' \in\llbracket 1,W\rrbracket $ to be the index given by the relation  $
\xi_{k'}({ e}\, )=\cal C_k({e}\,)$. Here $k'= k'(e)$ depends on the energy $e$  and  $\xi_{k'(e)}(e)$ is increasing in  $k$.   
As $e$ approaches to an eigenvalue of $D$, one eigenvalue from $\b\xi(e)$  tends to $\pm\infty$. 
The other eigenvalues follow the smooth curves $\cal C_k$  
and the labels $k'(e)$ gets shifted by $\pm 1$  whenever $e$ crosses an eigenvalue of $D$.   
Since  the curve  $\cal C _k$ passes through  $(\lambda_k, \lambda_k)$, we have 
\be\label{tanxi}
H\b\psi_k=\lambda_k \b\psi_k, \quad \xi_{k'}(\lambda_k)=\lambda_k,\quad 
\b\psi_k=\begin{pmatrix} \bw_k \cr \bv_k\end{pmatrix}, \quad
 Q_{\lambda_k}\bu_{k'}=\xi_{k'}\bu_{k'}, \quad \bu_{k'}(\lambda_k)=\frac{\bw_k}{\|\bw_k\|_2}.
\ee

\subsection{Outline of proof of Theorem \ref{thm:afterreg}.}\label{sub:sketch}\  
We explain the main steps of the proof, with successively QUE for mean field blocks, QUE for  $H$ from (\ref{11da}), and its application to local law, universality and delocalization.\\

\noindent{\it First step: QUE for mean-field blocks $Q_e^{\b g}$.}\ Remember the definition of $H$ from (\ref{11da}) and denote
$
\wt H=(1+N^{-A}\frac{N+1}{N})^{-1/2}H
$.
Consider a parameter $\zeta=T=N^{-c}$ where $c>\e_m$ is defined in $\eqref{mouyzz}$. Then, thanks to the Gaussian matrix $H_2$ defining $H$,  we can write 
$$
\wt H=H_T+\sqrt{T}\begin{pmatrix} H_{W}^{\rm G}  & 0 \cr 0 & 0
    \end{pmatrix}
$$
for some $H_T$ as defined in (\ref{Hgzeta}), and $H_{W}^{\rm G}$ is a $W\times W$ GOE matrix.
To this $H_T$ we associate $H_T^{\b g}$ from formula (\ref{Hg}), and denote
$
V=A_{T}^{\b g}-B^{\b g, *} (D^{\b g} -e)^{-1}B^{\b g}.
$
Consider the flow 
\begin{equation}\label{eqn:deform}
K_{T}^{\b g, e}(t)=V+Z(t)
\end{equation}
 as in (\ref{eqn:Kt}).
 Notice that 
we have the equality in distribution
\be\label{deform1}
K_{T}^{\b g, e} (t) \overset{\rm (d)}{=}   K_{T-t}^{\b g, e}(0) = A_{T-t}^{\b g}-B^{\b g, *} (D^{\b g} -e)^{-1}B^{\b g}.
\ee
In particular, the distributions of $Q_e^{\b g}= A^{\b g} -B^{\b g, *} (D^{\b g} -e)^{-1}B^{\b g} $ from  \eqref{gaibian} is the same as  $K_{T}^{ \b g, e}(T)$.

We therefore obtain QUE for the mean field blocks $Q_e^{\b g}$ by using Theorem \ref{thmQUE}, i.e. by interpreting this matrix ensemble as the result of the flow 
$K(T)=K_{T}^{ \b g, e}(T)$. As an hypothesis for Theorem \ref{thmQUE}, some estimates on $V=K(0)$ are necessary, and given in Subsection \ref{sec: Str}.\\

\noindent{\it Second step: QUE for $H^{\b g}$.}\ 
To simplify the notations we set $\b g=0$, but QUE will be obtained similarly for any small enough $\b g$. 
 For the proof, we combine  an   $\e$-net argument with perturbations of eigenvectors.

For this, we first need to choose good points for our net.  
Let $M =N^{C}$ with $C$ a large constant which will be chosen in the rigorous proof.  
We will prove that there is another large number $C'$ such that for each $n \in \Z$ fixed such that $E_n=nN^{-C'}\in [-2+\kappa, 2-\kappa]$, then  there is a deterministic $e_n\in[E_n,E_{n+1}]$  (i.e., 
the choice of $e_n$ may depend on the law of $D$ but is independent of the random matrix elements of $D$)
$$
\inf_j  |\lambda^D_j - e_n| \ge M^{-1}   
$$
with high probability, 
where the $\lambda^D_j$'s are eigenvalues of $D$ (recall  that  $\lambda_j$ denotes  an eigenvalue of $H$).
In other words, the bulk eigenvalues of $D$ will stay away from the grid points $(e_n)_{n\in\mathbb{Z}}$ by at least $N^{-C}$: the norm of $Q_{e_n}$ is polynomially bounded,
an hypothesis necessary to prove QUE by flow methods.

 We now consider QUE for these good points $(e_n)_{n\in\mathbb{Z}}$. Let $J$ be the $W\times W$ matrix defined by  
\be\label{defJ}
(J)_{ij}=\delta_{ij}\cdot \mathds{1}_{1\le i\le W/2}.
\ee
 By QUE the mean-field blocks (see Lemma \ref{henna}),  for all $n$ and $l$  satisfying  $|\xi_l(e_n)-e_n|\le W^{-1}$ we have  
\be\label{gozet-}
\left| \left\|J\cdot \bu_l(e_n)\right\|^2_2-\frac12\right|\le   \frac{N^{1/2+\tau}}{W}+\frac{N^{\frac{\e_m}{2}+\tau}}{W^{1/2}}
 \ee
with overwhelming probability, where $\tau> 0$ an arbitrarily small positive constant and  $\e_m$ is defined in \eqref{cond: moment}.

For a given bulk index $k$, let  $\wt e=\sup_{n}\{e_n:\, e_n  <  \lambda_k \} $. Recall that  $\cal C_k(\lambda_k)=\lambda_k$ and 
 $k'\in\llbracket 1,W\rrbracket $ is the index given by the relation  $\xi_{k'}(e )=\cal C_k(e)$ for all $e$, as explained in Subsection \ref{MR}. 
By  the  eigenvector perturbation formula for the matrix $Q_e$, we have 
\be\label{eqn:perbevect}
  \frac{\rd }{\rd e}  {\b u}_{k'}(e) 
 =     \sum_{\ell \ne k}\frac{  {\b u}_{\ell'} (e)}{\cal C_{k}(e)-\cal C_\ell (e)}
\left({\b u}_{\ell'}  (e),  B^* \frac{1}{(D-e)^2}B \;  {\b u}_{k'}(e)\right).
\ee
Notice that we used the labeling associated with the curve $\cal C$ since  $\cal C_k(e)$ is  continuous, i.e.,   the label $k, \ell$ does not change 
as $e$ pass through the eigenvalues of $D$. However, the label $k'$ for the eigenvector depends on $e$. 

Our goal is to  approximate $\bu_{k'}(\lambda_k)$, which is proportional to 
the first $W$ components of the eigenvector $\b\psi_k$ of $H$, by the eigenvector   $\bu_{k'}({\wt e}\,)$ which satisfies the QUE  
by \eqref{gozet-}.
Integration gives
\be\label{zapgys-}
\| \bu_{k'}({\wt e}\,)- \bu_{k'}(\lambda_k) \| \le N^{-C'} \sup_{\lambda_k \le e \le {\wt e}}
 \sum_{\ell \ne k}\frac{ 1}{|\cal C_{k}(e)-\cal C_\ell (e)|}
\Big | \left({\b u}_{\ell'}  (e),  B^* \frac{1}{(D-e)^2}B \;  {\b u}_{k'}(e)\right) \Big |.
\ee 
We will show that  for some $C_1 > 0$ \nc the following two  estimates  hold with high probability.
\begin{enumerate}
\item  Level repulsion: for fixed $k$  we have
$$
 \min_{ e\in[\,{\wt e}, \lambda_k]}\; \min_{{  \ell:} \ell \ne  k}\; |\cal C_{ k}(e)-\cal C_{\ell}(e)|\ge  N^{- C_1/2}. 
$$
\item A consequence of the weak uncertainty principle from Section \ref{sec: reg},
$$
\sup_{\lambda_k \le e \le {\wt e}}
\Big | \left({\b u}_{\ell'}  (e),  B^* \frac{1}{(D-e)^2}B \;  {\b u}_{k'}(e)\right) \Big |\le   N^{C_1/2}.
$$
\end{enumerate}
If these two bounds hold then \eqref{zapgys-} gives stability of the eigenvector under perturbation in $e$, provided that $C' \gg C_1$. 
Delocalization and QUE  of $\bu_{k'}({\wt e})$ therefore imply the same properties for  $\bu_{k'}(\lambda_k)$.

Thus, denoting $\e_N$ the right hand side of (\ref{gozet-}) and $X_n=\sum_{1 \le i \le W/2} |\psi_k(i + n W/2)|^2$,  we have 
\begin{equation}\label{eqn:ratio}
\frac{X_1}{X_{2}}=1+\OO\left(\e_N\right)
\end{equation}
with overwhelming probability.
Now we can shift the indices by $W/2$ and repeat the same argument, so that for any $1\leq \ell<m\leq 2N/W$, we have
$$ 
\frac{X_{\ell}}{X_{m}}=\left(1+\OO(\e_N)\right)^{m-\ell}=1+\OO\left(\frac{N}{W}\e_N\right).
$$
provided that $\frac{N}{W}\e_N=\oo(1)$.
Summing over $\ell$ for fixed $m$ gives, with overwhelming probability, 
\begin{equation}\label{eqn:QUEsketch}
\frac{N}{W}X_m=\frac{1}{2}+\OO\left(\frac{N}{W}\e_N\right).
\end{equation}
This concludes the outline that QUE  
for the eigenvector $\b\psi_k$ holds, when $\frac{N}{W}\e_N=\oo(1)$.\\

\noindent{\it Third step: applications of QUE.}\ 
We successively outline the proofs of delocalization, universality and local law for $H$ from (\ref{11da}).

Delocalization for the mean field blocks $Q_e^{\b g}$ holds thanks to a priori resolvent estimates from  subsection \ref{sec: Str}, and regularization of the resolvent by Dyson Brownian motion, as in (\ref{evcontrol}).
By stability as in (\ref{zapgys-}), this delocalization is extended to $\bu_{k'}(\la_k)$. As $((u_{k'}(\la_k))(i))_{1\leq i\leq W}=(\psi_k(i))_{1\leq i\leq W}/\|\b\psi_k\|_{{\rm L}^2(\llbracket 1,W\rrbracket)}$, 
delocalization for $\b\psi_k$ follows from both delocalization of $\bu_{k'}(\la_k)$ to the QUE estimate (\ref{eqn:QUEsketch}) about $\|\b\psi_k\|_{{\rm L}^2(\llbracket 1,W\rrbracket)}$.

For universality, remember that for any  $e$,   $\mathcal{C}_k(e)=\xi_{k'} (e)$ denotes  the eigenvalues of $Q_e$, and that
the intersection points of the  curves $e \to \xi_k(e)$ with the diagonal $e= \xi$ 
are  eigenvalues for $H$  (see Figure \ref{Fig1}).
Thus $\lambda_j$  can be determined by the spectrum $\b\xi(e)$ for a fixed $e$, 
and  the slope of the curves $e \to \xi_{k'}(e)$. 
On the one hand, $\b\xi(e)$  follows GOE statistics as a consequence of \cite{LanSosYau2016}.
On the other hand, a simple calculation yields 
$$
\partial_e  \mathcal{C}_k(e)=1-\frac{1}{\sum_{i=1}^W \left| \psi_k^g(i) \right|^2},
$$
where $\b\psi^g$ is the corresponding eigenvector of $ H^g$ with $g$ the solution to $e=\la_k^g+g$.
From the  QUE  \eqref{eqn:QUEsketch} for $H^g$, all slopes are equal at leading order, so that the statistics of 
$\lambda_j$ will be given by those of $\xi_k$ up to some  trivial scaling.  
In the same way, the local law for $H$ follows from a local law for $Q_e$ by parallel projection.

\vspace{0.5cm}

\begin{figure}[h]
\centering
\vspace{0.2cm}
\begin{subfigure}{.4\textwidth}
  \centering
\begin{tikzpicture}
\node[anchor=south west,inner sep=0] (x) at (0,0) {\includegraphics[width=7cm]{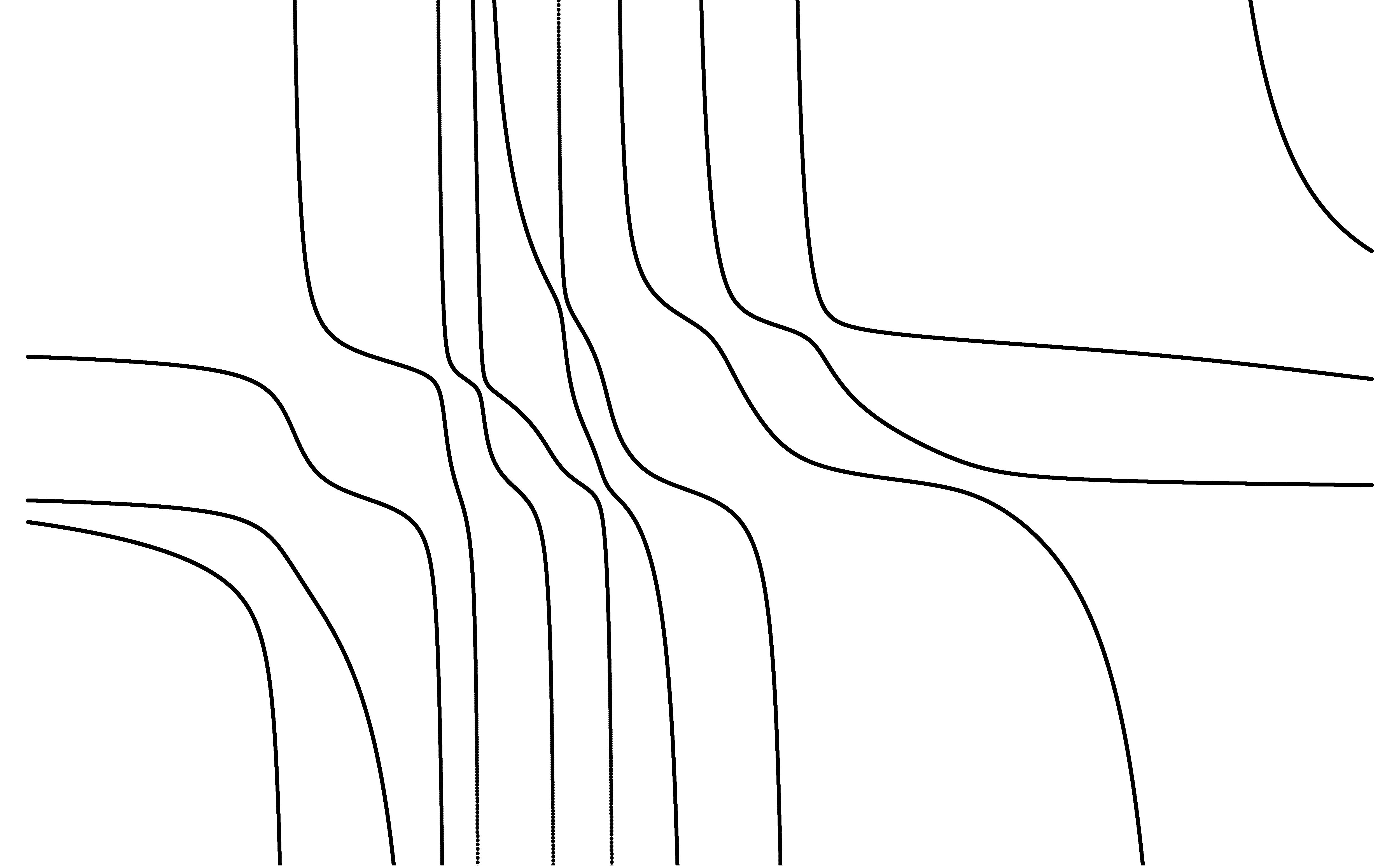}};
\begin{scope}[x={(x.south east)},y={(x.north west)}]
\draw[black,thick,rounded corners] (0.5,0.55) rectangle (0.64,0.75);
\draw [black,->] (0,0.5) -- (1.02,0.5);
\draw [black,->] (0.47,0) -- (0.47,1);
\draw [black,dashed] (0.47,0.5) -- (0.81,1);
\draw [black,dashed] (0.47,0.5) -- (0.13,0);
\fill[black] (0.198,0.1)  circle[black,radius=1.5pt];
\fill[black] (0.26,0.19)  circle[black,radius=1.5pt];
\fill[black] (0.313,0.265)  circle[black,radius=1.5pt];
\fill[black] (0.339,0.305)  circle[black,radius=1.5pt];
\fill[black] (0.388,0.378)  circle[black,radius=1.5pt];
\fill[black] (0.423,0.426)  circle[black,radius=1.5pt];
\fill[black] (0.435,0.445)  circle[black,radius=1.5pt];
\fill[black] (0.452,0.468)  circle[black,radius=1.5pt];
\fill[black] (0.52,0.58)  circle[black,radius=1.5pt];
\fill[black] (0.555,0.622)  circle[black,radius=1.5pt];
\fill[black] (0.585,0.665)  circle[black,radius=1.5pt];
\fill[black] (0.198,0.1)  circle[black,radius=1pt];
\node[black] at (1.05,0.5) {$e$};
\end{scope}
\end{tikzpicture}
  \caption{A simulation of  eigenvalues of $Q_e=A-B^*(D-e)^{-1}B$, i.e. functions $e\mapsto \xi_j(e)$. Here $N=12$ and $W=3$. The $\lambda_i$'s are the abscissa of the intersections with the diagonal.}
\end{subfigure}
\begin{subfigure}{.1\textwidth}
  \centering
\begin{tikzpicture}
\draw [black,dashed] (0,0) -- (0,0);
\end{tikzpicture}
\end{subfigure}
\begin{subfigure}{.4\textwidth}
\centering
\vspace{-1.1cm} \hspace{-2cm}
\begin{tikzpicture}
\node[anchor=south west,inner sep=0] (y) at (0,0) {\includegraphics[width=8.5cm,height=5cm,trim={3cm 3cm 1.5cm 1.5cm},clip]{C.jpg}};
\begin{scope}[x={(y.south east)},y={(y.north west)}]
\draw[white,ultra thick,fill=white]  (0,0) rectangle (1,0.8);
\draw [black,-,thick,dashed] (0.21,0.16) -- (0.65,0.8);
\draw [black,-,thick,dashed] (0.37,0.16) -- (0.37,0.8);
\draw [black,-,thick] (0.1,0.7) -- (0.8,0.3);
\draw [black,-,thick] (0.1,0.8) -- (0.8,0.4);
\draw [black,-,thick] (0.1,0.93) -- (0.8,0.53);
\draw [black,-,thick] (0.1,1) -- (0.8,0.6);
\draw [black,-,thick] (0.1,0.64) -- (0.8,0.24);
\draw [black,-,thick] (0.1,0.5) -- (0.8,0.1);
\draw [black,-,thick] (0.1,0.4) -- (0.8,0.02);
\draw [black,-,thick] (0.1,0.32) -- (0.63,0.04);
\fill[black] (0.26,0.235)  circle[black,radius=1.8pt];
\fill[black] (0.3,0.29)  circle[black,radius=1.8pt];
\fill[black] (0.35,0.36)  circle[black,radius=1.8pt];
\fill[black] (0.415,0.46)  circle[black,radius=1.8pt];
\fill[black] (0.445,0.5)  circle[black,radius=1.8pt];
\fill[black] (0.495,0.573)  circle[black,radius=1.8pt];
\fill[black] (0.56,0.665)  circle[black,radius=1.8pt];
\fill[black] (0.595,0.715)  circle[black,radius=1.8pt];
\draw[black,fill=black] (0.37,0.775)  +(-1.3pt,-1.3pt) rectangle +(1.3pt,1.3pt) ;
\draw[black,fill=black] (0.37,0.645)  +(-1.3pt,-1.3pt) rectangle +(1.3pt,1.3pt) ;
\draw[black,fill=black] (0.37,0.545)  +(-1.3pt,-1.3pt) rectangle +(1.3pt,1.3pt) ;
\draw[black,fill=black] (0.37,0.485)  +(-1.3pt,-1.3pt) rectangle +(1.3pt,1.3pt) ;
\draw[black,fill=black] (0.37,0.345)  +(-1.3pt,-1.3pt) rectangle +(1.3pt,1.3pt) ;
\draw[black,fill=black] (0.37,0.255)  +(-1.3pt,-1.3pt) rectangle +(1.3pt,1.3pt) ;
\draw[black,fill=black] (0.37,0.18)  +(-1.3pt,-1.3pt) rectangle +(1.3pt,1.3pt) ;
\draw [black,->](0.37,0.845)-- (0.495,0.774);
\filldraw[white,fill=white]
(0,0) -- (0,1) -- (1,1) -- (1,0) -- cycle
(0.2,0.15) -- (0.65,0.15) -- (0.65,0.8) -- (0.2,0.8) -- cycle;
\draw[black,thick,rounded corners] (0.2,0.15) rectangle (0.65,0.8);
\draw [black,->,thick] (0.37,0.05) -- (0.37,0.13);
\node[black] at (0.37,0.02) {$e$};

 \draw [black,->,thick] (0.31,0.05) -- (0.31,0.13);
\node[black] at (0.31,0.02) {$\lambda'$};

\draw [black,->,thick] (0.26,0.05) -- (0.26,0.13);
\node[black] at (0.26,0.02) {$\lambda$};

\draw [black,->](0.37,0.775)-- (0.475,0.715);
\draw [black,->] (0.37,0.645) -- (0.44,0.605);
\draw [black,->] (0.37,0.545)-- (0.415,0.52);
\draw [black,->] (0.37,0.49)-- (0.398,0.468);
\draw [black,->] (0.37,0.255) -- (0.33,0.274);
\draw [black,->] (0.37,0.18) -- (0.31,0.209);
\end{scope}
\end{tikzpicture}
  \caption{Framed region of Figure (a) for large $N,W$: the curves $\xi_j$ are almost parallel, with slope $1-N/W$.  The eigenvalues of  $Q_e$ and  $H$ are related by a
  projection to the diagonal.
 }
\end{subfigure}

\caption{The idea of mean-field reduction, from \cite{BouErdYauYin2017}: universality of gaps between eigenvalues for fixed $e$ implies universality on the diagonal through parallel projection.
For $e$ fixed, we label the curves by $\xi_k(e)$.  }
\label{Fig1}
\end{figure}

\subsection{Generalized resolvent estimates.}\label{sec: Str}\ In this subsection, we do not need to assume (\ref{cond: moment}).

Recall that we have added a GOE regularization of size $N^{-A}$ in \eqref{11da}. Since $N^{-A}$ is  tiny, all resolvent  estimates cited in this paper for matrices are valid  after adding  this small  regularizing GOE.   A formal proof can be obtained  by 
the standard resolvent identity $
(B-C)^{-1}=B^{-1}+B^{-1}CB^{-1}+\cdots$, which we will skip.  
In this section, all results will  be proved without this regularization so as to  simplify the notations.

Our first goal is to show that $K_{T, t}^{\b g, e}$ \eqref{deform1}   is $(\eta_*, \eta^*,  r)$-regular  at $e=E_0$, in the sense of Assumption \ref{XYH},
for some range of $t$. The precise choice of the parameters $r, T, \eta_*, \eta^*$ will be given in \eqref{mouyzz}.
Recall the matrix  $H^{\b g}_{T, t} $ is defined by 
$$
H^{\b g}_{T, t} = \left(\begin{array}{cc}   A_T^{\b g}+Z_t    & B^* \\
B  & D^{\b g}
\end{array}\right).
$$
As in (\ref{eqn:general}), define the ``generalized  resolvent" of $H^{\b g}_{T, t}$  by  
$$
G^{\b g}_{T, t}(z, e )=  
\left(H^{ \b g}_{T, t}- \begin{pmatrix}  z {\rm I}_{W}  & 0 \cr 0 & e {\rm I}_{N- W}  \end{pmatrix} \right)^{-1}.
$$
The distribution of $H^{\b g}_{T, t}$ is the same as $H^{\b g}_{T-t}$ defined in \eqref{Hg}, 
so we will also denote  $G^{\b g}_{T, t}(z, e )$  by $G^{\b g}_{T-t}(z, e )$.

Clearly, the $W\times W$ component of  $ G^{\b g}_{T, t}(z, e )$ is  exactly the resolvent $ ( K_{T-t}^{ \b g, e}-z)^{-1}$.
We will state estimates on this generalized resolvent in Theorem \ref{LLniu}, an important input for our mean-field reduction method. The 
proof appears in the companion papers \cite{BouYanYauYin2018,YanYin2018}. 
On the one hand, the absence of imaginary part on most of the diagonal elements 
of the generalized resolvent  is a major obstacle to estimate it.  On the other hand, Theorem \ref{LLniu} assumes   $\eta={\rm Im}\, z$ is large (almost of order $1$), which is a sufficient 
input to apply Theorem \ref{thmQUE} and obtain quantum unique ergodicity.

Define $M^{\zeta, \b g}_i(  z, \wt z)$ as the solution of the self consistent equation
$$
(M^{\zeta, \b g}_i)^{-1}(  z, \wt z)=-( \wt z -z)\mathds{1}_{i>W}-z-{g}_i- \sum_{j} (s_{\zeta})_{ ij}M  _j ^{\zeta, \b g}(z,\wt z ), \quad z,\; \wt z\in \C^+\cup \R
$$
with the constraint that $$M^{0,\b 0}_i( \wt z, \wt z\,)=m_{\rm sc} (\wt z+ \ii 0^+ ),$$  the Stieltjes transform of the semicircle law. 
For simplicity of notations, we denote by  $M^{\zeta, \b g}( z, \wt z)$ the matrix with entries 
$$M^{\zeta, \b g}_{ij}: =M^{\zeta, \b g}_i\delta_{ij}.$$ We will show that $M^{\zeta, \b g}( z, \wt z)$  is the limit of the generalized resolvent 
$G_{\zeta}^{\b g}(  z,\wt z\, )$. For this purpose, we first collect  basic  properties of  $M$ in the following lemma, which is proved 
in \cite{BouYanYauYin2018}.

\begin{lemma}\label{UE}
 Assume $|\re \wt z\, |\le 2-\kappa$  for some $\kappa>0$. There exist $c, C>0$ such that the following holds.
 \begin{enumerate}[(i)]
\item  (Existence and Lipschitz  continuity) If  
\be\label{heiz}
 \zeta+\| \b g\|_\infty+ |z-\wt z|\le c, 
\ee
   then  $M^{\zeta, \b g}_i(  z, \wt z)$  exists and 
$$
 \max_i\left|M^{\zeta, \b g}_i(  z, \wt z)-m_{\rm sc}(\wt z+  \ii 0^+)\right|\le C \left(\zeta+\| \b g\|_\infty+ |z-\wt z|\,\right).
$$
If, in addition, we assume $\zeta'+\| \b g'\|_\infty+ |z'-\wt z|\le c $, then 
$$
\max_i\left|M^{\zeta', \b g'}_i(  z', \wt z)-M^{\zeta, \b g}_i(  z, \wt z)\right|\le C\left(\| \b g-\b g'\|_\infty +|z'-z|+|\zeta'-\zeta|\right).
$$
\item (Uniqueness)  The vector $M^{\zeta, \b g}_i(  z, \wt z)$, $(1\le i\le N)$ is unique under the constraints  \eqref{heiz}  and  
$$
  \max_i\left|M^{\zeta, \b g}_i(  z, \wt z)-m_{\rm sc}(\wt z+ \ii  0^+ )\right|\le c.
$$
\end{enumerate}
\end{lemma}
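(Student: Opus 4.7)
The two statements are a standard consequence of the implicit function theorem applied to the self-consistent equation near the semicircle base point, exploiting the row-stochasticity of the variance profile $(s_{ij})$ together with the spectral gap of the linearization in the bulk. Concretely, I would rewrite the equation as $F_i(M;\zeta,\b g,z,\wt z) = 0$ where
$$
F_i(M;\zeta,\b g,z,\wt z) \deq M_i^{-1} + z + g_i + (\wt z - z)\mathds{1}_{i>W} + \sum_{j} (s_\zeta)_{ij} M_j.
$$
At the base point $(\zeta,\b g,z)=(0,\b 0,\wt z)$, the constant vector $M_i\equiv m_{\rm sc}(\wt z+\ii 0^+)$ solves $F=0$ by the defining identity $m_{\rm sc}^{-1}=-\wt z-m_{\rm sc}$, using $\sum_j s_{ij}=\sum_x f(x)=1$. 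The assumption $|\re\wt z|\le 2-\kappa$ guarantees that $m_{\rm sc}(\wt z+\ii 0^+)$ is well-defined and that $|m_{\rm sc}(\wt z)|^2\le 1-c(\kappa)$ for some constant $c(\kappa)>0$.

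Next I would verify the hypothesis of the implicit function theorem by computing the Jacobian at the base point:
$$
(D_M F)_{ik}\big|_{\mathrm{base}} = -m_{\rm sc}^{-2}\,\delta_{ik} + s_{ik} = -m_{\rm sc}^{-2}\bigl(I-m_{\rm sc}^2 S\bigr)_{ik},
$$
with $S=(s_{ij})$. The key, and the reason the whole scheme is uniform in $N$, is that $S$ is row-stochastic in the $\ell^\infty$ sense: $\|S\|_{\ell^\infty\to\ell^\infty}=\max_i\sum_j s_{ij}=1$. Hence
$$
\|m_{\rm sc}^2 S\|_{\ell^\infty\to\ell^\infty}=|m_{\rm sc}|^2\le 1-c(\kappa),
$$
so a Neumann series gives
$$
\|(I-m_{\rm sc}^2 S)^{-1}\|_{\ell^\infty\to\ell^\infty}\le 1/c(\kappa),
$$
a bound independent of the dimension. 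Smoothness of $F$ in all its arguments in $\ell^\infty$ is immediate in a neighborhood of the base point, where $\min_i|M_i|$ stays bounded below.

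With the uniformly bounded inverse linearization in hand, I would apply a Banach fixed-point argument to the map $M\mapsto M-(D_M F|_{\mathrm{base}})^{-1}F(M;\zeta,\b g,z,\wt z)$ on a small closed $\ell^\infty$-ball around $m_{\rm sc}\b 1$. For $\zeta+\|\b g\|_\infty+|z-\wt z|$ sufficiently small, this map is a contraction mapping the ball into itself, yielding a unique fixed point $M^{\zeta,\b g}(z,\wt z)$ in that ball with
$$
\|M^{\zeta,\b g}(z,\wt z)-m_{\rm sc}\b 1\|_\infty\le C\bigl(\zeta+\|\b g\|_\infty+|z-\wt z|\bigr).
$$
The Lipschitz continuity in $(\zeta,\b g,z)$ at fixed $\wt z$ claimed in part (i) is obtained by subtracting the defining equations at two parameter values, isolating the difference $M'-M$ through the linearization, and using the same uniform bound on $(I-m_{\rm sc}^2 S)^{-1}$ after absorbing the small perturbation. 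Part (ii), uniqueness in the constraint set $\{\max_i|M_i-m_{\rm sc}|\le c\}$, follows at once from the strict contraction property, possibly after shrinking $c$.

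The only potentially nontrivial point is making the argument genuinely \emph{uniform in $N$}, since we are inverting an $N\times N$ operator. This is handled cleanly by the choice of the $\ell^\infty$ topology together with row-stochasticity: the spectral radius of $m_{\rm sc}^2 S$ in $\ell^\infty\to\ell^\infty$ is at most $|m_{\rm sc}|^2$, which is bounded away from $1$ throughout the bulk. Everything else is a routine implicit function theorem/contraction mapping argument, and no further input is needed beyond the bulk bound on $m_{\rm sc}$ and the normalization $\sum_x f(x)=1$.
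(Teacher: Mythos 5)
First, note that the paper itself does not prove Lemma \ref{UE}: it is stated with the remark that the proof appears in the companion paper \cite{BouYanYauYin2018}, so there is no in-paper argument to compare against. Your overall strategy (implicit function theorem / contraction mapping around the constant solution $m_{\rm sc}\b 1$, with uniformity in $N$ obtained from an $\ell^\infty$ bound on the inverse of the linearization) is the natural and presumably correct one. However, your execution has a genuine gap at the single step that carries all the difficulty: the invertibility of the stability operator $I-m_{\rm sc}^2 S$.

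Your claimed bound $|m_{\rm sc}(\wt z+\ii 0^+)|^2\le 1-c(\kappa)$ is false. The two roots of $m^2+\wt z m+1=0$ have product $1$, and for real $\wt z$ with $|\wt z|<2$ they are complex conjugates, so $|m_{\rm sc}(E+\ii 0^+)|=1$ \emph{identically} throughout the bulk; this is exactly the regime of the lemma, since in the application $\wt z=e$ is real. Consequently $\|m_{\rm sc}^2 S\|_{\ell^\infty\to\ell^\infty}=1$, the Neumann series does not converge, and your uniform bound on $(I-m_{\rm sc}^2S)^{-1}$ — and with it the contraction property and the Lipschitz constants in (i) — does not follow. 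This is the well-known borderline feature of self-consistent equations for band and generalized Wigner matrices: $S$ has eigenvalue $1$ (constant vector) and eigenvalues accumulating at $1$ (long-wavelength modes), while $|m_{\rm sc}^2|=1$. The correct argument must use that, writing $m_{\rm sc}=e^{\ii\theta}$ with $\theta$ bounded away from $0$ and $\pi$ for $|\re\wt z|\le 2-\kappa$, one has $|1-m_{\rm sc}^2t|^2=(t-\cos 2\theta)^2+\sin^2 2\theta$, which is bounded below for all $t\in\mathrm{spec}(S)$ \emph{provided} $\mathrm{spec}(S)\subset[-1+c,1]$ (this lower spectral gap at $-1$ is needed to handle $E$ near $0$, where $m_{\rm sc}^2=-1$, and is itself a consequence of the lower bound in (\ref{bandcw1})). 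This yields a bounded inverse on $\ell^2$, which must then be upgraded to $\ell^\infty\to\ell^\infty$ using the band structure of $S$ — a further step absent from your proposal. Without these ingredients the fixed-point scheme does not close, so the proof as written is incomplete at its central point.
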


Since $s_{ij}$ from \eqref{bandcw0} is a periodic function of $i-j$,  by the uniqueness of the previous lemma, we have
$
 M_i^{\zeta, \b 0}(z, \wt z)= M_{W-i}^{\zeta, \b 0}(z, \wt z), 
$
so that
\be\label{aaaa}
\sum_{i=1}^{W/2}M_i^{\zeta, \b 0}(z, \wt z)=\frac12\sum_{i=1}^{W}M_i^{\zeta, \b 0}(z, \wt z).
\ee
This equation will be necessary to obtain the quantum ergodicity estimate for $Q^{\b g}_e$ in (\ref{Hop3}).   
Our main results on the generalized  resolvent of $H_{\zeta}^{\b g}$ is the following, proved in a companion paper.

 \begin{theorem}[Generalized resolvent estimate]\label{LLniu}
 Recall $\eta_*, \eta^*$ and $r$ from Assumptions \ref{XYH} and \ref{as:basic}.  Suppose these parameters  are of the form 
\be\label{mouyzz} 
\eta_*=N^{-\e_*},\quad \eta^*=N^{-\e^*} \quad r=N^{- \e_*+3\e^*},\quad  T=N^{- \e_*+\e^* }, \quad  0<\e^*\le \e_* /20,
\ee 
where $T$ is a new parameter  used in the following equation (\ref{mouyzz2}).  Assume that
\begin{equation}\label{eqn:constraint}
\log _N W>  \max\left(\frac{3}{4}+\e^*,\frac{1}{2}+\e_*+\e^*\right).
\end{equation}
For any small $\tau,
\kappa>0$ and large $D$, uniformly in $|e|<2-\kappa$, for large enough $N$ the following holds.
For any  deterministic $z$, $\zeta$ and $\b g$ satisfying 
  \be\label{mouyzz2} 
   |\re z-e|\le r, \quad \eta_* \le  \im z\le \eta^* , \quad 0\le \zeta\le T, \quad \|\b g\|_\infty\le W^{-3/4},
\ee
we have (we denote $\|A\|_{\max}=\max_{i,j}|A_{ij}|$)
\be\label{jxw}
\P\left( \|{ G^{ \b g}_\zeta(z, e)-M ^{\zeta, \b g }( z, e)}\| _{\max}\ge   N^\tau \left(\frac{N^{1/2}}{W}+\frac{1}{\sqrt{W\, {\rm Im}z}}\right)\nc\right)\le N^{-D}.
 \ee 
 \end{theorem}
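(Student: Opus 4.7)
The plan is to combine a Schur-complement self-consistent equation for the diagonal entries with a $T$-type equation for the off-diagonal ones, and to close both via a continuity argument in an auxiliary spectral parameter. The guiding difficulty, which dictates the whole strategy, is that Ward's identity
$$\sum_j |G_{ij}|^2 = \frac{\im G_{ii}}{\im z}$$
is unavailable for $G^{\b g}_\zeta(z,e)$ on the $D$-block, since the spectral parameter $e$ there is real. Off-diagonal control cannot therefore be extracted from diagonal estimates in the usual way.

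First I would derive, by applying the Schur complement row by row and isolating the fluctuating part, the identity
$$-1/G^{\b g}_{ii} \;=\; \tilde z_i + g_i + \sum_j (s_\zeta)_{ij} G^{\b g}_{jj} + \mathcal E_i,$$
with $\tilde z_i = z$ for $i\le W$ and $\tilde z_i = e$ otherwise, where $\mathcal E_i$ is a centered sum of off-diagonal quadratic forms in the $i$-th row of $H^{\b g}_\zeta$. Comparing this against the defining relation for $M^{\zeta, \b g}$ and invoking the stability of that fixed-point equation (Lemma \ref{UE}) reduces the bound $|G^{\b g}_{ii} - M^{\zeta, \b g}_i| \lesssim N^\tau ( N^{1/2}/W + 1/\sqrt{W\im z})$ to a control of $\mathcal E_i$, which in turn requires a sharp bound on $\sum_j s_{ij} |G^{\b g}_{ij}|^2$.

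Next I would establish a closed inequality of the form
$$T^{(j)}_i \;\le\; C\Bigl(\tfrac{1}{W} + |M^{\zeta,\b g}_i|^2 \sum_k s_{ik}\, T^{(j)}_k\Bigr), \qquad T^{(j)}_i := \sum_k s_{ik} |G^{\b g}_{kj}|^2,$$
derived through a new diagrammatic expansion that never appeals to Ward. Inverting $1 - |M|^2 S$ and using its spectral gap (coming from $|m_{\rm sc}|<1$ in the bulk) yields the desired pointwise decay on $|G^{\b g}_{ij}|$, and feeds back into the bound on $\mathcal E_i$. The optimal error $N^{1/2}/W$ then emerges from an improved fluctuation averaging lemma applied to $\mathcal E_i$, which exploits cancellations among the $Z$-variables after an additional layer of resolvent expansion.

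Since the $T$-equation cannot be closed at $\im w = 0$ directly, my main device would be to replace the shift $e$ on the $D$-block by a complex parameter $w$ and to run a continuity argument: start at $\im w \sim 1$, where the problem reduces to the anisotropic local law and Ward's identity is fully available, and successively decrease $\im w$ in tiny increments down to $\im w = 0$. At each step, the Lipschitz continuity of both $G^{\b g}_\zeta(z,w)$ and $M^{\zeta, \b g}(z,w)$ in $w$ (Lemma \ref{UE}) transfers the previous a priori bound, and the $T$-equation then upgrades it to the target estimate. The hard part will be this continuity step: each decrement incurs a small polynomial loss, and only under the hypothesis $\log_N W > 3/4 + \e^*$ does the total accumulated loss over all $\oo(N^{\e_* - \e^*})$ steps remain subleading relative to $N^{1/2}/W$. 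This is the origin of the $3/4$ exponent, and the technical heart of the proof, which is carried out in the companion papers \cite{BouYanYauYin2018, YanYin2018}.
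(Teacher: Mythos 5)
This paper does not prove Theorem \ref{LLniu}: it is explicitly taken as an input, with the proof deferred to parts 2 and 3 of the series (\cite{BouYanYauYin2018,YanYin2018}). So there is no internal proof to compare your proposal against. What the paper does offer is a two-paragraph description of the strategy in Subsection 1.3, and your outline reproduces it faithfully at that level: the failure of Ward's identity for the generalized resolvent, a self-consistent $T$-equation for the off-diagonal entries derived via a new diagrammatic expansion, a bootstrap that successively decreases the imaginary part of the spectral parameter on the $D$-block from order one down to zero, and an improved fluctuation averaging step to reach the $N^{1/2}/W$ error. In that sense your plan is aligned with the authors' announced approach.

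But the proposal is a plan, not a proof. Every quantitative step is asserted rather than derived: the closed inequality for $T^{(j)}_i$, the control of $\mathcal E_i$, the fluctuation averaging that produces $N^{1/2}/W$, and the accounting of losses across the $\im w$-decrements that is supposed to produce the $3/4$ threshold are all left unexecuted, and you end by deferring "the technical heart" to the same companion papers. One specific claim deserves a flag: you invert $1-|M|^2 S$ "using its spectral gap (coming from $|m_{\rm sc}|<1$ in the bulk)." On the $D$-block the spectral parameter is real, and $|m_{\rm sc}(E+\ii 0)|=1$ for $E$ in the bulk, so $1-|M|^2 S$ is nearly singular there; its invertibility (or the way around its near-singularity) is precisely the obstacle created by the absence of Ward's identity, not a consequence of a uniform gap. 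As written, the proposal therefore identifies the right road map but does not constitute a proof of the statement, and neither does anything in this paper.
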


The following corollary is an immediate consequence of the above generalized resolvent estimate, the deterministic  Lemma \ref{UE}, and (\ref{aaaa}). In the statement, we use the notation $\cal I_{e, r}=(e-r,e+r)$ as in (\ref{Ir}).

\begin{corollary}
\label{lemLQ}  
We  follow the  assumptions and conventions  of Theorem \ref{LLniu}. 
Then for any $z = E + \ii \eta$ with $E \in  \cal I_{e,r}$ and  $\eta_*\leq \eta\leq \eta^*$,  any $t$ satisfying $0\le t\le T$ and any fixed (large)  number  $D>0$ the following statements hold for $N$ large enough:
\begin{align}
&\label{Hop4}
 \P\left(\exists E\in \cal I_{e,r},\; \left|\im \big(K_{T-t}^{\b g, e}-z \big)^{-1}_{kk}\right|\ge \frac{2}{W}\im \tr \big(K_{T-t}^{\b g, e}-z\big)^{-1}  \right)\le W^{-D},\\
&\label{jiangjiulh}
 \P\left(  \left|  \frac1W\tr\big(K_{T-t}^{\b g, e} -z \big)^{-1} - m_{\rm sc}(z) \right|\ge N^{-\e^*/2}\right)\le W^{-D},\\
&
\label{Hop3}
 \P\left( \max_{E\in \cal I_{e,r}}\left|  \frac{1}{W}\nc\sum_{1\le k\le W/2}\big( K_{T-t}^{\b g, e} -z \big)^{-1}_{kk}-\frac{1}{2  W\nc}\tr \big(K_{T- t}^{\b g, e}-z\big)^{-1}\right|\ge  N^\tau \left(\frac{N^{1/2}}{W}+\frac{1}{\sqrt{W\, {\rm Im}z}}\right)\nc\right)\le W^{-D}.
\end{align}
In particular, $K_{T-t}^{\b g, e}$  satisfies the regularity assumptions  (\ref{e:imasup}), (\ref{eqn:diaginitial}), \ref{Hop1prime})
in the  range $0\le t\le T$.
 \end{corollary}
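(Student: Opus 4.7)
The key observation is that the $W\times W$ upper-left block of the generalized resolvent $G^{\b g}_{T-t}(z,e)$ is exactly $(K_{T-t}^{\b g, e}-z)^{-1}$, and the deterministic profile $M^{T-t,\b g}(z,e)$ from Lemma~\ref{UE} serves as its approximation. The strategy is to combine (i) Theorem~\ref{LLniu} to pass from the random resolvent to $M$, (ii) Lemma~\ref{UE} to control $M$ by a uniform scalar $m_{\rm sc}(e+\ii 0^+)$, and (iii) the symmetry (\ref{aaaa}) to handle the partial trace identity. All parameters of Theorem~\ref{LLniu} are satisfied: for $\zeta=T-t\in[0,T]$ and $z$ in the prescribed spectral domain, the hypotheses (\ref{mouyzz}), (\ref{mouyzz2}), and $\|\b g\|_\infty\leq W^{-3/4}$ hold by assumption.

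The plan is to first establish all the bounds pointwise in $z$, then upgrade to uniformity in $E\in\cal I_{e,r}$. Pointwise, Theorem~\ref{LLniu} gives with probability $\geq 1-N^{-D'}$ that
\[
\max_{1\leq k\leq W}\left|(K_{T-t}^{\b g,e}-z)^{-1}_{kk}-M^{T-t,\b g}_k(z,e)\right|\leq N^{\tau}\left(\frac{N^{1/2}}{W}+\frac{1}{\sqrt{W\,\im z}}\right)=:\Delta(z).
\]
Lemma~\ref{UE}(i) then yields $|M^{T-t,\b g}_k(z,e)-m_{\rm sc}(e+\ii 0^+)|\leq C(T+\|\b g\|_\infty+|z-e|)\leq C(T+W^{-3/4}+r+\eta^*)$. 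Under the parameter constraints (\ref{mouyzz}) and $|e|\leq 2-\kappa$, the right-hand side is $o(1)$ while $\im m_{\rm sc}(e+\ii 0^+)=\pi\rho_{\rm sc}(e)$ is bounded away from zero.

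Now the three claims follow. For (\ref{jiangjiulh}), averaging Theorem~\ref{LLniu} over the diagonal and using $\frac{1}{W}\sum_k M^{T-t,\b g}_k\approx m_{\rm sc}(e+\ii 0^+)\approx m_{\rm sc}(z)$ (the last step by Lipschitz continuity of $m_{\rm sc}$ since $|z-e|\leq r+\eta^*\ll N^{-\e^*/2}$) gives the claim with room to spare. For (\ref{Hop4}), $\frac{1}{W}\im\tr(K-z)^{-1}\approx\im m_{\rm sc}(e+\ii 0^+)$ is bounded below by a positive constant, while each $|\im(K-z)^{-1}_{kk}|\approx|\im M^{T-t,\b g}_k(z,e)|$ is close to the same quantity; the ratio is therefore strictly less than $2$ for $N$ large, so the violating event has probability $\leq W^{-D}$. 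For (\ref{Hop3}), combining the resolvent approximation on both partial and full sums with the exact identity (\ref{aaaa}) for $\b g=0$ and the Lipschitz bound $|M^{T-t,\b g}_k-M^{T-t,\b 0}_k|\leq C\|\b g\|_\infty\leq CW^{-3/4}$ shows that the leading deterministic terms cancel up to an error $O(W^{-3/4})$, which is absorbed into $\Delta(z)$ since $W^{-3/4}\ll N^{1/2}/W$ when $W\geq N^{3/4+a}$.

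The last technical point is to convert the pointwise statements into uniform ones over $E\in\cal I_{e,r}$, as required by the $\exists E$ and $\max_E$ in (\ref{Hop4}) and (\ref{Hop3}). This is a standard net argument: discretize $\cal I_{e,r}$ on a polynomially fine grid of $N^{C}$ points, apply Theorem~\ref{LLniu} at each grid point with exponent $D+C$, and interpolate using the trivial Lipschitz bound $\|\partial_E(K-z)^{-1}\|\leq (\im z)^{-2}\leq \eta_*^{-2}$, which is polynomially controlled. I expect no serious obstacle: Theorem~\ref{LLniu} is the single substantive input (proved elsewhere in the series), and everything after is bookkeeping, with only minor care needed to track that $W^{-3/4}$ (from the $\b g$-Lipschitz step) is swallowed by the main error $N^\tau N^{1/2}/W$ under the bandwidth hypothesis (\ref{eqn:constraint}).
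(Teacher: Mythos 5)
Your proposal is correct and uses exactly the ingredients the paper relies on — Theorem \ref{LLniu} for the random-to-deterministic step, Lemma \ref{UE} for the comparison of $M^{\zeta,\b g}$ with $m_{\rm sc}$, and the symmetry (\ref{aaaa}) for the partial trace — which is precisely the route the paper takes (it states the corollary as an "immediate consequence" of these three facts without further detail). Your additional bookkeeping (the net argument in $E$ and the observation that the $\b g$-Lipschitz error $W^{-3/4}$ is dominated by $N^{1/2}/W$) is sound.
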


\begin{remark} \label{64} 
This corollary gives control of the error in QUE for mean field blocks and therefore controls the range of  $W$ for which delocalization can be proved.

  More precisely, assume $\e_*=0$ to simplify. The error $N^{-\frak c}$ in  Assumption \ref{as:basic}, which governs the error in  Theorem \ref{thmQUE},  
is of order  $N^{-\frak c} \sim  \frac{ N^{1/2}}{W}$, form (\ref{Hop3}).
In order to patch this estimate to get QUE for the band matrix $H$, we will need $\frac{N}{W}\cdot \frac{N^{1/2}}{W}   \ll 1$. This explains our condition 
$ W \gg N^{3/4}$.  

However,  the  error $\frac{\sqrt N}{W}$ in \eqref{Hop3} 
is taken  from \eqref{jxw}; this error in \eqref{Hop3}   usually can be improved by taking into account the average of the index $k$. \nc We believe that the key error term in Theorem \ref{thmQUE} comes from the last term in \eqref{Ice1}. If we take $t_0$ close to 1 and replace $n$ by $W$, this error is of order
 $W^{-1/2}$. We therefore expect that for
$\frac{N}{W}\cdot \frac 1 {W^{1/2}}   \ll 1$, i.e. $ W \gg N^{2/3}$,  the QUE for band matrices holds. 
If  we additionally assume that these errors associated to different blocks are centered and asymptotically independent, then the total error for the QUE of  the band matrix $H$ would be 
$(\frac{N}{W})^{1/2}\cdot \frac 1 {W^{1/2}}$, which is much smaller than $1$ when $W \gg N^{1/2}$. 
\end{remark}

 \subsection{Eigenvectors and eigenvalues estimates for mean-field blocks.}\ \label{sec:QUEmean}
The  following lemma concerning  the QUE and related properties of 
the $W\times W$ matrix $Q^{\b g}_e$  from \eqref{gaibian}. It is an important  building block for the proof of Theorem \ref{thm:afterreg}.

For the statement,  recall the notations from Subsection \ref{MR}. In particular, the matrix $Q^{\b g}_e$  and its eigenvalues and eigenvectors  $\xi^{\b g}_k(e)$ and $\bu^{\b g}_k(e)$
are defined in \eqref{gaibian}. 
  
 \begin{lemma}\label{henna} Let $H$ satisfy the assumptions  in Theorem \ref{thm:afterreg} and $\kappa,\tau>0$ be small constants. 
For  $e\in \R, 1 \le k \le W$ and $C> 0$,  
denote by $\chi$ the set   
\be\label{71}
\chi(k,e,   C,\b g):= 
\left \{ |D^{\b g}-e|\ge N^{-C} \right\} \cap \left\{ |\xi^{\b g}_k(e)-e|\le  W^{-1 } \right\}.
 \ee
Uniformly in deterministic $ |e|\le 2-\kappa$,    the following statements hold. 
 \begin{enumerate}[(i)]
\item (Delocalization)  For any  $C, D>0 $,  for $N$ large enough, we have
\be\label{sza}
\max_{\|\b g\|_\infty\le W^{-3/4}}\P\left( \{   \|\bu^{\b g}_k(e)\|^2_\infty\ge W^{-1+\tau} \}\cap  \chi(k,e,  C, \b g)  \right)\le N^{-D}.
 \ee
 \item (Level repulsion) 
For any  $C, D>0 $,  there exists $N_0=N_0(C,D)$ such that for $N\geq N_0(C,D)$, for any $x>0$ we have
\be\label{sza3}
\max_k \max_{\|\b g\|_\infty\le W^{-3/4}}\P\left(
  \big \{ \left|\xi^{\b g}_{k\pm 1}(e)-\xi^{\b g}_{k }(e) 
  \right| \le \frac xW   \big \} \cap \chi(k,e,  C, \b g)  \right)\le   W^\tau x^{2-\tau} +N^{-D}.
 \ee 
\item (QUE) Recall that $\e_m$ is defined in \eqref{gwzN} and $J$ in \eqref{defJ}. For any  $C, D>0 $,  for $N$ large enough,
\be\label{sza2}
\max_{\|\b g\|_\infty\le W^{-3/4}}\P\left(   \left  \{  \Big |  \|J\cdot \bu^{\b g}_k(e)  \|^2_2-\frac12   \Big |\ge  \frac{N^{1/2+\tau}}{W}+\frac{N^{\frac{\e_m}{2}+\tau}}{W^{1/2}}\nc  
  \right  \} \cap \chi(k,e,  C, \b g)  \right) \le N^{-D} .
 \ee
\item(Local law)  Take $\b g=0$. There exists $\e >0$ which does not depend on $\tau$ such that for any  $C,D>0$, for  sufficiently large $N$ we have 
\be\label{zyaa}
\P\left(\left\{\sup_{  0 \le e'-e \le  W^{  -1+\e } }\left|\#\Big\{k: \xi_k(e)\in [e, e']\Big\}-W\int_{e}^{e'} \rho_{\rm sc}(x)\rd x\right|\ge  W^{\tau}  \right\}\cap \{|D-e|\geq N^{-C}\} \right)\le N^{-D}.
\ee
Notice that  for  $\tau < \e$, we have $W^{\tau} < W\int_{e}^{e+ W^{-1+ \e} } \rho_{\rm sc}(x) \rd x$.  
\end{enumerate} 
\end{lemma}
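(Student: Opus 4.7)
The proof rests on interpreting $Q_e^{\b g}$ as the terminal value of a Dyson Brownian motion. By the distributional identity \eqref{deform1}, $Q_e^{\b g} \overset{\rm (d)}{=} K_T^{\b g, e}(T)$, where $K_T^{\b g, e}(t) = V + Z(t)$ starts from $V = A_T^{\b g} - B^{\b g,*}(D^{\b g} - e)^{-1} B^{\b g}$ and evolves under an independent symmetric Brownian noise $Z$. On the event $\chi(k, e, C, \b g)$, the bound $|D^{\b g} - e| \ge N^{-C}$ polynomially controls $\|V\|$, yielding the boundedness \eqref{clMang}. The generalized resolvent estimate (Theorem \ref{LLniu}), packaged as Corollary \ref{lemLQ}, then verifies the regularity hypotheses of Assumptions \ref{XYH} and \ref{as:basic} for $V$: \eqref{jiangjiulh} gives \eqref{e:imasup}; \eqref{Hop4} gives \eqref{eqn:diaginitial}; and \eqref{Hop3} combined with the symmetry \eqref{aaaa} (arising from the periodicity of $s_{ij}$) yields Assumption \ref{as:basic} part 2 with $I = \llbracket 1, W/2\rrbracket$, $C_0 = 1/2$, and error exponent $W^{-\mathfrak{c}} \lesssim N^{\tau}\bigl(N^{1/2}/W + (W\eta^*)^{-1/2}\bigr)$.

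With these hypotheses in hand, part (i) follows directly from Lemma \ref{lemdeloc} applied at $t = T$. Part (iii) is obtained by invoking Theorem \ref{thmQUE} with $n = W$, $I = \llbracket 1, W/2\rrbracket$, and $t_0 \asymp T$: the bound \eqref{Ice1} for $|p_{kk}| = \bigl|\,\|J\cdot \bu_k^{\b g}(e)\|_2^2 - 1/2\,\bigr|$ gives an initial-data error $W^{-\mathfrak{c}} \sim N^{1/2+\tau}/W$ and a dynamical error $(WT)^{-1/2} \sim N^{\e_m/2 + \tau}/\sqrt{W}$ under the parametrization $T \asymp N^{-\e_m}$ implicit in \eqref{mouyzz} and \eqref{sjui}, matching \eqref{sza2} exactly. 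Part (iv) is a consequence of \eqref{jiangjiulh} combined with the DBM rigidity refinement of \cite[Theorem 3.3]{LanYau2015}, which upgrades the resolvent scale from $\eta^*$ to the optimal $W^{-1+\e}$ and produces the eigenvalue counting bound \eqref{zyaa}. Part (ii) follows from the full-strength Gaussian component $Z(T)$ in $K_T^{\b g, e}(T)$: conditional on $V$ and on the rigidity furnished by (iv), a classical Wegner-type argument (equivalently, the bulk gap statistics of matrices with a GOE perturbation) gives the level repulsion $W^\tau x^{2-\tau}$ at the scale $x/W$.

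The main technical hurdle lies in part (iii), where the Gaussian-component size $\sim N^{-\e_m}$ in \eqref{sjui} must be tracked through the DBM time $T$ and balanced against the initial error from \eqref{Hop3} so that the two sources of error precisely match the two terms in \eqref{sza2}. The periodicity identity \eqref{aaaa} is the essential algebraic observation: it makes the normalization $C_0 = 1/2$ exact and avoids any additional error term in the comparison between $|I|^{-1}\sum_{i\in I} G_{ii}$ and $n^{-1}\tr G$ appearing in Assumption \ref{as:basic}; without this symmetry one would be forced to take $I$ asymmetric and incur further losses.
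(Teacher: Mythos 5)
Your proposal is correct and follows essentially the same route as the paper: the distributional identity $Q_e^{\b g}\overset{\rm(d)}{=}K_T^{\b g,e}(T)$, verification of Assumptions \ref{XYH} and \ref{as:basic} via Corollary \ref{lemLQ} (with the symmetry \eqref{aaaa} fixing $C_0=1/2$), Theorem \ref{thmQUE} for (iii) with the parameter choice $\e_*=\e_m+\e$, $\e^*=\e$ so that $T\asymp N^{-\e_m}$, the DBM level-repulsion input of \cite{LanYau2015} for (ii), and rigidity plus the free-convolution/semicircle comparison through \eqref{jiangjiulh} for (iv). The only minor imprecision is writing $(W\eta^*)^{-1/2}$ where the uniform bound over $\eta\in[\eta_*,\eta^*]$ forces $(W\eta_*)^{-1/2}$, but this is absorbed into the second error term of \eqref{sza2} and does not affect the conclusion.
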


\begin{remark}\label{large constant}
The constraint  $|\xi^{\b g}_k(e)-e|\le  W^{-1 } $ in  \eqref{71} can be replaced by $|\xi^{\b g}_k(e)-e|\le  W^{-1 + \e }$ for some $\e>0$, with little change in the proof.  
In the application of this lemma in our paper, we only need to use $A W^{-1}$ for any large fixed constant $A$. 
\end{remark}

\begin{proof} 
Recall the operator $K^{\b g, e}_{T, t}$ in \eqref{deform1} and   $\eta_*$, $\eta^*$,  $r$ and $T$  in \eqref{mouyzz}.
Denote the eigenvalues and eigenvectors of $K^{\b g, e}_{T}(t)$  by  $\lambda_{k}^T(t)$  and $\bu_{k}^T(t)$.
Hence   the distributions of the eigenvalue $\xi^{\b g}_k(e)$ and eigenvector $\bu^{\b g}_k(e)$ of $Q^{\b g}_e$ are given by 
$$
\xi^{\b g}_k(e)\overset{\rm (d)}{=}  \lambda_{k, T}(t), \quad \bu^{\b g}_k(e)\overset{\rm (d)}{=} {\bu_{k,T}(t)}.
$$
By definition of $K^{\b g, e}_{T}(t)$,  it is trivial to prove that for any $C_0>0$ there exists $C_1$ such that 
\be\label{13}
  \mathds{1}_{|D^{\b g}-e|\ge N^{-C_0}} \| K^{\b g, e}_{T}(t) \|\le W^{C_1}
\ee
 holds with a very high probability for any $0 \le t \le T$.  
 Corollary \ref{lemLQ} and \eqref{13}  imply that $K^{\b g, e}_{T}(t)$ is $(\eta_*, \eta^*,  r)$-regular  (Assumption \ref{XYH})  at $E_0=e$
for any   $0 \le t \le T$ (under the condition 
$\mathds{1}_{|D^{\b g}-e|\ge N^{-C_0 }}$). In addition, the conditions in Assumption \ref{as:basic}  are guaranteed by \eqref{Hop4} and \eqref{Hop3}. 
By Theorem  \ref{thmQUE}, for any small $\e>0$,  with overwhelming probability we have
$$
\Big |  \|J\cdot \bu^{\b g}_k(e)  \|^2_2-\frac12   \Big |  \le 
W^\e \left( \frac{N^{1/2}}{W}\nc 
+ (W N^{- \e_*+\e^* })^{-1/2}\right), 
$$
where we have used $T$ defined in \eqref{mouyzz}. 
We now choose $\e_*=\e_m+\e$ and $\e^*=\e$. For small enough $\e$,
thanks to (\ref{gwzN}), the constraint  (\ref{eqn:constraint}) is satisfied. Together with  the above equation, this proves 
proving \eqref{sza2}. 
Moreover, by \eqref{evcontrol},  $(K^{\b g, e}_{T, t}-z)^{-1}_{ii}$ is uniformly bounded for $z \in \cal D_\kappa$ and  this  implies 
\eqref{sza}.  

To prove (\ref{sza3}),
we need a level repulsion result from \cite{LanYau2015}.

 \begin{lemma}\label{shang} (Theorem 3.5 and 3.6 of \cite{LanYau2015})
Let $\lambda_{i,t}$ denote the eigenvalues of  $K(t) $  \eqref{eqn:Kt} with $V$  $(\eta_*, \eta^*, r)$-regular at $E_0$ and bounded as in Definition  \ref{XYH}.
Assume that there exists $ c<1$ such that   
\be\label{lu}
|\log \eta^*|\le c  |\log \eta_*|.
\ee
Let $\tau>0$. Then for large enough $N$, 
for any  $x>0$ we have 
$$
\max_{ 
t\in \cal T_\omega
} \P\left(\left\{|\lambda_{i,t}-E_0|\le W^{-1}\right\}\cap\left\{ |\lambda_{i,t}-\lambda_{i\pm 1,t}|\le  W^{-1}x\right\}\right)\le W^{\tau}x^{2-\tau},
$$ 
where $\cal T_\omega$ is defined in \eqref{Tdef}. 
\end{lemma}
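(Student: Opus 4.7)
The statement is quoted from Theorems 3.5 and 3.6 of \cite{LanYau2015}, so any reproof must follow that work's philosophy: combine rigidity for the eigenvalues of the deformed matrix $K(t)=V+Z(t)$ with a conditional density analysis that exploits the $\beta=1$ Vandermonde repulsion intrinsic to the symmetric Dyson Brownian motion.

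First, I would establish eigenvalue rigidity $|\lambda_{i,t}-\gamma_{i,t}|\le W^{-1+\tau}$ uniformly in $t\in\mathcal{T}_\omega$, with overwhelming probability, using the deformed local law of \cite{BouHuaYau2017} applied to the $(\eta_*,\eta^*,r)$-regular initial data $V$. The log-scale condition $|\log\eta^*|\le c|\log\eta_*|$ is what closes the iterative bootstrap in the local-law derivation on the restricted spectral domain $\{\eta_*\le\eta\le\eta^*\}$. Rigidity also yields a bounded density of states $\rho_{\mathrm{fc},t}$ in a window of radius $r$ around $E_0$, so the typical gap between consecutive eigenvalues in that window is of order $W^{-1}$.

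Next, I would use the Gaussian-divisible identity $K(t)\overset{(d)}{=}K(s)+\widetilde Z(t-s)$ for a well-chosen intermediate time $s\in\mathcal{T}_\omega$, $s<t$, together with a Harish-Chandra--Itzykson--Zuber type representation, to express the conditional density of the pair $(\lambda_{i,t},\lambda_{i\pm 1,t})$ given all other eigenvalues. This conditional density carries a Vandermonde factor $|\lambda_{i,t}-\lambda_{i\pm 1,t}|$ multiplied by a remainder that, on the $W^{-1}$ scale around $E_0$, is uniformly bounded by rigidity and the bounded density of $\rho_{\mathrm{fc},t}$. Integrating against Lebesgue measure on the region $\{|\lambda_{i,t}-E_0|\le W^{-1}\}\cap\{|\lambda_{i,t}-\lambda_{i\pm 1,t}|\le W^{-1}x\}$ then produces the $x^{2}$ decay, and the prefactor $W^{\tau}$ absorbs the overwhelming probability event on which rigidity holds.

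The main obstacle is to justify this conditional density manipulation away from equilibrium, where the joint matrix law admits no closed form. The device used in \cite{LanYau2015} is a short-range comparison on a mesoscopic window: one approximates the DBM on that window by an equilibrium $\beta=1$ log-gas with Dirichlet boundary conditions, for which a direct Vandermonde integration yields $x^{2-\tau}$, and transfers the bound back to $K(t)$ via a localised maximum principle for the eigenvalue SDE (analogous in spirit to Proposition \ref{thm:maxPrincipleQUE}). Uniformity in $t\in\mathcal{T}_\omega$ follows from a covering in time combined with the elementary Lipschitz regularity of $t\mapsto\lambda_{i,t}$, itself a consequence of the SDE \eqref{eqn:eigenvaluesSymmetric} together with rigidity.
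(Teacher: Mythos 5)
The paper does not prove this lemma at all: it is imported verbatim as Theorems 3.5 and 3.6 of \cite{LanYau2015}, so the only thing to assess is whether your sketch would stand on its own as a reconstruction of that external proof. Your first paragraph (rigidity from the deformed local law, bounded density of $\rho_{\fc,t}$ near $E_0$, typical spacing $W^{-1}$) is the correct a priori input. The problem is the core of your second paragraph: for \emph{symmetric} matrices the Harish-Chandra--Itzykson--Zuber integral over the orthogonal group has no closed form, so there is no usable "conditional density of the pair $(\lambda_{i,t},\lambda_{i\pm1,t})$ carrying an explicit Vandermonde factor." That device works only for $\beta=2$ (Hermitian), where the Br\'ezin--Hikami/HCIZ formula makes the transition kernel of the eigenvalue flow determinantal; the whole reason the $\beta=1$ level-repulsion literature (going back to the Wegner-estimate method of Erd\H{o}s--Schlein--Yau, which is what \cite{LanYau2015} adapts) takes a different route is precisely that this step is unavailable. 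The actual argument never touches a joint eigenvalue density: it writes $K(t)=K(s)+\widetilde Z(t-s)$, conditions on a minor, and uses the explicit Gaussian density of a single row/column together with the local law for the minor to show that two resolvent denominators cannot simultaneously be small, which is what yields the $x^{2-\tau}$ tail.

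Your proposed repair -- comparing with an equilibrium $\beta=1$ log-gas on a mesoscopic window and transferring back by a "localised maximum principle" -- is also not a valid substitute. That machinery (homogenization of the coupled DBM difference equation, as in the gap-universality arguments) produces \emph{distributional} statements about gaps at the scale of the typical spacing; it does not by itself give a quantitative tail bound valid for all $x>0$ down to $x=0$, and in several of its implementations it \emph{requires} a level-repulsion estimate of exactly the present type as an input, so invoking it here is circular. If you want a self-contained proof, you should replace the HCIZ/log-gas steps by the Schur-complement/Gaussian-column argument sketched above; as written, the proposal has a genuine gap at its central step.
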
 

We now apply Lemma \ref{shang}  to the flow \eqref{eqn:deform}. The condition \eqref{lu} is trivially verified by the choice of $\e_*, \e^*$ 
in  Lemma \ref{LLniu}. Hence Lemma \ref{shang} implies the level repulsion estimate \eqref{sza3}.

It remains to prove  \eqref{zyaa}. From Lemma \ref{l:regpath} (i) applied to $K^{\b 0, e}_{T}(t)$ at time $t=T$, we have
$$
\P\left(\left\{\sup_{  0 \le e'-e \le  W^{  -1+\e } }\left|\#\Big\{k: \xi_k(e)\in [e, e']\Big\}-W\int_{e}^{e'} \rho_{{\rm fc}, T}(x)\rd x\right|\ge  W^{\tau} 
\right\}
 \cap \{|D-e|\geq N^{-C}\}
 \right)\le N^{-D}.
$$
We therefore just need to prove
\begin{equation}\label{eqn:expect}
\left|\int_{e}^{e'} \rho_{{\rm fc}, T}(x)\rd x-\int_{e}^{e'} \rho_{{\rm sc}}(x)\rd x\right|\leq W^{-1+\tau}.
\end{equation}
Recall the following relation between $m_{\fc,t } $ and $V$: 
\be\label{jslajf;}
 m _{\fc,t}(z)= m_{ V}\left(z+ t\; m _{\fc,t}(z)\right)=\frac1W\tr\left(V-z-t\; m _{\fc,t}(z)\right)^{-1} 
\ee
where   $V= A_{T}^{\b g}-B^{\b g, *} (D^{\b g} -e)^{-1}B^{\b g} = K^{\b g, e}_{T} $. 
 For $z=E+\ii\eta$ with  $ |E-e|\le r$ and $ \eta_* \le \eta \le \eta^*  $,  \eqref{jiangjiulh} implies  that 
\be
m _{\fc,0}(z) -m_{\rm sc}(z)=  \frac1W\tr\left(V-z \right)^{-1}-m_{\rm sc}(z)=\OO(N^{-\e^*/2}) 
\ee
holds with high probability. Similarly, by \eqref{jiangjiulh} and \eqref{jslajf;},  for any  $t \ge 0$ we have 
\be\label{66-}
 m _{\fc,t}(z) -m_{\rm sc}(z+ t\; m _{\fc,t}(z))= \frac1W\tr\left(V-z-t\; m _{\fc,t}(z)\right)^{-1}-m_{\rm sc}(z+ t\; m _{\fc,t}(z))=\OO(N^{-\e^*/2})
\ee
provided that 
\be\label{66}
 \eta_*\leq  \im (z+ t\; m _{\fc,t}(z)) = \eta +  t \im  m _{\fc,t}(z)\le  \eta^*, \quad  |  \re (z+ t\; m _{\fc,t}(z)) - e| \le r.
\ee 
For $t=T$  as  defined in Lemma \ref{LLniu}, we have
\be\label{77}
\eta_*\ll T \ll \eta^*,  \quad T \ll  r/2,  \quad |E-e|\le r/2,   \quad  0 \le \eta \le \eta^* /2.
\ee
Moreover, as proved in \cite[Lemma 7.2]{LanYau2015}, for any $0<\eta<\eta^*$, we have
\be\label{88}
 c \le  \im  m _{\fc,T}(z) \le C',\ |m _{\fc,T}(z)|< C'\log N,
\ee
for some positive constants $c, C'$. Equations (\ref{77}) and (\ref{88}) show that the assumption \eqref{66} holds for $t=T$, 
and concludes the proof of (\ref{eqn:expect}) by taking $\eta=0^+$ in (\ref{66-}).\end{proof}

\subsection{Regularity and weak   uncertainty principle.}\label{sec: reg} \ 
The GOE  component  in \eqref{11da} implies the following  regularity property and weak  uncertainty principle.  This lemma does not require 
the decomposition \eqref{sjui}, i.e., the Gaussian divisibility for the band matrix elements, we state it under this assumption for simplicity.
 
\begin{lemma}\label{Hide} Let $H$ be as in Theorem \ref{thm:afterreg} for some fixed $A>10$.
Let $\b \phi \in \R^N $  be   defined by 
$$
\phi_{i}=\mathds{1}_{W\le i\le N}.
$$ 
Recall the notatons from Definition \ref{defHg}.
Then there exists a (large)  constant  $C_r=C_r(A)$ (the  subscript $r$ is used to indicate that the constant is related to the  regularity)  
such that for any fixed $D>0$
\begin{align}\label{reg1}
& \max_{\|\b g\|_\infty\le W^{0.9}/N}\P\left(\exists \,t: |t|\le20, \;  k\in \Z_N \; \text{ such that } \;  \left|\lambda_k^{\b g + t \b \phi }\right|\le 20, \; \left\|\bw_k^{\b g + t\b \phi }\right\|_2^2\le N^{-C_r}\right)\le N^{-D}, \\
&\max_{\|\b g\|_\infty\le W^{0.9}/N}\P\left(\exists \,e: |e|\le10, \quad  B^*\frac1{(D^{\,\b g}-e)^2}B\ge N^{C_r}\left(B^*\frac1{(D^{\,\b g}-e) }B\right)^2+N^{C_r} \right)\le N^{-D}.
\label{reg2}
\end{align}
\end{lemma}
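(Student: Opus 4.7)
The plan is to prove both parts of Lemma~\ref{Hide} by exploiting the $N^{-A}$ GOE perturbation in \eqref{11da}, which provides polynomial anti-concentration at any polynomial scale $N^{-C}$. In both statements, the event to rule out is that some eigenvector of a large block of $H^{\b g}$ is almost orthogonal to a fixed subspace; this is prevented with overwhelming probability by Gaussian fluctuations of a different, independent block of $H^{\mathrm{G}}$.

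The first step in both cases is a deterministic discretization. Both $\lambda_k^{\b g + t\b\phi}$ and $\b\psi_k^{\b g+t\b\phi}$ are Lipschitz in $t$ with polynomial-in-$N$ constants (via Hellmann--Feynman and standard eigenvector perturbation, using $\|H\|\le N^C$ with overwhelming probability), and similarly $(D^{\b g}-e)^{-1}$ is polynomially Lipschitz in $e$ away from $\mathrm{spec}(D^{\b g})$. It therefore suffices to establish \eqref{reg1} and \eqref{reg2} on an $N^{-C}$-net of values of $t$ and $e$ respectively, for some large constant $C$ depending on $A$, $C_r$, and $D$; the final $C_r$ is then chosen to absorb the union-bound loss.

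For \eqref{reg1}, the Schur complement applied to $H^{\b g+t\b\phi}\b\psi_k = \lambda_k\b\psi_k$ with $\b\psi_k = \binom{\b w_k}{\b v_k}$ yields the identity
\[
\frac{1}{\|\b w_k^{\b g+t\b\phi}\|_2^2} \;=\; 1 + \bigl\langle \b u_k,\, B^*(D^{\b g}-tI-\lambda_k^{\b g+t\b\phi})^{-2}B\, \b u_k\bigr\rangle, \qquad \b u_k = \frac{\b w_k}{\|\b w_k\|_2}.
\]
Hence $\|\b w_k\|_2^2 \le N^{-C_r}$ forces $\lambda_k^{\b g+t\b\phi}+t$ to lie within $\|B\|\, N^{-C_r/2} \le N^{C-C_r/2}$ of some eigenvalue of $D^{\b g}$. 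Conditioning on the $D$-block so that $\mathrm{spec}(D^{\b g})$ becomes deterministic, the matrix $H^{\b g+t\b\phi}$ still contains the independent $N^{-A}$ GOE perturbation in its $A$- and $B$-blocks. A standard Gaussian anti-concentration estimate bounds the conditional density of each $\lambda_k^{\b g+t\b\phi}$ at any fixed real point by $N^{O(A)}$, and union-bounding over $k\in\Z_N$, over eigenvalues of $D^{\b g}$ in a bounded interval, and over the net of $t$'s gives \eqref{reg1}, provided $C_r$ is chosen large enough relative to $A$, $C$, and $D$.

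For \eqref{reg2}, diagonalize $D^{\b g} = U\Lambda U^*$ with eigenvalues $(\mu_j)$ and orthonormal eigenvectors $(\b u_j)$. A residue computation as $e\to \mu_j$ shows that the rank-one singular parts of both sides of \eqref{reg2} are proportional to $(B^*\b u_j)(B^*\b u_j)^*$, with the ratio between the singular part of $B^*(D^{\b g}-e)^{-2}B$ and that of $(B^*(D^{\b g}-e)^{-1}B)^2$ equal to $\|B^*\b u_j\|_2^{-2}$. Consequently \eqref{reg2} holds near $\mu_j$ as soon as $\|B^*\b u_j\|_2^2 \ge N^{-C_r/2}$; for $e$ at distance $\ge N^{-C}$ from $\mathrm{spec}(D^{\b g})$, both sides of \eqref{reg2} are deterministically $\le N^{O(C)}$ and the inequality is automatic. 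To control $\|B^*\b u_j\|_2$, condition on the entire $D$-block (so the $\b u_j$ are deterministic) and on the non-Gaussian part of $B$, writing $B = B_0 + N^{-A}B^{\mathrm{G}}$ with $B^{\mathrm{G}}$ an $(N-W)\times W$ matrix of i.i.d.\ $\mathcal{N}(0,1/N)$ entries independent of $D$. For each fixed unit $\b u_j$, the vector $B^*\b u_j$ is then a $W$-dimensional Gaussian with covariance of order $N^{-2A-1}I_W$ plus a deterministic shift, so standard Gaussian anti-concentration yields
\[
\P\bigl(\|B^*\b u_j\|_2^2 \le N^{-C_r}\bigr) \;\le\; \bigl(CN^{2A+1-C_r}\bigr)^{W/2}.
\]
Union-bounding over the at most $N$ eigenvalues $\mu_j$ of $D^{\b g}$ in $[-11, 11]$ and over the net of $e$'s concludes the proof.

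The main technical obstacle is decoupling, at each step, the eigenvectors $\b u_j$ of $D^{\b g}$ from the block $B$: while the Gaussian pieces of the blocks $A$, $B$, $D$ of $H^{\mathrm{G}}$ are jointly independent, the $\b u_j$ depend on $D$ and the eigenvalues $\lambda_k^{\b g+t\b\phi}$ of the full matrix $H$ depend on all three blocks. The resolution is to condition first on $D$ (which freezes $\b u_j$ and $\mathrm{spec}(D^{\b g})$) before exploiting the remaining Gaussian randomness in $A$ and $B$. Bookkeeping of the polynomial losses from the net sizes, the union bounds, and the operator-norm bound on $B$ determines how large $C_r$ must be chosen, but involves no delicate estimate beyond standard Gaussian anti-concentration.
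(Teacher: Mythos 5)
Both halves of your argument contain a genuine gap, and in both cases the missing ingredient is the same: a \emph{uniform} lower bound on $\|B^*\bv\|_2$ over all unit vectors $\bv$ that are approximate eigenvectors of $D^{\b g}$ with approximate eigenvalue near $e$. For \eqref{reg1}, your Schur-complement reduction (that $\|\bw_k\|_2^2\le N^{-C_r}$ forces $\lambda_k^{\b g+t\b\phi}+t$ to be polynomially close to $\mathrm{spec}(D^{\b g})$) is correct, but the next step — that conditionally on $D$ the density of $\lambda_k^{\b g+t\b\phi}$ is bounded by $N^{\OO(A)}$ — is not a standard Gaussian anti-concentration fact, and it fails precisely in the regime you are trying to exclude: the derivatives of $\lambda_k$ with respect to the entries of $A$ and $B$ are $\psi_k(i)\psi_k(j)$ with at least one index $\le W$, hence all of size $\OO(\|\bw_k\|_2)$, so on the bad event $\lambda_k$ is, to first order, a deterministic function of $D$ and its conditional law can concentrate near an eigenvalue of $D^{\b g}$ (the block-diagonal case $B=0$ gives an exact delta). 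Any honest proof of that density bound would require showing that the relevant eigenvector retains polynomial mass on the coordinates carrying the independent Gaussian noise, which is essentially the statement being proved; the argument as written is circular.

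For \eqref{reg2}, controlling $\|B^*\b u_j\|_2$ for each \emph{individual} exact eigenvector of $D^{\b g}$ is not enough. If the inequality fails at some $e$, the witness is the unit vector $\wt\bv=(D^{\b g}-e)^{-1}B\bv/\|(D^{\b g}-e)^{-1}B\bv\|_2$, which satisfies $\|(D^{\b g}-e)\wt\bv\|_2,\|B^*\wt\bv\|_2\le N^{-C_r/2}\|B\|$; this $\wt\bv$ lives in the \emph{span} of all eigenvectors with $\mu_j$ near $e$, and even if every $\|B^*\b u_j\|_2$ is bounded below, a linear combination of near-degenerate $\b u_j$'s can make $B^*\wt\bv$ cancel. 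Your "residue" analysis of the leading singular part near a single $\mu_j$ ignores these cross terms and the intermediate regime where several poles contribute comparably. To beat the union bound over the span one needs to know its dimension is $o(W/\log N)$, so that the per-vector bound $\P(\|B^*\bv\|_2\le N^{-C})\le e^{-cW}$ survives an $N^{\OO(\dim)}$-point net. This is exactly how the paper proceeds: it proves the single uncertainty principle \eqref{15z2} by combining (a) the per-vector Gaussian anti-concentration you also use, (b) the local law for the band matrix $D$ (from \cite{ErdYauYin2012Univ}) to bound the dimension of the near-eigenspace $S_e$ by $\OO(W^{0.9})$, and (c) a net argument over $S_e$; both \eqref{reg1} and \eqref{reg2} are then soft deterministic consequences of \eqref{15z2}. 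The dimension bound (b) is the ingredient entirely absent from your proposal, and without it neither half closes.
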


The proof of this lemma follows the one for  \cite[Proposition 3.1]{BouErdYauYin2017}. 
Lemma \ref{Hide} is weaker in the sense that  the error $N^{\pm C_r}$ was originally given by  order one quantities in \cite{BouErdYauYin2017}. In addition,  \cite[Proposition 3.1]{BouErdYauYin2017} applies  to any approximate eigenvector without assuming  the small  GOE  regularization $N^{-A}H^{\rm G}$. On the other  hand, Lemma \ref{Hide} works for $W \ge N^{3/4+c}$  (in fact,  $W \ge N^{1/2+c}$ is enough), while \cite{BouErdYauYin2017} required $W=\Omega(N)$.

\begin{proof}[Proof of Lemma \ref{Hide}]
We first note that  $0.9$ in (\ref{reg1}) and (\ref{reg2}) can be replaced by any fixed number less than 1 in the following arguments.

We will first prove the following form of an uncertainty principle: approximate eigenvectors for $D^{\b g}$  have some weight on the first $W$ coordinates, in the sense that there exists $C>0$ such that for any fixed  $D>0$, 
\be\label{15z2}
\max_{\|\b g\|_\infty\le W^{0.9}/N}\P\left( \exists \bv\in \R^{N-W}   \mbox{ with } \| \bv\|_1=1  \nc, \; e\in \R,  \;  \|B^*\bv\|_2+ \|(D^{\b g}-e)\bv \|_2\le N^{-C}\right)\le N^{-D}.
\ee

We first consider $B^*\bv$.
Thanks to the  component  $N^{-A}H^{\rm G}$  in \eqref{11da},  for any fixed $\bv$ and $1\le n\le W$, 
there is an $a_0$ independent of $H^{\rm G}$ such that 
$(B^*\bv)_n=    a_0 + N^{-A}  \xi_n \cdot \bv$ with  $\b \xi_1, \ldots,  \b \xi_W$ having independent Gaussian entries of variance order $1/N$.   
Thus there exists $C>0$ such  that for any $\|\bv\|_2=1$, we have
$$
\P\left(\left|(B^*\bv)_n\right|\le N^{-C}\right)\le 1/2, 
$$
for all $1\le n\le W $. 
Taking intersection of these independent events,   we have proved that 
there exist $C >0,  c> 0 $ such that for any $\bv$ as above, 
\be\label{yanghuo}
\P\left(\|B^*\bv\|_2\le N^{-C } \right)\le e^{- c W}.
\ee

The matrix $D$ in $H$ is itself a band matrix of size  $N-W$  and band width $W$. 
 Denote by  $\lambda_k^D$ be the eigenvalues of $D$. 
The  local law  in \cite[Theorem 2.1]{ErdYauYin2012Univ} was established up to  the scale $W^{-1+\tau}$ for any constant $\tau>0$, strictly speaking 
for random band matrices satisfying $\sum_j s_{ij} =1$. For $D$, this assumption is violated for $i$ in a set of size at most $2W$, 
but \cite[Theorem 2.1]{ErdYauYin2012Univ} still holds by elementary adjustments left to the reader. 
This theorem  implies in particular that  with  probability $ 1- N^{-D}$ 
\be\label{qzih}
  \max_{e\in\R}\frac{\#\{k,\; \lambda^D_k \in [e, e+W^{-1+\tau}]\}}{NW^{-1+\tau}}\le 10.
\ee
 Denote by $\lambda^{D^{\b g}}_k$ and $\b\psi^{D^{\b g}}_k$, $1\le k\le N-W$, the eigenvalues and  eigenvectors of ${D^{\b g}}$. 
A trivial bound on the  eigenvalue perturbation  gives
$
 | \lambda_k^{D^{\b g}}-\lambda_k^D|= \| {\b g}\|_\infty \le W^{0.9}/N,
$
so that  $ \lambda^{D^{\b g}}_k\in [e, e+N^{-1}]$ implies $ \lambda^{D}_k\in [e-W^{0.9}/N, e+ 2 W^{0.9}/N]$.  
Hence with high probability we have 
\begin{align*}
& \big | \{k: \lambda^{D^{\b g}}_k\in [e, e+N^{-1}]\}\big |
\le \big | \{j: \lambda^{D}_j\in  [e-W^{0.9}/N, e+ 2 W^{0.9}/N]\}\big |  
 \le  10 W^{0.9 },
\end{align*} 
where we have used \eqref{qzih} and  $W^{0.9 }/N  \ge W^{-1+\tau}$.
Hence for any  $D>0$, for large enough $N$ we have  
$$
\P\left(\exists e\in \R,  \; \big | \{k: \lambda^{D^{\b g}}_k\in [e, e+N^{-1}]\}\big |\ge 30 W^{0.9}\right)\le N^{-D}.
$$
As a consequence, if we define 
$S_e={\rm span}\{\b\psi_k^{D^{\b g}}:\la_k^{D^{\b g}}\in[e,e+N^{-1}]\}$, then
$\dim (S_e)=\OO(W^{0.9})$ with high probability. 
For such an  $S_e$ of   dimension $\OO(W^{0.9})$, 
we can choose a finite set  $\cal N$ in the unit sphere of  $S_e$ \nc with  
$|\cal N|= N^{\OO( W^{0.9})}$ such that for any $\bv$ in the unit sphere  there is a  $p\in \cal N$ such that $|\bv - p| \le  N^{-C-1}$ with $C$ 
being the constant in \eqref{yanghuo}. 
 Together with   \eqref{yanghuo} and the fact that $\|B\|\le N$ holds with very high probability,  we obtain that 
 there exists $C>0$ such that for any fixed  $D>0$,
$$
\P\left( \exists \bv\in S_e,\;  \|B^*\bv\|_2\le N^{-C}\right)\le N^{-D},
$$
 where we have used $e^{-c W} N^{\OO( W^{0.9})}  \le e^{-c' W} $ for some $c'> 0$. 
Therefore  there exists  $C>0$ such that  for any fixed $e$ and $D>0$, for large enough $N$ we have $\mathbb{P}(A_e)\leq N^{-D}$ where
\begin{equation}\label{eqn:A_e}
A_e=\{ \exists \bv\in \R^{N-W}   \mbox{ with } \| \bv \|=1,  \nc \; \|B^*\bv\|_2+ \|({D^{\b g}}-e)\bv \|_2\le N^{-C}\}.
\end{equation}
By union bound, we also have 
$\mathbb{P}(\cap_{e\in N^{-2C}\mathbb{Z},|e|<N^C}A_e)\leq N^{-D}$.
Moreover, $\|D^{\b g}\|\leq N^{C}$  with high probability, so that  (\ref{15z2}) follows easily.

We now  show how \eqref{reg1} follows from  (\ref{15z2}).  By definition $D^{\b g + t\phi}=D^{\b g}-t$ and $A^{\b g + t\phi}=A^{\b g}$, so 
the eigenvector equation is
\begin{align*} 
 A^{\b g}\bw_k^{\b g + t\b \phi}+B^*\bv_k^{\b g + t\b \phi} & =\lambda_k^{\b g + t\b \phi}\bw_k^{\b g + t\b \phi},\\
 B\bw_k^{\b g + t\b \phi}+(D^{\b g}-t)\bv_k^{\b g + t\b \phi}& =\lambda_k^{\b g + t\b \phi}\bv_k^{\b g + t\b \phi}.
\end{align*} 
If $\|\bw_k^{\b g + t\b \phi}\|_2 \le N^{-C}$ for some $C>0$, then  $\| A^{\b g}\bw_k^{\b g + t\b \phi}\| + \| B\bw_k^{\b g + t\b \phi} \| \le N^{-C/2}$ with very high probability. 
Hence $ \bv_k^{\b g + t\b \phi} $, after normalization,  realizes the condition (\ref{eqn:A_e}) with $e=t+\lambda_k^{\b g + t\b \phi}$. 
Therefore,    \eqref{reg1} follows from \eqref{15z2}.

We now  prove \eqref{reg2}. The event in  \eqref{reg2} means that  for some normalized $\bv\in \R^{N-W}$ and $|e|<10$,
\be\label{ting} 
 \big \|\frac1{(D^{\b g}-e) }B\bv \big \| _2\ge N^{C_r} \left(\big \|B^*\frac1{(D^{\b g}-e) }B\bv\big \|_2+1\right).
\ee
Denoting
 $\wt \bv =(D^{\b g}-e) ^{-1}B\bv / \|(D^{\b g}-e) ^{-1}B\bv\|_2$, it follows from \eqref{ting} that     
\begin{align*}
\|(D^{\b g}-e)\wt \bv\|_2 & =\frac{ \|B\bv\|_2}{\|(D^{\b g}-e) ^{-1}B\bv\|_2}  \le    \frac{ \|B\bv\|_2} {   N^{C_r} \left(\big \|B^*\frac1{(D^{\b g}-e) }B\bv\big \|_2+1\right)}  \le N^{-C_r}\|B\bv\|_2 , \\
 \|  B^*   \wt \bv\|_2 &=\frac{ \|   B^*   (D^{\b g}-e) ^{-1}B\bv\|_2}{\|(D^{\b g}-e) ^{-1}B\bv\|_2}\le N^{-C_r}.
\end{align*}
Since $\|B\|_{\rm op}\le N$ with  high probability, $\wt v$ realizes the event $(\ref{eqn:A_e})$, so that 
\eqref{15z2} implies  \eqref{reg2}. 
 \end{proof}

\subsection{Proof of Theorem \ref{thm:afterreg}.}\label{Mfr} \   
We make rigorous our proof sketch from Subsection \ref{sub:sketch}.
We consider the full band matrix $H$, the proof for the minors $H^{(k)}$ being the same up to trivial adjustments.

Recall the notations from Subsection \ref{MR}. 
There, we assumed that the map 
$g\to e= \lambda_k^g+g$ has a regular inverse which enable us to  define the curve $\cal C_k(e) = \lambda^g_k$. 
To prove  this,  a simple perturbation calculation yields that 
$\partial (\lambda_k^g+g)/\partial g=\sum_{i=1}^{W} \left|\psi_k^g( i) \right|^2$
By \eqref{reg1}, 
 $\left|\psi_k^g( i) \right|^2 \ge N^{- C_r}$  for all $|g|<20$ for some constant $C_r > 0$, with high probability.  Thus the invertibility is proved with high probability
 and from now on we shall restrict ourselves to this case. 
By  differentiating w.r.t.  $g$ in the identity $\mathcal{C}_k(\lambda_k^g+g)=\lambda_k^g$,
 we have  
\begin{equation}\label{eqn:slope}
\Big | \frac{\partial}{\partial e}\mathcal{C}_k(e) \Big | = \Big | 1-\Big(\sum_{i=1}^{W} \left|\psi_k^g( i) \right|^2\Big)^{-1}\Big | \le N^{C_r} .
\end{equation}

We now complete the proof of Theorem \ref{thm:afterreg}, successively considering QUE for some small mean-field matrices, then QUE and delocalization  for band matrices,  the semicircle law, and universality.\\
 
\noindent 
{\it Part  1A:  QUE for a small matrix.}  We will prove that a slightly more general form of \eqref {maindel} holds  for   eigenvectors $\b\psi^{\b g}$ of $H^{\b g}$
with any $\|{\b g}\|_\infty\le W^{-3/4}$. 
 But for simplicity, we present the proof for $\b g=\b 0$, and point out the  modification    for the general  case   whenever it is  needed.

We will prove  the following   delocalization and QUE  for the eigenvector  $\bu_{k'}(\lambda_k)$ of $Q^{\b g}_{\lambda_k}$ defined in \eqref{gaibian}:
\begin{align}
&
\label{jbg1}
\P\left(\exists k\in \llbracket 1, W\rrbracket , j \in \Z_N \; : \; |\lambda_j|\le 2-\kappa, \; \xi_k(\lambda_j)=\lambda_j, \; \|\bu_k(\lambda_j)\|^2_\infty\ge W^{-1+\tau} \right) \le N^{-D},\\
\label{jbg2}
&\P\Big ( \exists k\in \llbracket 1, W\rrbracket , j \in \Z_N\; :\, |\lambda_j|\le 2-\kappa, \; \xi_k(\lambda_j)=\lambda_j, \; 
\left|\| J\cdot \bu_k(\lambda_j)\|^2_2-\frac12\right|  \ge \frac{N^{\frac{1}{2}+\tau}}{W}+\frac{N^{\frac{\e_m}{2}+\tau}}{W^{1/2}}\nc   \Big ) \le N^{-D}.
\end{align}

The difference between \eqref{jbg1}--\eqref{jbg2} and \eqref{sza}--\eqref{sza2} are the randomness of their arguments, i.e.,  $e$ in  \eqref{sza}--\eqref{sza2}
is replaced by $\lambda_j$ in \eqref{jbg1}--\eqref{jbg2}.   To prove \eqref{jbg1} and \eqref{jbg2}, our basic strategy is combining  an   $\e$-net argument with 
a perturbation theory of eigenvectors. 
Let $M =N^{2C_1+2D}$;   here $D$ is the    constant appeared in  \eqref{jbg1} and \eqref{jbg2} (not to confuse with the notation that   $D$ was also used to denote a matrix)
and  $C_1 = D+6C_r$ where $C_r$ is the constant in \eqref{eqn:slope}.

We denote  $E_n =  n N^{-C_1}$, and claim that for each   fixed $n$ satisfying $[E_n, E_{n+1}]\subset [-2+\kappa, 2-\kappa]$, there is a deterministic $e_n$ with  
$
E_n< e_n < E_{n+1}
$
such that  
\be\label{wanjyz}
\P\left(\inf_{j, n}  |\lambda^D_j-e_n|\le M^{-1}  \right)\le N^{-D }
\ee
where the $\lambda^D_j$'s are the eigenvalues of $D$.
To see this, note that for any $\eta>0$
$$
 \int_{E_n }^{E_{n+1}}  \E \im [D - E- \ii \eta ]^{-1} \rd E \le  \E \int_\R \im [D - E- \ii \eta ]^{-1} \rd E \le N.
$$ 
Hence  there is an  $e_n \in  [E_n,  E_{n+1}]$ such that,  with $\eta = M^{-1}$,
$
   \E \im [D - e_n- \ii \eta ]^{-1}   \le C N^{C_1+1}.
$
By the Markov inequality, 
$
\P \Big ( \inf_j  |\lambda^D_j - e_n| \le M^{-1}  \Big  ) \le N^{C_1+1} M^{-1}$ and (\ref{wanjyz}) holds,
so that  
we can restrict our consideration to the set   $|D-e| \ge N^{-C}$ for any $C \ge 2C_1+2D$.  
 By Lemma \ref{henna},  \eqref{sza}, \eqref{sza3} and \eqref{sza2} hold. In particular, \eqref{sza3} holds with $x = N^{-C_1/2}$. 
Hence  for all $n$ and $l$  satisfying  $|\xi_l(e_n)-e_n|\le W^{-1}$ we have  
\be\label{gozet}
 \|\bu_l(e_n)\|^2_\infty\le W^{-1+\tau}  ,\quad \left| \left\|J\cdot \bu_l(e_n)\right\|^2_2-\frac12\right|\le  \frac{N^{1/2+\tau}}{W}+\frac{N^{\frac{\e_m}{2}+\tau}}{W^{1/2}}\nc,\quad
 |\xi_{l  \pm1}(e_n)-\xi_l(e_n)|\ge N^{ -C_1/2}
\ee
with probability larger than $ (1-N^{-D}-N^{ -C_1(1- \tau) } W^{2+ \tau}) \ge 1-2 N^{-D} $. Here we have used the choice $C_1 = D+6C_r$ and
 $\tau$ can be  an arbitrarily  small number. 
 
We define
\be\label{mk}
m(\lambda_k)  = \sup_n \{n: e_n  <  \lambda_k \}
\ee
For simplicity we denote $
{\wt e}=e_{m(\lambda_k)}$. 
Recall that $\cal C_k(\lambda_k)=\lambda_k$, and thus \eqref{eqn:slope} 
and   \eqref{reg1} assert that $\left|\partial  {\cal C}_k(e)/\partial e\right|\le N^{C_r}$
holds with high probability. 
Since  $e_{n+1} - e_n \le 2 N^{-C_1}$,   \eqref{mk} implies $|\wt e-\lambda_k|\le 2N^{-C_1}$.  
Since $C_1\ge 6 C_r $ so that  
$N^{-C_1}N^{C_r}\ll N^{-0.8C_1}$, we have 
\be\label{23}
  \left|\cal C_{k }({\wt e\,})-{\wt e\,}\right|\le   \left|\cal C_{k }({\wt e\,})-\cal C_{k }( \lambda_k) \right| + \left| \lambda_k-{\wt e\,}\right|  \le N^{C_r} \left| \lambda_k-{\wt e\,}\right| 
\le 2N^{-C_1}N^{C_r}  \le N^{-0.8C_1} \ll     W^{-1  } 
\ee
with   probability larger than $1-N^{-D}$. 

Recall that  $k'\in\llbracket 1,W\rrbracket $ is the index given by the relation  $
\xi_{k'}(e )=\cal C_k(e)$ for all $e$. 
Applying \eqref{gozet} 
with $e_n$ set to be $\tilde e$  and using $C_1 \gg D$,   
we obtain   the level repulsion bound 
$$
 \P\left(|\xi_{k'  \pm1}({\wt e}\,)-\xi_{k'}({\wt e}\,)|\ge N^{- C_1/2}\right)\ge 1-N^{-D}.
$$ 
Together with  the continuity argument used in \eqref{23},  the level-repulsion  holds between $\cal C_k$ and $\cal C_{k\pm1}$ in the interval $[\,\wt e, \lambda_k\,]$,   i.e.,  
\be\label{ljih1}
\P\left(\exists \,e\in[\,{\wt e}, \lambda_k], \;s.t. \;|\cal C_{k  \pm1}(e)-\cal C_{k}(e)|\le \frac12N^{-  C_1/2 }\right)\le N^{-D}.
\ee
Integrating the perturbation formula (\ref{eqn:perbevect}), we get
\be\label{zapgys}
\bu_{k'}(\lambda_k)=\bu_{k'}({\wt e}\,)-\int_{\lambda_k}^{{\wt e}}
 \sum_{\ell \ne k}\frac{  {\b u}_{\ell'} (e)}{\cal C_{k}(e)-\cal C_\ell (e)}
\left({\b u}_{\ell'}  (e),  B^* \frac{1}{(D-e)^2}B \;  {\b u}_{k'}(e)\right)\rd e
\ee
Inserting (\ref{ljih1}) into \eqref{zapgys} and using  $|\wt e-\lambda_k|\le 2N^{-C_1}$, we obtain
\be\label{ver}
\left\|\bu_{k'}(\lambda_k)-\bu_{k'}({\wt e}\,)
\right\|_\infty\le CN^{- C_1/2}
\max_{ e\in[\,{\wt e}, \lambda_k]}
\max_{ \ell\ne  k}
 \frac{\left|\left(\bu_{ \ell'    }, B^* (D-e)^{-2 }B \bu_{ k'}\right) \right|}{|\cal C_{ \ell}(e) |+1}.
\ee
The numerator of the last term can be bounded by using \eqref{reg2} so that  
\be\label{wanshang}
 \left|\left(\bu_{ \ell'   }, B^* (D-e)^{-2 }B \bu_{ k'}\right) \right|\le 
N^{C_r} \left( \left\|B^* (D-e)^{-1 }B\bu_{ \ell'   }\right\|^2_2+1\right)^{1/2}\left( \left\|B^* (D-e)^{-1 }B\bu_{ k'  }\right\|^2_2+1\right)^{1/2}.
 \ee
Inserting the identity
$B^* (D-e)^{-1 }B\bu_{ \ell'   }=Q_e\bu_{ \ell'   }-A\bu_{ \ell'   } =\xi_{ \ell'}(e)\bu_{ \ell'   }-A\bu_{ \ell'   }$
into the right hand side  of \eqref{wanshang}, we obtain 
$$
 \left|\left(\bu_{ \ell'   }, B^* (D-e)^{-2 }B \bu_{ k'}\right) \right|\le N^{C_r} 
 \left( \xi_{ \ell'}(e)+\|A\|_{op}+1\right)  \left( \xi_{ k'}(e)+\|A\|_{op}+1\right) .
$$
It is easy to prove that  $\|A\|_{op}=\OO(N)$ with high probability.  
Together with the fact that  
 $\xi_{ k'}(e)\in [\lambda_j, \xi_{k'}(\wt e\,)]$ for $e\in[\,{\wt e}, \lambda_k]$,   which follows from $\partial_e\cal C_j(e)<0$, 
we have proved 
$$
\left|\left(\bu_{ \ell'   }, B^* (D-e)^{-2 }B \bu_{ k'}\right) \right|\le N^{C_r+2}\left(|\xi_{ \ell'}(e) |+1\right).
$$
Inserting this bound  into  \eqref{ver}, using that  $\xi_{ \ell'}(e)  = \cal C_{ \ell}(e)$ in the denominator and   the choice  $C_1 = D+6C_r$,  
 we have proved  \eqref{jbg1} and \eqref{jbg2}.
 
Notice that the constant $C_r$ is associated with the uncertainty principle in \eqref{reg2} and $N^{-C_1}$ is the grid size. 
We can make the grid size  small by 
choosing large $C_1$; the price to pay is that the initial data in the stochastic flow argument becomes large, i.e., the constant $C_1$ in \eqref{clMang} is large. 
However, the results we use on the stochastic flow (e.g. \ref{shang}) are  insensitive to this  constant, which is the main reason we can choose 
$C_1$ large.\\

\noindent  
{\it  Part 1B: Delocalization and QUE for the band matrices.}   
By definition \eqref{tanxi},  
$ \bu_{k'}(\lambda_k)=\bw_k/\|\bw_k\|_2 $.  
Equations \eqref{jbg1} and \eqref{jbg2} can be written  in the following form: for any fixed large $D>0$ and small $\tau>0$, 
 \begin{align}\label{jbg3}
& \P\left(\exists k \in \Z_N,\;  : \; |\lambda_k|\le 2-\kappa, \;  \;\frac{\max_{1\le i\le W} |\psi_k(i)|^2}{\sum_{i=1}^W |\psi_k(i)|^2}\ge W^{-1+\tau} \right) \le N^{-D},\\
&
\P\left(\exists k \in \Z_N\, : \,   |\lambda_k|\le 2-\kappa, \; \left|\frac{\sum_{i=1}^{ W/2} |\psi_k(i)|^2}{\sum_{i=1}^W |\psi_k(i)|^2}-\frac12 \right|\ge  \frac{N^{1/2+\tau}}{W}+\frac{N^{\frac{\e_m}{2}+\tau}}{W^{1/2}}\nc   \right) \le N^{-D}. 
\notag
\end{align} 
Clearly we can  shift the indices so that  
$$
\P\left(\exists k \in \Z_N\, :  \; |\lambda_k|\le 2-\kappa, \; \max_{n\in \Z_N}\left|\frac{\sum_{i=1}^{W/2} |\psi_k(n+i)|^2}
{\sum_{i=1}^W |\psi_k(n+i)|^2}-\frac12 \right|\ge  \frac{N^{1/2+\tau}}{W}+\frac{N^{\frac{\e_m}{2}+\tau}}{W^{1/2}}\nc   \right) \le N^{-D}, 
$$
and a similar shifted version of \eqref{jbg3} holds. 
Since $W,N$ and $\e_m$ are related  by  \eqref{gwzN}, we  have $ \frac{N^{1/2+\tau}}{W}+\frac{N^{\frac{\e_m}{2}+\tau}}{W^{1/2}}\nc =\oo(W/N)$, so that exactly as in (\ref{eqn:ratio})--(\ref{eqn:QUEsketch}) we obtain
$$
\P\Big (\exists k \in \Z_N,\; :  \; |\lambda_k|\le 2-\kappa, \;   \max_{n\in \Z_N} \Big |\frac{N}{W} \sum_{i=1}^{W/2} |\psi_k(i+n )|^2-\frac12 \Big |\ge 
  \frac{N}{W}\left(\frac{N^{1/2+\tau}}{W}+\frac{N^{\frac{\e_m}{2}+\tau}}{W^{1/2}}\right)\nc 
 \Big) \le N^{-D}.
$$
when $N/W$ is an integer.
If $N/W$ is not an integer,  the delocalization estimate \eqref{jbg3} can be used to lead to the same conclusion. 
 Moreover, from \eqref{gwzN} we have $\frac{N}{W}\cdot\frac{N^{1/2+\tau}}{W}<N^{-2a}$ and $\frac{N}{W}\cdot\frac{N^{\frac{\e_m}{2}+\tau}}{W^{1/2}}<N^{-\frac{3}{2}a}$ \nc with $a>0$ given in \eqref{gwzN}.
We  have thus proved  the QUE part of  Theorem \ref{thm:afterreg}.  Finally, note that the above QUE for length interval $W/2$ obviously implies the same estimate for length $W$.\nc

Finally, the proof of Theorem \ref{thm:afterreg} just given above holds for all $\|\b g\|\le W^{-3/4}$ since all lemmas were proved under this assumption.    We  have thus proved  that for any fixed $\tau, D>0$, for large enough $N$ we have
\be\label{wzajj2}
\min_{\|\b g\|\le W^{-3/4}}\P\Big (\exists j \in \Z_N,\;  :  \; |\lambda^{\b g}_j|\le 2-\kappa, \;   \frac NW\sum_{i=1}^{ W} |\psi^{\b g}_j(i )|^2 =1+\OO(N^{-  \frac{3}{2}\nc a+\tau})\Big)\ge 1-N^{-D} 
\ee

\noindent 
{\it Part 2: the semicircle law.} 
Following  the  mean-field reduction method,  we first prove the following lemma.

 \begin{lemma}\label{zybb} Recall the definition of the constant $a$  in \eqref{gwzN}. 
Under the assumption of Theorem \ref{thm:afterreg},   for any fixed $e_0$ with $|e_0|\le 2-\kappa$ we have  
 \be\label{axing}
 \max_j\P\left(|\lambda_j-e_0|\le N^{-1+\frac{a}{2}}  \mbox { and }  \Big |\left(\cal C_j(e_0)-e_0\right)-\frac NW\left(\lambda_j-e_0\right)\Big |\ge W^{-1-{\frac{a}{10}}} \right)\le N^{-D}.
 \ee
 \end{lemma}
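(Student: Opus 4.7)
The plan is to express $\cal C_j(e_0) - e_0$ as an integral whose integrand is the reciprocal of the partial $\ell^2$-mass of $\b\psi_j^g$ on the first $W$ coordinates, and then apply QUE in the form \eqref{wzajj2} to replace this mass by its typical value $W/N$. Let $g(e)$ denote the solution of $e = \lambda_j^{g(e)} + g(e)$ with $g(\lambda_j) = 0$, so that $\cal C_j(e) = e - g(e)$. Setting $S(e) \deq \sum_{i=1}^{W} |\psi_j^{g(e)}(i)|^2$, the derivative formula \eqref{eqn:slope} reads $\partial_e \cal C_j(e) = 1 - 1/S(e)$; integrating from $\lambda_j$ to $e_0$ and using $\cal C_j(\lambda_j) = \lambda_j$ yields
\[
\cal C_j(e_0) - e_0 \,=\, -\int_{\lambda_j}^{e_0}\frac{de}{S(e)}.
\]

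The heart of the argument is a uniform-in-$e$ QUE bound $S(e) = \tfrac{W}{N}\bigl(1 + O(N^{-3a/2 + \tau})\bigr)$ for all $e$ between $\lambda_j$ and $e_0$, with overwhelming probability. That $|g(e)|$ stays in the range where \eqref{wzajj2} applies is obtained by a bootstrap: at $e = \lambda_j$ we have $g = 0$, and as long as $|g(e)| \le W^{-3/4}$ the estimate \eqref{wzajj2} gives $1/S(e) = (N/W)(1 + o(1))$, hence $|g(e)| \le 2 (N/W) |e - \lambda_j| \le 2 N^{a/2}/W$; the condition \eqref{gwzN} then ensures $N^{a/2}/W \ll W^{-3/4}$, closing the bootstrap. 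To upgrade \eqref{wzajj2}, which is pointwise in $g$, to a uniform statement along the curve $e \mapsto g(e)$, I would cover the corresponding $g$-interval by a net of mesh $N^{-D'}$ (for $D'$ large), apply \eqref{wzajj2} at each net point via a union bound, and interpolate between net points by eigenvector perturbation. The mild level repulsion needed for this interpolation follows from the Gaussian component in \eqref{11da} together with arguments of the same type as in Lemma \ref{henna}(ii).

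Substituting the uniform estimate into the integral formula and using $|\lambda_j - e_0| \le N^{-1 + a/2}$ yields
\[
\cal C_j(e_0) - e_0 \,=\, \tfrac{N}{W}(\lambda_j - e_0) \,+\, O\bigl(W^{-1} N^{-a + \tau}\bigr).
\]
Since $W \le N$ gives $W^{a/10} \le N^{a/10}$, the error is bounded by $W^{-1-a/10} \cdot N^{-9a/10 + \tau}$, which is $\le W^{-1-a/10}$ for any $\tau < 9a/10$, proving the lemma. The principal technical obstacle is the step from the pointwise-in-$g$ bound \eqref{wzajj2} to the uniform bound along $e \mapsto g(e)$: the net argument requires enough quantitative level repulsion for $H^g$ to make eigenvector perturbation on a polynomial mesh negligible, and some care is needed to ensure that no eigenvalue crossing occurs as $g$ varies over the small interval $[0, g(e_0)]$.
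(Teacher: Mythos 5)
Your proposal is correct and is essentially the paper's argument: integrate the slope identity $\partial_g(g+\lambda_j^g)=\sum_{i\le W}|\psi_j^g(i)|^2$ between $\lambda_j$ and $e_0$ and substitute the QUE estimate \eqref{wzajj2}, with the same error bookkeeping. The only difference is that the paper performs the integral in the $g$ variable, so the integrand is the partial mass itself (bounded by $1$) rather than its reciprocal; this lets the pointwise-in-$g$ statement \eqref{wzajj2} be applied directly (a Fubini/Markov argument handles the measure-zero-in-expectation bad set), so the net, eigenvector-perturbation and level-repulsion machinery you invoke to get uniformity along $e\mapsto g(e)$ is not needed.
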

  
\begin{proof}
Recall the definition of the matrix $H^g$ from  \eqref{Hg}
and the relation $ \frac{\pt (g+\lambda^g_j )}{\pt g}  =  \sum_{i=1}^{W} \left|\psi_j^g( i) \right|^2 $.
Integrating this relation from 
$g_0$ 
to $0$ with $g_0$ defined by  the equation $g_0+\lambda^{g_0}_j=e_0$, we have 
\be\label{zee}
 \int_{g_0}^0\sum_{i=1}^{W} \left|\psi_j^g( i) \right|^2\rd g=\lambda_j-e_0.
\ee
By  \eqref{wzajj2},  for each $|g|\le W^{-3/4}$ fixed,   
we have $\sum_{i=1}^{W} \left|\psi_j^g( i) \right|^2=W/N\left(1+\OO(N^{-a})\right)$ with  high probability. Hence the left side of \eqref{zee} is equal to $-g_0 
W/N\left(1+\OO(N^{-a})\right)$ with high probability.  By definition,  $\cal C_j(e_0) =\lambda^{g_0}_j =e_0-g_0 $. Inserting this relation into \eqref{zee}, 
we have proved \eqref{axing}. 
\end{proof}

We now  prove the local semicircle law by using  \eqref{zyaa}. 
 For $\e>0$ small enough, we consider $E_2>  E_1 $  with  $\Delta:=E_2-E_1\le N^{-1+\e}$.
 Clearly, we can assume
 $\Delta\ge 1/N$. We apply  Lemma \ref{zybb} with the choice $e_0=E_1$: for any $D>0$, for large enough $N$ we have
$$
   \#\left\{k: \cal C_k(e_0)\in \Big [e_0, \; e_0+ \Delta\frac{N}{W}-\frac{1}{W} \Big]\right\}   \le\#\left\{k: \lambda_k\in [E_1, \; E_2] \right\}  \\
\le \#\left\{k: \cal C_k(e_0)\in \Big[e_0, \;  e_0+\Delta\frac{N}{W}+\frac{1}{W}\Big]\right\}
$$
with probability $1-N^{-D}$. 
Since $\cal C_k = \xi_{k'}$ represents the same curve,    we can apply   \eqref{zyaa} with the choices 
 $
e=E_1$, $e'-e=\Delta N /W-1/W $, or $\Delta N/W+1/W. 
 $
Hence the estimate \eqref{zyaa} implies  the local semicircle law  and we have completed the proof of Theorem \ref{thm:afterreg}.\\

\noindent {\it Part 3: Eigenvalues local statistics.}\ 
We rely on fixed energy universality result  for a matrix flow, from \cite{LanSosYau2016} (note that the constraint $\om_0>2/3$ below is probably not optimal but sufficient in our setting).

\begin{theorem}[Fixed energy universality for the Dyson Brownian motion \cite{LanSosYau2016}]
Let $V$ be a $n\times n$ deterministic matrix and $Z$ be a $n\times n$ standard GOE matrix.  Consider  $H = V + \sqrt{t_0} Z$ with $t_0 = n^{\om_0}/n$.  
Assume that $V$ is ($n^{-\delta_1}t_0$, $n^{-\delta_2}$, $n^{\delta_3} t_0)$ regular at E (see Assumption \ref{XYH}) with  ($c$ is a universal small constant)
$$
\frac{2}{3}< \om_0<1,\ \delta_2 < \min\left(\frac{1-\om_0}{4},\delta_3,c\right).
$$
Remember the notation
$
\rho^{(n)}_{\fc,t_0}
$ 
for the density corresponding to 
 the Stieltjes transform $m^{(n)}_{\fc,t_0}$  defined in (\ref{eqn:freefunc}).

For any smooth test function $O \in \mathscr{C}^\infty ( \R^k )$ with compact support, 
there are   constants $c, C >0$ such that
$$
\bigg | \int_{\mathbb{R}^k} O (\ba) p_{H}^{(k)} \left( E + \frac{\ba}{ N \rho^{(n)}_{\fc,t_0}(E) } \right) {\rm d \ba}
- \int_{\mathbb{R}^k} O (\ba) p_{\rm GOE}^{(k)} \left( E + \frac{\ba}{ N \rho^{(n)}_{\fc,t_0}(E) } \right) {\rm d \ba} \bigg | \leq  C n^{- c}.
$$
\end{theorem}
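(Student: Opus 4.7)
I would follow the dynamical approach for fixed energy universality via homogenization of Dyson Brownian motion. The overall strategy is to compare the DBM trajectories starting from $V$ with trajectories starting from a suitable reference GOE ensemble, and transfer correlation statistics through this coupling. The three main ingredients are: an optimal-scale local law for $H = V + \sqrt{t_0}Z$, homogenization estimates for the coupled DBM, and a conversion from gap statistics to fixed energy statistics.

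\textbf{Step 1: Local law at optimal scale.} Starting from the $(\eta_*, \eta^*, r)$-regularity assumption, I would first establish that the Stieltjes transform $m_{\fc,t_0}(z)$ solving the equation in \eqref{eqn:freefunc} satisfies uniform stability bounds on the domain of interest near $E$. Then, following the self-consistent equation approach, I would prove that the resolvent $G(t_0,z) = (H-z)^{-1}$ satisfies $|G_{ii}(z) - g_i(t_0,z)|$ and $|m_H(z) - m_{\fc,t_0}(z)|$ bounds of the correct optimal order down to scale $\eta \sim n^{-1+\epsilon}$, uniformly for $E' \in [E-n^{-\delta_3}t_0/2, E+n^{-\delta_3}t_0/2]$. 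This yields eigenvalue rigidity at the scale $n^{-1+\epsilon}$ near $E$, and ensures the classical locations $\gamma_i$ of $H$ eigenvalues near $E$ are well-defined.

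\textbf{Step 2: Homogenization of coupled DBM.} This is the heart of the argument and I expect it to be the main obstacle. I would introduce an independent reference GOE matrix $Z'$ and consider the coupled flows $\bla(t)$ starting from spectrum of $V$ and $\bmu(t)$ starting from a reference Wigner matrix, both driven by the \emph{same} Brownian motion. The goal is to show that, at time $t_0$, for indices $i$ with $\gamma_i$ near $E$, the differences $\lambda_i(t_0) - \mu_i(t_0)$ behave as a slowly varying function of the index $i$ (i.e., essentially constant on mesoscopic scales). The key analytic tool is the parabolic regularity of the linearized flow $\partial_t v_i = \sum_{j \neq i} (v_j - v_i)/(n(\lambda_i - \lambda_j)^2)$, which can be analyzed via short-range/long-range decomposition, finite speed of propagation, and a maximum principle analogous to those appearing in Section \ref{sec:QUE} of the present paper. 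The constraint $\omega_0 > 2/3$ enters precisely to guarantee enough relaxation time for this homogenization to take effect at the optimal scale.

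\textbf{Step 3: Transfer to correlation functions.} Once homogenization gives $|\lambda_i(t_0) - \mu_i(t_0) - c| \le n^{-1-c'}$ for some constant shift $c$ depending smoothly on $i$ at the mesoscopic scale, the $k$-point correlation function of $\bla(t_0)$ near $E$, after rescaling by $n \rho_{\fc,t_0}^{(n)}(E)$, coincides with the $k$-point correlation function of $\bmu(t_0)$ near a corresponding energy $E'$, up to error $n^{-c}$. Since $\bmu(t_0)$ is a Wigner matrix whose averaged energy universality is already classical, and since rigidity from Step 1 allows one to convert averaged to fixed energy for the reference ensemble, the conclusion follows by combining the two correlation functions via a smoothing/test function argument. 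The role of the smooth compactly supported test function $O$ is essential here, as it absorbs the remaining mesoscopic shifts in a controlled way.
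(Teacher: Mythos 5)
First, a point of reference: the paper does not prove this theorem at all. It is imported verbatim from \cite{LanSosYau2016} and used as a black box in Part 3 of the proof of Theorem \ref{thm:afterreg}, so there is no in-paper argument to compare yours against; the comparison is necessarily with the proof in that reference.

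Your Steps 1 and 2 correctly identify the architecture of the known proof: an optimal local law and rigidity for the free convolution under the $(\eta_*,\eta^*,r)$-regularity hypothesis, followed by homogenization of the coupled dynamics via short-range/long-range decomposition, finite speed of propagation and a maximum principle, with $\omega_0>2/3$ guaranteeing enough relaxation time. The genuine gap is in Step 3. Homogenization does not produce a constant shift $c$: it produces $\lambda_i(t_0)-\mu_i(t_0)=X_i+O(n^{-1-c'})$, where $X_i$ is a \emph{random} mesoscopic linear functional of the initial differences $(\lambda_j(0)-\mu_j(0))_j$ (a kernel average over roughly $nt_0$ indices). This shift is of order $t_0\gg 1/n$ and is correlated, through $\mu(0)$, with the microscopic configuration of the reference process $\mu(t_0)$. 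Decoupling this random shift from the fixed-energy statistics of the reference ensemble is precisely what separates fixed-energy universality from gap universality, and it is the main new content of \cite{LanSosYau2016}; your proposal treats the shift as deterministic and skips this issue entirely. Relatedly, the claim that ``rigidity from Step 1 allows one to convert averaged to fixed energy for the reference ensemble'' does not hold: rigidity localizes eigenvalues only to scale $n^{-1+\epsilon}$, which is far too coarse to remove an energy average over a window of that size --- if it could, fixed-energy universality would follow trivially from averaged universality, which it does not. The standard remedy is to take the reference process to be the stationary GOE flow, whose fixed-energy statistics are tautologically those of the GOE, and then to confront the random-shift decoupling directly.
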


We apply this result to the $W\times W$ matrix flow $t \to K_{T-t}^{\b g, e}$
at $t = T$ with the initial data  $V= K_{T}^{\b g, e}$, i.e., $n= W$.
By Corollary \ref{lemLQ}, $V$  is $(\eta_*, \eta^*, r)$ regular with the parameters defined in Theorem \ref{LLniu}. 
With $\eta_*, \eta^*, r, T$ defined in \eqref{mouyzz}, we have the following identifications
$$
\delta_3 = 2 \e^*  \log_W N , \; \delta_1= \e^* \log_W N , \; \delta_2 =  \e^*  \log_W N , \; 1- \omega_0 =  \log_W N(\e_*-\e^* ).
$$
The above theorem with $e=E$ gives (we consider the case $k=2$ to simplify the presentation)
$$
\bigg | \int_{\mathbb{R}^2} O (\ba) p_{Q_E}^{(2)} \left( E + \frac{\ba}{ N \rho_{\rm sc}(E) } \right) {\rm d \ba}
- \int_{\mathbb{R}^2} O (\ba) p_{\rm GOE}^{(2)} \left( E + \frac{\ba}{ N \rho_{\rm sc}(E) } \right) {\rm d \ba} \bigg | \leq  C N^{- c}.
$$
where  we replaced $\rho^{(n)}_{\fc,t_0}$ with $\rho_{\rm sc}$  by taking $\eta=0^+$ in (\ref{66-}).
We can write
\begin{multline*} 
 \int_{\mathbb{R}^2} O (\ba) p_{Q_E}^{(2)} \left( E + \frac{\ba}{ N \rho_{\rm sc}(E) } \right) {\rm d \ba}
=   
\frac 12  \sum_{k'  \not =  j'} \E  \, O \Big (W \rho_{\rm sc} (E) (\xi_{k'}-E) , W \rho_{\rm sc} (E) (\xi_{j'}-E) \Big )\\
=   
\frac 12  \sum_{k \not =  j} \E  \, O \Big (W \rho_{\rm sc} (E) (\cal C_k (E)-E) , W \rho_{\rm sc} (E) (\cal C_j (E)-E) \Big). 
\end{multline*} 
Recall that  there is a shift of indices $k \to k'$ (depending  on the randomness)  so that $ \cal C_k (E) = \xi_{k'}$. 
In the   expression above, we have summed  over all indices and thus this shift is irrelevant  for our purpose. 

Applying   \eqref{axing} with  $e_0=E$, we can substitute $W \rho_{\rm sc} (E) (\cal C_k (E)-E) $ with 
 $N \rho_{\rm sc} (E) (\lambda_k-E) + \OO(W^{-\frac{a}{10}}) $ in the above equation.
Note that \eqref{axing} holds only for eigenvalues in a small neighborhood of $E$. Since $O$ is compactly supported, this restriction does not affect the usage of  
\eqref{axing}  in the last equation. 
Finally, the error term $\OO(W^{-a/10})$ is negligible, which concludes the  the proof.

\section{A comparison method} \label{sec: comp}

In this section, we prove the theorems \ref{Main}, \ref{lsc} and \ref{Univ}. The basic idea follows  the Green function comparison method in \cite{ErdYauYin2012Univ}, interpolating between resolvents of two matrices $H$ and $\wt H$. 
However, contrary to the setting from  \cite{ErdYauYin2012Univ}, we only have  a priori  estimates on the Green's function for $\wt H$, but not for $H$.
A self-consistent Green function comparison method for band matrices was developed in \cite{BaoErd2015}, which only requires estimates on the Green's function of one of both matrices. Our a priori estimates on the Green's function are different so that we proceed with another self-consistent method from \cite{aniso}.

\subsection{Elementary facts.}\ 
Recall the resolvent of a matrix $H$ can be written as 
\be\label{imG}
G_{ij}(z)=\sum_{k}\frac{\psi_k(i)\psi_k(j)}{\lambda_k-z}, \quad \im G_{ii} (E+\ii \eta) =
\sum_k \frac{\eta |\psi_k(i)|^2} {(E-\lambda_k)^2+\eta^2}
\ee
where $\b\psi_k$  is the $k$-th eigenvector with eigenvalue $\lambda_k$. 
The following lemma is a classical fact connecting the Green function with delocalization of eigenvectors and local laws.

\begin{lemma}\label{Equ} Let $(H_N)_{N\geq 1}$ be a sequence of  $N\times N$ random symmetric matrices. Suppose  that for any 
 $c>0$ there exists a constant $\kappa_c>0\in \R$  such that for any  $D>0$
\be\label{rough bout}
\inf_{j\in \llbracket cN, (1-c)N\rrbracket} \P\left(|\lambda_j |\le 2-\kappa_c \right)\ge 1-N^{-D}
\ee
provided that $N$ is large enough. 
Consider the following assertion:
for any  small $\kappa, \tau>0$ and $D>0$ 
\begin{align}
&\label{buchi2}
\sup_{|E|\le 2-\kappa,N^{-1}\le \eta\le 1}\max_{i}\left(\im G_{ii}(z)\right)=\OO(N^{\tau}),\\
&\label{buchi3}
\sup_{|E|\le 2-\kappa,N^{-1}\le \eta\le 1}\max_{i,j}|G_{ij}(z)|=\OO(N^{\tau}),
\end{align}
hold with probability larger than $1-N^{-D}$. 
 Then 
 \begin{enumerate}
\item \eqref{buchi2}  implies  \eqref{maindel}. 
\item \eqref{maindel}  and   \eqref{SemiC} imply   \eqref{buchi3}.
\end{enumerate}

  \end{lemma}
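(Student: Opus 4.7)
My plan is to use the spectral representation \eqref{imG} of the Green function in both directions, combined with a dyadic decomposition of the sum over eigenvalues for part (ii).

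For part (i), I would fix a bulk index $k \in \llbracket \kappa N, (1-\kappa)N\rrbracket$ and apply hypothesis \eqref{rough bout} to obtain some $\kappa' = \kappa_\kappa > 0$ such that $|\lambda_k| \leq 2 - \kappa'$ with probability at least $1 - N^{-D}$. Discarding all terms except $k$ in \eqref{imG} gives the pointwise bound
$$
|\psi_k(i)|^2 \leq \eta \,\im G_{ii}(\lambda_k + \ii\eta)
$$
for every $\eta > 0$. Choosing $\eta = N^{-1}$ and applying \eqref{buchi2} with $\kappa = \kappa'$ at the random point $E = \lambda_k$ (which is licit since \eqref{buchi2} is uniform over $|E|\leq 2-\kappa$) yields $|\psi_k(i)|^2 \leq N^{-1+\tau}$ with probability $1 - 2N^{-D}$. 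A union bound over $i,k$ completes (i).

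For part (ii), Cauchy--Schwarz applied to the spectral representation gives
$$
|G_{ij}(z)|^2 \leq \Big(\sum_k \frac{|\psi_k(i)|^2}{|\lambda_k-z|}\Big)\Big(\sum_k \frac{|\psi_k(j)|^2}{|\lambda_k-z|}\Big),
$$
so it suffices to bound each factor. Delocalization \eqref{maindel} (say with $\tau/10$ in place of $\tau$) bounds $|\psi_k(i)|^2$ by $N^{-1+\tau/10}$ uniformly over bulk indices $k$, while the non-bulk indices contribute a negligible amount when combined with the boundedness of $\|H\|$. It then remains to control $\sum_k |\lambda_k - z|^{-1}$ for $z = E+\ii\eta$ with $|E|\leq 2-\kappa$ and $N^{-1}\leq \eta \leq 1$. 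I would write this sum via a dyadic partition $S_{-1} = \{k : |\lambda_k - E| < \eta\}$ and $S_m = \{k : |\lambda_k - E| \in [2^m\eta, 2^{m+1}\eta)\}$ for $0 \leq m \leq \log_2(C/\eta)$. The local semicircle law \eqref{SemiC} gives $|S_m| \leq C(N 2^m\eta + N^{\tau/10})$, and on each $S_m$ we have $|\lambda_k - z| \geq 2^m\eta \vee \eta$, whence
$$
\sum_k \frac{1}{|\lambda_k-z|} \leq \frac{|S_{-1}|}{\eta} + \sum_{m\geq 0}\frac{|S_m|}{2^m\eta} \leq C\, N \log N + C\,\frac{N^{\tau/10}}{\eta}.
$$
Multiplying by $N^{-1+\tau/10}$ and using $\eta \geq N^{-1}$ yields $N^{-1+\tau/10}\sum_k |\lambda_k-z|^{-1} = O(N^{\tau/3})$, so $|G_{ij}(z)| = O(N^{\tau/3})$, which is \eqref{buchi3} up to renaming $\tau$.

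The main minor obstacle is ensuring the dyadic bounds are uniform in $(E,\eta)$ over the stated range. This is standard: discretize $\{(E,\eta) : |E|\leq 2-\kappa,\ N^{-1}\leq \eta\leq 1\}$ on a mesh of polynomial size, apply \eqref{SemiC} and \eqref{maindel} at each grid point via a union bound, and then extend by the trivial Lipschitz continuity of the resolvent on scale $\eta^{-2}\leq N^2$. The bulk condition $|E|\leq 2-\kappa$ ensures that the dyadic shells used in the decomposition remain comparable to $N 2^m\eta$ (rather than degenerating near the edge).
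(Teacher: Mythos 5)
Your proof is correct and follows essentially the same route as the paper's own (very terse) argument: for (i), drop all but the $k$-th term in $\im G_{ii}(\lambda_k+\ii N^{-1})$ and invoke \eqref{rough bout} plus \eqref{buchi2} at $E=\lambda_k$; for (ii), insert \eqref{maindel} and \eqref{SemiC} into the spectral representation \eqref{imG} and perform a dyadic decomposition, which your Cauchy--Schwarz step and mesh argument merely make explicit. One small correction: the negligibility of the non-bulk indices in (ii) should be justified not by boundedness of $\|H\|$ but by \eqref{SemiC}, which forces eigenvalues with indices outside $\llbracket cN,(1-c)N\rrbracket$ (for $c$ small depending on $\kappa$) to lie at distance $\gtrsim\kappa$ from any $E$ with $|E|\le 2-\kappa$, so that their total contribution is $\OO(\kappa^{-1})$ by the normalization $\sum_k|\psi_k(i)|^2=1$.
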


\begin{proof} 
For any $k\in \llbracket cN, (1-c)N\rrbracket$, by \eqref{rough bout},  we can assume 
$ |\lambda_k |\le 2-\kappa_c  $
for some  $\kappa_c> 0$.  
Then  \eqref{buchi2}  implies that, with high probability,
$$
\eta^{-1}|\psi_k(i)|^2 \le \im G_{ii}(\lambda_k+ \ii \eta )=\OO(N^\tau), \quad \eta=N^{-1},
$$
which is \eqref{maindel}.   
On the other hand, the bound  \eqref{maindel}  on $\psi_k(i)$ and the eigenvalue distribution estimate  \eqref{SemiC} 
inserted in \eqref{imG} yield  \eqref{buchi3} by a simple dyadic decomposition.
\end{proof}  

\subsection{Proof of Theorem \ref{Main}.}\   By Theorem \ref{lemma main3},   \eqref{maindel} of Theorem \ref{Main}  is just  a corollary of the following lemma. 
In the remainder of this section,  we will prove Lemma \ref{shay}.  

\begin{lemma}\label{shay} If the statement   \eqref{buchi2} holds for all $H$  in \eqref{sjui},  then  \eqref{buchi2}  holds for any  $H$ in Theorem \ref{Main}. 
\end{lemma}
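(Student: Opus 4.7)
The plan is to transfer the a priori estimate \eqref{buchi2} from a Gaussian divisible reference matrix $\wt H$ of the form \eqref{sjui} to the arbitrary band matrix $H$ satisfying \eqref{cond: moment} and \eqref{eqn:subgaus}, via a self-consistent Green function comparison in the spirit of \cite{aniso}. Theorem \ref{lemma main3} already delivers \eqref{buchi2} for such a $\wt H$, so the task reduces to running the comparison.

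First I would perform a moment matching: for each independent entry with $|i-j|\le W$, construct $\wt H_{ij}$ of the form prescribed by \eqref{sjui}, namely the sum of a bounded random variable satisfying \eqref{cond: moment}, \eqref{eqn:subgaus} and an independent centered Gaussian of variance of order $W^{-1}N^{-\e_m}$, such that $\E H_{ij}^k = \E \wt H_{ij}^k$ for $k=1,2,3$ and $|\E H_{ij}^4 - \E \wt H_{ij}^4| \le C W^{-2}N^{-\e_m}$. The hypothesis \eqref{cond: moment} with its $N^{-\e_m}$ lower bound on the fourth-cumulant gap is tailored precisely to make this matching feasible, following the standard recipe of \cite{TaoVu2011, ErdYauYin2012Univ}. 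The constructed $\wt H$ is itself of type \eqref{sjui}, so Theorem \ref{lemma main3} applies to it.

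The comparison proper I would carry out by introducing a continuous interpolation $H_s$, $s\in[0,1]$, between $\wt H$ and $H$ and letting $G_s(z) = (H_s-z)^{-1}$. Form the high moment $\Phi(s,z) = \E\bigl[(\im G_{s,ii}(z))^{2p}\bigr]$ for a large integer $p$ and aim to show $\Phi(s,z)\le N^{2p\tau}$ for all $s\in[0,1]$. Differentiating in $s$ and applying a cumulant expansion up to fourth order, the contributions of the first three cumulants are common to $\wt H$ and $H$ and therefore cancel; the fourth-order remainder is a polynomial in $G$-entries which, via the induction hypothesis $\im G_{ii}\le N^\tau$ and Ward's identity $\sum_j|G_{ij}|^2 = \eta^{-1}\im G_{ii}$, yields a differential inequality of the shape $\partial_s \Phi(s,z) \le C N^{-\delta} \Phi(s,z)$ for some $\delta>0$. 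This $\delta$ is strictly positive precisely under the hypothesis \eqref{gwzN}: the $N^{-\e_m}$ gain from moment matching together with Ward cancellations overcomes the combinatorial $O(NW)$ count of independent matrix entries as soon as $W\gg N^{3/4}$ and $\e_m<1/2$.

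The main obstacle is the self-consistent character of the argument: the bound $\im G_{ii}\le N^\tau$ appears both as an input to the fourth-order Taylor remainder and as the desired conclusion, so the bootstrap cannot be unwound linearly. I would close it by a stopping-time argument, bounding the first time $s_0\in(0,1]$ at which the estimate fails and showing this event has negligible probability; choosing the moment $p = p(D,\tau)$ large enough and applying Markov's inequality at $s=1$ then converts the moment bound into a pointwise probability bound with loss smaller than $N^{-D}$ for any prescribed $D$. Uniformity in $z = E + \ii\eta$ over $|E|\le 2-\kappa$, $\eta\ge N^{-1}$ is secured by a standard mesh argument together with the Lipschitz property of $G_s$ in $z$ valid down to $\eta = N^{-1}$. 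The condition $W\gg N^{3/4}$ enters only in the final Gronwall step.
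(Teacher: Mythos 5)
Your overall strategy is the paper's: build a four--moment--matched $\wt H$ of type \eqref{sjui} (Lemma \ref{4M}, via \cite{TaoVu2011}), import its resolvent bounds from Theorem \ref{lemma main3}, and transfer them to $H$ by the self-consistent Green function comparison of \cite{aniso} closed with Gronwall. The implementation differs in two standard but interchangeable ways: the paper uses the Bernoulli entry-replacement interpolation $H^\theta$ and the discrete derivative formula of Lemma \ref{lem:interpol} followed by a Taylor expansion in the single entry, rather than a continuous interpolation with a cumulant expansion; and it resolves the self-consistency not by a stopping time in the interpolation parameter but by an induction in $\eta$ down the scales $\eta_m=N^{-m\e_0}$, using the deterministic monotonicity $\|G(E+\ii\eta/r)\|_{\max}\le r\|G(E+\ii\eta)\|_{\max}$ from \cite{BenKno2018} to manufacture the weak a priori bound $N^{\tau+\e_0}$ at each new scale, and the structural observation that $f^{(n)}(0)$ still contains at least $2p-n$ factors of the entry $G^\theta_{kl}$ being estimated, which is what turns the expansion into the Gronwall inequality $|\partial_\theta\E F|\le N^{-c}(1+\E F)$.

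Two details in your write-up need correcting. First, the quantity you propagate must be $\max_{k,l}\E|G^\theta_{kl}(z)|^{2p}$ over \emph{all} entries, not $\E[(\im G_{ii})^{2p}]$: the expansion remainder is a product of resolvent entries including off-diagonal ones, so a bootstrap that only controls diagonal imaginary parts is not self-consistent. Correspondingly, the input for $\wt H$ is the full max-norm bound \eqref{zhizl}, which the paper extracts from delocalization \emph{together with} the local law \eqref{SemiC} via Lemma \ref{Equ}, not from \eqref{buchi2} alone. Second, your power counting misattributes the gain. Neither Ward's identity nor the $N^{-\e_m}$ factor from \eqref{cond: moment} enters the comparison: after exact four-moment matching the leading error is the fifth-order term, bounded crudely by (number of entries)$\times$(fifth moment) $= \OO(NW\cdot W^{-5/2})=\OO(NW^{-3/2})$, which is $N^{-c}$ already for $W\gg N^{2/3}$, hence comfortably under \eqref{gwzN}. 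The role of $\e_m<1/2$ and of the $N^{-\e_m}$ variance of the Gaussian component is entirely upstream: it makes the exact four-moment matching of Lemma \ref{4M} compatible with a Gaussian component large enough for the Dyson Brownian motion analysis behind Theorem \ref{lemma main3}.
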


To prove  Lemma \ref{shay}, note that  for each $H$ in Theorem \ref{Main}, 
there is $\wt H$  of type  \eqref{sjui} such that the  first four moments of the entries of $H$
and   $\wt H$ coincide.  A precise statement is the following  lemma, about a single random variable. The proof is easily adapted from \cite[Corollary 30]{TaoVu2011} and it is left to the reader.

\begin{lemma}\label{4M} Let $H$ be a band matrix satisfying the conditions in  Theorem \ref{Main}. 
Then there  exists a matrix ensemble   $\wt H$ of the form \eqref{sjui} satisfying the assumptions of Theorem \ref{lemma main3}
such that 
$$
 \E (H_{ij})^n=\E (\wt H_{ij})^n, \quad |i-j|\le W, \quad n=1,2,3,4.
$$
\end{lemma}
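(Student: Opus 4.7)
The plan is to construct $\wt H = H_1 + H_2$ entrywise, solving a prescribed moment problem for the distribution of each $(H_1)_{ij}$. For $|i-j| > W$, set $(H_1)_{ij} = H_{ij}$; symmetry is preserved, and the required regularity inherits directly from the assumptions on $H$ (the moment condition \eqref{cond: moment} is only required on the band $|i-j|\le W$). For $|i-j| \le W$, since $(H_2)_{ij}$ is an independent centered Gaussian with variance $\sigma_{ij}^2 = c\,(1+\delta_{ij})\,W^{-1}N^{-\e_m}$, independence combined with the known Gaussian moments forces:
\begin{align*}
\E(H_1)_{ij}=0,\quad \E(H_1)_{ij}^2&=s_{ij}-\sigma_{ij}^2,\quad \E(H_1)_{ij}^3=\E H_{ij}^3,\\
\E(H_1)_{ij}^4&=\E H_{ij}^4 - 6\sigma_{ij}^2(s_{ij}-\sigma_{ij}^2) - 3\sigma_{ij}^4.
\end{align*}

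The main step is to verify that these prescribed moments come from a genuine probability distribution satisfying \eqref{cond: moment}--\eqref{eqn:subgaus}. The classical solvability criterion for the truncated Hamburger moment problem, expressed in terms of the normalized variable $\hat\xi_{ij} := (H_1)_{ij}/\sqrt{s_{ij}-\sigma_{ij}^2}$, is $\E\hat\xi_{ij}^4 - (\E\hat\xi_{ij}^3)^2 \ge 1$; strict inequality admits a realization supported on at most three points. Setting $a_{ij} := \sigma_{ij}^2 / s_{ij}$, which by \eqref{bandcw1} satisfies $a_{ij} \le c_s^{-1} c\, N^{-\e_m}$, a Taylor expansion in $a_{ij}$ gives
$$
\E\hat\xi_{ij}^4 - (\E\hat\xi_{ij}^3)^2 = \bigl(\E\xi_{ij}^4 - (\E\xi_{ij}^3)^2\bigr) + \OO\bigl(c\,N^{-\e_m}\bigr),
$$
where $\xi_{ij} := H_{ij}/\sqrt{s_{ij}}$ and the implicit constant depends only on the first four moments of $\xi_{ij}$, which are uniformly bounded via \eqref{eqn:subgaus} (since $s_{ij}\ge c_s/W$ implies $\E \exp(c_s\delta_d\, \xi_{ij}^2)<\infty$). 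The hypothesis \eqref{cond: moment} gives $\E\xi_{ij}^4 - (\E\xi_{ij}^3)^2 \ge 1 + N^{-\e_m}$, so choosing $c$ sufficiently small (depending only on $c_s$, $C_s$, $\delta_d$) preserves a margin of at least $\tfrac{1}{2}N^{-\e_m}$. This yields a 3-point distribution for $(H_1)_{ij}$ whose normalization $\hat\xi_{ij}$ automatically satisfies \eqref{cond: moment} with the required $N^{-\e_m}$ gap.

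The subgaussian bound \eqref{eqn:subgaus} for $(H_1)_{ij}$ is then automatic: a 3-point distribution with variance $s_{ij}-\sigma_{ij}^2 = \OO(W^{-1})$ and bounded normalized higher moments has support of diameter $\OO(W^{-1/2})$, so $\E\exp(\delta_d W (H_1)_{ij}^2)$ is bounded uniformly for small enough $\delta_d$. Symmetry is enforced by constructing $(H_1)_{ij}$ for $i\le j$ and reflecting; independence across distinct unordered pairs is built in. Assembling $\wt H = H_1 + H_2$ produces an ensemble of the form \eqref{sjui} satisfying the hypotheses of Theorem \ref{lemma main3} with the required moment match. The main (mild) obstacle is the expansion controlling how $a_{ij}$ shifts the solvability margin; it succeeds precisely because the Gaussian variance $\sigma_{ij}^2$ and the forbidden margin are both of the same order $N^{-\e_m}$, so a single small constant $c$ suffices uniformly across entries.
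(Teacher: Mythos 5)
Your overall route is the one the paper intends: copy the off-band entries, prescribe the first four moments of $(H_1)_{ij}$ that are forced by subtracting an independent Gaussian of variance $\sigma_{ij}^2=c(1+\delta_{ij})W^{-1}N^{-\e_m}$, and check solvability of the truncated moment problem for the normalized variable via the Hankel criterion $\E\hat\xi^4-(\E\hat\xi^3)^2\ge 1$. Your moment identities are correct, and so is the perturbative estimate showing that the fourth-moment gap of $\hat\xi_{ij}$ differs from that of $\xi_{ij}$ by $\OO(cN^{-\e_m})$, uniformly over the band, because $\sigma_{ij}^2/s_{ij}=\OO(cN^{-\e_m})$ and the moments of $\xi_{ij}$ are uniformly bounded by \eqref{eqn:subgaus} and \eqref{bandcw1}. (That you only retain a gap $\tfrac12 N^{-\e_m}$ rather than $N^{-\e_m}$ is harmless bookkeeping: absorb the factor $2$ into a marginally larger $\e_m$, using the slack $a$ in \eqref{gwzN}; the paper's own one-line reduction to \cite[Corollary 30]{TaoVu2011} has the same feature.)

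There is, however, a genuine gap in your verification of \eqref{eqn:subgaus} for $H_1$. It is not true that a 3-point distribution with mean zero, unit variance and bounded third and fourth moments has bounded support: an atom at height $M$ carrying mass of order $M^{-4}$ is compatible with any fixed bound on the first four moments, so ``bounded normalized higher moments'' does not force support of diameter $\OO(1)$ for $\hat\xi_{ij}$, hence not $\OO(W^{-1/2})$ for $(H_1)_{ij}$. Moreover, the realization you invoke from the solvability criterion is not canonical, and the standard one (extend the moment sequence so that the $4\times4$ Hankel matrix becomes singular and take the roots of the associated cubic) has atoms controlled only by the moments divided by the Hankel determinant, i.e.\ by the gap, which in your setting is as small as $N^{-\e_m}$; a realization of this kind may place an atom of size $W^{-1/2}$ times a power of $N^{\e_m}$, in which case $\E\exp\bigl(\delta_d W (H_1)_{ij}^2\bigr)$ is not uniformly bounded and \eqref{eqn:subgaus} fails. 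You must therefore \emph{choose} the realization, e.g.\ take the two-point law matching mean $0$, variance $1$ and third moment $m_3$ (atoms of size at most $|m_3|+1$), and mix it, with weight comparable to the gap, with a bounded symmetric two-point law, rescaling so as to restore the variance and third moment; this produces a law on at most four points of size $\OO(1)$, uniformly in the size of the gap, matching all four moments. This is exactly the content of Tao--Vu's Corollary 30, which the paper cites, so the fix is standard; but as written, your deduction of the subgaussian bound rests on a false claim and is the one step of the argument that does not go through.
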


\begin{proof} [Proof of Lemma \ref{shay}.]  Let $H $ be the matrix in Theorem \ref{Main} and  $\wt H$ the one given in Lemma \ref{4M}. 
Denote by $\wt G (z)=(\wt H-z)^{-1}$ the Green function of $\tilde H$.  By  Theorem \ref{lemma main3}, \eqref{maindel} and  \eqref{SemiC} hold
for the eigenvalues and eigenvectors of $\tilde H$.
Together with  Lemma \ref{Equ},  we get
\be\label{zhizl}
\sup_{|E|\le 2-\kappa, N^{-1}\le \eta\le 1}\|\wt G (z)\|_{\max}=
\OO( N^\tau), z=E+\ii\eta
\ee
with probability  $1-N^{-D}$.  We now prove that the same estimate holds for $G$, i.e. 
\be\label{zhizlttx}
\sup_{|E|\le 2-\kappa, N^{-1}\le \eta\le 1}\|G (z)\|_{\max}=
\OO( N^\tau).
\ee

We follow the self-consistent  comparison method from \cite{aniso}.  
We start with a very weak estimate  $\|G(z)\|_{\max}\leq\eta^{-1}$, i.e.,   \eqref{zhizlttx} holds  for $\eta\sim 1$.  For $\eta< 1$, let $\e_0>0$ be a small parameter and define
$$
\eta_m=N^{-m\e_0}, \quad z_m=E+\ii\eta_m, \quad 1\le m\le \e_0.
$$
Our goal is to prove  by induction  that    for $z=z_m, 1\le m\le \e_0^{-1}$,  \eqref{zhizlttx} holds, which 
implies \eqref{zhizl}. \nc  Thus it remains to prove that if \eqref{zhizlttx} holds for $z=z_m'$, $0\le m'\le m$, then \eqref{zhizlttx} holds for $z=z_{m+1}$, $1\le m+1\le \e_0^{-1}$.
  
As in \cite{aniso}, we define the symmetric interpolation matrix $H^\theta$ by  
\be\label{Ht}
 (H^\theta)_{ij}=(1-\chi^\theta_{ij})H^0_{ij}+ \chi^\theta_{ij} H^1 _{ij},
\quad H^1=H, \quad H^0=\wt H
\ee
where for $i\le j$,  $\chi^\theta_{ij}$  are i.i.d. Bernoulli random variable such that $\P(\chi_{ij}^\theta=1)=\theta$.   
Denote $G^\theta(z)=(H^\theta-z)^{-1}$. We can now recast the induction as follows: if for any (small) $\tau$ and (large) $D$, and  $|E|< 2-\kappa$,  we have  
$$
\max_{0\le \theta\le 1} \max_{  m'\le  m}\| G^\theta (z_{m'})\|_{\max}=\OO( N^\tau), \quad z_{m'}=E+\ii\eta_{m'}  
$$
then for any $\tau$ and $D$, and $|E|< 2-\kappa$,  we have
\be\label{zhizlttx3}
\max_{0\le \theta\le 1}  \| G^\theta (z_{m+1})\|_{\max}=\OO(N^\tau), \quad z_{m+1}=E+\ii\eta_{m+1} . 
\ee
We know that  \eqref{zhizlttx3} holds  for $\theta=0$ and all $m\le \e_0^{-1}$.
Our aim is to prove \eqref{zhizlttx3} for $0< \theta\le 1$.

From 
\cite[Lemma 10.2]{BenKno2018},   
we have 
$\|  G(E+\ii\eta/r)\|_{\max}\leq r\| G(E+\ii\eta)\|_{\max}$ for any  $r>1$. As a consequence, 
$
\max_{0\le n\le m}\|  G^\theta (z_n)\|_{\max}=\OO( N^\tau)$ implies   $\|G^\theta (z_{m+1})\|_{\max}= \OO(N^{\tau+\e_0})$.
 Thus it  remains to show that, under the assumption \eqref{zhizl}, if  we have     
  \be\label{jjdu} 
\max_{0\le \theta\le 1}  \| G^\theta (z_{m+1})\|_{\max}=\OO(N^{\tau+\e_0})
\ee
 then (\ref{zhizlttx3}) also holds.
 
 By \eqref{zhizl}, for any  $p\in \N$ and 
for any $\tau>0$, we have 
\be\label{zhizlwe}
 \max_{kl}\E|G^{\theta=0} _{kl}  (z_{m+1})|^{2p}=\OO( N^{2p\tau}).
\ee
We will use the following Lemma  from \cite{aniso}  to extend the above bound  to general $\theta\in [0,1]$:
\be\label{zhizlwe2}
 \max_\theta\max_{kl}\E|G^{\theta } _{kl}  (z_{m+1})|^{2p}=\OO( N^{2p\tau}),
\ee
which completes the proof  of \eqref{zhizlttx3} by Markov's inequality.  

\begin{lemma} \label{lem:interpol}
For fixed $i, j\in Z_N$, and $\lambda \in \R$, 
we define the matrix
\begin{equation*}
\pb{H^{\theta,\lambda}_{(ij)}}_{ab}=
\begin{cases}
\lambda & \hbox{if } \{a,b\} =\{i,j\},
\\
H^\theta_{ab} & \hbox{if } \{a,b\} \ne \{i,j\}\,.
\end{cases}
\end{equation*}
 For bounded and smooth $F \col \R^{N\times N} \to \C$ we have
 $$
\partial_\theta\E F(H^\theta)=\sum_{i\le j} \left( \E F (H^{\theta,H^1_{ij}}_{(i j)}) - \E F (H^{\theta,H^0_{ij}}_{(ij)})\right).
$$
\end{lemma}

We now return to prove \eqref{zhizlwe2}. 
Choose the function $F$ as follows:
 $$F(X) =F_{kl, p,z}(X)=\left|((X-z)^{-1})_{kl}\right|^{2p}.$$
By \eqref{zhizlwe},   for any $\tau> 0$ and  $p\in\mathbb{N}$, 
$
\E F(H^0)=\OO(N^{2\tau p}).
$
Thus, \eqref{zhizlwe2} for $H^\theta, 0\le \theta\le 1$, follows from  Gronwall's inequality and the following inequality, to be proved in the remainder of this paragraph (here and below, $z=z_{m+1}$): for any $p\geq 100$, there exists $c>0$ such that
\begin{equation}\label{LGZ}
  \left|\partial_\theta\E F(H^\theta) \right|  = \Big|\sum_{i,j}\left( \E F\pB{H^{\theta,H^1_{ij}}_{(i j)}} - \E F\pB{H^{\theta,H^0_{ij}}_{(ij)}}\right)\Big| 
\le  N^{-c}\left(1+   \E F\pB{H^{\theta}}\right)
\end{equation}
 for  any $0\le \theta\le 1$. Note that  the above equality is Lemma  \ref{lem:interpol}.

The matrices $H^{\theta,H^1_{ij}}_{(i j)}$ and $H^{\theta,H^0_{ij}}_{(i j)}$ are identical except for the entries $(i,j)$ and $(j,i)$ when $|i-j|\le W$, so we now compare them by a perturbative argument. 
We fix $i,j$ and define 
$$
f(\lambda)=f_{ij,kl,p,z,\theta}(\lambda):=F_{kl,p,z} \pB{H^{\theta,\lambda}_{(i j)}}.$$ 
By definition,  
$ f(H^\theta_{ij})
=F\pB{H^{\theta } }$
with $H^{\theta } $ as in \eqref{Ht}.
The $n$-th derivative of $f$, $f^{(n)}$,  is a sum of products of some $2p+n$ matrix entries of the resolvent and its conjugate.   From  \eqref{jjdu},   we therefore have  
$$
f^{(n)}(H^\theta_{ij})=\OO(N^{(\tau+\e_0)(2p+n)})
$$
with high probability.
By standard iterated resolvent identities, the same bound holds  for any $y=\OO(W^{-1/2+\tau})$:
$$
f^{(n)}(y)=\OO(N^{(\tau+\e_0) (2p+n)}).
$$
with overwhelming probability. Hence, by Taylor's expansion  with respect to $y=0$, for any $m\geq 1$,  we have
$$\E F\pB{H^{\theta,H^1_{ij}}_{(i j)}} - \E F\pB{H^{\theta,H^0_{ij}}_{(ij)}}=
\sum_{5\le n\le  m}\frac{\E(f^{(n)}(0))}{n!}\left(\E ((H^1_{ij})^n)-\E ((H^0_{ij})^n)\right)
+\OO
\left(N^{(\tau+\e_0)(2p+m+1)}
(W^{-\frac{1}{2}-\tau})^{m+1}
\right) 
$$
where we used that the first four moments of $H^1_{ij}$ and $H^0_{ij}$ are the same.   On the one hand, we choose $m=p$ so that the
above error term is at most $\OO(N^{-\frac{p}{10}})$ when $\tau+\e_0<1/100$.
On the other hand,
$f^{(n)}(0)$ is a sum of products of some $2p+n$ matrix entries of the resolvent and its conjugate,
 among which  at least $2p-n$ are either 
$
\left(H^{\theta,0}_{(ij)}-z\right)^{-1}_{kl}
$
or
its conjugate. 
With the resolvent identity, these two quantities  are easily  bounded by 
$ 
|G^\theta_{kl}|+W^{-1/2+\e}
$ for any $\e>2(\tau+\e_0)$, with high probability.  
The remaining $2n$ resolvent entries are bounded using (\ref{jjdu}).
Therefore, for $n\le p$,  
$$
\left|\E f^{(n)}(0)\right| \le C_p\, N^{2n(\tau+\e_0)} \left(\E(|G^\theta_{kl}|^{2p-n})+W^{(-1/2+\e)(2p-n)}
\right)
\leq 
C_p\, N^{2n(\tau+\e_0)} \left(1+\E(F(H^\theta))\right).
$$ 
The above estimates together give
$$
  \left|\partial_\theta\E F(H^\theta) \right|
\le C_p NW^{1-\frac{5}{2}+10(\tau+\e_0)}(1+\E F(H^{\theta}))
+
C_p N^{1-\frac{p}{10}} W.
$$
As $W \ge N^{3/4}$ and $p\geq 100$, this concludes the proof of the inequality in \eqref{LGZ} (and Lemma \ref{shay}).
\end{proof}

\subsection{Proof of Theorem \ref{lsc}}.
We keep the notations from the proof of  Lemma \ref{shay} for $H$ and $\wt H$. On the one hand, from the local law in Theorem \ref{lemma main3}, for any $\kappa, \tau>0$ there exist $\e>0$ such that
for any $z=E+N^{-1+\tau}$ with $-2+\kappa<E<2-\kappa$,
$\im N^{-1}{\rm Tr}\, \wt G(z)-\im m_{\rm sc}(z)=\OO(N^{-\e})$.
On the other hand, by repeating exactly the proof of Lemma \ref{shay}, the estimate (\ref{LGZ}) also holds for 
$
F(X)=\left|\im N^{-1}{\rm Tr}(X-z)^{-1}-\im m_{\rm sc}(z)\right|^{2p},
$
so that $\im N^{-1}{\rm Tr}\, G(z)-\im m_{\rm sc}(z)=\OO(N^{-c})$ for some $c>0$.  In turn, this implies the local law for $H$.

\subsection{Proof of Theorem \ref{Univ}}.
Again, we follow the notations from the proof of  Lemma \ref{shay} for $H$ and $\wt H$. 
 Theorem \ref{lemma main3} gives universality for $\wt H$, so that  
Theorem \ref{Univ} follows  by applying the Green's functions comparison theorem from \cite{ErdYauYin2012Univ}.
The input for this theorem is the four moment matching of the matrix entries, given by construction of $\wt H$, and resolvent bounds as proved for our band matrices in (\ref{zhizlttx}).

\setcounter{equation}{0}
\setcounter{theorem}{0}
\renewcommand{\theequation}{A.\arabic{equation}}
\renewcommand{\thetheorem}{A.\arabic{theorem}}
\appendix
\setcounter{secnumdepth}{0}
\section{Appendix  \ \ Perfect matching observables for Hermitian matrices}\label{complex}

Although the main reults of this paper are stated for symmetric matrices, they can be adapted to the Hermitian class. The only major modification concerns the definition of the perfect matching observables. We explain below the Hermitian counterpart of Section \ref{sec:QUE1}.

\subsection{Eigenvector dynamics.}\ Let $B^{(h)}$ be a $n\times n$ matrix such that $\Re(B^{(h)}_{ij}),\Im(B^{(h)}_{ij})  (i<j)$ and $B^{(h)}_{ii}/\sqrt{2}$ are independent standard Brownian motions, and $B^{(h)}_{ji}=
(B^{(h)}_{ij})^*$. The $n\times n$ Hermitian Dyson Brownian motion $K^{(h)}$ with initial value $K^{(h)}(0)$
is
$
K^{(h)}(t)=K^{(h)}(0)+\frac{1}{\sqrt{2n}}B^{(h)}(t).
$

Let  $\bla_0\in\Sigma_n$, $\boldu_0\in \UU(n)$. The Hermitian Dyson Brownian motion/vector flow with initial condition  $(\lambda_1,\dots,\lambda_n)=\bla_0$, $(u_1,\dots,u_n)=\boldu_0$,  is 
\begin{align}
\rd\la_k&=\frac{\rd B^{(h)}_{kk}}{\sqrt{2n}}+\left(\frac{1}{n}\sum_{\ell\neq k}\frac{1}{\la_k-\la_\ell}\right)\rd t,\notag\\
\rd u_k&=\frac{1}{\sqrt{2n}}\sum_{\ell\neq k}\frac{\rd B^{(h)}_{k\ell}}{\lambda_k-\lambda_\ell}u_\ell
-\frac{1}{2n}\sum_{\ell\neq k}\frac{\rd t}{(\la_k-\la_\ell)^2}u_k.\label{eqn:eigenvectorsHermitian}
\end{align}
With the above definitions, the strict analogue of Theorem \ref{thm:PCE} holds in this Hermitian setting.

In addition to (\ref{eqn:cij}) and (\ref{eqn:ukd}), we define
\begin{align}
&u_k\partial_{\overline{u}_\ell}=\sum_{\al=1}^nu_k(\al)\partial_{\overline{u}_\ell(\al)},\notag\\
&X_{k\ell}^{(h)}=u_k\partial_{u_\ell}-\overline{u}_\ell\partial_{\overline{u}_k},\notag\
\overline{X}_{k\ell}^{(h)}=\overline{u}_k\partial_{\overline{u}_\ell}-u_\ell\partial_{u_k}.\notag
\end{align}
Here  $\partial_{u_\ell}$ and $\partial_{\overline{u}_\ell}$ are defined by considering 
$u_\ell$ as a complex number, i.e., if we write $u_\ell= x+ \ii y$ then 
$\partial_{{u}_\ell}= \frac 1 2 (\partial_x - \ii \partial_y) $. The analogue of Lemma \ref{lem:generator} for the generator is then (see \cite{BouYau2017})
\begin{equation}
\LL_t^{(h)}=\frac{1}{2}\sum_{1\leq k<\ell\leq n}c_{k\ell}(t)\left(X_{k\ell}^{(h)}\overline{X}_{k\ell}^{(h)}+\overline{X}_{k\ell}^{(h)}X_{k\ell}^{(h)}\right), \label{LH}
\end{equation}
meaning that
$\rd\E( g (\boldu_t))/\rd t=\E(\LL_t^{(h)} g(\boldu_t))$)
for the stochastic differential equation  (\ref{eqn:eigenvectorsHermitian}).

\subsection{The observables.}\ 
As in Section \ref{sec:QUE1}, let $I$ be a fixed subset of $\llbracket 1,n\rrbracket$, and denote the eigenvector overlaps 
\begin{align*}
p_{ij}&=\sum_{\alpha\in I}u_i(\alpha)\bar u_j(\alpha),\ \ i\neq j\in\llbracket 1,n\rrbracket\\
p_{ii}&=\sum_{\alpha\in I}u_i(\alpha)\bar u_i(\alpha)-C_0,\ \ i\in\llbracket 1,n\rrbracket
\end{align*}
where $C_0$ is an arbitrary but fixed constant independent of $i$. Note that contrary to the real case, we now have $p_{ij}\neq p_{ji}$ for $i\neq j$.

With this definition, Theorem \ref{thmQUE} still holds.
For the proof, we keep the same definition for our configuration space as in the real case: $\boeta:  \llbracket 1,n\rrbracket \to \bN$ where
$\eta_j:=\boeta(j)$ is interpreted as the number of particles at the site $j$.
For any given configuration $\boeta$, consider the set of vertices
$$
\mathcal{V}_{\boeta}=\{(i,a,\e): 1\leq i\leq n, 1\leq a\leq \eta_i,\e\in\{b,w\}\}.
$$
We  represent vertices corresponding to $\e=b$ (resp. $\e=w$) by a black (resp. white) disk.
Let $\mathcal{A}_{\boeta}$ be the graph with vertices $\mathcal{V}_{\boeta}$ and with edges all possible $\{v_1,v_2\}$ with $\e_1\neq\e_2$, where $v_1=(i_1,a_1,\e_1)$, $v_2=(i_2,a_2,\e_2)$. In words, $\mathcal{A}_{\boeta}$ is the complete graph on $\mathcal{V}_{\boeta}$ except that edges between vertices of the same color are forbidden.
Let $\mathcal{G}_{\boeta}$ be the set of perfect matchings of $\mathcal{A}_{\boeta}$. Let  $\mathcal{E}(G)$ be the set of edges of a graph $G\in\mathcal{G}_{\boeta}$.

\begin{figure}[h]
\centering
\begin{subfigure}{.4\textwidth}
\centering
\vspace{0.6cm} \hspace{0cm}
\begin{tikzpicture}[scale=0.5]
\draw[fill,black] (1,1) circle [radius=0.2];
\draw[fill,black] (1,2) circle [radius=0.2];
\draw[fill,black] (4,1) circle [radius=0.2];
\draw[fill,black] (4,2) circle [radius=0.2];
\draw[fill,black] (4,3) circle [radius=0.2];
\draw[fill,black] (6,1) circle [radius=0.2];
\draw [thick,->,black] (-2,1) -- (9,1);
\draw [thick,black] (-1,0.8) -- (-1,1.2);
\draw [thick,black] (8,0.8) -- (8,1.2);

\node at (-1,0.4) {\nc 1};
\node at (1,0.4) {\nc $i_1$};
\node at (4,0.4) {\nc $i_2$};
\node at (6,0.4) {\nc $i_3$};
\node at (8,0.4) {\nc $n$};
  
\end{tikzpicture}
\vspace{0.1cm}
  \caption{A configuration $\boeta$ with $\mathcal{N}(\boeta)=6$, $\eta_{i_1}=2$, $\eta_{i_2}=3$, $\eta_{i_3}=1$.
 }
   \label{fig:sub1}
\end{subfigure}
\hspace{1cm}
\begin{subfigure}{.4\textwidth}
  \centering
\begin{tikzpicture}[scale=0.5]

\draw [ultra thick,black] (0.6,1) to[out=45,in=225] (1.4,2);
\draw [ultra thick,black] (0.6,2) to[out=55,in=120] (4.4,3);
\draw [ultra thick,black] (1.4,1) to[out=35,in=220] (3.6,3);
\draw [ultra thick,black] (4.4,1) to[out=100,in=80] (3.6,1);
\draw [ultra thick,black] (4.4,2) to[out=0,in=120] (5.6,1);
\draw [ultra thick,black] (3.6,2) to[out=65,in=100] (6.4,1);

\draw [dashed,->,black] (-2,1) -- (9,1);
\draw [thick,black] (-1,0.8) -- (-1,1.2);
\draw [thick,black] (8,0.8) -- (8,1.2);
\draw[fill,black] (0.6,1) circle [radius=0.2];
\draw[fill,black] (0.6,2) circle [radius=0.2];
\draw[black,fill=white] (1.4,1) circle [radius=0.2];
\draw[black,fill=white] (1.4,2) circle [radius=0.2];
\draw[fill,black] (3.6,1) circle [radius=0.2];
\draw[fill,black] (3.6,2) circle [radius=0.2];
\draw[fill,black] (3.6,3) circle [radius=0.2];
\draw[black,fill=white] (4.4,1) circle [radius=0.2];
\draw[black,fill=white] (4.4,2) circle [radius=0.2];
\draw[black,fill=white] (4.4,3) circle [radius=0.2];
\draw[fill,black] (5.6,1) circle [radius=0.2];
\draw[black,fill=white] (6.4,1) circle [radius=0.2];

\node at (-1,0.4) {\nc 1};
\node at (1,0.4) {\nc $i_1$};
\node at (4,0.15) {\nc $i_2$};
\node at (6,0.4) {\nc $i_3$};
\node at (8,0.4) {\nc $n$};
\end{tikzpicture}
  \caption{A perfect matching $G\in\mathcal{G}_{\boeta}$. Here, $P(G)=p_{i_1i_1}p_{i_1i_2}p_{i_2i_1}p_{i_2i_2}p_{i_2i_3}p_{i_3i_2}$.}
  \label{fig:sub1}
\end{subfigure}
\end{figure}

Moreover, for any given edge $e=\{(i_1,a_1,\e_1),(i_2,a_2,\e_2)\}$, we define  
$p(e)=p_{i_1,i_2}$ if $\e_1=b$, $p(e)=p_{i_2,i_1}$ if $\e_2=b$. Let
$
P(G)=\prod_{e\in \mathcal{E}(G)}p(e)
$ and
\begin{equation}\label{feqC}
f^{(h)}_{\bla, t}(\boeta)=\frac{1}{\mathcal{L}(\boeta)}\E\left(\sum_{G\in\mathcal{G}_{\boeta}} P(G)\mid \bla\right),\ \ \mathcal{L}(\boeta)=\prod_{i=1}^n \eta_i!.
\end{equation}
We have the following complex analogue of Theorem \ref{thm:EMF}.

\begin{theorem}[Perfect matching observables for the eigenvector moment flow: Hermitian case]\label{thm:EMFC}
Suppose that $\boldu$ is the solution to the Hermitian eigenvector dynamics (\ref{eqn:eigenvectorsHermitian}),
and $ f^{(h)}_{\bla, t}$ is given by \eqref{feqC}.
Then 
it satisfies the equation  
\begin{align}
\notag
&\partial_t  f^{(h)}_{\bla, t}  = \mathscr{B}^{(h)}(t) f^{(h)}_{\bla, t} ,\\
\notag&\mathscr{B}^{(h)}(t)  f(\boeta) = \sum_{k \neq \ell} c_{k\ell}(t) \eta_k (1+ \eta_\ell) \left( f(\boeta^{k, \ell})-f(\boeta)\right).
\end{align}
\end{theorem}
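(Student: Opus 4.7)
The plan is to mimic the strategy of Theorem \ref{thm:EMF}: reduce the parabolic equation to a purely combinatorial identity for the action of the Hermitian generator on the unnormalized sum $h^{(h)}(\boeta) = \sum_{G\in\mathcal{G}_{\boeta}}P(G)$, then verify the identity by a Leibniz expansion and an edge classification.

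First, by (\ref{LH}) it is enough to show that for each fixed $k<\ell$,
\begin{equation*}
(X^{(h)}_{k\ell}\bar{X}^{(h)}_{k\ell}+\bar{X}^{(h)}_{k\ell}X^{(h)}_{k\ell}) h^{(h)}(\boeta) = 2\eta_k(1+\eta_\ell)\bigl(h^{(h)}(\boeta^{k\ell})-h^{(h)}(\boeta)\bigr) + 2\eta_\ell(1+\eta_k)\bigl(h^{(h)}(\boeta^{\ell k})-h^{(h)}(\boeta)\bigr),
\end{equation*}
after which division by $\mathcal{L}(\boeta)$ yields (\ref{ve}). The elementary rules needed are the direct analogues of (\ref{trans3})--(\ref{trans1}): a direct computation, using $\partial_{u_\ell(\alpha)} \bar u_j(\beta) = 0$ and $u$ and $\bar u$ treated as independent, gives $X^{(h)}_{k\ell}p_{ij} = \delta_{\ell i}p_{kj} - \delta_{kj}p_{i\ell}$ and $\bar{X}^{(h)}_{k\ell}p_{ij} = \delta_{\ell j}p_{ik} - \delta_{ki}p_{\ell j}$. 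Specializing to the six relevant indices gives, for instance, $X^{(h)}_{k\ell}p_{\ell k}=\bar{X}^{(h)}_{k\ell}p_{k\ell}=p_{kk}-p_{\ell\ell}$, while $X^{(h)}_{k\ell}p_{k\ell} = \bar{X}^{(h)}_{k\ell}p_{\ell k}=0$ and $X^{(h)}_{k\ell}p_{ki}=\bar X^{(h)}_{k\ell}p_{ik}=0$ for $i\notin\{k,\ell\}$. In particular the orientation of each edge matters, and only one of $X^{(h)}_{k\ell},\bar X^{(h)}_{k\ell}$ produces a nonzero contribution depending on the colors of its endpoints.

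Next I would prove the analogue of Lemma \ref{lem:key}. For a fixed $G\in\mathcal{G}_{\boeta}$, define $\mathcal{V}_k^b,\mathcal{V}_k^w,\mathcal{V}_\ell^b,\mathcal{V}_\ell^w$ (the black and white vertex sets at sites $k,\ell$) and, for vertices $v,w$ of the same color sitting at $\{k,\ell\}$, define $S_{vw}G$ as the transposition when $v,w$ lie in different sites and as the two-particle jump when they sit at the same site (the jump moves one black-white pair from $\{k\}$ to $\{\ell\}$ or vice versa, so that the result is a matching in $\mathcal{G}_{\boeta^{k\ell}}$ or $\mathcal{G}_{\boeta^{\ell k}}$). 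The key identity to prove is
\begin{equation*}
(X^{(h)}_{k\ell}\bar X^{(h)}_{k\ell}+\bar X^{(h)}_{k\ell}X^{(h)}_{k\ell}) P(G) = \sum_{v,w} \varepsilon(v,w) P(S_{vw}G) - (\eta_k+\eta_\ell)P(G),
\end{equation*}
where the sum ranges over ordered pairs of distinct same-color vertices in $\mathcal{V}_k\cup\mathcal{V}_\ell$ and $\varepsilon(v,w) =\pm1$ depending on whether $v,w$ sit at the same or different sites. This is obtained by expanding the second-order differential operator by Leibniz, as in (\ref{eqn:Leibniz}), and then classifying each pair of edges on which the derivatives land according to their type (single, transverse $k\to\ell$, transverse $\ell\to k$, or double), with the colour structure now forcing strict bookkeeping of which of $X^{(h)}$, $\bar X^{(h)}$ acts on which endpoint. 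Each term of the expansion either (i) produces a new product of $p$'s which is exactly $P(S_{vw}G)$ after identifying the corresponding transposition/jump, or (ii) produces a $-P(G)$ contribution from $X^{(h)}\bar X^{(h)}$ or $\bar X^{(h)} X^{(h)}$ acting twice on the same edge. A careful tally of the latter gives the coefficient $(\eta_k+\eta_\ell)$ — this is where I expect the book-keeping to be most delicate, because edges come with an orientation, and cross-terms between $X$ and $\bar X$ are what ultimately ``see'' the diagonal factors $p_{kk},p_{\ell\ell}$.

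Finally, summing the identity over $G\in\mathcal{G}_{\boeta}$ and using that $S_{vw}$ is a bijection from $\mathcal{G}_{\boeta}$ to $\mathcal{G}_{\boeta}$ (transpositions), $\mathcal{G}_{\boeta^{k\ell}}$, or $\mathcal{G}_{\boeta^{\ell k}}$ (jumps), one counts $|\mathcal{V}_k^b||\mathcal{V}_\ell^b|+|\mathcal{V}_k^w||\mathcal{V}_\ell^w| = 2\eta_k\eta_\ell$ transpositions and $|\mathcal{V}_k^b||\mathcal{V}_k^w|=\eta_k^2$ jumps from $k$ to $\ell$ (and symmetrically $\eta_\ell^2$ from $\ell$ to $k$). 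Combined with the normalization ratio $\mathcal{L}(\boeta)/\mathcal{L}(\boeta^{k\ell}) = \eta_k/(\eta_\ell+1)$, the identity reorganizes into the desired $\eta_k(1+\eta_\ell)(h^{(h)}(\boeta^{k\ell})-h^{(h)}(\boeta)) + \eta_\ell(1+\eta_k)(h^{(h)}(\boeta^{\ell k})-h^{(h)}(\boeta))$, which matches the coefficient $\eta_k(1+\eta_\ell)$ in the statement of $\mathscr{B}^{(h)}$ once the factor $1/2$ in (\ref{LH}) is absorbed into the $X\bar X+\bar X X$ structure. The main obstacle, as noted, is the careful sign and coefficient tracking in the Leibniz expansion: unlike the real case where $X^{(s)}$ acts symmetrically on the two endpoints of an edge, here $X^{(h)}$ and $\bar X^{(h)}$ act only on endpoints of a specific color, so the contributions to each edge type split into four sub-cases instead of one. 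Modulo this bookkeeping, the proof is structurally identical to the proof of Lemma \ref{lem:key}.
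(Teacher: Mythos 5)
Your strategy is the same as the paper's: reduce via the generator \eqref{LH} to a combinatorial identity for the unnormalized sum over perfect matchings, prove a key lemma expressing $(X\bar X+\bar X X)P(G)$ as a signed sum over transpositions and jumps minus a multiple of $P(G)$, sum over $G$ using bijectivity of the maps $S_{vw}$, and renormalize by $\mathcal{L}$. Your differentiation rules $X^{(h)}_{k\ell}p_{ij}=\delta_{\ell i}p_{kj}-\delta_{kj}p_{i\ell}$ and $\bar X^{(h)}_{k\ell}p_{ij}=\delta_{\ell j}p_{ik}-\delta_{ki}p_{\ell j}$ are correct. However, the bookkeeping you defer is exactly where your stated identities go wrong, in three places.

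First, the color classification is inverted. Since $X^{(h)}$ acts only through black vertices at $\ell$ and white vertices at $k$, while $\bar X^{(h)}$ acts only through white vertices at $\ell$ and black vertices at $k$, the nonvanishing cross terms produce \emph{transpositions} from same-color, different-site pairs (sign $-1$) and \emph{jumps} from different-color, same-site pairs (sign $+1$); a same-color pair at the same site contributes nothing, and a jump must move one black and one white vertex together in order to land in the bipartite matching space $\mathcal{G}_{\boeta^{k\ell}}$. Your sum ``over ordered pairs of distinct same-color vertices'' therefore omits every jump term and could never produce $h^{(h)}(\boeta^{k\ell})$, even though your final count $|\mathcal{V}_k^b||\mathcal{V}_k^w|=\eta_k^2$ tacitly uses the correct black--white pairing. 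Second, the preliminary identity for the unnormalized $h^{(h)}$ cannot hold with coefficients $2\eta_k(1+\eta_\ell)$: the $\eta_k^2$ jumps yield $\eta_k^2\,h^{(h)}(\boeta^{k\ell})$, and this equals $\eta_k(1+\eta_\ell)\,\mathcal{L}(\boeta)\,g(\boeta^{k\ell})$ only because $\mathcal{L}(\boeta^{k\ell})=\mathcal{L}(\boeta)(1+\eta_\ell)/\eta_k$; plain ``division by $\mathcal{L}(\boeta)$'' does not convert your displayed identity into the generator equation. Third, for the unhalved operator $X\bar X+\bar XX$ the diagonal coefficient is $-2(\eta_k+\eta_\ell)P(G)$, not $-(\eta_k+\eta_\ell)P(G)$; compare Lemma \ref{lem:keyC}, where the factor $1/2$ multiplies both the operator and the off-diagonal sum but not the diagonal term. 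All three points are repairable within your plan, but as written the identities do not close.
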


\noindent As in the real case, the above theorem is independent of our choice of the canonical basis, see Remark \ref{rem:canon}. It therefore generalizes the class of observables
for the eigenvector moment flow from \cite[Theorem 3.1 (ii)]{BouYau2017}.

\subsection{Proof of Theorem \ref{thm:EMFC}.}\  \label{proofofEMFC}  
We naturally replace the definition (\ref{eqn:gR}) with
$$
g(\boeta)=\frac{1}{\mathcal{L}(\boeta)}\sum_{G\in\mathcal{G}_{\boeta}} P(G)
$$
and let $1\leq k<\ell\leq n$ be fixed for the rest of this subsection. We abbreviate $X=X_{k\ell}^{(h)}, \overline{X}=\overline{X}_{k\ell}^{(h)}$. With (\ref{LH}) the proof reduces to 
\begin{equation}\label{eqn:applicationC}
\frac{1}{2}(X\bar X+\bar X X) g(\boeta)=\eta_k(1+\eta_\ell) (g(\boeta^{k\ell})-g(\boeta))+\eta_\ell(1+\eta_k) (g(\boeta^{\ell k})-g(\boeta)).
\end{equation}
To calculate  $\frac{1}{2}(X\bar X+\bar X X)  P(G)$ for any $G\in\mathcal{G}_{\boeta}$ we first need the following definition.

\begin{definition} Let $\boeta$ and $k<\ell$ be fixed.
\begin{enumerate}[(i)]
\item $\mathcal{V}_i\subset\mathcal{V}_{\boeta}$ is the set of vertices of type $(i,a,\e)$, $1\leq a\leq \eta_i$, $\e\in\{b,w\}$. 
\item $\mathcal{V}^b_i\subset\mathcal{V}_i$ is the set of vertices of type $(i,a,b)$, $1\leq a\leq \eta_i$, and similarly for $\mathcal{V}^w_i$.
\item 
For any two sets, denote $A\cdot B=(A\times B)\cup (B\times A)$.
We define 
$$
\epsilon(v_1,v_2)=\left\{
\begin{array}{ll}
1&{\rm if}\ (v_1,v_2)\in(\mathcal{V}_k^b\cdot \mathcal{V}_k^w)\cup(\mathcal{V}_\ell^b\cdot \mathcal{V}_\ell^w),\\
-1&{\rm if}\ (v_1,v_2)\in(\mathcal{V}_k^b\cdot \mathcal{V}_\ell^b)\cup(\mathcal{V}_k^w\cdot \mathcal{V}_\ell^w),\\
0&{\rm otherwise}.
\end{array}
\right.
$$
\item Let $G\in\mathcal{G}_{\boeta}$ and $(v_1,v_2)\in (\mathcal{V}_k\cup\mathcal{V}_\ell)^2_*$.

Assume $(v_1,v_2)\in(\mathcal{V}_k^b\cdot\mathcal{V}_\ell^b)\cup(\mathcal{V}_k^w\cdot\mathcal{V}_\ell^w)$. Then we define $S_{v_1v_2}G=S_{v_2v_1}G\in\mathcal{G}_{\boeta}$ as the perfect matching obtained by transposition of $v_1$ and $v_2$. More precisely, let $\tau_{v_1v_2}$ be the permutation of $\mathcal{V}_{\boeta}$ transposing $v_1$ and $v_2$. Then 
$$
\mathcal{E}(S_{v_1v_2}G)=\{\{\tau_{v_1,v_2}(w_1),\tau_{v_1,v_2}(w_2)\}: \{w_1,w_2\}\in\mathcal{E}(G)\}.
$$

Assume  $(v_1,v_2)\in\mathcal{V}_k^b\cdot\mathcal{V}_k^w$, and write $v_1=(k,a_1,b)$, $v_2=(k,a_2,w)$ for example, where $1\leq a_1,a_2\leq \eta_k$. Then we define $S_{v_1v_2}G=S_{v_2v_2}G\in\mathcal{G}_{\boeta^{k\ell}}$ as the perfect matching obtained by a jump of $v_1$ and $v_2$ to $\ell$. 
More precisely, let $j_{v_1v_2}=j_{v_2v_1}$ be the following bijection from $\mathcal{V}_{\boeta}$ to $\mathcal{V}_{\boeta^{k\ell}}$: 
$j_{v_1v_2}(v_1)=(\ell,\eta_\ell+1,b)$, $j_{v_1v_2}(v_2)=(\ell,\eta_\ell+1,w)$, $j_{v_1v_2}((k,c,b))=(k,c-1,b)$ if $a_1<c$, $j_{v_1v_2}((k,c,w))=(k,c-1,w)$ if $a_1<c$ and $j_{v_1v_2}(w_1)=w_1$ in all other cases. 
Then
$$
\mathcal{E}(S_{v_1v_2}G)=\{\{j_{v_1v_2}(w_1),j_{v_1v_2}(w_2)\}: \{w_1,w_2\}\in\mathcal{E}(G)\}.
$$
A similar definition applies if $(v_1,v_2)\in\mathcal{V}_\ell^b\cdot\mathcal{V}_\ell^w$, the jump now being towards $k$.

Finally, if $(v_1,v_2)\not\in(\mathcal{V}_k^b\cdot\mathcal{V}_\ell^b)\cup(\mathcal{V}_k^w\cdot\mathcal{V}_\ell^w)\cup\left(\mathcal{V}_k^b\cdot\mathcal{V}_k^w\right)\cup \left(\mathcal{V}_\ell^b\cdot\mathcal{V}_\ell^w\right)$, we define $S_{v_1v_2}G=G$ (or any arbitrary function).
\end{enumerate}
\end{definition}

\begin{figure}[h]
\centering
\begin{subfigure}{.45\textwidth}
\centering
\begin{tikzpicture}[scale=0.4]
  
\normalcolor
\node at (1,0.3) {\normalcolor$i$};
\node at (4,0.05) {\normalcolor$k$};
\node at (6,0) {\normalcolor$\ell$};

\draw [ultra thick] (0.6,1) to[out=45,in=225] (1.4,2);
\draw [ultra thick] (0.6,2) to[out=55,in=120] (4.4,3);
\draw [ultra thick] (1.4,1) to[out=35,in=220] (3.6,3);
\draw [ultra thick] (4.4,1) to[out=100,in=80] (3.6,1);
\draw [ultra thick] (4.4,2) to[out=0,in=120] (5.6,1);
\draw [ultra thick] (3.6,2) to[out=65,in=100] (6.4,1);

\draw [dashed,->] (-0.5,1) -- (7.5,1);
\draw[fill] (0.6,1) circle [radius=0.2];
\draw[fill] (0.6,2) circle [radius=0.2];
\draw[fill=white] (1.4,1) circle [radius=0.2];
\draw[fill=white] (1.4,2) circle [radius=0.2];
\draw[fill] (3.6,1) circle [radius=0.2];
\draw[fill] (3.6,2) circle [radius=0.2];
\draw[fill] (3.6,3) circle [radius=0.2];
\draw[fill=white] (4.4,1) circle [radius=0.2];
\draw[fill=white] (4.4,2) circle [radius=0.2];
\draw[fill=white] (4.4,3) circle [radius=0.2];
\draw[fill] (5.6,1) circle [radius=0.2];
\draw[fill=white] (6.4,1) circle [radius=0.2];

\draw(5.6,1) circle [radius=0.35];
\node at (5.2,0.2) {\normalcolor$v_2$};

\draw(3.6,3) circle [radius=0.35];
\node at (2.7,2.9) {\normalcolor$v_1$};

\draw [ultra thick] (10.6,1) to[out=45,in=225] (11.4,2);
\draw [ultra thick] (10.6,2) to[out=55,in=120] (14.4,3);
\draw [ultra thick] (11.4,1) to[out=-40,in=-150] (15.6,1);
\draw [ultra thick] (14.4,1) to[out=100,in=80] (13.6,1);
\draw [ultra thick] (14.4,2) ..controls (13.6,0.9) and (12.6,2) .. (13.6,3);
\draw [ultra thick] (13.6,2) to[out=65,in=100] (16.4,1);

\draw [dashed,->] (9.5,1) -- (17.5,1);
\draw[fill] (10.6,1) circle [radius=0.2];
\draw[fill] (10.6,2) circle [radius=0.2];
\draw[fill=white] (11.4,1) circle [radius=0.2];
\draw[fill=white] (11.4,2) circle [radius=0.2];
\draw[fill] (13.6,1) circle [radius=0.2];
\draw[fill] (13.6,2) circle [radius=0.2];
\draw[fill] (13.6,3) circle [radius=0.2];
\draw[fill=white] (14.4,1) circle [radius=0.2];
\draw[fill=white] (14.4,2) circle [radius=0.2];
\draw[fill=white] (14.4,3) circle [radius=0.2];
\draw[fill] (15.6,1) circle [radius=0.2];
\draw[fill=white] (16.4,1) circle [radius=0.2];

\draw [ultra thick,->] (7,4) to[out=30,in=150] (10,4);

\node at (8.5,5.3) {\normalcolor$S_{v_1v_2}$};

\end{tikzpicture}
  \caption{The map $S_{v_1v_2}$ in case of a transposition.
 }
   \label{fig:sub3}
\end{subfigure}
\hspace{1cm}
\begin{subfigure}{.45\textwidth}
\centering
\begin{tikzpicture}[scale=0.4]
    \normalcolor

\node at (1,0.3) {\normalcolor$i$};
\node at (4,0.05) {\normalcolor$k$};
\node at (6,0.3) {\normalcolor$\ell$};

\draw [ultra thick] (0.6,1) to[out=45,in=225] (1.4,2);
\draw [ultra thick] (0.6,2) to[out=55,in=120] (4.4,3);
\draw [ultra thick] (1.4,1) to[out=35,in=220] (3.6,3);
\draw [ultra thick] (4.4,1) to[out=100,in=80] (3.6,1);
\draw [ultra thick] (4.4,2) to[out=0,in=120] (5.6,1);
\draw [ultra thick] (3.6,2) to[out=65,in=100] (6.4,1);

\draw [dashed,->] (-0.5,1) -- (7.5,1);
\draw[fill] (0.6,1) circle [radius=0.2];
\draw[fill] (0.6,2) circle [radius=0.2];
\draw[fill=white] (1.4,1) circle [radius=0.2];
\draw[fill=white] (1.4,2) circle [radius=0.2];
\draw[fill] (3.6,1) circle [radius=0.2];
\draw[fill] (3.6,2) circle [radius=0.2];
\draw[fill] (3.6,3) circle [radius=0.2];
\draw[fill=white] (4.4,1) circle [radius=0.2];
\draw[fill=white] (4.4,2) circle [radius=0.2];
\draw[fill=white] (4.4,3) circle [radius=0.2];
\draw[fill] (5.6,1) circle [radius=0.2];
\draw[fill=white] (6.4,1) circle [radius=0.2];

\draw [ultra thick] (10.6,1) to[out=45,in=225] (11.4,2);
\draw [ultra thick] (10.6,2) to[out=35,in=140] (14.4,2);
\draw [ultra thick] (11.4,1)  ..controls (9,-1.3) and (9,6) .. (15.6,2);
\draw [ultra thick] (14.4,1) to[out=30,in=150] (15.6,1);
\draw [ultra thick] (13.6,1) ..controls (15,0) and (18,0) .. (16.4,2);
\draw [ultra thick] (13.6,2) to[out=-90,in=90] (16.4,1);

\draw [dashed,->] (9.5,1) -- (17.5,1);
\draw[fill] (10.6,1) circle [radius=0.2];
\draw[fill] (10.6,2) circle [radius=0.2];
\draw[fill=white] (11.4,1) circle [radius=0.2];
\draw[fill=white] (11.4,2) circle [radius=0.2];
\draw[fill] (13.6,1) circle [radius=0.2];
\draw[fill] (13.6,2) circle [radius=0.2];
\draw[fill=white] (14.4,1) circle [radius=0.2];
\draw[fill=white] (14.4,2) circle [radius=0.2];
\draw[fill] (15.6,1) circle [radius=0.2];
\draw[fill=white] (16.4,1) circle [radius=0.2];
\draw[fill] (15.6,2) circle [radius=0.2];
\draw[fill=white] (16.4,2) circle [radius=0.2];

\draw [ultra thick,->] (7,4) to[out=30,in=150] (10,4);

\draw(4.4,1) circle [radius=0.35];
\node at (5,0.2) {\normalcolor$v_2$};

\draw(3.6,3) circle [radius=0.35];
\node at (2.7,2.9) {\normalcolor$v_1$};

\node at (8.5,5.3) {\normalcolor$S_{v_1v_2}$};
\end{tikzpicture}
\vspace{-0.8cm}
  \caption{The map $S_{v_1v_2}$ in case of a jump.
 }
   \label{fig:sub4}
\end{subfigure}
\end{figure}

Below is the main result for the proof of Theorem \ref{thm:EMFC}.

\begin{lemma}\label{lem:keyC}
For any $G\in\mathcal{G}_{\boeta}$, we have
\begin{equation}\label{eqn:keyC}
\frac{1}{2}(X\bar X+\bar XX)P(G)=\frac{1}{2}\sum_{(v_1,v_2)\in (\mathcal{V}_k\cup\mathcal{V}_\ell)^2_*}\epsilon(v_1,v_2)P(S_{v_1v_2}G)-(\eta_k+\eta_\ell)P(G).
\end{equation}
\end{lemma}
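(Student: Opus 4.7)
The plan is to adapt the proof of Lemma \ref{lem:key} to the Hermitian setting. Two new features appear: the coloring of vertices (each edge of $G$ must connect a black vertex to a white one), and the symmetric second-order operator $\tfrac12(X\bar X + \bar X X)$, so that Leibniz's rule on a product of overlaps produces both $X p(e_1)\bar X p(e_2)$ and $\bar X p(e_1) X p(e_2)$ in the cross terms.

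First, I would compute the action of $X = u_k\partial_{u_\ell} - \bar u_\ell \partial_{\bar u_k}$ and $\bar X = \bar u_k \partial_{\bar u_\ell} - u_\ell \partial_{u_k}$ on an individual overlap, treating $u$ and $\bar u$ as independent complex variables. A direct calculation yields
\begin{align*}
X p_{ij} = \delta_{i\ell}\, p_{kj} - \delta_{jk}\, p_{i\ell}, \qquad \bar X p_{ij} = \delta_{j\ell}\, p_{ik} - \delta_{ik}\, p_{\ell j};
\end{align*}
in particular $X p_{k\ell} = \bar X p_{\ell k} = 0$ while $X p_{\ell k} = \bar X p_{k\ell} = p_{kk} - p_{\ell\ell}$, $X p_{kk} = -p_{k\ell}$, and $X p_{\ell\ell} = p_{k\ell}$. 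Composing these, one obtains that $\tfrac12(X\bar X + \bar X X)$ acts on a single overlap as $-\tfrac12 p(e)$ for a single-edge overlap (one index in $\{k, \ell\}$, the other outside), as $-p(e)$ for a transverse overlap ($p_{k\ell}$ or $p_{\ell k}$), and as $p_{\ell\ell} - p_{kk}$ or $p_{kk} - p_{\ell\ell}$ for a double-edge overlap $p_{kk}$ or $p_{\ell\ell}$ respectively.

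Next, Leibniz's rule gives the Hermitian analogue of (\ref{eqn:Leibniz}):
\begin{align*}
\tfrac{1}{2}(X\bar X + \bar X X) P(G) & = \tfrac{1}{2}\sum_{(e_1,e_2)\in\mathcal{E}(G)^2_*} \bigl(X p(e_1)\, \bar X p(e_2) + \bar X p(e_1)\, X p(e_2)\bigr)\prod_{e\notin\{e_1,e_2\}} p(e) \\
& \quad + \tfrac{1}{2}\sum_{e_1\in \mathcal{E}(G)} (X\bar X + \bar X X) p(e_1) \prod_{e\ne e_1} p(e).
\end{align*}
I would partition $\mathcal{E}(G)$ into single, double, and transverse edges as in (\ref{edg1})--(\ref{edg3}), and run a case analysis parallel to (\ref{eqn:I})--(\ref{eqn:VIII}). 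For an ordered pair of distinct edges $(e_1, e_2)$ with vertices $v_1, v_2 \in \mathcal{V}_k \cup \mathcal{V}_\ell$, the identities above will show that the cross term equals $2\epsilon(v_1, v_2)$ times the edge product associated to $S_{v_1 v_2} G$. The vanishing cases $(v_1, v_2) \in \mathcal{V}_k^b \cdot \mathcal{V}_\ell^w \cup \mathcal{V}_k^w \cdot \mathcal{V}_\ell^b$ correspond exactly to $\epsilon(v_1, v_2) = 0$; the nonzero cases produce transpositions ($\epsilon = -1$ for pairs in $\mathcal{V}_k^b \cdot \mathcal{V}_\ell^b$ or $\mathcal{V}_k^w \cdot \mathcal{V}_\ell^w$) or jumps ($\epsilon = +1$ for pairs in $\mathcal{V}_k^b \cdot \mathcal{V}_k^w$ or $\mathcal{V}_\ell^b \cdot \mathcal{V}_\ell^w$).

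The main obstacle will be the bookkeeping: the coloring roughly doubles the number of sub-cases of the real proof, and one must verify in each of the six combinations of edge types (single-single, single-double, single-transverse, double-double, double-transverse, transverse-transverse) that the cross terms assemble into the desired $\tfrac12\sum_{(v_1, v_2)\in(\mathcal{V}_k\cup\mathcal{V}_\ell)^2_*} \epsilon(v_1, v_2) P(S_{v_1 v_2}G)$, after absorbing the same-edge correction from the double-edge second-order term. The diagonal $-(\eta_k + \eta_\ell)P(G)$ emerges from the three edge-type second-order contributions via the identity $|\mathcal{V}_s| + 2|\mathcal{E}_d| + 2|\mathcal{E}_t| = 2\eta_k + 2\eta_\ell$. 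Summing (\ref{eqn:keyC}) over $G \in \mathcal{G}_{\boeta}$ and using that $S_{v_1 v_2}$ is either a permutation of $\mathcal{G}_{\boeta}$ or a bijection to $\mathcal{G}_{\boeta^{k\ell}}$ or to $\mathcal{G}_{\boeta^{\ell k}}$ then yields (\ref{eqn:applicationC}) after normalization by $\mathcal{L}(\boeta)$, completing the proof of Theorem \ref{thm:EMFC}.
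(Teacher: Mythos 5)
Your proposal follows essentially the same route as the paper's proof: the symmetrized Leibniz expansion into cross terms over ordered pairs of edges plus second-order diagonal terms, the single/double/transverse edge classification, the explicit action of $X$ and $\bar X$ on overlaps, and the case-by-case identification of cross terms with $\epsilon(v_1,v_2)P(S_{v_1v_2}G)$ followed by the vertex-counting identity for the diagonal $-(\eta_k+\eta_\ell)P(G)$. Your derivative rules are correct (note they give $Xp_{\ell\ell}=p_{k\ell}$, whereas the paper's displayed rule contains a typo), and the remaining work is exactly the bookkeeping you identify, carried out in the paper as the nine terms (I)--(IX).
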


Assuming the above lemma we can complete the proof of Theorem \ref{thm:EMFC}. Let 
$$
h(\boeta)=\sum_{G\in\mathcal{G}_{\boeta}}P(G).
$$
Note that if $(v_1,v_2)\in(\mathcal{V}_k^b\cdot \mathcal{V}_\ell^b)\cup(\mathcal{V}_k^w\cdot \mathcal{V}_\ell^w)$, then $S_{v_1v_2}$ is a permutation of $\mathcal{G}_{\boeta}$. Moreover, if $(v_1,v_2)\in\mathcal{V}_k^b\cdot \mathcal{V}_k^w$ (resp. $\mathcal{V}_\ell^b\cdot \mathcal{V}_\ell^w$) then
 $S_{v_1v_2}$ is a bijection from $\mathcal{G}_{\boeta}$ to  $\mathcal{G}_{\boeta^{k\ell}}$ (resp. $\mathcal{G}_{\boeta^{\ell k}}$). Summing (\ref{eqn:keyC}) over all $G\in\mathcal{G}_{\boeta}$ therefore gives
\begin{align*}
\frac{1}{2}\left(X\bar X+\bar X X\right)h(\boeta)=&\frac{1}{2}\left(2\eta_k^2h(\boeta^{k\ell})+2\eta_\ell^2h(\boeta^{\ell k})-(2\eta_k\eta_\ell+2\eta_\ell\eta_k)h(\boeta)-2(\eta_k+\eta_\ell)h(\boeta)\right)\\
=&
\eta_k^2h(\boeta^{k\ell})+\eta_\ell^2h(\boeta^{\ell k})-(\eta_k(\eta_\ell+1)+\eta_\ell(\eta_k+1))h(\boeta).
\end{align*}
The above equation implies (\ref{eqn:applicationC}) after renormalization by $\mathcal{L}(\boeta)$. This concludes the proof of Theorem \ref{thm:EMFC}.

\begin{proof}[Proof of Lemma \ref{lem:keyC}]
Let $L=\frac{1}{2}\left(X\bar X+\bar X X\right)$. We have
\begin{equation}\label{eqn:LeibnizC}
L P(G)=\sum_{(e_1,e_2)\in \mathcal{E}(G)^2_*}Xp(e_1)\bar Xp(e_2)\prod_{e\in\mathcal{E}(G)\backslash\{e_1,e_2\}}p(e)
+
\sum_{e_1\in \mathcal{E}(G)}Lp(e_1)\prod_{e\in\mathcal{E}(G)\backslash\{e_1\}}p(e).
\end{equation}
We keep the notations (\ref{edg1}), (\ref{edg2}), (\ref{edg3}) for the single, double and transverse edges.
Remember that for any $v\in\mathcal{V}_{\boeta}$,  $e_v$ is the unique edge containing $v$ and $v'$ be the unique vertex such that $e_v=\{v,v'\}$. We still denote
\begin{align*}
\mathcal{V}_s&=\{v\in \mathcal{V}_k\cup\mathcal{V}_\ell:\{v,v'\}\in\mathcal{E}_s\},\\
\mathcal{V}_d&=\{v\in \mathcal{V}_k\cup\mathcal{V}_\ell:\{v,v'\}\in\mathcal{E}_d\},\\
\mathcal{V}_t&=\{v\in \mathcal{V}_k\cup\mathcal{V}_\ell:\{v,v'\}\in\mathcal{E}_t\},
\end{align*}
and $\mathcal{V}_{k,s}^b$ the single, black vertices in $\mathcal{V}_k$  (and similarly for $\mathcal{V}_{k,s}^w$, etc).
We will need the following elementary rules: if $i\neq \ell$ and $j\neq k$, $Xp_{ij}=0$, and
\begin{align}
&X p_{ik}=-p_{i\ell}, X p_{\ell j}=p_{kj}\label{trans1C}\\
&X p_{\ell k}=p_{kk}-p_{\ell\ell},\label{trans2C}\\
&X p_{kk}=-p_{k\ell}, X p_{\ell\ell}=p_{kk}\label{trans3C}.
\end{align}
We also obviously have $\bar X p=\overline{X \bar p}$. 
Equation (\ref{eqn:LeibnizC}) can be written as
$$
L P(G)={\rm(I)+(II)+(III)+(IV)+(V)+(VI)+(VII)+(VIII)+(IX)}
$$
where all terms are defined and calculated below. First,
$$
{\rm(I)}:=\sum_{(e_1,e_2)\in (\mathcal{E}_s)^2_*}Xp(e_1)\bar Xp(e_2)\prod_{e\in\mathcal{E}(G)\backslash\{e_1,e_2\}}p(e)
=
\sum_{(v_1,v_2)\in (\mathcal{V}_s)^2_*}Xp_{\{v_1,v_1'\}}\bar Xp_{\{v_2,v_2'\}}\prod_{e\in\mathcal{E}(G)\backslash\{e_{v_1},e_{v_2}\}}p(e).
$$
From (\ref{trans1C}),  $Xp_{\{v_1,v_1'\}}\bar Xp_{\{v_2,v_2'\}}=-p_{\{v_2,v_1'\}}p_{\{v_1,v_2'\}}$ if $(v_1,v_2)\in(\mathcal{V}_{\ell,s}^b\times \mathcal{V}_{k,s}^b)\cup(\mathcal{V}_{k,s}^w\times\mathcal{V}_{\ell,s}^w)$, and 
$Xp_{\{v_1,v_1'\}}\bar Xp_{\{v_2,v_2'\}}=p_{\{j_{v_1,v_2}(v_1),v_1'\}}p_{\{j_{v_1,v_2}(v_2),v_2'\}}$ if $(v_1,v_2)\in(\mathcal{V}_{k,s}^b\times \mathcal{V}_{k,s}^w)\cup(\mathcal{V}_{\ell,s}^b\times\mathcal{V}_{\ell,s}^w)$.
In all other cases, 
$Xp_{\{v_1,v_1'\}}\bar Xp_{\{v_2,v_2'\}}=0$. We therefore proved
\begin{equation}\label{eqn:IC}
{\rm(I)}=\sum_{(v_1,v_2)\in (\mathcal{V}_{\ell,s}^b\times \mathcal{V}_{\ell,s}^b)\cup(\mathcal{V}_{k,s}^w\times\mathcal{V}_{\ell,s}^w)\cup(\mathcal{V}_{k,s}^b\times  \mathcal{V}_{k,s}^w)\cup(\mathcal{V}_{\ell,s}^b\times\mathcal{V}_{\ell,s}^w)} \e(v_1,v_2)P(S_{v_1v_2}G)
=
\frac{1}{2}\sum_{(v_1,v_2)\in (\mathcal{V}_s)^2_*} \e(v_1,v_2)P(S_{v_1v_2}G).
\end{equation}
We now consider
$$
{\rm(II)}:=\sum_{(e_1,e_2)\in \mathcal{E}_s\cdot\mathcal{E}_d}Xp(e_1)\bar Xp(e_2)\prod_{e\in\mathcal{E}(G)\backslash\{e_1,e_2\}}p(e)
=
\frac{1}{2}\sum_{(v_1,v_2)\in \mathcal{V}_s\cdot \mathcal{V}_d}Xp_{\{v_1,v_1'\}}\bar Xp_{\{v_2,v_2'\}}\prod_{e\in\mathcal{E}(G)\backslash\{e_{v_1},e_{v_2}\}}p(e).
$$
We used that vertices on a double edge need to be weighted by a factor $1/2$.
From (\ref{trans1C}) and (\ref{trans3C}),  
\begin{align*}
Xp_{\{v_1,v_1'\}}\bar Xp_{\{v_2,v_2'\}}&=-p_{\{v_2,v_1'\}}p_{\{v_1,v_2'\}}\ {\rm if}\ (v_1,v_2)\in\left(\mathcal{V}_{k,d}^w\times \mathcal{V}_{\ell,s}^w\right)\cup(\mathcal{V}_{\ell,s}^b\times \mathcal{V}_{k,d}^b)
\cup\left(\mathcal{V}_{\ell,d}^b\times \mathcal{V}_{k,s}^b\right)\cup(\mathcal{V}_{k,s}^w\times \mathcal{V}_{\ell,d}^w),\\
Xp_{\{v_1,v_1'\}}\bar Xp_{\{v_2,v_2'\}}&=p_{\{j_{v_1v_2}(v_1),v_1'\}}p_{\{j_{v_1v_2}(v_2),v_2'\}}\ {\rm if}\ (v_1,v_2)\in(\mathcal{V}_{k,d}^w\times\mathcal{V}_{k,s}^b)\cup(\mathcal{V}_{\ell,d}^b\times\mathcal{V}_{\ell,s}^w)
\cup(\mathcal{V}_{k,s}^w\times\mathcal{V}_{k,d}^b)\cup(\mathcal{V}_{\ell,s}^b\times\mathcal{V}_{\ell,d}^w).
\end{align*}
We therefore have
\begin{equation}\label{eqn:IIC}
{\rm(II)}=\frac{1}{2}\sum_{(v_1,v_2)\in \mathcal{V}_s\cdot\mathcal{V}_d} \e(v_1,v_2)P(S_{v_1v_2}G).
\end{equation}
Concerning
$$
{\rm(III)}:=\sum_{(e_1,e_2)\in (\mathcal{E}_d)^2_*}Xp(e_1)\bar Xp(e_2)\prod_{e\in\mathcal{E}(G)\backslash\{e_1,e_2\}}p(e)
=
\frac{1}{4}\sum_{(v_1,v_2)\in (\mathcal{V}_d)^2_*:v_1\neq v_2'}Xp_{\{v_1,v_1'\}}\bar Xp_{\{v_2,v_2'\}}\prod_{e\in\mathcal{E}(G)\backslash\{e_{v_1},e_{v_2}\}}p(e),
$$
using (\ref{trans3C}) we have $Xp_{\{v_1,v_1'\}}\bar Xp_{\{v_2,v_2'\}}=-p_{\{v_2,v_1'\}}p_{\{v_1,v_2'\}}$ if $v_1$ and $v_2$ are in distinct $\mathcal{V}_{i}$'s and with the same color, 
$p_{\{j_{v_1v_2}(v_1),v_1'\}}p_{\{j_{v_1v_2}(v_2),v_2'\}}$ if they are in the same $\mathcal{V}_{i}$ with distinct colors. All together, we always have
$Xp_{\{v_1,v_1'\}}\bar Xp_{\{v_2,v_2'\}}=\e(v_1,v_2)P(S_{v_1v_2}G)+\e(v_1',v_2)P(S_{v_1'v_2}G)$.
We therefore proved
\begin{align}\label{eqn:IIIC}
{\rm(III)}&=\frac{1}{4}\sum_{(v_1,v_2)\in (\mathcal{V}_d)^2_*} \left(\e(v_1,v_2)P(S_{v_1v_2}G)+\e(v_1',v_2)P(S_{v_1'v_2}G)\right)-\frac{1}{2}\sum_{v\in\mathcal{V}_d} P(S_{vv'}G)\notag\\
&=\frac{1}{2}\sum_{(v_1,v_2)\in (\mathcal{V}_d)^2_*} \e(v_1,v_2)P(S_{v_1v_2}G)-\frac{1}{2}\sum_{v\in\mathcal{V}_d} P(S_{vv'}G).
\end{align}
Our next estimate is a diagonal term, namely
\begin{equation}\label{eqn:IVC}
{\rm(IV)}:=
\sum_{e_1\in \mathcal{E}_s}Lp(e_1)\prod_{e\in\mathcal{E}(G)\backslash\{e_1\}}p(e)
=
\sum_{v\in \mathcal{V}_s}Lp_{\{v,v'\}}\prod_{e\in\mathcal{E}(G)\backslash\{e_v\}}p(e)
=
-\frac{1}{2}\sum_{v\in \mathcal{V}_s}P(G)
\end{equation}
where we used  (\ref{trans1C}) twice to obtain $Lp_{\{v,v'\}}=-\frac{1}{2}p_{\{v,v'\}}$.

Another diagonal term is
$$
{\rm(V)}:=
\sum_{e_1\in \mathcal{E}_d}Lp(e_1)\prod_{e\in\mathcal{E}(G)\backslash\{e_1\}}p(e)
=
\frac{1}{2}\sum_{v\in \mathcal{V}_d}Lp_{\{v,v'\}}\prod_{e\in\mathcal{E}(G)\backslash\{e_1\}}p(e).
$$
Note that we have
$Lp_{\{v,v'\}}=p_{kk}-p_{\ell\ell}$ if $v\in\mathcal{V}_\ell$, $p_{\ell\ell}-p_{kk}$ otherwise. This proves
\begin{equation}\label{eqn:VC}
{\rm(V)}=
\frac{1}{2}\sum_{v\in \mathcal{V}_d}(P(S_{vv'}(G))-P(G)).
\end{equation}
We now consider cases where transverse edges appear:
\begin{align*}
&{\rm(VI)}:=\sum_{(e_1,e_2)\in \mathcal{E}_s\times\mathcal{E}_t\cup \mathcal{E}_t\times\mathcal{E}_s}Xp(e_1)\bar Xp(e_2)\prod_{e\in\mathcal{E}(G)\backslash\{e_1,e_2\}}p(e)\\
=&
\sum_{v_1\in \mathcal{V}_s,\{v_2,v_2'\}\in\mathcal{E}_t}\left(Xp_{\{v_1,v_1'\}}\bar Xp_{\{v_2,v_2'\}}+\bar Xp_{\{v_1,v_1'\}} Xp_{\{v_2,v_2'\}}\right)\prod_{e\in\mathcal{E}(G)\backslash\{e_{v_1},e_{v_2}\}}p(e).
\end{align*}
Up to transposing $v_2$ and $v_2'$, we can assume that $v_1$ and $v_2$ are in the same $\mathcal{V}_i$. With (\ref{trans1C}) and (\ref{trans2C}) a calculation gives 
$Xp_{\{v_1,v_1'\}}\bar Xp_{\{v_2,v_2'\}}+\bar Xp_{\{v_1,v_1'\}} Xp_{\{v_2,v_2'\}}=p_{j_{v_1v_2}(v_1)v_1'}p_{j_{v_1v_2}(v_2)v_2'}-p_{\tau_{v_1v_2'}(v_1)v_1'}p_{\tau_{v_1v_2'}(v_2')v_2}$. This yields
\begin{equation}\label{eqn:VIC}
{\rm(VI)}=
\sum_{(v_1,v_2)\in\mathcal{V}_s\times\mathcal{V}_t}\e(v_1,v_2)P(S_{v_1v_2}(G))
=
\frac{1}{2}\sum_{(v_1,v_2)\in\mathcal{V}_s\cdot\mathcal{V}_t}\e(v_1,v_2)P(S_{v_1v_2}(G)).
\end{equation}
We also consider
\begin{align*}
&{\rm(VII)}:=\sum_{(e_1,e_2)\in \mathcal{E}_d\times\mathcal{E}_t\cup \mathcal{E}_t\times\mathcal{E}_d}Xp(e_1)\bar Xp(e_2)\prod_{e\in\mathcal{E}(G)\backslash\{e_1,e_2\}}p(e)\\
=&
\sum_{v_1\in \mathcal{V}_d,\{v_2,v_2'\}\in\mathcal{E}_t}\left(Xp_{\{v_1,v_1'\}}\bar Xp_{\{v_1,v_2'\}}+\bar Xp_{\{v_1,v_1'\}} Xp_{\{v_1,v_2'\}}\right)\prod_{e\in\mathcal{E}(G)\backslash\{e_v,e_w\}}p(e).
\end{align*}
Without loss of generality we can assume $v_1$ and $v_2$ are in the same $\mathcal{V}_i$. Assume they also have a different color. Then (\ref{trans2C}) and (\ref{trans3C}) give $Xp_{\{v_1,v_1'\}}\bar Xp_{\{v_1,v_2'\}}+\bar Xp_{\{v_1,v_1'\}} Xp_{\{v_2,v_2'\}}=
p_{j_{v_1v_2}(v_1)v_1'}p_{j_{v_1v_2}(v_2)v_2'}-p_{\tau_{v_1v_2'}(v_1)v_1'}p_{\tau_{v_1v_2'}(v_2')v_2}$. If $v_1$ and $v_2$ have the same color, a similar equation holds, permuting $v_1$ and $v_1'$. This implies
\begin{equation}\label{eqn:VIIC}
{\rm(VII)}=
\sum_{(v_1,v_2)\in\mathcal{V}_d\times\mathcal{V}_t}\e(v_1,v_2)P(S_{v_1v_2}(G))=
\frac{1}{2}\sum_{(v_1,v_2)\in\mathcal{V}_d\cdot\mathcal{V}_t}\e(v_1,v_2)P(S_{v_1v_2}(G)).
\end{equation}
For two transverse edges, with (\ref{trans2C}) we first compute $\frac{1}{2}(X p_{k\ell}\bar X p_{k\ell}+\bar X p_{k\ell} X p_{k\ell})=0$, and indeed $\e(v_1,v_2)=0$ when $v_1,v_2$ are the same color on the same site, or different colors on different sites.  Moreover, $\frac{1}{2}(X p_{k\ell}\bar X p_{\ell k}+\bar X p_{k\ell} X p_{\ell k})=\frac{1}{2}(p_{kk}^2+p_{\ell\ell}^2-2p_{kk}p_{\ell\ell})$, so that in all cases we proved
\begin{equation}\label{eqn:VIIIC}
{\rm(VIII)}:=\sum_{(e_1,e_2)\in (\mathcal{E}_t)^2_*}Xp(e_1)\bar Xp(e_2)\prod_{e\in\mathcal{E}(G)\backslash\{e_1,e_2\}}p(e)
=
\frac{1}{2}\sum_{(v_1,v_2)\in(\mathcal{V}_t^2)_*}\e(v_1,v_2)P(S_{v_1v_2}(G).
\end{equation}
Finally, from (\ref{trans2C}) we have $L p_{k\ell}=-p_{k\ell}$, so that
\begin{equation}\label{eqn:IXC}
{\rm(IX)}:=
\sum_{e_1\in \mathcal{E}_t}Lp(e_1)\prod_{e\in\mathcal{E}(G)\backslash\{e_1\}}p(e)
=
-\frac{1}{2}\sum_{v\in \mathcal{V}_t}P(G)
\end{equation}
By summation of all equations (\ref{eqn:IC}), (\ref{eqn:IIC}), (\ref{eqn:IIIC}), (\ref{eqn:IVC}), (\ref{eqn:VC}), (\ref{eqn:VIC}), (\ref{eqn:VIIC}), (\ref{eqn:VIIIC}), (\ref{eqn:IXC}), the right hand sides of (\ref{eqn:keyC}) and (\ref{eqn:Leibniz})
are the same. This concludes the proof of Lemma \ref{lem:keyC}.
\end{proof}

\begin{bibdiv}
\begin{biblist}

\bib{AnaLeM2013}{article}{
   author={Anantharaman, N.},
   author={Le Masson, E.},
   title={Quantum ergodicity on large regular graphs},
   journal={Duke Math. J.},
   date={2015},
   volume={164},
   number={4},
   pages={723--765}
}

 \bib{BaoErd2015}{article}{
   author={Bao, Z.},
   author={Erd{\H o}s, L.},
   title={Delocalization for a class of random block band matrices},
   journal={Probab. Theory Related Fields},
   volume={167},
   date={2017},
   number={3-4},
   pages={673--776}}

 \bib{BenKno2018}{article}{
   author={Benaych-Georges, F.},
   author={Knowles, A.},
   title={Lectures on the local semicircle law for Wigner matrices},
   journal={to appear in SMF volume Panoramas et Syntheses},
   date={2016}}

\bib{BouErdYauYin2017}{article}{
   author={Bourgade, P.},
   author={Erd{\H{o}}s, L.},
   author={Yau, H.-T.},
   author={Yin, J.},
   title={Universality for a class of random band matrices},
   journal={Advances in Theoretical and Mathematical Physics},
   issue={3},
   volume={21},
   pages={739--800},
   date={2017}
}

\bib{BouHuaYau2017}{article}{
   author={Bourgade, P.},
   author={Huang, J.},
   author={Yau, H.-T.},
   title={Eigenvector statistics of sparse random matrices},
   journal={Electron. J. Probab.},
   volume={22},
   date={2017},
   pages={Paper No. 64, 38}
}

   \bib{BouYanYauYin2018}{article}{
   author={Bourgade, P.},
   author={Yang, F.},
   author={Yau, H.-T.},
   author={Yin, J.},
   title={Random band matrices in the delocalized phase, II: Generalized resolvent estimates},
   journal={prepublication},
   date={2018}
}

 \bib{BouYau2017}{article}{
    author={Bourgade, P.},
   author={Yau, H.-T.},
    title={The Eigenvector Moment Flow and local Quantum Unique Ergodicity},
    journal={Communications in Mathematical Physics},
   volume={350},
   number={1},
    pages={231--278},
   date={2017}
 }

 \bib{Bru1989}{article}{
    author={Bru, M.-F.},
    title={Diffusions of perturbed principal component analysis},
    journal={J. Multivariate Anal.},
    volume={29},
    date={1989},
    number={1},
   pages={127--136}
 }

   \bib{ConJ-Ref2}{article}{
      author={Casati, G.},
      author={Guarneri, I.},
      author={Izrailev, F.},
      author={Scharf, R.},
       title={Scaling behavior of localization in quantum chaos},
        date={1990Jan},
     journal={Phys. Rev. Lett.},
      volume={64},
       pages={5\ndash 8},
         url={https://link.aps.org/doi/10.1103/PhysRevLett.64.5},
}

\bib{ConJ-Ref1}{article}{
      author={Casati, G.},
      author={Molinari, L.},
      author={Izrailev, F.},
       title={Scaling properties of band random matrices},
        date={1990Apr},
     journal={Phys. Rev. Lett.},
      volume={64},
       pages={1851\ndash 1854},
         url={https://link.aps.org/doi/10.1103/PhysRevLett.64.1851},
}

\bib{Col1985}{article}{
   author={Colin de Verdi{\`e}re, Y.},
   title={Ergodicit\'e et fonctions propres du laplacien},
   language={French, with English summary},
   journal={Comm. Math. Phys.},
   volume={102},
   date={1985},
   number={3},
   pages={497--502}
}

 \bib{DisPinSpe2002}{article}{
   author={Disertori, M.},
   author={Pinson, L.},
   author={Spencer, T.},
   title={Density of states for random band matrices},
   journal={Commun. Math. Phys.},
   volume={232},
   pages={83--124},
   date={2002}}

\bib{Efe1997}{article}{
   author={Efetov, K.},
   title={Supersymmetry in disorder and chaos},
   journal={Cambridge University Press},
   date={1997}
}

\bib{ErdKno2013}{article}{
   author={Erd{\H{o}}s, L.},
   author={Knowles, A.},
   title={Quantum Diffusion and Delocalization for Band Matrices  with General Distribution},
   journal={Ann. Inst. H. Poincar\'e},
   volume={12},
   date={2011},
   number={7},
   pages={1227-1319}
}

\bib{ErdKnoYauYin2013}{article}{
   author={Erd{\H{o}}s, L.},
   author={Knowles, A.},
   author={Yau, H.-T.},
   author={Yin, J.},
   title={The local semicircle law for a general class of random matrices},
   journal={Elect. J. Prob.},
   volume={18},
   date={2013},
   number={59},
   pages={1--58}
}

\bib{ErdKnoYauYin2013-band}{article}{
   author={Erd{\H{o}}s, L.},
   author={Knowles, A.},
   author={Yau, H.-T.},
   author={Yin, J.},
   title={Delocalization and diffusion profile for random band matrices},
   journal={Communications in Mathematical Physics},
   volume={323},
   date={2013},
   number={1},
   pages={367--416}
}

\bib{ErdYauYin2012Univ}{article}{
      author={Erd{\H{o}}s, L.},
      author={Yau, H.-T.},
      author={Yin, J.},
       title={Bulk universality for generalized {W}igner matrices},
        date={2012},
     journal={Probab. Theory Related Fields},
      volume={154},
      number={1-2},
       pages={341\ndash 407},
}

\bib{ConJ-Ref4}{article}{
      author={Feingold, M.},
      author={Leitner, D. M.},
      author={Wilkinson, M.},
       title={Spectral statistics in semiclassical random-matrix ensembles},
        date={1991Feb},
     journal={Phys. Rev. Lett.},
      volume={66},
       pages={986\ndash 989},
         url={https://link.aps.org/doi/10.1103/PhysRevLett.66.986},
}

\bib{fy}{article}{
      author={Fyodorov, Y.V.},
      author={Mirlin, A.D.},
       title={ Scaling properties of localization in random band matrices: A $\sigma$-model approach.},
        date={1991},
     journal={Phys. Rev. Lett.},
      volume={67 },
       pages={2405\ndash 2409},
}

\bib{HeMarc2018}{article}{
    author={He, Y.},
    author={Marcozzi, M.},
    title={Diffusion Profile for Random Band Matrices: a Short Proof.},
    journal={prepublication},
    date={2018}
}

\bib{aniso}{article}{
 author={Knowles, A.},
    author={Yin, J.},
title={Anisotropic local laws for random matrices},
 journal={Probability Theory and Related Fields},
   volume={169},
   date={2017},
   number={1},
   pages={257--352}
   
     }

 \bib{LanSosYau2016}{article}{
    author={Landon, B.},
    author={Sosoe, P.},
     author={Yau, H.-T.},
    title={Fixed energy universality of Dyson Brownian motion},
   journal={prepublication},
     date={2016}
  }

 \bib{LanYau2015}{article}{
    author={Landon, B.},
     author={Yau, H.-T.},
    title={Convergence of local statistics of Dyson Brownian motion},
   journal={Communications in Mathematical Physics},
     date={2017},
     volume={355},
     number={3},
     pages={949--1000}
  }

 \bib{McK1969}{article}{
    author={McKean, H. P.},
    title={Stochastic integrals},
    journal={Academic Press, New York},
   date={1969}
 }

\bib{NguTaoVu2017}{article}{
   author={Nguyen, H.},
   author={Tao, T.},
   author={Vu, V.},
   title={Random matrices: tail bounds for gaps between eigenvalues},
   journal={Probab. Theory Related Fields},
   volume={167},
   date={2017},
   number={3-4},
   pages={777--816}
}

\bib{PelSchShaSod}{article}{
   author={Peled, R.},
   author={Schenker, J.},
   author={Shamis, M.},
   author={Sodin, S.},
   journal={International Mathematical Research Notices},
   title={On the Wegner orbital model}
   date={2017}
   }

\bib{RudSar1994}{article}{
   author={Rudnick, Z.},
   author={Sarnak, P.},
   title={The behaviour of eigenstates of arithmetic hyperbolic manifolds},
   journal={Comm. Math. Phys.},
   volume={161},
   date={1994},
   number={1},
   pages={195--213}
}

\bib{Sch2009}{article}{
   author={Schenker, J.},
      title={Eigenvector localization for random band matrices with power law band width},
   journal={Comm. Math. Phys.},
   volume={290},
   date={2009},
   pages={1065--1097}}

   \bib{SchMT}{article}{
   author={Shcherbina, M.},
   author={Shcherbina, T.},
      title={Characteristic polynomials for 1D random band matrices from the localization side},
   journal={Communications in Mathematical Physics},
   volume={351},
   date={2017}}

\bib{Sch2014}{article}{
   author={Shcherbina, T.},
      title={Universality of the local regime for the block band matrices with a finite number of blocks},
   journal={J. Stat. Phys.},
   volume={155},
   date={2014},
   pages={466--499}}

      \bib{Sch1}{article}{
   author={Shcherbina, T.},
      title={On the Second Mixed Moment of the Characteristic Polynomials of 1D Band Matrices},
   journal={Communications in Mathematical Physics},
   volume={328},
   date={2014},
   pages={45--82}}

\bib{Shn1974}{article}{
      author={Shnirel'man, A. I.},
        date={1974},
     journal={Uspekhi Mat. Nauk},
      volume={29},
      number={6},
       pages={181\ndash 182},
}

\bib{Sod2010}{article}{
  author={Sodin, S.},
      title={The spectral edge of some random band matrices},
   journal={ Ann. of Math.},
   volume={173},
   number={3},
   pages={2223-2251},
   year={2010}
}

\bib{Spe}{article}{
   author={Spencer, T.},
      title={Random banded and sparse matrices (Chapter 23)},
   journal={Oxford Handbook of Random Matrix Theory, edited by G. Akemann, J. Baik, and P. Di Francesco},
   }

\bib{TaoVu2011}{article}{
   author={Tao, T.},
   author={Vu, V.},
   title={Random matrices: universality of local eigenvalue statistics},
   journal={Acta Math.},
   volume={206},
   date={2011},
   number={1}
}

\bib{ConJ-Ref6}{article}{
      author={Wilkinson, M},
      author={Feingold, M},
      author={Leitner, D~M},
       title={Localization and spectral statistics in a banded random matrix
  ensemble},
        date={1991},
     journal={Journal of Physics A: Mathematical and General},
      volume={24},
      number={1},
       pages={175},
         url={http://stacks.iop.org/0305-4470/24/i=1/a=025},
}

   \bib{YanYin2018}{article}{
   author={Yang, F.},
   author={Yin, J.},
   title={Random band matrices in the delocalized phase, III: Averaging fluctuations},
   journal={prepublication},
   date={2018}
}

\bib{Zel1987}{article}{
   author={Zelditch, S.},
   title={Uniform distribution of eigenfunctions on compact hyperbolic
   surfaces},
   journal={Duke Math. J.},
   volume={55},
   date={1987},
   number={4},
   pages={919--941}
}

\end{biblist}
\end{bibdiv}

  \end{document}